\documentclass[a4paper, 11pt, reqno]{amsart}
\setlength{\parindent}{1em}

\usepackage{a4wide, amssymb, amsmath, amsthm, latexsym, bbm}

\usepackage[arrow, matrix, curve]{xy}

\usepackage{dsfont}
\usepackage[T1]{fontenc}

\usepackage[active]{srcltx}

\usepackage{hyperref}
    
\allowdisplaybreaks

\numberwithin{equation}{section}

\frenchspacing

\parindent=0pt

\DeclareMathOperator{\1I}{\mathbbm{1}}
\DeclareMathOperator{\aco}{aco}

\DeclareMathOperator{\id}{id}
\DeclareMathOperator{\Id}{Id}

\DeclareMathOperator{\RsL}{R_sL}
\DeclareMathOperator{\sgn}{sgn}
\DeclareMathOperator{\supp}{supp}

\newcommand{\N}{\mathbb N}
\newcommand{\Z}{\mathbb Z}

\newcommand{\R}{\mathbb R}
\newcommand{\C}{\mathbb C}
\newcommand{\K}{\mathbb K}

\newcommand{\mE}{\mathcal{E}}
\newcommand{\mF}{\mathcal{F}}

\newcommand{\mL}{\mathcal{L}}
\newcommand{\mM}{\mathcal{M}}

\newcommand{\mR}{\mathcal{R}}
\newcommand{\mS}{\mathcal{S}}
\newcommand{\mT}{\mathcal{T}}

\newcommand{\abs}[1]{\lvert#1\rvert}
\newcommand{\BB}{\mathfrak{B}}
\newcommand{\ds}{\displaystyle}
\newcommand{\del}{\partial}
\newcommand{\E}{{\mathbb E}}
\newcommand{\eps}{\varepsilon}

\newcommand{\HU}{$H^\infty$}
\newcommand{\imp}{\Rightarrow}

\newcommand{\into}{\hookrightarrow}
\newcommand{\iso}{\cong}

\newcommand{\lsk}{\langle}
\newcommand{\la}{\lambda}
\newcommand{\mal}{\cdot}

\newcommand{\ohne}{\backslash}
\newcommand{\om}{\supseteq}
\newcommand{\ph}{\varphi}
\newcommand{\rechts}{\longrightarrow}
\newcommand{\rsk}{\rangle}
\newcommand{\tensor}{\otimes}

\newcommand{\tm}{\subseteq}

\newcommand{\Xdot}{\dot{X}}

\theoremstyle{plain}
  \newtheorem{satz}{Proposition}[section]
  \newtheorem{prop}[satz]{Proposition}
  \newtheorem{lemma}[satz]{Lemma}
  
  \newtheorem{cor}[satz]{Corollary}
  \newtheorem{theorem}[satz]{Theorem}
  \newtheorem{bem}[satz]{Remark}
  \newtheorem{remark}[satz]{Remark}
\theoremstyle{definition}
  \newtheorem{definition}[satz]{Definition}
  \newtheorem{defsatz}[satz]{Definition/Proposition}
  \newtheorem{example}[satz]{Example}
  \newtheorem{bspe}[satz]{Examples}

\pagestyle{plain}

\title{$\mR_s$-sectorial operators and generalized Triebel-Lizorkin spaces}

\author{Peer Kunstmann}
\author{Alexander Ullmann}

\address{Department of Mathematics,
Karlsruhe Institute of Technology, 76128 Karlsruhe, Germany.}
\email{peer.kunstmann@kit.edu}
\email{ullmann@kit.edu}

\thanks{This work was partially supported by the Deutsche Forschungsgemeinschaft (DFG) (We 2847/1-2).}

\keywords{Function spaces, generalized Triebel-Lizorkin spaces, bounded ${H}^\infty$-calculus}

\subjclass[2000]{46E30, 47A60, 47B38, 42B25.}

\begin{document}

\begin{abstract}
We introduce a notion of generalized Triebel-Lizorkin spaces associated with sectorial operators in Banach function spaces. Our approach is based on holomorphic functional calculus techniques. Using the concept of $\mR_s$-sectorial operators, which in turn is based on the notion of $\mR_s$-bounded sets of operators introduced by Lutz Weis, we obtain a neat theory including equivalence of various norms and a precise description of real and complex interpolation spaces. Another main result of
this article is that an $\mR_s$-sectorial operator always has a bounded \HU-functional calculus in its associated generalized Triebel-Lizorkin spaces.
\end{abstract}

\maketitle

\tableofcontents

\section{Introduction}

The theory of function spaces is a wide and important topic and has applications in various fields of analysis, for example in regularity theory for partial differential equations, cf. e.g. \cite{triebel-interpolation}, \cite{amann}. It is well known that many classical function spaces as Sobolev spaces, Bessel-potential spaces or H\"older-Zygmund spaces, can be subsumed in the general notion of Besov and Triebel-Lizorkin spaces, cf. e.g. \cite{triebel-1}, \cite{triebel-2}.
The most common definition of these spaces is based on the Fourier transform and a dyadic decomposition of the Fourier image. From an operator theoretic point of view, these spaces are closely related to the Laplace operator. In fact, Besov spaces are real interpolation spaces of the domains of powers of the Laplacian. For a sectorial operator $A$ with domain $D(A)$ in a Banach space $X$, an equivalent norm for the real interpolation space $(X,D(A))_{\theta,s}$ ($\theta\in (0,1), s\in [1,\infty]$) is given using functional calculus as
\begin{equation} \label{intro_Besov}
\|x\|_{(X,D(A))_{\theta,s}} \approx \|x\|_X + \bigg( \int_0^\infty \|t^{-\theta} \ph(tA)x\|_X^s \, \frac{dt}{t}\bigg)^{1/s}
\end{equation}

(with the usual modification if $s=\infty$), where $\ph\neq 0$ is a suitable bounded holomorphic function. This goes back to Komatsu, cf. \cite{komatsu2}, and a comprehensive treatment is given in \cite{haase}.\\

The norm in (classical) Triebel-Lizorkin spaces $F^{2\theta}_{p,s}$ can be rewritten in a similar way as
\begin{equation} \label{intro_TL}
\|x\|_X + \bigg\| \bigg( \int_0^\infty |t^{-\theta} \ph(tA)x|^s \, \frac{dt}{t}\bigg)^{1/s}\bigg\|_X,
\end{equation}

where $X=L^p(\R^n)$, $A=-\Delta$ and $p\in [1,\infty), s\in [1,\infty], \theta\in (0,1)$, if e.g. $\ph(z)=ze^{-z}$, cf. \cite{triebel-artikel82}.\\

The norm expression in (\ref{intro_TL}) still makes sense for a general sectorial operator in a Banach function space $X$, and it seems reasonable to use (\ref{intro_TL}) to define generalized Triebel-Lizorkin spaces for a general sectorial operator $A$. This is what we shall do in this paper.\\

For Schr\"odinger operators $H=-\Delta+V$ with certain potentials $V$, generalized Triebel-Lizorkin spaces have been studied e.g. in \cite{zheng1}, \cite{zheng2}, via an approach that is much closer to the usual definition. The two (closely interrelated) main differences to the present approach are:
\begin{itemize}\itemsep0pt
\item the operator $H$ is \emph{self-adjoint} in $L^2(\R^n)$ whereas we
  allow for general sectorial operators $A$ in a Banach function  space,
\item the functional calculus of the spectral theorem for $H$ is used,
  which contains functions with compact support, whereas we resort to
  the holomorphic functional calculus for sectorial operators and thus have to
  use holomorphic functions $\ph$ in (\ref{intro_TL}).
\end{itemize}

Another issue is that the usual Littlewood-Paley-like definition of Besov- and Triebel-Lizorkin spaces (and also \cite{zheng1}, \cite{zheng2}) uses discrete dyadic sums rather than continuous integral expressions like (\ref{intro_Besov}), (\ref{intro_TL}). For $\ph(z)=ze^{-z}$, discrete dyadic analogs of (\ref{intro_Besov}) can be extracted from \cite{triebel-interpolation} (based on the notion of quasi-linearizable interpolation couples and properties of the $K$-functional). In the context of $H^\infty$-calculus, there is no general study of such expressions. We address the question of discrete analogs of (\ref{intro_TL}) in Subsection 3.3. It turns out that one has to be much more careful in the choice of $\ph$. The condition we give below is certainly a bit technical in nature. On the other hand, our study also yields new discrete analogs for expressions (\ref{intro_Besov}), cf. Remarks \ref{bem_diskrete-Normen-Haase}, \ref{bem_diskrete-Normen-Haase-im-Besovraum}.\\

Let us mention right away that the case $s=2$ is special. Much more is known and much more can be proved. This is, on the one hand, due to the ubiquity of square function expressions in harmonic analysis, which were also among the first topics studied in the theory of $H^\infty$-functional calculus, cf. e.g. the fundamental paper \cite{mcintosh-1986} for the Hilbert space case (where actually the expressions (\ref{intro_Besov}) and (\ref{intro_TL}) coincide) and \cite{CDMY}, where more generally $X=L^p$ is covered. On the other hand, Khintchine's inequality allows to rephrase square function expressions in terms of random sums involving Rademacher or Gaussian sequences. These, however, make sense in arbitrary Banach spaces $X$, and the relations to $H^\infty$-functional calculus have been established in \cite{kaltonweis2}, \cite{KKW06}, \cite{kaltonweisAA}. Moreover, in \cite{kaltonweisAA}, \cite{suarez-weis}, \cite{KKW06}, interpolation methods have been introduced (based on Rademacher and Gaussian random sums) that can be used to define intermediate spaces between $X$ and the domain $D(A)$ of a sectorial operator $A$. For
$X=L^p(\R^n)$, $1<p<\infty$, and $A=-\Delta$ these intermediate spaces are the Bessel potential spaces. The main new feature in the present paper thus is the study of the case $s\neq 2$. In order to give a  meaning to (\ref{intro_TL}) we cannot work in a general Banach space $X$ and restrict to Banach function spaces. Moreover, there is no interpolation method behind (even if $X$ is a Banach function space, the domain $D(A)$ of a sectorial operator $A$ in $X$ is not a Banach
function space, in general). Nevertheless, we shall use expressions like (\ref{intro_TL}) to establish a theory of  \emph{$s$-intermediate   spaces} between $X$ and $D(A)$ which, for $X=L^p(\R^n)$, $1<p<\infty$, and $A=-\Delta$ coincide with classical Triebel-Lizorkin spaces $F^\alpha_{p,s}(\R^n)$.\\

Let us now be a bit more specific about the content of the present paper. As in the study of Besov- and Triebel-Lizorkin spaces, a central issue in handling the norms (\ref{intro_Besov}), (\ref{intro_TL}) is that they are independent of the special choice of (suitable) auxiliary functions $\ph$. For Besov-spaces (or in the general case real interpolation spaces) this can be ensured by sectoriality of the operator $A$. In the case of the generalized Triebel-Lizorkin norms (\ref{intro_TL}), things are more subtle, since we need in addition control of the norm of the pointwise expressions in (\ref{intro_TL}). For this we introduce the concept of $\mR_s$-sectorial operators, based on the notion of $\mR_s$-boundedness of operators introduced in \cite{Weis01}: If $X$ is a Banach function space, a set $\mT\tm L(X)$  is called \emph{$\mR_s$-bounded} if there is a constant $C>0$ such that for all $n\in\N$, $T\in\mT^n$ and $x\in X^n$ we have an estimate
\begin{equation}\label{intro-Rs-bdd}
\Big\| \Big(\sum_{j=1}^n |T_jx_j|^s\Big)^{1/s} \Big\|_X \le C\mal \Big\| \Big(\sum_{j=1}^n |x_j|^s\Big)^{1/s} \Big\|_X
\end{equation}

(with the usual modification if $s=\infty$). An operator $A$ in $X$ is called \emph{$\mR_s$-sectorial} if there is an open sector $\Sigma\subsetneq\C$ such that $\{zR(z,A) \,|\, z\in\Sigma\}$ is $\mR_s$-bounded. In Subsection \ref{subsection_Equivalence-of-s-power-function-norms} we will show that in this case the $s$-power function norm expressions in (\ref{intro_TL}) are equivalent for a certain class of auxiliary functions $\ph$ using arguments from \cite{LeM04}. This is the central
technical tool to deal with these norms. Of course, estimates like (\ref{intro-Rs-bdd}) have been studied before in classical harmonic analysis where they appear in the context of vector-valued extensions of singular integral operators (cf. below).\\

Using these concepts, we can use (\ref{intro_TL}) to define generalized Triebel-Lizorkin spaces for $\mR_s$-sectorial operators and conclude that these spaces are reasonable intermediate spaces. In particular, they are well-behaved under real and complex interpolation. Another main result of this work is that the operator $A$ always has a bounded \HU-calculus in its associated generalized Triebel-Lizorkin spaces.\\

In \cite{artikel-Rs-Hoo} we have shown that large classes of differential operators have an $\mR_s$-bounded \HU-calculus, i.e. the set $\{ f(A) \,|\, f\in H^\infty(\Sigma), \|f\|_{\infty,\Sigma} \le 1\}$ is $\mR_s$-bounded for some open sector $\Sigma\subsetneq\C$. In particular, these operators are $\mR_s$-sectorial. Therefore, the theory of generalized Triebel-Lizorkin spaces developed in the present article can be applied to those operators. Moreover, in \cite{artikel-TL-for-PDO} we shall show that, for several classes of elliptic differential operators, generalized Triebel-Lizorkin spaces coincide with classical Triebel-Lizorkin spaces. On the one hand, this gives new representations of the spaces $F^\theta_{pq}$, on the other hand, by the result mentioned above this means that these operators have a bounded \HU-calculus in the (classical) Triebel-Lizorkin spaces $F^\theta_{pq}$.\\

This paper is organized as follows: We start with some preliminaries in Section \ref{section_prelim}. Beside standard facts about Banach function spaces and the \HU-calculus for sectorial operators, we give a detailed exposure of the concept of $\mR_s$-boundedness in Banach function spaces in Subsection \ref{subsection-Rs-bd}. The explicit notion of $\mR_s$-boundedness in $L^p$-spaces was introduced by Lutz Weis in \cite{Weis01}, and it has been used e.g. in \cite{Weis01}, \cite{BK02} to deduce $\mR$-boundedness of semigroups of operators and hence maximal regularity of the related Cauchy problem. Based on this concept we study the notion of $\mR_s$-sectorial operators and $\mR_s$-bounded \HU-calculus in Section \ref{section_Rs-Sekt.}. In Subsection \ref{subsection_Equivalence-of-s-power-function-norms} we show the equivalence of (continuous) $s$-power function norms (\ref{intro_TL}) for certain auxiliary functions $\ph$, and in Subsection \ref{subsection_Discrete_s-power_function_norms} we study in detail discrete counterparts of (\ref{intro_TL}) of the form
\begin{equation} \label{intro_TL-discrete}
\|x\|_X + \Big\| \Big( \sum_{j\in\Z} |2^{-j\theta} \ph(2^jA)x|^s \Big)^{1/s} \Big\|_X,
\end{equation}

and show that also these norms are equivalent to (\ref{intro_TL}), where one has to impose more restrictions on the auxiliary function $\ph$. This will also lead to new results for discrete counterparts of the norms (\ref{intro_Besov}) for real interpolation spaces.\\

In Section \ref{section_The-associated-s-intermediate-spaces} we turn to the central topic of this paper. We start with the definition of the associated homogeneous and inhomogeneous \emph{$s$-intermediate   spaces} $\Xdot^\theta_{s,A}, X^\theta_{s,A}$ which we will also refer to as generalized Triebel-Lizorkin spaces. This will be justified in Proposition \ref{satz_s-Raum-von-Laplace=Fspq}, where we show that the $s$-intermediate spaces for the Laplace operator $A=-\Delta$ coincide with the classical Triebel-Lizorkin spaces. After the definition and some elementary properties in Subsection \ref{subsection_Element-prop-of-s-spaces} we will show in Subsection \ref{subsection_The-s-spaces-as-intermediate-spaces-and-interpolation} that the $s$-intermediate spaces are indeed intermediate spaces for the interpolation couple $(X, D(A^m))$, and we will explore real and complex interpolation of these spaces. In the last Subsection \ref{subsection_The-part-of-A-in-the-s-intermediate-spaces} we will present one of the main results of this work and show that the ``part'' of $A$ (which has to be defined properly) always has a bounded \HU-calculus in its scale of homogeneous $s$-intermediate spaces $\Xdot^\theta_{s,A}, \theta\in\R$. In the case that $A$ is
invertible or that $A$ has a bounded \HU-calculus in $X$, it also has a bounded \HU-calculus in the inhomogeneous spaces $X^\theta_{s,A}$ if $\theta>0$. We thus establish an analog of Dore's Theorem that states that an invertible sectorial operator $A$ in a Banach space $X$ has a bounded \HU-calculus in the scale of real interpolation spaces $(X,D(A))_{\theta,p}$ for $p\in[1,\infty], \theta\in (0,1)$, cf. \cite{dore-interpolation1}, \cite{dore-interpolation2}. The case $s=2$ is known in principle (\cite{LeM04}, \cite{LeMerdy-square-fcts}, \cite{kaltonweisAA}, \cite{KKW06}, \cite{kriegler}), but the case $s\neq 2$ is entirely new. We shall give applications of this result in \cite{artikel-TL-for-PDO}.\\

It is a reasonable claim that the generalized Triebel-Lizorkin spaces introduced in this paper coincide with the corresponding spaces for Schr\"odinger operators $H=-\Delta+V$ defined in \cite{zheng1}, \cite{zheng2}, but we will not elaborate on this topic in this paper. A few thougts are given in the concluding remarks in Section \ref{section_Concluding_remarks}.\\

\section{Preliminaries} \label{section_prelim}

\subsection{The \HU-calculus for sectorial operators} \label{subsection-Hoo}

We shall be brief and refer to the standard literature, e.g. \cite{haase} or \cite{levico}, for details. A linear operator $A:X\om D(A)\to X$, where $X$ is a complex Banach space, is called a \emph{sectorial operator of type $\omega \in [0,\pi)$} if the spectrum $\sigma(A)$ is contained in the sector $\overline{\Sigma}_\omega$, where $\Sigma_\sigma := \{ z\in\C\ohne (-\infty,0] \;|\;\, |\arg(z)| < \sigma\}$ if $\sigma\in (0,\pi]$ and $\Sigma_0 := (0,\infty)$, and $\sup_{z \notin \overline{\Sigma}_{\sigma}} \|zR(z,A)\| < \infty$ for all $\sigma\in (\omega,\pi)$. The infimum $\omega(A)$ over all such $\omega\in (0,\pi/2)$ is called {\em the type of $A$}. In this article, we will additionally always assume that $A$ is injective with dense domain and range (Observe that, by the sectoriality condition,
density of $R(A)$ already implies that $A$ is injective, and if $X$ is reflexive, then $D(A)$ is always dense).\\

For $f:\Sigma_\sigma\to\C$ let $\|f\|_{\infty,\sigma}:= \sup_{z\in\Sigma_\sigma}|f(z)|$, where we sometimes drop the index $\sigma$ in notation if there is no risk of confusion. We introduce the algebra $H^\infty(\Sigma_\sigma):= \{ f:\Sigma_\sigma\to \C \,|\, f$ analytic, $\|f\|_{\infty,\sigma} < \infty \}$ of bounded analytic functions on the sector $\Sigma_\sigma$ and the subalgebra $H_0^\infty(\Sigma_\sigma)$ consisting of those $f\in H^\infty(\Sigma_\sigma)$ for which there exists an $\eps>0$ with $\sup_{z\in\Sigma_\sigma} \big( (|z|^\eps  \vee |z|^{-\eps})  |f(z)| \big) < \infty$. Let $\omega\in (\omega(A),\sigma)$ and define the path of integration $\Gamma_\omega(t) := |t|e^{-\sgn(t)i\omega}$ for all $t\in\R$, then
\begin{equation}
\ph \mapsto \ph(A):=\frac{1}{2\pi i} \int_{\Gamma_\omega} \ph(\la)R(\la,A)\, d\la
\end{equation}

defines an algebra homomorphism $H_0^\infty(\Sigma_{\sigma})\to L(X)$ that is independent of $\omega \in (\omega(A),\sigma)$ and only depends on the germ of $\ph$ on $\overline{\Sigma}_{\omega(A)}$. By a standard extension procedure we obtain a functional calculus for all $f\in H^\infty(\Sigma_\sigma)$ and even for a larger class of holomorphic functions: Let $\rho(\la) := \la(1+\la)^{-2}, \la\notin (-\infty,0)$ and $\BB(\Sigma_\sigma) := \{ f:\Sigma_\sigma \to\C \:|\: z\mapsto \rho(z)^m f(z) \in H_0^\infty(\Sigma_\sigma)\text{ for some } m\in\N\}$, then it is easy to check that $\rho^m(A)=\rho(A)^m=A^m(1+A)^{-2m}$, and $\rho(A)$ is an injective operator. Let $f \in \BB(\Sigma_\sigma)$ and choose $m\in\N$ such that $\rho^m f\in H_0^\infty(\Sigma_\sigma)$, then the operator $(\rho^m f)(A)\in L(X)$ is well defined and we can define $f(A) := (\rho(A))^{-m} (\rho^m f)(A)$. It can be shown that the definition of $f(A)$ is independent of $m\in\N$ such that $\rho^m f\in H_0^\infty(\Sigma_\sigma)$, and that $f\mapsto f(A)$ is an (abstract) functional calculus for $A$ in the sense of \cite{haase}, Chapter 1.3.\\

Let $\sigma\in (\omega(A),\pi]$, then $A$ is said to have a {\em bounded $H^\infty(\Sigma_\sigma)$-calculus} if
\[
M^\infty_{\sigma}(A) := \sup\{ \|f(A)\| \:|\: f\in H^\infty(\Sigma_\sigma), \|f\|_{\infty,\sigma}\le 1\} < \infty.
\]

In this case, $\omega_{H^\infty}(A) := \inf\{ \sigma \in (\omega(A),\pi] \:|\:M^\infty_{\sigma}(A) < \infty \}$ is called the {\em \HU-type} of $A$, and we will also just say that $A$ has a bounded $H^\infty$-calculus.\\

It is an easy consequence of the so-called convergence lemma and the closed graph theorem, that $A$ has a bounded $H^\infty(\Sigma_\sigma)$-calculus if and only if there is a $C>0$ such that
\[
\|\ph(A)\| \le C\, \|\ph\|_{\infty,\sigma} \quad\text{ for all } \ph\in H_0^\infty(\Sigma_\sigma),
\]

and in this case $M^\infty_{\sigma}(A)\le C$,  cf. e.g. \cite{levico}, Remark 9.11 and \cite{haase}, Proposition 5.3.4.\\

As a special class of analytic functions $f$ that still yield a nice representation formula and ensure that $f(A)$ is bounded we introduce the {\em extended Dunford-Riesz class}, which is defined by $\mathcal{E}(\Sigma_\sigma) := H_0^\infty(\Sigma_\sigma) \oplus \Big\lsk \frac{1}{1+\id_{\Sigma_\sigma}}\Big\rsk_\C \oplus \lsk \1I_{\Sigma_\sigma}\rsk_\C$ and $\mathcal{E}_{\omega} := \bigcup_{\sigma'>\omega} \mathcal{E} (\Sigma_{\sigma'})$ for all $\omega\in [0,\pi)$. Then $\mathcal{E}(\Sigma_\sigma) $ is the algebra of bounded analytic functions on $\Sigma_\sigma$ that have finite polynomial limits in $0$ and $\infty$. Here we say that $f$ has a finite polynomial limit in $0$, if there is an $a\in\C$ and $\alpha>0$ such that $f(z)-a= O(|z|^\alpha)$ as $z\to 0$, and a finite polynomial
limit in $\infty$, if the latter is true for $f(z^{-1})$. In this case, the values $f(0), f(\infty)\in\C$ are well defined. Moreover, by the mean value theorem, bounded holomorphic functions on $\Sigma_\sigma$ that are decaying to $0$ at $0$ and $\infty$, or that are holomorphic in a neighborhood of $0$ and $\infty$, respectively, belong to the class $\mathcal{E}(\Sigma_\sigma)$. For $f\in\mathcal{E}(\Sigma_\sigma)$ let $\ph:=  f - \frac{f(0)-f(\infty)}{1+\id_{\Sigma_\sigma}} - f(\infty)\,\1I_{\Sigma_\sigma}$ be the corresponding $H_0^\infty$-function, then it is easily checked that
\[
f(A) = \ph(A) + (f(0)-f(\infty))\,(1+A)^{-1} + f(\infty)\,\id_X.
\]

For details we refer to \cite{haase}, Section 2.2.\\

\subsection{Banach function spaces} \label{subsection_B.f.s.}

For this subsection we refer to the standard references \cite{bennett} Chapter 1 and \cite{zaanen} Chapter 15. Let $(\Omega,\mu)$ be a $\sigma$-finite measure space. We fix a \textit{$\mu$-localizing sequence} $(\Omega_n)_{n\in\N}$, i.e. an increasing sequence of $\mu$-measurable subsets such that $\mu(\Omega_n)<\infty$ for all $n\in\N$ and $\bigcup_{n\in\N} \Omega_n=\Omega$. A $\mu$-measurable subset $M\tm \Omega$ will be called \emph{$(\Omega_n)_{n\in\N}$-bounded} if $M\tm \Omega_n$ for some $n\in\N$. We will use the terminology that a property for a $\mu$-measurable function $f$ on $\Omega$ holds \emph{$(\Omega_n)_{n\in\N}$-locally} if it holds for $f|_M$ for all $(\Omega_n)_{n\in\N}$-bounded sets $M$. In particular we introduce the following notation:\\

If $f_n:\Omega \to \K, n\in\N_0$ are $\mu$-measurable, we say that $f_n\to f_0$ \textit{converges $(\Omega_n)_{n\in\N}$-locally in measure} for $n\to\infty$ if $f_n|_M \to f_0|_M$ in measure for $n\to\infty$ for all  $(\Omega_n)_{n\in\N}$-bounded sets $M$. We denote by $M^*(\Omega,\mu):=M^*(\mu)$ the class of $\mu$-measurable extended scalar-valued functions on $\Omega$, by $M(\Omega,\mu):=M(\mu)$ the space of $\mu$-measurable scalar-valued functions on $\Omega$, endowed with the topology of $(\Omega_n)_{n\in\N}$-local convergence in measure, and by $M^+(\Omega,\mu):=M^+(\mu)$ the cone of $\mu$-measurable functions on $\Omega$ with values in $[0,\infty]$, where as usual we identify functions which are pointwise equal outside a $\mu$-nullset. Moreover we define the spaces $L^1_{loc}(\Omega,\mu) := \{ f\in M(\mu)  \;|\; f|_M \in L^1(M)$ for all $(\Omega_n)_{n\in\N}$-bounded sets $M\}$ of locally integrable functions, endowed with the topology of convergence on $(\Omega_n)_{n\in\N}$-bounded sets. Finally let $S(\Omega,\mu) := \{ f\in M(\mu) \;|\; f(\Omega) $ is finite and $\supp(f)$ is $(\Omega_n)_{n\in\N}$-bounded$\}$ be the space of step functions with bounded support. In general, all these spaces depend on the special choice of the underlying $\mu$-localizing sequence $(\Omega_n)_{n\in\N}$. Nevertheless we will suppress the explicit notation of the sequence $(\Omega_n)_{n\in\N}$ in the sequel but keep it in mind.%\\

\begin{definition}
A map $\rho: M^+(\mu)\to [0,\infty]$ is called a {\em Banach function norm}, if for all $f,g,f_n\in M^+(\mu) (n\in\N), \alpha>0$ and $M\tm \Omega$ $\mu$-measurable the following properties hold:
\begin{itemize}
\item[(B1)] $\rho(f)=0 \iff f=0$ $\mu$-a.e. , $\rho(\alpha f)=\alpha\rho(f)$ and $\rho(f+g)\le\rho(f)+\rho(g)$ ({\em norm properties}),
\item[(B2)] $0\le g\le f$ $\mu$-a.e. $\imp \rho(g)\le\rho(f)$ ({\em monotonicity}),
\item[(B3)] $0\le f_n \nearrow f$ $\mu$-a.e. $\imp \rho(f_n) \nearrow \rho(f)$ ({\em Fatou property}),
\item[(B4)] $M$ bounded $\imp \rho(\1I_M)< \infty$,
\item[(B5)] $M$ bounded $\imp \int_M f\,d\mu \le C_M \rho(f)$, where $C_M>0$ is a constant independent of $f$.
\end{itemize}

In this case, $X:=X(\rho):= \{ f \,|\, f\in M^*(\mu), \rho(|f|) < \infty \}$, endowed with the norm $f \mapsto \|f\|_X := \rho(|f|)$, is called a {\em Banach function space}.
\end{definition}

By (B5) we have $X(\rho)\into L^1_{loc}(\Omega,\mu)$, and (B3) implies that appropriate versions of the classical Fatou Lemma and monotone convergence theorem hold in $X$, for a detailed exposition cf. \cite{bennett}, Chapter 1.1.\\

Standard examples for Banach function spaces are $L^p$-spaces, Lorentz spaces and Orlicz spaces. Observe that our definition of a Banach function spaces is a little more general than the one given in \cite{bennett} since conditions (B4), (B5) are only required to hold on $(\Omega_n)_{n\in\N}$-bounded sets in the sense as discussed at the beginning. Hence, our notion of Banach function spaces also covers mixed spaces $L^pL^q$. In \cite{bennett} conditions (B4), (B5) are formulated for the collections of all $\mu$-measurable sets $M$ of finite measure. Nevertheless, the proofs given in \cite{bennett} also work in our situation, hence we will usually cite \cite{bennett} as our standard reference. In \cite{zaanen} a more general notion of Banach function spaces, which are called K\"othe spaces there, are considered.\\

For a Banach function space $X$ we have $S(\Omega,\mu)\into X\into M(\mu)$, where the second inclusion is continuous (\cite{bennett}, Theorem 1.4). This implies e.g. that each convergent sequence $f\in X^\N$ contains a subsequence that converges $\mu$-a.e., cf. \cite{bennett}, Theorem I.1.7 (vi). One can also show that $X$ is a complete lattice, to be more precise:
\begin{satz} \label{satz-X-als-Verband-vollst}
Let $X$ be a Banach function space. Then $X$ is a complete sub-lattice of $M^*(\mu)$, and to every $F\tm X$ there is a countable subset $F_0\in F$ such that $\sup F = \sup F_0$.
\end{satz}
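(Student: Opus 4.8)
The plan is to show that a Banach function space $X$ is Dedekind complete as a sublattice of $M^*(\mu)$, and then to extract a countable ``supremum-realizing'' subset by the standard exhaustion argument used for order complete function spaces. First I would recall that for $f,g\in X$ the pointwise maximum $f\vee g$ and minimum $f\wedge g$ lie in $X$: this is immediate from $|f\vee g|\le |f|\vee|g|\le |f|+|g|$ together with the lattice property (B2) of the Banach function norm, so $X$ is at least a (vector) sublattice of $M^*(\mu)$. The nontrivial part is completeness, i.e. that every subset $F\subseteq X$ that is bounded above in $M^*(\mu)$ by some $h\in M^*(\mu)$ has a least upper bound in $M^*(\mu)$, and that this bound can be computed using only countably many elements of $F$; of course the sup may fail to lie in $X$, which is why the statement is phrased in $M^*(\mu)$.

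The key step is the reduction to a countable subfamily. Without loss of generality pass to the set $\widetilde F$ of all finite pointwise maxima of elements of $F$; this is upward directed, has the same upper bounds as $F$, and $\sup F=\sup\widetilde F$ if either exists, so it suffices to treat an upward directed family, and a countable cofinal subset of $\widetilde F$ yields a countable subset $F_0\subseteq F$ with $\sup F_0=\sup F$. To find such a cofinal sequence I would use the $\mu$-localizing sequence $(\Omega_n)$: fix the finite measures $\mu_n:=\mu(\,\cdot\cap\Omega_n)$ and, for $f\in\widetilde F$, consider the bounded quantity $\int_\Omega \arctan(f)\,\frac{d\mu_n}{2^n(1+\mu(\Omega_n))}$ — call its supremum over $\widetilde F$ the number $s<\infty$ (finiteness because $\arctan$ is bounded and the weights are summable against the finite measures $\mu_n$). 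Choose $g_k\in\widetilde F$ with $\int \arctan(g_k)\,d\nu\nearrow s$, where $\nu:=\sum_n 2^{-n}(1+\mu(\Omega_n))^{-1}\mu_n$ is a finite measure equivalent to $\mu$; replacing $g_k$ by $g_1\vee\cdots\vee g_k$ (still in $\widetilde F$ by directedness) we may assume $g_k\nearrow$ pointwise to some $g\in M^*(\mu)$. I claim $g=\sup\widetilde F$: for any $f\in\widetilde F$ we have $f\vee g_k\in\widetilde F$ and $\int\arctan(f\vee g_k)\,d\nu\le s=\lim_k\int\arctan(g_k)\,d\nu$, and letting $k\to\infty$ (monotone convergence, with $f\vee g_k\nearrow f\vee g$) gives $\int\arctan(f\vee g)\,d\nu\le\int\arctan(g)\,d\nu$, whence $\arctan(f\vee g)=\arctan(g)$ $\nu$-a.e., i.e. $f\le g$ $\mu$-a.e. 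Thus $g$ is an upper bound; it is the least one since $g=\lim g_k$ and each $g_k\in\widetilde F$ lies below every upper bound. Finally, the countable family $\{g_k\}$ comes from finitely many elements of $F$ each, so their union $F_0$ is a countable subset of $F$ with $\sup F_0=g=\sup F$.

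The main obstacle, and the one point requiring care, is the measurability/well-definedness of the pointwise supremum of an \emph{uncountable} family: $\sup_{f\in F} f(\omega)$ need not be a measurable function a priori. The argument above circumvents this precisely by constructing the candidate supremum $g=\lim_k g_k$ as an a.e. limit of a monotone \emph{sequence}, hence measurable, and then verifying the universal property; this is exactly the classical trick for order completeness of $L^0$ over a $\sigma$-finite (equivalently, finite) measure space, and the $\sigma$-finiteness hypothesis — encoded here by the localizing sequence $(\Omega_n)$ — is what makes the auxiliary finite measure $\nu$ available. I would also note that (B3), the Fatou property, is what guarantees $g\in X$ precisely when $\sup_k\|g_k\|_X<\infty$, but since the statement only asserts $g\in M^*(\mu)$ this is not needed for the proof and can be relegated to a remark.
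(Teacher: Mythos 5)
Your proof is correct and follows essentially the approach the paper intends: the paper simply cites the classical $L^p$ case (Dunford--Schwartz, Cor.\ IV.11.7) and states that the proof carries over, and your exhaustion argument -- passing to the upward directed family of finite maxima, maximizing $\int\arctan(\cdot)\,d\nu$ for the auxiliary finite measure $\nu$ built from the localizing sequence $(\Omega_n)$, and taking the monotone limit -- is exactly that classical argument carried out in the present setting. One cosmetic remark: the assertion that $X$ is a complete sub-lattice (i.e.\ that the supremum lies in $X$ when $F$ is order-bounded in $X$ by some $h\in X$) also drops out of your construction directly via (B2), since $f_1\le g\le h$ for any $f_1\in F$ gives $|g|\le |f_1|+|h|$, so not even the Fatou property is needed for that part.
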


This can be proven in the same manner as in the case $X=L^p$, cf. e.g. \cite{dunford} Cor. IV.11.7.\\

We will usually also need the following additional property:
\begin{itemize}
\item[(B6)] If $f\in X$ and $(M_n)_{n\in\N}$ is a decreasing sequence of $\mu$-measurable sets with $\1I_{M_n}\to 0$ $\mu$-a.e., then $\|f \1I_{M_n}\|_X \to 0$ for $n\to\infty$ ({\em absolute continuity}).
\end{itemize}

A Banach function space that fulfills (B1)-(B6) will be called a Banach function space with \emph{absolute continuous norm}. Observe that (B6) implies that the space of step functions $S(\Omega,\mu)$ is dense in $X$, cf. e.g.\cite{bennett}, Theorem I.3.11., and (B6) is equivalent to the \emph{$\sigma$-order-continuity} of the lattice $X$ as well as to the validity of Lebesgue's theorem, compare \cite{bennett} Prop. 3.5, 3.6.\\

If $E\neq\{0\}$ is a Banach space, we define the $E$-valued extension of the function space $X$ as $X(E) := \{ F\in M(\Omega,E) \:|\:  |F|_E \in X \}$, where $|F|_E := \| F(\mal)\|_E$. Letting $\|F\|_{X(E)} := \| \, |F|_E\,\|_X$ makes $X(E)$ a Banach space (cf. \cite{calderon}). If in addition $X$ has absolute continuous norm, then the space of step functions $S(\Omega,\mu)$ is dense in $X$, hence $X\tensor E$ is dense in $X(E)$ in this case.\\

Observe that the construction of the space $X(E)$ works for real and complex Banach spaces $E$ and thus makes $X(E)$ to a real or complex Banach space, respectively. In particular, the space $X(\C)$ is well-defined and will be called a {\em complex Banach function space}. In the sequel we will also just write $X$ for a complex Banach function space having in mind that $X = \widetilde{X}(\C)$ for some (real) Banach function space $\widetilde{X}$. In this case,  properties as (B6) for $X$ are always understood as $\widetilde{X}$ having this property, and $X(E)$ denotes the space $ \widetilde{X}(E)$ for any Banach space $E$.\\

We note the following proposition for $X(E)$-valued integrable functions, which is well known if $X=L^p(\Omega,\mu)$, cf. \cite{dunford}, Chapter III.11. Since $X$ locally embeds into $L^1$ the proof given there easily carries over to the more general situation considered here, :
\begin{satz} \label{int_mit_werten_in_X}
Assume that $X$ has absolute continuous norm. Let $(J,\nu)$ be a $\sigma$-finite measure-space and $F:J\to X(E)$ be an integrable function. Then there exists a $\nu\tensor\mu$-measurable function $f:J\times\Omega\to E$ such that:
\begin{enumerate}
\item $f(t,\mal)$ is a representative of $F(t)$ for $\nu$-a.e. $t\in J$,
\item $f(\mal,\omega)$ is integrable for $\mu$-a.e. $\omega\in\Omega$,
\item the mapping $\omega \mapsto \int_J f(t,\omega)\,d\nu(t)$ is a representative of $\int_J F(t)\,d\nu(t)$.
\end{enumerate}
\qed\end{satz}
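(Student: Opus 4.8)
The plan is to build $f$ as a pointwise limit of jointly measurable simple functions approximating $F$, using the Fatou property (B3) and the local embedding (B5) to control every limiting step. First I would fix a sequence of $X(E)$-valued simple functions $F_n=\sum_k\1I_{A^n_k}g^n_k$ (finite sums, $\nu(A^n_k)<\infty$, $g^n_k\in X(E)$) with $F_n\to F$ both $\nu$-almost everywhere and in $L^1(J;X(E))$ (possible since $F$ is Bochner integrable); after passing to a subsequence we may assume $\int_J\|F_{n+1}(t)-F_n(t)\|_{X(E)}\,d\nu(t)\le 2^{-n}$. Choosing representatives $g^n_k\colon\Omega\to E$, set $f_n(t,\omega):=\sum_k\1I_{A^n_k}(t)g^n_k(\omega)$, which is $\nu\tensor\mu$-measurable and satisfies that $f_n(t,\mal)$ represents $F_n(t)$ for every $t$.

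Next I would put $G_n(t):=|f_{n+1}(t,\mal)-f_n(t,\mal)|_E\in X$, so that $\|G_n(t)\|_X=\|F_{n+1}(t)-F_n(t)\|_{X(E)}$ and $\int_J\|G_n(t)\|_X\,d\nu\le 2^{-n}$. Since the partial sums $\sum_{n\le N}G_n(t)$ increase pointwise on $\Omega$ to $\sum_nG_n(t)$, property (B3) together with the triangle inequality gives $\rho\big(\sum_nG_n(t)\big)\le\sum_n\|G_n(t)\|_X$; integrating in $t$ then shows that for $\nu$-a.e.\ $t$ we have $\sum_nG_n(t)\in X$ and $\sum_n\|G_n(t)\|_X<\infty$. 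For such $t$ the element $\sum_nG_n(t)\in X\into L^1_{loc}(\Omega,\mu)$ is finite $\mu$-a.e., so $(f_n(t,\omega))_n$ is Cauchy in $E$ for $\mu$-a.e.\ $\omega$; define $f(t,\omega)$ to be its limit where that exists and $0$ otherwise. The set where the limit fails to exist is $\nu\tensor\mu$-measurable, so $f$ is $\nu\tensor\mu$-measurable, and for $\nu$-a.e.\ $t$ one has $|f_n(t,\mal)-f(t,\mal)|_E\le\sum_{m\ge n}G_m(t)$ with $X$-norm at most $\sum_{m\ge n}\|G_m(t)\|_X\to0$, that is, $f_n(t,\mal)\to f(t,\mal)$ in $X(E)$.

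It then remains to check (1)--(3). For (1): $f_n(t,\mal)$ represents $F_n(t)$, while $F_n(t)\to F(t)$ and $f_n(t,\mal)\to f(t,\mal)$ in $X(E)$ for $\nu$-a.e.\ $t$, so $f(t,\mal)$ represents $F(t)$ for $\nu$-a.e.\ $t$. The crucial point for (2) and (3) is a Fubini-type interchange legitimized by (B5): for every $(\Omega_n)_{n\in\N}$-bounded $M$ one has $\int_M\int_J\sum_nG_n(t)(\omega)\,d\nu(t)\,d\mu(\omega)=\int_J\int_M\sum_nG_n(t)(\omega)\,d\mu(\omega)\,d\nu(t)\le\int_JC_M\,\rho\big(\sum_nG_n(t)\big)\,d\nu(t)<\infty$, so $\Phi(\omega):=\int_J\sum_nG_n(t)(\omega)\,d\nu(t)$ is finite for $\mu$-a.e.\ $\omega$. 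Since everywhere $|f(t,\omega)|_E\le|f_1(t,\omega)|_E+\sum_nG_n(t)(\omega)$ and $\int_J|f_1(t,\omega)|_E\,d\nu<\infty$ for $\mu$-a.e.\ $\omega$ (as $f_1(t,\omega)=\sum_k\1I_{A^1_k}(t)g^1_k(\omega)$ with $\nu(A^1_k)<\infty$ and $|g^1_k|_E<\infty$ $\mu$-a.e.), $f(\mal,\omega)$ is Bochner integrable for $\mu$-a.e.\ $\omega$, which is (2). For (3): $\omega\mapsto\int_Jf_n(t,\omega)\,d\nu(t)$ represents $\int_JF_n(t)\,d\nu(t)$ (immediate for simple functions), the latter converge to $\int_JF(t)\,d\nu(t)$ in $X(E)$, while by dominated convergence in $(J,\nu)$ (dominating the tails by those of $\Phi$) one gets $\int_Jf_n(t,\omega)\,d\nu(t)\to\int_Jf(t,\omega)\,d\nu(t)$ for $\mu$-a.e.\ $\omega$; since convergence in $X(E)\into M(\Omega,E)$ forces a $\mu$-a.e.\ convergent subsequence, the two limits coincide $\mu$-a.e., which is (3).

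I expect the main obstacle to be the bookkeeping of the interchanges of the $\nu$-integral with the $X$-norm and with the $\mu$-integral over $(\Omega_n)_{n\in\N}$-bounded sets: the whole argument rests on pushing the rapidly decaying bounds $\int_J\|G_n(t)\|_X\,d\nu\le2^{-n}$ through the Fatou property (B3), to move $X$-norms inside infinite sums, and through (B5), to convert $X$-control into $L^1(M)$-control and apply ordinary Fubini. Nothing delicate about $E$ or the fine structure of $X(E)$ enters beyond the embedding $X(E)\into M(\Omega,E)$.
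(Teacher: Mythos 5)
Your argument is correct and is essentially the proof the paper has in mind: the paper simply refers to the classical $L^p$ case in Dunford--Schwartz and notes that it carries over because $X$ embeds locally into $L^1$, and your writeup is exactly that adaptation (simple-function approximation with a rapidly convergent subsequence, the Fatou property (B3) to sum the increments in $X$, and (B5) to convert $X$-bounds into local $L^1$-bounds for the Fubini and dominated-convergence steps). Note that, as in your proof, the absolute continuity (B6) assumed in the statement is not actually needed for this particular argument.
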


In the situation of Proposition \ref{int_mit_werten_in_X} we obtain in particular
\begin{equation} \label{ungl-B.f.s.-integral-betrag}
\Big|\int_J F(t) \,d\nu(t)\Big|_E \le \int_J |f(t,\mal)|_E\,d\nu(t) = \int_J |F(t)|_E \,d\nu(t).
\end{equation}

Finally we mention that if $U\tm\C$ is open and $F:U\to X(E)$ is analytic, one can choose a version of $F$ with analytic paths, i.e., there is a measurable function $f:U\times \Omega \to E$ such that $f(\mal,w)$ is analytic for a.e. $w\in\Omega$, and for all $z\in U, k\in\N^0$, the function $\del_z^k f(z,\mal)$ is a representative of $F^{(k)}(z)$. This goes back to Stein, cf. \cite{stein}, III.2 Lemma, a detailed exposition for $X=L^p$ can be found in \cite{desch-homan}, and a variant of the proof given there also works in our situation.\\

\subsection{$\mR_s$-boundedness} \label{subsection-Rs-bd}

In this subsection we give a short exposition of the notion of $\mR_s$-boundedness in Banach function spaces, which was introduced for $L^p$-spaces bei Lutz Weis in \cite{Weis01}. In fact, the concept of $\mR_s$-boundedness in $L^p$-spaces, although not denoted in this way, is a subject of classical harmonic analysis, mainly considered in the framework of vector-valued singular integrals, cf. e.g. the monographs \cite{stein-dick}, \cite{garcia} or \cite{grafakos}. In \cite{Weis01} and in the sequel e.g. in \cite{BK02}, the notion of $\mR_s$-boundedness was a central tool to show maximal regularity of certain sectorial operators. Crucial in this context is the coincidence of $\mR_2$-boundedness in $L^p$-spaces with $\mR$-boundedness if $1\le p < \infty$, which in turn is a central tool in dealing with the question of maximal regularity. In fact, many of the assertions we present in this section are already indicated in \cite{Weis01}, or they are variants of corresponding assertions for $\mR$-boundedness as shown in \cite{levico}, Chapter 2.\\

Let $X,Y$ be complex Banach function spaces with absolute continuous norm over $\sigma$-finite measure spaces $(\Omega,\mu)$ and $(\widetilde{\Omega},\widetilde{\mu})$, respectively, and let $s\in [1,\infty]$. We note that the basic ideas and definitions presented in this chapter easily generalize to the more general setting of an abstract Banach lattice using the Krivine-calculus (cf. e.g. \cite{LT}, Section II.1.d).\\

\begin{definition}[$\mR_s$-boundedness]
Let $\mT\tm L(X,Y)$. The set $\mT$ is called {\em $\mR_s$-bounded}, if there exists a constant $C\in\R_{>0}$, such that for all $n\in\N,
T\in\mT^n$ and $x\in X^n$:
\begin{eqnarray} \label{def_Rs-bd_Summen}
&&\Big\| \Big(\sum_{j=1}^n |T_j x_j|^s \Big)^{1/s} \Big\|_Y \le C\, \Big\| \Big(\sum_{j=1}^n |x_j|^s \Big)^{1/s} \Big\|_X, \quad\text{ if } s<\infty,\\ \label{def_Rs-bd_Sup}
&&\Big\| \sup_{j\in\N_{\le n}} |T_j x_j| \Big\|_Y \le C\, \Big\| \sup_{j\in\N_{\le n}} |x_j|  \Big\|_X, \quad\text{ if } s=\infty.
\end{eqnarray}

The infimum of all such bounds $C$ is called the $\mR_s$-bound of $\mT$ and denoted by $\mR_s(\mT)$. If $T\in L(X,Y)$, we say that $T$ is $\mR_s$-bounded if $\{T\}$ is $\mR_s$-bounded and let $\mR_s(T):=\mR_s(\{T\})$.
\end{definition}

Obviously, if $\mT\tm \mL(X,Y)$ is $\mR_s$-bounded, then $\mT\tm L(X,Y)$, and $\mT$ is norm-bounded with $\sup_{T\in\mT} \|T\| \le \mR_s(\mT)$. Moreover, if $\mT\tm \mL(X,Y)$ and $C>0$, then $\mT$ is $\mR_s$-bounded with $\mR_s(\mT)\le C$ if and only if $\mT_0$ is $\mR_s$-bounded with $\mR_s(\mT_s)\le C$ for all finite subsets $\mT_0\tm\mT$, and in this case
\[
\mR_s(\mT) = \sup\{ \mR_s(\mT_0) \:|\: \mT_0\tm\mT \text{ finite} \}.
\]

If one considers $x\in X^n$ as an element of $M(\Omega,\C^n)$, we have
\[
\Big\| \Big(\sum_{j=1}^n |x_j|^s \Big)^{1/s} \Big\|_X = \|x\|_{X(\ell_n^s)}   \quad\text{and } \big\| \sup_{j\in\N_{\le n}} |x_j|^s \big\|_X = \|x\|_{X(\ell^\infty_n)},
\]

respectively. So $T\in\mT^n$ can be identified with the diagonal operator
\[
\widetilde{T}:X^n\to Y^n, x\mapsto (T_jx_j)_{j\in\N_{\le n}},
\]

which can be considered as a bounded operator $X(\ell^s_n)\to Y(\ell^s_n)$. With this notation the set  $\mT$ is $\mR_s$-bounded if and only if the set of operators
\[
\{\widetilde{T}\,|\, T\in\mT^n, n\in\N\} \tm \bigcup_{n\in\N} L\big( X(\ell^s_n),Y(\ell^s_n) \big)
\]

is uniformly bounded.

\begin{bem}
Since $X,Y$ have the Fatou property, we can replace the finite sums in (\ref{def_Rs-bd_Summen}) in the definition of $\mR_s$-boundedness by infinite series and the suprema in (\ref{def_Rs-bd_Sup}) by suprema over all $\N$. In particular, a single operator $T\in \mL(X,Y)$ is $\mR_s$-bounded if and only if the diagonal operator
\[
(x_n)_{n\in\N} \mapsto (Tx_n)_{n\in\N}
\]

induces a bounded operator $\widetilde{T}_s \in L( X(\ell^s),Y(\ell^s))$, and in this case $\mR_s(T) = \|\widetilde{T}_s\|_{L( X(\ell^s),Y(\ell^s))}$.
\end{bem}

An easy consequence is the following
\begin{prop} \label{RsL(X)-BS}
Let $\RsL(X,Y) := \{ T\in L(X,Y) \,|\, T$ is $ \mR_s$-$bounded\}$. Then $\RsL(X,Y)$, endowed with the norm $\mR_s(\mal)$, is a Banach space.
\qed\end{prop}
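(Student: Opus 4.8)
The plan is to identify $\RsL(X,Y)$ with a closed subspace of a space of operators between the vector-valued extensions $X(\ell^s)$ and $Y(\ell^s)$, and then invoke completeness of the latter. More precisely, by the Remark immediately preceding the statement, the map $T\mapsto\widetilde T_s$, sending an $\mR_s$-bounded operator to the induced diagonal operator $(x_n)_n\mapsto(Tx_n)_n$ on $\ell^s$-valued extensions, is an isometry from $(\RsL(X,Y),\mR_s(\cdot))$ into $L(X(\ell^s),Y(\ell^s))$. Since $X(\ell^s)$ and $Y(\ell^s)$ are Banach spaces (as recalled in Subsection \ref{subsection_B.f.s.}, the $E$-valued extension of a Banach function space is again a Banach space), $L(X(\ell^s),Y(\ell^s))$ is complete, so it suffices to show that the image of the isometry is closed.

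First I would check that the assignment $T\mapsto\widetilde T_s$ is well-defined and linear, which is immediate: linearity in $T$ is clear from the pointwise definition of the diagonal action, and the isometry property $\mR_s(T)=\|\widetilde T_s\|$ is exactly the content of the Remark. Next I would verify that $\mR_s(\cdot)$ is genuinely a norm on $\RsL(X,Y)$: the triangle inequality and homogeneity transfer from the operator norm on $L(X(\ell^s),Y(\ell^s))$ via the isometry, and $\mR_s(T)=0$ forces $\|\widetilde T_s\|=0$, hence $T=0$. So the only substantive point is closedness of the range.

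For closedness, suppose $(T^{(k)})_k$ is a sequence in $\RsL(X,Y)$ such that $\widetilde{T^{(k)}_s}\to S$ in $L(X(\ell^s),Y(\ell^s))$. I want to produce $T\in\RsL(X,Y)$ with $\widetilde T_s=S$. Since $\mR_s$-convergence in particular implies operator-norm convergence of $T^{(k)}$ (using $\|T^{(k)}-T^{(m)}\|\le\mR_s(T^{(k)}-T^{(m)})=\|\widetilde{T^{(k)}_s}-\widetilde{T^{(m)}_s}\|$, so $(T^{(k)})$ is Cauchy in $L(X,Y)$), there is $T\in L(X,Y)$ with $T^{(k)}\to T$ in operator norm. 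It remains to see $\widetilde T_s=S$; for this one evaluates both sides on a fixed $x=(x_n)_n\in X(\ell^s)$. The sequence $\widetilde{T^{(k)}_s}x\to Sx$ in $Y(\ell^s)$, while componentwise $T^{(k)}x_n\to Tx_n$ in $Y$; combining these (passing, if needed, to an $\mu$-a.e.\ convergent subsequence and using the Fatou property of $Y$) identifies $Sx$ with the diagonal action of $T$, so $T$ is $\mR_s$-bounded and $\widetilde T_s=S$.

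The main obstacle I anticipate is purely bookkeeping: reconciling the two modes of convergence in the last step — convergence in the norm of $Y(\ell^s)$ versus componentwise convergence in $Y$ — which requires passing to subsequences and invoking that $Y$ (being a Banach function space) locally embeds into $L^1$, so that norm convergence yields $\mu$-a.e.\ convergence of a subsequence, together with the Fatou property to pass the limit through the $\ell^s$-norm. This is exactly the kind of argument already used implicitly in the Remark, so no genuinely new idea is needed; the proof is a short consequence of that identification.
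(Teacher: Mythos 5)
Your proposal is correct and is essentially the paper's (unwritten) argument: the Proposition is stated as an immediate consequence of the preceding Remark, i.e. of the isometric identification $T\mapsto\widetilde T_s$ of $\RsL(X,Y)$ with diagonal operators in $L(X(\ell^s),Y(\ell^s))$, exactly as you use it. Your final bookkeeping step is even easier than you anticipate: since $|F_n|\le|F|_{\ell^s}$ pointwise, each coordinate projection $Y(\ell^s)\to Y$ is contractive, so $(Sx)_n=\lim_k T^{(k)}x_n=Tx_n$ in $Y$ directly, without passing to a.e. convergent subsequences or invoking the Fatou property.
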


We note that in general $\RsL(X,Y)\neq L(X,Y)$ if $s\neq 2$, cf. Example \ref{bsp_operator_nicht_Rs-bdd} below. If the spaces $X,Y$ have appropriate geometric properties, norm-boundedness of a set of operators also implies the $\mR_s$-boundedness for a certain range of $s$:
\begin{bem}\label{bem_Rp_in_Lp}
Let $\mT\tm L(X,Y)$. Let $X$ be $p$-concave and $Y$ be $q$-convex for some $1\le p \le q \le \infty$, and let $\mT$ be norm-bounded. Then $\mT$ is $\mR_s$-bounded for all $s\in [p,q]$. In particular, if $X=L^p, Y=L^q$ and $\mT$ is norm-bounded, then $\mT$ is $\mR_s$-bounded for all $s\in [p,q]$.
\end{bem}

\begin{proof}
Let $s\in [p,q]$ and define $C:=\sup_{T\in\mT} \|T\|<\infty$. Let $n\in\N, T\in\mT^n$ and $x\in X^n$, then
\begin{eqnarray*}
\|\widetilde{T}x\|_{Y(\ell^s_n)} &\le& M^{(s)}(Y) \, \|\widetilde{T}x\|_{\ell^s_n(Y)} = M^{(s)}(Y) \, \big\|(\|T_jx_j\|_Y)_j\big\|_{\ell^s_n} \le  M^{(s)}(Y) \, \big\|( C \,\|x_j\|_X)_j\big\|_{\ell^s_n} \\
&=& M^{(s)}(Y) \,C\, \|x\|_{\ell^s_n(X)} \le C\,M^{(s)}(Y) M_{(s)}(X)  \mal \| x\|_{X(\ell^s_n)}.
\end{eqnarray*}

Here we used the fact that $X$ is also $s$-concave and $Y$ is $s$-convex since $p\le s\le q$, and $M_{(s)}(X), M^{(s)}(Y)<\infty$ denote the corresponding $s$-concavity/$s$-convexity constants of $X$ and $Y$, respectively. The supplementary assertions follow from the fact that $L^p$ is always $p$-concave and $p$-convex.
\end{proof}

We recall the related definition of $\mR$-boundedness (cf. e.g. \cite{clement} or \cite{levico}, Section 2): A set $\mT\tm L(X,Y)$ is called {\em $\mR$-bounded}, if there exists a constant $C\in\R_{>0}$, such that for all $n\in\N, T\in\mT^n$ and $x\in X^n$:
\begin{eqnarray}
&&  \E \Big| \sum_{j=1}^n r_j \tensor T_jx_j \Big|_X  \le C\,   \E \Big| \sum_{j=1}^n r_j \tensor x_j \Big|_X ,
\end{eqnarray}

where $(r_j)_{j\in\N}$ is any sequence of independent symmetric $\pm1$-valued, i.e. Bernoulli-distributed, random variables on some probability space, and $\E$ denotes the expectation with respect to the corresponding probability measure. We usually choose the Rademacher functions $r_j(t):=\sgn\sin(2^j\pi t), j\in\N$ on $[0,1]$.\\

Since the Khintchine-inequality holds in $r$-concave Banach function spaces if $r<\infty$, we obtain the following close relation between $\mR$-boundedness and $\mR_2$-boundedness:
\begin{remark} \label{bem_R-bd=R_2-bd}
Let $\mT\tm L(X,Y)$.
\begin{itemize}
\item[(1)] If $X$ is $r$-concave for some $r<\infty$ and $\mT$ is $\mR$-bounded, then $\mT$ is $\mR_2$-bounded,
\item[(2)] If $Y$ is $r$-concave for some $r<\infty$ and $\mT$ is $\mR_2$-bounded, then $\mT$ is $\mR$-bounded.
\end{itemize}

In particular, if both $X$ and $Y$ are $r$-concave for some $r<\infty$, then $\mT$ is $\mR$-bounded if and only if $\mT$ is $\mR_2$-bounded.
\end{remark}

We will now turn to some persistence properties of $\mR_s$-boundedness that correspond to persistence properties of $\mR$-boundedness, cf. e.g. \cite{levico}, Section 2. The first one is an immediate consequence of the norm-properties in the spaces $X(\ell^s_n)$:
\begin{prop}
Let $\mS \tm L(X,Y)$ be $\mR_s$-bounded.
\begin{itemize}
\item[(1)] If $\mT \tm L(X,Y)$ is $\mR_s$-bounded, then the set $\mS+\mT:= \{S+T \,|\, S\in\mS, T\in\mT\}$ is $\mR_s$-bounded,  and $\mR_s(\mS+\mT)\le \mR_s(\mS) + \mR_s(\mT)$.
\item[(2)] If $V$ is another Banach function space and $\mT \tm L(V,X)$ is $\mR_s$-bounded, then the set $\mS\mT :=  \{ST \,|\, S\in\mS, T\in\mT\}$ is $\mR_s$-bounded with $\mR_s(\mS\mT)\le \mR_s(\mS) \mal \mR_s(\mT)$.
\end{itemize}
\qed\end{prop}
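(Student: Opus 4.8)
The plan is to reduce everything to the triangle inequality and to submultiplicativity of operator norms on the vector-valued spaces $X(\ell^s_n)$, $Y(\ell^s_n)$, $V(\ell^s_n)$, using the identification — recorded just above the statement — of an $n$-tuple $T\in\mT^n$ with the diagonal operator $\widetilde{T}\colon X(\ell^s_n)\to Y(\ell^s_n)$. Recall that $\mR_s$-boundedness of $\mT$ with bound $C$ is precisely the assertion $\|\widetilde{T}\|_{L(X(\ell^s_n),Y(\ell^s_n))}\le C$ for every $n\in\N$ and every $T\in\mT^n$, and that $\mR_s(\mT)$ is the infimum of all such $C$. With this reformulation both assertions become one-line computations.

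For (1): fix $n\in\N$, $R\in(\mS+\mT)^n$ and $x\in X^n$, and write $R_j=S_j+T_j$ with $S_j\in\mS$, $T_j\in\mT$. Then $\widetilde{R}=\widetilde{S}+\widetilde{T}$ as operators $X(\ell^s_n)\to Y(\ell^s_n)$, so the triangle inequality in $Y(\ell^s_n)$ gives $\|\widetilde{R}x\|_{Y(\ell^s_n)}\le\|\widetilde{S}x\|_{Y(\ell^s_n)}+\|\widetilde{T}x\|_{Y(\ell^s_n)}\le(\mR_s(\mS)+\mR_s(\mT))\,\|x\|_{X(\ell^s_n)}$. Since $n$, $R$, $x$ were arbitrary, $\mS+\mT$ is $\mR_s$-bounded with $\mR_s(\mS+\mT)\le\mR_s(\mS)+\mR_s(\mT)$.

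For (2): fix $n\in\N$, $R\in(\mS\mT)^n$ and $v\in V^n$, and write $R_j=S_jT_j$ with $S_j\in\mS$, $T_j\in\mT$. Put $x:=(T_jv_j)_j$, which lies in $X^n$ because $T_j\in L(V,X)$. Then $\widetilde{R}v=\widetilde{S}x$, where $\widetilde{S}\colon X(\ell^s_n)\to Y(\ell^s_n)$ is the diagonal operator built from $(S_j)_j$ and $\widetilde{T}\colon V(\ell^s_n)\to X(\ell^s_n)$ the one built from $(T_j)_j$. Applying first the $\mR_s$-bound of $\mS$ and then that of $\mT$ yields $\|\widetilde{R}v\|_{Y(\ell^s_n)}=\|\widetilde{S}x\|_{Y(\ell^s_n)}\le\mR_s(\mS)\,\|x\|_{X(\ell^s_n)}=\mR_s(\mS)\,\|\widetilde{T}v\|_{X(\ell^s_n)}\le\mR_s(\mS)\,\mR_s(\mT)\,\|v\|_{V(\ell^s_n)}$, which gives the claim.

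The whole argument is uniform in $s\in[1,\infty]$: for $s=\infty$ one simply replaces $\ell^s_n$ by $\ell^\infty_n$ throughout, and the same two estimates remain valid, since both the triangle inequality and the operator-norm bounds for $\widetilde{S},\widetilde{T}$ hold there as well (cf. the Remark above, where the suprema in the definition of $\mR_\infty$-boundedness are identified with $\ell^\infty$-norms). There is essentially no genuine obstacle here; the only point that deserves a moment of care is the bookkeeping in part (2), namely that the intermediate tuple $x=(T_jv_j)_j$ indeed lands in $X^n$ so that the $\mR_s$-bound of $\mS\subseteq L(X,Y)$ may be applied to it, and then that $\|x\|_{X(\ell^s_n)}$ is exactly $\|\widetilde{T}v\|_{X(\ell^s_n)}$.
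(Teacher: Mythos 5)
Your proof is correct and follows exactly the route the paper indicates, namely that the statement is an immediate consequence of the norm properties (triangle inequality and submultiplicativity) in the spaces $X(\ell^s_n)$, $Y(\ell^s_n)$, $V(\ell^s_n)$ via the diagonal-operator identification. Nothing further is needed.
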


The following convexity property is an important tool and is again a variant of a corresponding result for $\mR$-bounded operators, cf. e.g. \cite{levico}, Theorem 2.13.
\begin{prop} \label{Satz-aco-Rs-bd}
Let $\mT\tm L(X,Y)$ be $\mR_s$-bounded. Then the strong closure of the absolute convex hull $\overline{\aco}^s(\mT)\tm L(X,Y)$ is $\mR_s$-bounded with $\mR_s(\overline{\aco}^s(\mT))= \mR_s(\mT)$.
\qed\end{prop}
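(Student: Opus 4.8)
The plan is to prove both inequalities. Since $\mT\tm\overline{\aco}^s(\mT)$, the bound $\mR_s(\mT)\le\mR_s(\overline{\aco}^s(\mT))$ is immediate, so the real content is $\mR_s(\overline{\aco}^s(\mT))\le\mR_s(\mT)$, which also yields the claimed finiteness. I would split this into two steps: first show $\mR_s(\aco(\mT))\le\mR_s(\mT)$, then show that passing to the strong closure does not increase the bound.

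\emph{Step 1 (convex hull).} Fix $S_1,\dots,S_n\in\aco(\mT)$ and write $S_j=\sum_{k=1}^m\lambda_{j,k}T_{j,k}$ with $T_{j,k}\in\mT$ and $c_j:=\sum_{k=1}^m|\lambda_{j,k}|\le 1$ (using a common $m$ and allowing zero coefficients). Fix $x_1,\dots,x_n\in X$. Pointwise on $\widetilde\Omega$ we have $|S_jx_j|\le\sum_k|\lambda_{j,k}|\,|T_{j,k}x_j|$, and Jensen's inequality applied to the convex function $t\mapsto t^s$ together with $c_j\le 1$, $s\ge 1$ gives $|S_jx_j|^s\le c_j^{\,s-1}\sum_k|\lambda_{j,k}|\,|T_{j,k}x_j|^s\le\sum_k|\lambda_{j,k}|\,|T_{j,k}x_j|^s$. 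Setting $y_j^{(k)}:=|\lambda_{j,k}|^{1/s}x_j$, so that $|T_{j,k}y_j^{(k)}|^s=|\lambda_{j,k}|\,|T_{j,k}x_j|^s$, and summing over $j$, we obtain
\[
\sum_{j=1}^n|S_jx_j|^s\le\sum_{k=1}^m\sum_{j=1}^n|T_{j,k}y_j^{(k)}|^s .
\]
Taking $s$-th roots and $Y$-norms and using the monotonicity (B2), the $\mR_s$-boundedness of $\mT$ applied to the $mn$ operators $T_{j,k}\in\mT$ indexed by the pairs $(j,k)$ and to the vectors $y_j^{(k)}$ yields
\[
\Big\|\Big(\sum_{j}|S_jx_j|^s\Big)^{1/s}\Big\|_Y\le\mR_s(\mT)\,\Big\|\Big(\sum_{k,j}|y_j^{(k)}|^s\Big)^{1/s}\Big\|_X .
\]
Since $\sum_{k,j}|y_j^{(k)}|^s=\sum_j c_j|x_j|^s\le\sum_j|x_j|^s$, one more appeal to (B2) gives $\|(\sum_j|S_jx_j|^s)^{1/s}\|_Y\le\mR_s(\mT)\,\|(\sum_j|x_j|^s)^{1/s}\|_X$, i.e. $\mR_s(\aco(\mT))\le\mR_s(\mT)$. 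For $s=\infty$ the same argument works with suprema in place of sums, and is in fact shorter, since already $|S_jx_j|\le\sup_k|T_{j,k}x_j|$.

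\emph{Step 2 (strong closure).} Let $S_1,\dots,S_n\in\overline{\aco}^s(\mT)$, fix $x_1,\dots,x_n\in X$ and $\eps>0$. By definition of the strong closure there are $S_1',\dots,S_n'\in\aco(\mT)$ with $\|S_jx_j-S_j'x_j\|_Y<\eps$ for each $j$. Since $x\mapsto\|(\sum_j|x_j|^s)^{1/s}\|_Y=\|x\|_{Y(\ell^s_n)}$ is a norm on $Y^n$ and $(\sum_j a_j^s)^{1/s}\le\sum_j a_j$ for $a_j\ge 0$, $s\ge 1$, the triangle inequality and Step 1 give
\[
\|\widetilde Sx\|_{Y(\ell^s_n)}\le\|\widetilde{S'}x\|_{Y(\ell^s_n)}+\sum_{j=1}^n\|S_jx_j-S_j'x_j\|_Y\le\mR_s(\mT)\,\|x\|_{X(\ell^s_n)}+n\eps .
\]
Letting $\eps\to0$ yields $\mR_s(\overline{\aco}^s(\mT))\le\mR_s(\mT)$, hence equality. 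Again $s=\infty$ is handled identically, using $\sup_j a_j\le\sum_j a_j$.

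The main (and only mildly delicate) point is the bookkeeping in Step 1: reorganizing the doubly indexed convex combination so that a single application of the $\mR_s$-estimate over the enlarged index set $(j,k)$ closes the argument, together with the elementary convexity inequality that moves the power $s$ inside the sum. Step 2 is routine once one observes that convergence in $Y$ controls the $Y(\ell^s_n)$-norm via the triangle inequality, so no use of the lattice embedding $Y\into M(\widetilde\mu)$ or of a.e.\ subsequences is actually needed.
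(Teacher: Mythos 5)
Your proof is correct. The paper does not write out an argument for this proposition at all --- it only remarks that it is a variant of the corresponding statement for $\mR$-boundedness (\cite{levico}, Theorem 2.13) --- and your two-step argument (the pointwise Jensen inequality together with the rescaling $y_j^{(k)}=|\lambda_{j,k}|^{1/s}x_j$ over the enlarged index set $(j,k)$ to handle $\aco(\mT)$, then the triangle-inequality approximation in $Y(\ell^s_n)$ for the strong closure) is exactly the expected adaptation of that proof to the lattice setting, with the contraction principle of the Rademacher case replaced by the elementary pointwise convexity estimate.
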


Let $(J,\nu)$ be a $\sigma$-finite measure space and $S:J \to L(X,Y)$ strongly measurable and $a\in L^1(J)$, then define an operator $T_{a,S}\in L(X,Y)$ by
\[
T_{a,S}x := \int_J a(t) S(t) x\, d\nu(t) \quad\text{ for all } x\in X.
\]

By approximation, it follows that $T_{a,S}\in \overline{\aco}^s (S(J))$ if $\|a\|_{L^1}=1$, which leads to the following
\begin{cor} \label{Rs-Bd-von-Operatoren-T_a,S}
Let $\mT\tm L(X,Y)$ be $\mR_s$-bounded, then the set
\[
\mS := \{ T_{a,S} \,|\, S:J \to L(X,Y) \text{ strongly measurable with }  S(J)\tm \mT , a\in L_1(J) \text{ with } \|a\|_{L_1}\le 1  \}
\]

is $\mR_s$-bounded with $\mR_s(\mS)\le \mR_s(\mT)$.
\end{cor}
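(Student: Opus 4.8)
The plan is to reduce the statement to the already-established convexity property, Proposition~\ref{Satz-aco-Rs-bd}, by showing that every operator $T_{a,S}$ of the indicated form lies in the strong closure of the absolute convex hull $\overline{\aco}^s(\mT)$. Fix $S:J\to L(X,Y)$ strongly measurable with $S(J)\subseteq\mT$ and $a\in L^1(J)$ with $\|a\|_{L^1}\le 1$; we may assume $\|a\|_{L^1}=1$ (if $a=0$ there is nothing to prove, and otherwise rescale, noting $\aco^s(\mT)$ absorbs multiplication by scalars of modulus $\le 1$). The claim will then follow since $\mR_s(\overline{\aco}^s(\mT))=\mR_s(\mT)$ and $T_{a,S}$ is $\mR_s$-bounded as an element of an $\mR_s$-bounded set, with $\mR_s(\mS)\le\mR_s(\overline{\aco}^s(\mT))=\mR_s(\mT)$.

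The heart of the argument is the approximation that identifies $T_{a,S}$ as a strong limit of finite absolute convex combinations of elements of $S(J)\subseteq\mT$. First I would fix $x\in X$; since $S$ is strongly measurable, $t\mapsto S(t)x$ is measurable into $Y$, and by $\|S(t)\|\le\mR_s(\mT)$ together with $a\in L^1(J)$ the vector-valued integral $T_{a,S}x=\int_J a(t)S(t)x\,d\nu(t)$ exists as a Bochner integral. A standard simple-function approximation of the integrand $t\mapsto a(t)S(t)x$ (or, more cleanly, of the density $a$ with respect to $|a|\,d\nu$ combined with separability of the range) yields finitely many points $t_i\in J$ and coefficients $\alpha_i\in\C$ with $\sum_i|\alpha_i|\le\|a\|_{L^1}=1$ such that $\|\sum_i\alpha_i S(t_i)x-T_{a,S}x\|_Y$ is as small as desired; here each $\sum_i\alpha_i S(t_i)\in\aco^s(\mT)$. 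Because this can be done simultaneously for finitely many $x$ (passing to common refinements of the partitions), $T_{a,S}$ lies in the strong-operator closure $\overline{\aco}^s(\mT)$.

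The main obstacle I anticipate is the measure-theoretic bookkeeping in the simultaneous approximation: one must ensure that a single finite absolute convex combination works uniformly on a prescribed finite set of vectors $x_1,\dots,x_k$, which requires choosing a partition of $J$ that is fine enough for all of them at once and invoking strong measurability (hence essential separability of the range of $t\mapsto S(t)x_j$ on a set of full measure) to make the simple-function approximation legitimate. This is exactly the kind of ``by approximation'' step the excerpt already asserts just before the corollary, so in the write-up I would keep it brief, emphasizing only that (i) $\|a\|_{L^1}\le 1$ forces the combination to be \emph{absolute} convex rather than merely a linear combination, and (ii) the strong topology is the right one for passing to the limit inside $L(X,Y)$, after which Proposition~\ref{Satz-aco-Rs-bd} closes the argument and gives the bound $\mR_s(\mS)\le\mR_s(\mT)$.
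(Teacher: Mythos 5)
Your proposal is correct and follows essentially the same route as the paper: the paper also observes that $T_{a,S}\in\overline{\aco}^s(S(J))\subseteq\overline{\aco}^s(\mT)$ by the standard simple-function approximation (citing the $\mR$-bounded analogue in the literature) and then invokes Proposition~\ref{Satz-aco-Rs-bd} to conclude $\mR_s(\mS)\le\mR_s(\mT)$. Your handling of the scaling to $\|a\|_{L^1}=1$ and of the simultaneous approximation on finitely many vectors for the strong-operator closure is exactly the bookkeeping the paper leaves implicit.
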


This is proven in \cite{levico}, Corollary 2.14 for $\mR$-boundedness, and using Proposition \ref{Satz-aco-Rs-bd} the proof carries over to our situation.\\

We recall some standard notions about representation of functions with values in function spaces. If $J\tm\R$ is an interval and $x:J\to X$ is $\la$-measurable, then by the continuous embedding $X\into M(\mu)$ we can find a $\la\tensor\mu$-measurable representative $\tilde{x}:J\times\Omega\to\C$ and hence identify $x$ also with the measurable function $\omega \mapsto \tilde{x}(\mal,\omega)$. In this manner we can e.g. deal with the question if $x\in X(L^s_*(J))$. If there is no risk of confusion, we will work with this identification in the sequel without explicitly mentioning it.\\

By standard approximation arguments we obtain the following continuous version of $\mR_s$-boundedness.

\begin{prop} \label{prop_Rs-stetige-Fassung}
Let $s\in [1,\infty)$. Let $J\tm\R$ be a non-trivial interval and $S:J\to L(X,Y)$ be strongly measurable such that $S(J)$ is $\mR_s$-bounded. Then for all measurable $x:J\to X$ we have
\begin{equation} \label{Rs-stetige-Fassung}
\left\|\Big( \int_J |S(t)x(t)|^s \,dt\Big)^{1/s} \right\|_Y \le \mR_s(S(J)) \mal \left\|\Big( \int_J |x(t)|^s \,dt\Big)^{1/s} \right\|_X.
\end{equation}

In other words, the operator $S$ extends to a continuous diagonal operator $(S(t))_{t\in J}$ from $X(L^s(J))$ to $Y(L^s(J))$.
\end{prop}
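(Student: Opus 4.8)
The plan is to deduce \eqref{Rs-stetige-Fassung} from the discrete definition \eqref{def_Rs-bd_Summen} by approximating both $S$ and the vector-valued function $x$ by step functions in the variable $t$, and then passing to the limit by means of the Fatou property (B3). A few harmless reductions come first: replacing $J$ by an increasing exhaustion by bounded subintervals and using (B3) on both sides, we may assume $J$ is bounded; we may assume the right-hand side of \eqref{Rs-stetige-Fassung} is finite, so that $x\in X(L^s(J))$ once $x$ is identified with a $\la\tensor\mu$-measurable representative as explained above; and since $\mR_s$-bounded sets are norm-bounded, $\|S(t)\|\le\mR_s(S(J))$ for all $t$, so $t\mapsto S(t)x(t)$ is a well-defined measurable $Y$-valued function and the left-hand side of \eqref{Rs-stetige-Fassung} makes sense.

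The core is the \emph{elementary case}: suppose $\mT\tm L(X,Y)$ is $\mR_s$-bounded, $\{A_k\}_{k=1}^N$ is a finite measurable partition of $J$, $x=\sum_{k=1}^N\1I_{A_k}x_k$ with $x_k\in X$, and $S\equiv T_k$ on $A_k$ with $T_k\in\mT$. Then $\mu$-a.e.\ one has $\int_J|S(t)x(t)|^s\,dt=\sum_{k=1}^N|A_k|\,|T_kx_k|^s=\sum_{k=1}^N|T_k(|A_k|^{1/s}x_k)|^s$, so applying \eqref{def_Rs-bd_Summen} to $(T_k)_{k\le N}\in\mT^N$ and $(|A_k|^{1/s}x_k)_{k\le N}\in X^N$ yields
\[
\Big\|\Big(\int_J|S(t)x(t)|^s\,dt\Big)^{1/s}\Big\|_Y\le\mR_s(\mT)\,\Big\|\Big(\sum_{k=1}^N|A_k|\,|x_k|^s\Big)^{1/s}\Big\|_X=\mR_s(\mT)\,\Big\|\Big(\int_J|x(t)|^s\,dt\Big)^{1/s}\Big\|_X.
\]

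It then remains to approximate. For $x$: since $s<\infty$ and $X$ has absolutely continuous norm, I would pick step functions $x_n=\sum_k\1I_{A_k^{(n)}}x_k^{(n)}$ with $x_n\to x$ in $X(L^s(J))$ and, after passing to a subsequence, also $\la\tensor\mu$-a.e. For $S$: I would use the dyadic averages $S_n(t):=|I|^{-1}\int_I S(\tau)\,d\tau$ for $t$ in the dyadic interval $I=I^{(n)}(t)\tm J$ of length $2^{-n}$; by Corollary \ref{Rs-Bd-von-Operatoren-T_a,S} (with $a=|I|^{-1}\1I_I$) the set $\mT_n$ of all these averages satisfies $\mR_s(\mT_n)\le\mR_s(S(J))$, and by the Lebesgue differentiation theorem for $Y$-valued locally integrable functions one has $S_n(t)x\to S(t)x$ in $Y$ for a.e.\ $t$ and every $x$ in a countable dense subset of $X$, hence --- using the uniform bound $\|S_n(t)\|\le\mR_s(S(J))$ --- for every $x\in X$ and a.e.\ $t$. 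Passing to a common finite refinement of the two partitions, the elementary case applies to $(S_n,x_n)$ with $\mT=\mT_n$, and since $S_n(t)x_n(t)\to S(t)x(t)$ in $Y$ for a.e.\ $t$ we may (after one further subsequence giving $\la\tensor\widetilde{\mu}$-a.e.\ convergence) let $n\to\infty$, using classical Fatou in the $t$-variable followed by the Fatou property (B3) of $Y$ on the left and $x_n\to x$ in $X(L^s(J))$ on the right. I expect the main obstacle to be precisely this last limiting step: one has to arrange the two step-function approximations compatibly, keep the measurability of $t\mapsto S(t)x(t)$ under control, and carry out the two Fatou passages together with the repeated extraction of almost everywhere convergent subsequences so that the constant that survives is exactly $\mR_s(S(J))$.
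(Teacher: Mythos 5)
Your proof is correct and follows essentially the same route the paper only sketches (reduce to step functions so the integral becomes a finite sum where the definition of $\mR_s$-boundedness applies directly, then pass to the limit by an approximation and Fatou argument; the paper suppresses the details and refers to the $s=2$ case in Weis's Lemma 4a). One small caveat: $X$ need not be separable, so in the Lebesgue differentiation step the countable dense set should be taken inside the separable closed subspace of $X$ generated by the (essentially separably valued) function $x$ and the approximants $x_n$, which is all your argument actually uses.
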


The idea of proof is simple: We first consider suitable step functions, then the integrals turn to finite sums and we can use the definition of $\mR_s$-boundedness. Then we do an approximation argument to obtain the general result. For $s=2$, Proposition \ref{prop_Rs-stetige-Fassung} is proved in this way under slightly more restricting assumptions in \cite{Weis01}, Lemma 4~a, hence we suppress the technical details here.\\

We obtain an analogous result for $s=\infty$, in this case we can of course drop the measurability assumptions on $S$, and the proof is just an application of the Fatou property in $X$:
\begin{prop} \label{prop_Rs-stetige-Fassung,s=oo}
Let $J$ be a non-empty set and $S:J\to L(X,Y)$ such that $S(J)$ is $\mR_\infty$-bounded. Then for all mappings $x:J\to X$ we have
\begin{equation} \label{Rs-stetige-Fassung,s=oo}
\big\|  \sup_{t\in J} |S(t)x(t)|  \big\|_Y \le \mR_\infty(S(J)) \mal \big\|  \sup_{t\in J}|x(t)| \big\|_X.
\end{equation}
\qed\end{prop}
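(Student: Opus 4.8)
The plan is to prove Proposition \ref{prop_Rs-stetige-Fassung,s=oo} directly from the definition of $\mR_\infty$-boundedness together with the Fatou property (B3) of the Banach function space $Y$, exactly as the excerpt hints. Fix a mapping $x:J\to X$; we may assume $\sup_{t\in J}|x(t)|$ lies in $X$, since otherwise the right-hand side of \eqref{Rs-stetige-Fassung,s=oo} is infinite and there is nothing to prove (here $\sup_{t\in J}|x(t)|$ is understood as the lattice supremum in $M^*(\mu)$, which exists by Proposition \ref{satz-X-als-Verband-vollst}).

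First I would reduce to a \emph{countable} supremum. By Proposition \ref{satz-X-als-Verband-vollst} applied to the family $\{|x(t)| : t\in J\}\subseteq X$, there is a countable subset $J_0 = \{t_1, t_2, \dots\}\subseteq J$ with $\sup_{t\in J}|x(t)| = \sup_{k\in\N}|x(t_k)|$ (as elements of $M^*(\mu)$). The same reduction has to be performed on the $Y$-side: since $S(t)x(t)\in Y$ for each $t$ (the operators in an $\mR_\infty$-bounded set are in particular bounded), I again invoke Proposition \ref{satz-X-als-Verband-vollst} to pick a countable $J_1\subseteq J$ with $\sup_{t\in J}|S(t)x(t)| = \sup_{t\in J_1}|S(t)x(t)|$. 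Replacing $J_0$ by $J_0\cup J_1$ and relabelling, I may assume a single countable index set $\{t_k\}_{k\in\N}$ realises both suprema; monotonicity (B2) shows that enlarging the countable set only helps. Then
\[
\big\|\sup_{t\in J}|S(t)x(t)|\big\|_Y = \big\|\sup_{k\in\N}|S(t_k)x(t_k)|\big\|_Y, \qquad \big\|\sup_{t\in J}|x(t)|\big\|_X = \big\|\sup_{k\in\N}|x(t_k)|\big\|_X.
\]

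Next I would pass to finite suprema and use $\mR_\infty$-boundedness. For each $n\in\N$, the tuple $(S(t_1),\dots,S(t_n))\in S(J)^n$ and $(x(t_1),\dots,x(t_n))\in X^n$, so the defining inequality \eqref{def_Rs-bd_Sup} gives
\[
\big\|\sup_{k\le n}|S(t_k)x(t_k)|\big\|_Y \le \mR_\infty(S(J))\,\big\|\sup_{k\le n}|x(t_k)|\big\|_X \le \mR_\infty(S(J))\,\big\|\sup_{k\in\N}|x(t_k)|\big\|_X,
\]
the last step by monotonicity. Now $\sup_{k\le n}|S(t_k)x(t_k)| \nearrow \sup_{k\in\N}|S(t_k)x(t_k)|$ pointwise $\mu$-a.e. as $n\to\infty$, so the Fatou property (B3) of $Y$ yields $\big\|\sup_{k\le n}|S(t_k)x(t_k)|\big\|_Y \nearrow \big\|\sup_{k\in\N}|S(t_k)x(t_k)|\big\|_Y$. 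Letting $n\to\infty$ in the displayed inequality and combining with the two identities above gives \eqref{Rs-stetige-Fassung,s=oo}.

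The only genuinely delicate point is the reduction to a countable index set, i.e. making sense of $\sup_{t\in J}|S(t)x(t)|$ and $\sup_{t\in J}|x(t)|$ as honest elements of the lattice and knowing they are attained along a sequence; this is precisely what Proposition \ref{satz-X-als-Verband-vollst} (completeness of the lattice and the countable-sup property) is for, so there is no real obstacle. Everything else — passing to finite suprema, applying \eqref{def_Rs-bd_Sup}, and invoking Fatou — is routine. I would remark, as the statement already notes, that no measurability hypothesis on $S$ or $x$ is needed, because the argument never integrates: it only manipulates lattice suprema and uses monotone convergence of norms.
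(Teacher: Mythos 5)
Your proof is correct and follows exactly the route the paper intends: the paper gives no written argument beyond the remark that the result is "just an application of the Fatou property", i.e. finite suprema plus the defining inequality \eqref{def_Rs-bd_Sup} plus (B3), which is precisely your last two paragraphs. The only detail the paper leaves implicit — interpreting the uncountable suprema as lattice (essential) suprema and reducing them to a single countable index set via Proposition \ref{satz-X-als-Verband-vollst} — you handle correctly, so nothing is missing.
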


Further standard methods to obtain $\mR_s$-boundedness are by means of interpolation and duality. Recall that a set $\mT$ of operators is $\mR_s$-bounded if and only if the diagonal operators $\widetilde{T}$ for $T\in \mT^n, n\in\N$ induce uniformly bounded operators  from $X(\ell^s_n)$ to $Y(\ell^s_n)$. By complex interpolation we obtain $[X(\ell^{s_0}_n),X(\ell^{s_1}_n)]_\theta = X(\ell^{s_\theta}_n)$ (with equal norms) where $\frac{1}{s_\theta} = (1-\theta) \frac{1}{s_0} + \theta \frac{1}{s_1}$, cf. \cite{calderon}. This leads immediately to the following
\begin{prop}
Let $1\le s_0<s_1 \le \infty$. If $\mT\tm L(X,Y)$ is $\mR_{s_j}$-bounded for $j=1,2$, then $\mT$ is $\mR_s$-bounded for all $s\in [s_0,s_1]$.
\qed\end{prop}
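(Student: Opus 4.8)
The plan is to exploit the reformulation of $\mR_s$-boundedness in terms of uniform boundedness of the diagonal operators on the $\ell^s_n$-valued extensions, exactly as indicated in the paragraph preceding the statement. For a fixed $n\in\N$ and a fixed finite tuple $T\in\mT^n$, consider the diagonal operator $\widetilde T$ acting between the Calderón $\ell^s_n$-valued spaces. By hypothesis $\mT$ is $\mR_{s_0}$-bounded and $\mR_{s_1}$-bounded, so $\|\widetilde T\|_{L(X(\ell^{s_0}_n),Y(\ell^{s_0}_n))}\le \mR_{s_0}(\mT)$ and $\|\widetilde T\|_{L(X(\ell^{s_1}_n),Y(\ell^{s_1}_n))}\le \mR_{s_1}(\mT)$, with bounds uniform in $n$ and in the choice of $T\in\mT^n$.

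Next I would invoke Calderón's complex interpolation result for the vector-valued extensions, which is quoted in the excerpt: $[X(\ell^{s_0}_n),X(\ell^{s_1}_n)]_\theta = X(\ell^{s_\theta}_n)$ with equality of norms, where $\tfrac1{s_\theta}=(1-\theta)\tfrac1{s_0}+\theta\tfrac1{s_1}$, and likewise for $Y$. Given any $s\in[s_0,s_1]$, pick $\theta\in[0,1]$ with $s=s_\theta$. Bilinear (or rather, the standard operator) complex interpolation then yields
\[
\|\widetilde T\|_{L(X(\ell^{s}_n),Y(\ell^{s}_n))} \le \mR_{s_0}(\mT)^{1-\theta}\,\mR_{s_1}(\mT)^{\theta},
\]
again uniformly in $n$ and in $T\in\mT^n$. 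Since this is precisely the statement that the family $\{\widetilde T\,|\,T\in\mT^n,\ n\in\N\}$ is uniformly bounded from $X(\ell^s_n)$ to $Y(\ell^s_n)$, the remark before Proposition~\ref{RsL(X)-BS} gives that $\mT$ is $\mR_s$-bounded, with $\mR_s(\mT)\le \mR_{s_0}(\mT)^{1-\theta}\mR_{s_1}(\mT)^{\theta}$.

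I do not expect a serious obstacle here; the only point requiring a little care is that the interpolation identity must hold \emph{with constants independent of $n$}, but since Calderón's theorem gives isometric identifications $[X(\ell^{s_0}_n),X(\ell^{s_1}_n)]_\theta=X(\ell^{s_\theta}_n)$ for each fixed $n$, the interpolated operator norm bound is automatically $n$-independent. One should also note that the endpoint cases $s=s_0$ or $s=s_1$ are trivial, and that for $s_1=\infty$ the same argument applies since $X(\ell^\infty_n)$ is the relevant endpoint space; Calderón's result covers this. Hence the proof reduces to: (i) reformulate via diagonal operators, (ii) apply complex interpolation of operators together with the Calderón identity, (iii) translate back.
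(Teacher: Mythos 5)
Your proposal is correct and follows essentially the same route as the paper: the paper also reduces $\mR_s$-boundedness to uniform boundedness of the diagonal operators $\widetilde{T}$ on $X(\ell^s_n)\to Y(\ell^s_n)$ and then invokes Calder\'on's identity $[X(\ell^{s_0}_n),X(\ell^{s_1}_n)]_\theta = X(\ell^{s_\theta}_n)$ with equal norms, so the interpolated bound is uniform in $n$ and in $T\in\mT^n$. Your added remarks on the endpoints and on $n$-independence of the constants are exactly the points the paper leaves implicit.
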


In the special case $X=Y=L^p$, a norm-bounded set $\mT\tm X$ is always $\mR_p$-bounded, as we have seen in Remark \ref{bem_Rp_in_Lp}. Since there are various results for $\mR$-boundedness of operators, sometimes the following remark is helpful.
\begin{cor} \label{cor_R-bd->Rs-bd}
Let $X=L^p$ and $\mT\tm L(X)$ be $\mR$-bounded. Then $\mT$ is $\mR_s$-bounded for all $s\in [2\wedge p,2\vee p]$.
\qed\end{cor}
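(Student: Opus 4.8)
The plan is to combine three facts already available in the excerpt: Remark~\ref{bem_Rp_in_Lp}, Remark~\ref{bem_R-bd=R_2-bd}, and the interpolation proposition immediately preceding the statement. The strategy is to obtain $\mR_2$-boundedness and $\mR_p$-boundedness of $\mT$ separately, and then interpolate.

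First I would note that an $\mR$-bounded set is in particular norm-bounded. Since $X=L^p$ is $p$-concave and $p$-convex, Remark~\ref{bem_Rp_in_Lp} applied with $p=q$ yields at once that $\mT$ is $\mR_p$-bounded.

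Next, recall the elementary lattice-geometric fact that $L^p$ is $\max(p,2)$-concave, hence $r$-concave for the finite value $r:=\max(p,2)$. Applying Remark~\ref{bem_R-bd=R_2-bd}(1) with this $r$ (using that $X=Y=L^p$ is $r$-concave), $\mR$-boundedness of $\mT$ gives that $\mT$ is $\mR_2$-bounded.

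Finally, if $p=2$ there is nothing further to prove. Otherwise put $s_0:=2\wedge p$ and $s_1:=2\vee p$, so $1\le s_0<s_1<\infty$. We have just shown that $\mT$ is $\mR_{s_0}$-bounded and $\mR_{s_1}$-bounded, so the interpolation proposition above gives that $\mT$ is $\mR_s$-bounded for every $s\in[s_0,s_1]=[2\wedge p,2\vee p]$, as claimed. There is no genuine obstacle in this argument; the only point to keep in mind is the concavity/convexity behaviour of $L^p$ (simultaneously $p$-concave and $p$-convex, and $\max(p,2)$-concave), which is precisely what makes both endpoints $2$ and $p$ accessible.
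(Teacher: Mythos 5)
Your argument is correct and is precisely the intended one: the paper states the corollary without proof, but the surrounding text points exactly to the combination you use, namely Remark~\ref{bem_Rp_in_Lp} (norm-boundedness gives $\mR_p$-boundedness in $L^p$), Remark~\ref{bem_R-bd=R_2-bd}(1) ($\mR$-boundedness gives $\mR_2$-boundedness since $L^p$ is $r$-concave for a finite $r$), and the interpolation proposition for the range $[2\wedge p,2\vee p]$. No gaps; your note on the concavity/convexity of $L^p$ is the only point that needs checking and you handle it correctly.
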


In particular, finite operator sets in $L^p$ are always $\mR_s$-bounded for all $s\in [2\wedge p,2\vee p]$, since the latter is true for $\mR$-boundedness. However it is well known that in spite of these special cases, finitely many or even a single operator need not to be $\mR_s$-bounded, even in the Hilbert space case.\\

\begin{example} \label{bsp_operator_nicht_Rs-bdd}
Consider $X=Y=L^2([0,1])$. Let $r_j(t) := \sgn(\sin(2^j\pi t))$ for all $t\in [0,1], j\in\N$ be the Rademacher functions. Then $(r_j)_{j\in\N}$ is an orthonormal system in $X$. Let us further define $I_j:= (2^{-j},2^{-j+1}]$, $f_j(t):=2^{j/2}\1I_{I_j}$ for all $j\in\N$, then it is easily checked that $(f_j)_{j\in\N}$ is an orthonormal system in $X$ as well. For all $n\in\N$, we have
\begin{eqnarray*}
\Big\| \Big( \sum_{j=1}^n |r_j|^s \Big)^{1/s}\Big\|_{L_2} &=& \Big\| \Big( \sum_{j=1}^n \1I_{[0,1]} \Big)^{1/s}\Big\|_{L_2} = \big\|
n^{1/s} \1I_{[0,1]} \big\|_{L_2} = n^{1/s}.
\end{eqnarray*}

Because the $f_j$ have disjoint supports, we have on the other hand
\begin{eqnarray*}
\Big\| \Big( \sum_{j=1}^n |f_j|^s \Big)^{1/s}\Big\|_{L_2} &=& \bigg( \int_0^1 \Big( \sum_{j=1}^n |f_j|^s \Big)^{2/s} \bigg)^{1/2} = \bigg(  \sum_{j=1}^n \int_0^1 \big( |f_j|^s \big)^{2/s} \bigg)^{1/2} = n^{1/2}.
\end{eqnarray*}

As the $(r_j)_j, (f_j)_j$ are orthonormal sequences, we can construct operators $T,S$ on $X$ with the property $Tf_j= r_j$ and $Sr_j=f_j$ for all $j\in\N$. Then the above equalities show that $T$ is not $\mR_s$-bounded in case $s<2$ and $S$ is not $\mR_s$-bounded in case $s>2$.
\end{example}

We now have a look at duality. Recall that the dual space $X'$ can be identified with the so-called associated Banach function space $X^*$ of $X$ since $X$ has absolute continuous norm, cf. \cite{bennett}, Chapter 1, and for $s\in [1,\infty)$ we have in this sense $(X(\ell^s))' = X'(\ell^{s'})$ by \cite{gretsky-uhl} since $\ell^s$ has the Radon-Nikodym property if $s\in [1,\infty)$. Moreover, for $T\in L(X,Y)$ we identify its dual operator $T'$ with the corresponding operator $T^*:Y^*\to X^*$. Then we obtain the following duality result.

\begin{satz} \label{satz_Rs-dual}
Let $s\in [1,\infty)$ and $\mT\tm L(X)$ be $\mR_s$-bounded. Then $\mT' := \{T' \,|\, T\in\mT\}$ is $\mR_{s'}$-bounded in $X'$.
\qed\end{satz}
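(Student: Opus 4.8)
The statement to prove is: if $s\in[1,\infty)$ and $\mT\tm L(X)$ is $\mR_s$-bounded, then $\mT':=\{T'\mid T\in\mT\}$ is $\mR_{s'}$-bounded in $X'$. The natural approach is to pass through the diagonal-operator reformulation of $\mR_s$-boundedness that was set up just before the statement: $\mT$ is $\mR_s$-bounded iff the diagonal operators $\widetilde T:X(\ell^s_n)\to X(\ell^s_n)$ for $T\in\mT^n$, $n\in\N$, are uniformly bounded, with the common bound equal to $\mR_s(\mT)$. So the plan is to dualize this picture. For a tuple $T=(T_1,\dots,T_n)\in\mT^n$, the dual of the diagonal operator $\widetilde T$ on $X(\ell^s_n)$ is, under the identifications recalled in the excerpt, exactly the diagonal operator $\widetilde{T'}=(T_1',\dots,T_n')$ acting on the dual space. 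The key point is to identify that dual space correctly: by the cited fact $(X(\ell^s_n))'=X'(\ell^{s'}_n)$ (with equal norms), using that $X$ has absolutely continuous norm so $X'=X^*$, and that $\ell^s_n$ is finite-dimensional hence has the Radon–Nikodym property for $s\in[1,\infty)$.

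First I would spell out the identification $(X(\ell^s_n))' = X'(\ell^{s'}_n)$ isometrically, citing \cite{gretsky-uhl} and \cite{bennett} as in the paragraph preceding the statement; the duality pairing is $\langle x,\xi\rangle = \int_\Omega \sum_{j=1}^n x_j(\omega)\overline{\xi_j(\omega)}\,d\mu(\omega)$ (or without conjugation in the real case, with the usual care about scalars). Next I would check that under this pairing the Banach-space adjoint of the diagonal operator $\widetilde T\in L(X(\ell^s_n))$ is precisely the diagonal operator with entries $T_1',\dots,T_n'$ on $X'(\ell^{s'}_n)$: this is the completely routine computation $\langle \widetilde T x,\xi\rangle = \sum_j\langle T_j x_j,\xi_j\rangle = \sum_j\langle x_j,T_j'\xi_j\rangle = \langle x,\widetilde{T'}\xi\rangle$, where I should be a little careful to note that $T'$ as a map $X^*\to X^*$ agrees with the Banach-space adjoint $T^*$ via the identification $X'=X^*$, exactly as stated in the line before the theorem. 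Then, since $\|\widetilde{T'}\| = \|(\widetilde T)'\| = \|\widetilde T\|$ for any bounded operator on a Banach space, the uniform bound $\sup\{\|\widetilde T\|\mid T\in\mT^n, n\in\N\} = \mR_s(\mT)$ transfers verbatim to $\sup\{\|\widetilde{T'}\|\mid T\in\mT^n, n\in\N\}\le\mR_s(\mT)$, which by the very same diagonal-operator characterization (now applied to $X'$ in place of $X$, with exponent $s'$) says precisely that $\mT'$ is $\mR_{s'}$-bounded with $\mR_{s'}(\mT')\le\mR_s(\mT)$.

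The only genuine subtlety — and the step I would flag as the main obstacle — is the isometric identification $(X(\ell^s_n))'=X'(\ell^{s'}_n)$ and, behind it, the hypothesis that guarantees one gets the whole dual and not merely a norming subspace. For $s\in[1,\infty)$ the finite-dimensional $\ell^s_n$ automatically has the Radon–Nikodym property, so the Bochner-type duality theorem of \cite{gretsky-uhl} applies to the Banach-function-space-valued space $X(\ell^s_n)$; combined with $X$ having absolutely continuous norm (hence $X^{**}=X$ in the sense that $X'=X^*$ is again a Banach function space whose dual contains $X$), this yields the stated isometry. This is why the theorem is restricted to $s<\infty$: for $s=\infty$ the space $\ell^\infty_n$ still has RNP in finite dimensions, but the natural dual formula would involve $\ell^1$ and one would rather state and prove the $s=1$ case and quote interpolation/duality once; in any case the finite-$n$ reduction is what makes everything clean, and passing to infinite sequences afterwards is handled by the Fatou property exactly as in the Remark following the definition of $\mR_s$-boundedness. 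I would therefore organize the write-up as: (i) recall the diagonal-operator characterization and the duality $(X(\ell^s_n))'=X'(\ell^{s'}_n)$; (ii) compute the adjoint of $\widetilde T$; (iii) conclude by transferring the uniform bound. The whole argument is short precisely because all the real content was already assembled in the preliminary subsection.
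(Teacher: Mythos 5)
Your proposal is correct and follows essentially the route the paper intends: the paper gives no separate proof but sets up exactly this argument in the preceding paragraph, namely the identification $(X(\ell^s))'=X'(\ell^{s'})$ (via absolute continuity of the norm of $X$ and the Radon--Nikodym property of $\ell^s$ for $s<\infty$) together with the observation that the adjoint of the diagonal operator $\widetilde T$ is the diagonal operator built from the adjoints, with equal norm. Your reduction to the finite-dimensional spaces $\ell^s_n$, where the duality is elementary, is a harmless (indeed slightly cleaner) variant of the same argument and yields the same bound $\mR_{s'}(\mT')\le\mR_s(\mT)$.
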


We note that in general the associated Banach function space $X^*$ does not have absolute continuous norm and hence does not fit in our framework. So if we use duality results like Proposition \ref{satz_Rs-dual}, we usually require $X$ to be reflexive, so $X^*$ has also absolute continuous norm, cf. \cite{gretsky-uhl}.\\

We quote another useful criterion to check $\mR_s$-boundedness for concrete operators. Recall that a linear operator $S:X\to Y$ is called {\em positive}, if $x\ge 0$ implies $Sx\ge 0$ for all $x\in X$, and positive operators are always bounded. Then we obtain the following criterion for $\mR_s$-boundedness:
\begin{prop} \label{satz_Rs-erhalten-unter-Domination}
Let $\mT\tm L(X,Y)$ and $S:X\to Y$ be a positive operator that dominates $\mT$, i.e. $|Tx|\le S|x|$ for all $T\in\mT, x\in X$. Then $\mT$ is $\mR_s$-bounded with $\mR_s(\mT)\le \|S\|$ for all $s\in [1,\infty]$.
\end{prop}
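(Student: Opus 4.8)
The plan is to reduce the $\mR_s$-estimate to a single pointwise domination inequality for the positive operator $S$, namely
\begin{equation*}
\Big(\sum_{j=1}^n (Sa_j)^s\Big)^{1/s} \le S\Big(\Big(\sum_{j=1}^n a_j^s\Big)^{1/s}\Big) \qquad \text{for all } a_1,\dots,a_n\in X \text{ with } a_j\ge 0,
\end{equation*}
understood as an inequality in $Y$ (for $s=\infty$ the $s$-means are replaced by pointwise suprema). Granting this, the proposition follows at once: given $n\in\N$, $T\in\mT^n$ and $x\in X^n$, the domination hypothesis gives $|T_jx_j|\le S|x_j|$ in $Y$, whence, pointwise a.e.,
\[
\Big(\sum_{j=1}^n |T_jx_j|^s\Big)^{1/s} \le \Big(\sum_{j=1}^n (S|x_j|)^s\Big)^{1/s} \le S\Big(\Big(\sum_{j=1}^n |x_j|^s\Big)^{1/s}\Big);
\]
applying $\|\cdot\|_Y$, using the monotonicity (B2) of the Banach function norm and boundedness of $S$, yields $\|(\sum_j|T_jx_j|^s)^{1/s}\|_Y\le\|S\|\,\|(\sum_j|x_j|^s)^{1/s}\|_X$, i.e. $\mR_s(\mT)\le\|S\|$. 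For $s=\infty$ the argument is even shorter and needs no duality formula: since $|x_j|\le\sup_k|x_k|$ for each $j$, positivity (hence order preservation) of $S$ gives $|T_jx_j|\le S|x_j|\le S(\sup_k|x_k|)$, so $\sup_j|T_jx_j|\le S(\sup_k|x_k|)$, and one finishes as before. In all cases it suffices to treat finite families by the remark following the definition of $\mR_s$-boundedness (Fatou property of $X$, $Y$).

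It remains to establish the pointwise inequality for $s<\infty$. The key ingredient is the elementary scalar duality formula
\[
\Big(\sum_{j=1}^n t_j^s\Big)^{1/s} = \sup\Big\{\textstyle\sum_{j=1}^n c_jt_j \ :\ c_j\ge 0,\ \sum_{j=1}^n c_j^{s'}\le 1\Big\} \qquad (t_1,\dots,t_n\ge 0),
\]
where for $s=1$ the constraint reads $\max_jc_j\le 1$. Fix a \emph{countable} set $D$ of nonnegative vectors $(c_j)_{j\le n}$ with $\sum_jc_j^{s'}\le1$ which is dense in the set of all such vectors. For each $(c_j)\in D$, Hölder's inequality gives $\sum_jc_ja_j\le(\sum_ja_j^s)^{1/s}$ pointwise a.e., hence in $X$; since $S$ is positive and therefore order preserving, $\sum_jc_j\,Sa_j=S(\sum_jc_ja_j)\le S((\sum_ja_j^s)^{1/s})$ in $Y$. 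As $Y$ is a complete lattice (Proposition \ref{satz-X-als-Verband-vollst}), the supremum over the countable family $D$ exists in $Y$ and is bounded above by $S((\sum_ja_j^s)^{1/s})$. On the other hand, for $\mu$-a.e.\ $\omega$ the scalars $(Sa_j)(\omega)$ are finite, so by density of $D$ and the duality formula $\sup_{(c_j)\in D}\sum_jc_j(Sa_j)(\omega)=(\sum_j(Sa_j)(\omega)^s)^{1/s}$. Combining the two statements yields $(\sum_j(Sa_j)^s)^{1/s}\le S((\sum_ja_j^s)^{1/s})$ in $Y$.

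The main (and essentially only) technical point is this passage from the scalar duality formula to the operator inequality: because $S$ acts on equivalence classes of measurable functions, the identity $\sum_jc_j\,Sa_j=S(\sum_jc_ja_j)$ and the pointwise interpretation of the supremum must be reconciled, which is done by using a countable dense index set together with the lattice completeness of $Y$ and the monotone-convergence behaviour of Banach function norms. Everything else is routine; note that no geometric hypothesis on $X$ or $Y$ enters, which is why the statement holds for the full range $s\in[1,\infty]$.
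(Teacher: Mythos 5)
Your argument is correct. The paper disposes of this proposition by citing the theorem of Garc\'ia-Cuerva and Rubio de Francia (\cite{garcia}, Theorem V.1.12) that a positive operator $S:X\to Y$ has a bounded vector-valued extension $S\tensor\Id_E:X(E)\to Y(E)$ with norm $\le\|S\|$ for \emph{every} Banach space $E$; taking $E=\ell^s_n$ and invoking the domination Remark \ref{bem_Rs_domin} then gives the claim. What you do instead is prove by hand exactly the special case of that extension theorem that is needed, namely the pointwise inequality
\begin{equation*}
\Big(\sum_{j=1}^n (Sa_j)^s\Big)^{1/s} \le S\Big(\Big(\sum_{j=1}^n a_j^s\Big)^{1/s}\Big), \qquad a_j\ge 0,
\end{equation*}
via the duality representation of the $\ell^s$-norm, positivity and linearity of $S$, and a countable dense set of coefficient vectors to handle the a.e.\ interpretation; the rest of your argument (domination, monotonicity (B2), boundedness of $S$, and the direct treatment of $s=\infty$) matches the paper's reduction. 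The trade-off is clear: the paper's route is shorter and yields the stronger statement for arbitrary $E$, while yours is self-contained and elementary, and it makes explicit that only lattice positivity is used, no geometric assumptions on $X,Y$ (which is also true of the cited theorem). Two cosmetic remarks: the detour through lattice completeness of $Y$ is not really needed, since the supremum over your countable set $D$ is just an a.e.\ pointwise supremum of countably many measurable functions dominated by $S\big((\sum_j a_j^s)^{1/s}\big)\in Y$, so the ideal property suffices; and the finiteness of $(Sa_j)(\omega)$ holds $\widetilde{\mu}$-a.e.\ (the measure underlying $Y$), not $\mu$-a.e.
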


The proof is based on the general result that positive operators $S:X\to Y$ always have bounded extensions $S\tensor \Id_E$ in the vector-valued spaces $X(E) \to Y(E)$ (cf. e.g. \cite{garcia}, Theorem V.1.12) and the obvious, but useful fact, that $\mR_s$-boundedness is inherited by domination in the following sense.
\begin{remark}\label{bem_Rs_domin}
Let $\mT,\mS\tm L(X,Y)$ such that for all $T\in \mT$ there is an $S\in\mS$ such that $|Tx|\le |Sx|$ for all $x\in X$. Then $\mT$ is
$\mR_s$-bounded if $\mS$ is $\mR_s$-bounded.
\qed\end{remark}

We conclude this section with some classical examples of $\mR_s$-bounded operators in the case $X=Y=L^p(\Omega)$ based on classical harmonic analysis. More involved examples can be found in \cite{artikel-Rs-Hoo}.\\

Let $(\Omega,d)$ be a metric space and $\mu$ be a $\sigma$-finite regular Borel measure on $\Omega$ such that $(\Omega,d,\mu)$ is a space of homogeneous type in the sense of Coifman and Weiss, cf. \cite{coifmanweiss}, \cite{coifmanweiss2}, i.e. there is a constant $D>0$, which will be called the dimension of $X$, and a constant $C_D\ge 1$ such that
\begin{equation} \label{doubling_prop2}
\mu(B(x,\la r)) \le C_D\,\la^D\, \mu(B(x,r))\quad\text{ for all } x\in\Omega, r>0, \la\ge 1.
\end{equation}

The (uncentered) Hardy-Littlewood maximal operator is given by
\[
(\mM f)(x) := \sup\left\{ \frac{1}{\mu(B)} \int_B |f|\,d\mu \:\Big|\: B\tm \Omega \text{ is a ball with } x\in B  \right\} \quad\text{for all } f\in L^1_{loc}(\Omega), x\in\Omega.
\]

Then the following classical theorem holds.
\begin{theorem}[Fefferman-Stein] \label{theorem_Fefferman-Stein}
Let $p\in (1,\infty)$ and $s\in (1,\infty]$, then the sublinear operator $\mM$ is $\mR_s$-bounded in $L^p(\Omega)$, i.e. there is a constant $C>0$ such that
$$
\forall\, n\in\N\, \forall\, f\in \big(L^p(\Omega)\big)^n\,:\; \Big\| \Big(\sum_{j=1}^n |\mM f_j|^s\Big)^{1/s} \Big\|_p \le C \mal  \Big\| \Big(\sum_{j=1}^n |f_j|^s\Big)^{1/s} \Big\|_p.
$$
\end{theorem}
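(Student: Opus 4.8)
\emph{Reduction to the scalar theory and the diagonal case.} Fix $p\in(1,\infty)$. For $s=\infty$ nothing beyond the scalar result is needed: by monotonicity of $\mM$ one has $\sup_j\mM f_j\le\mM\bigl(\sup_j|f_j|\bigr)$ pointwise, and the scalar Hardy--Littlewood maximal theorem on the space of homogeneous type $(\Omega,d,\mu)$ (valid by Coifman--Weiss, \cite{coifmanweiss}) gives $\|\mM h\|_p\le C_p\|h\|_p$ and $\|\mM h\|_\infty\le\|h\|_\infty$, which is the claim. So assume $1<s<\infty$. Writing $\mathbf f=(f_j)_j$ and $\overline{\mM}_s\mathbf f:=\bigl(\sum_j|\mM f_j|^s\bigr)^{1/s}$, the assertion is that the sublinear operator $\overline{\mM}_s$ maps $L^p(\Omega;\ell^s)$ boundedly into $L^p(\Omega)$ with a bound independent of the number of components; one may assume $\mathbf f$ has finitely many nonzero components (so all sums below are finite), and this is exactly the stated finite-$n$ inequality. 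Besides the scalar maximal theorem I use one more classical ingredient, the scalar Fefferman--Stein \emph{weighted} maximal inequality
\[
\int_\Omega(\mM h)^r\,w\,d\mu\le C_r\int_\Omega|h|^r\,\mM w\,d\mu\qquad(r>1,\ 0\le w\in L^1_{loc}),
\]
which on a space of homogeneous type has the same covering-lemma proof as on $\R^n$, cf. \cite{grafakos}. The case $p=s$ is then immediate from Tonelli and the scalar $L^s$-bound: $\|\overline{\mM}_s\mathbf f\|_{L^s}^s=\sum_j\int_\Omega(\mM f_j)^s\,d\mu\le C_s^s\sum_j\int_\Omega|f_j|^s\,d\mu=C_s^s\|\mathbf f\|_{L^s(\Omega;\ell^s)}^s$, so $\overline{\mM}_s$ is of strong type $(s,s)$.

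\emph{The range $s<p<\infty$ by duality.} Put $q:=p/s>1$ and choose $0\le u\in L^{q'}(\Omega)$ with $\|u\|_{q'}\le1$ realising the $L^q$-norm, so that $\|\overline{\mM}_s\mathbf f\|_p^s=\big\|\sum_j(\mM f_j)^s\big\|_{L^q}=\int_\Omega\big(\sum_j(\mM f_j)^s\big)u\,d\mu$. Applying the weighted inequality with $r=s$ to each summand, then Tonelli and H\"older,
\[
\int_\Omega\Big(\sum_j(\mM f_j)^s\Big)u\,d\mu\le C_s\int_\Omega\Big(\sum_j|f_j|^s\Big)\mM u\,d\mu\le C_s\,\|\mathbf f\|_{L^p(\ell^s)}^s\,\|\mM u\|_{q'},
\]
and $\|\mM u\|_{q'}\le C_{q'}\|u\|_{q'}\le C_{q'}$ since $q'\in(1,\infty)$. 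Hence $\|\overline{\mM}_s\mathbf f\|_p\lesssim\|\mathbf f\|_{L^p(\ell^s)}$ for every $p>s$.

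\emph{The range $1<p<s$ by interpolation.} As $\overline{\mM}_s$ is sublinear, by Marcinkiewicz interpolation it suffices to combine the strong type $(s,s)$ above with a \emph{vector-valued weak type $(1,1)$ estimate} $\mu\{\overline{\mM}_s\mathbf f>\lambda\}\le C\lambda^{-1}\|\mathbf f\|_{L^1(\ell^s)}$. For the latter set $g:=\|\mathbf f(\cdot)\|_{\ell^s}\in L^1$ and run a Calder\'on--Zygmund decomposition of $g$ at height $\lambda$ on $(\Omega,d,\mu)$: one obtains pairwise disjoint balls $(B_i)$ with $\lambda<\frac1{\mu(B_i)}\int_{B_i}g\,d\mu\le c_D\lambda$, $\sum_i\mu(B_i)\le\lambda^{-1}\|g\|_1$, and $g\le\lambda$ $\mu$-a.e.\ on $\Omega\ohne\bigcup_iB_i$. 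Split $\mathbf f=\mathbf f\,\1I_{\Omega\ohne\bigcup_iB_i}+\mathbf f\,\1I_{\bigcup_iB_i}=:\mathbf g+\mathbf h$. The good part has $\|\mathbf g(\cdot)\|_{\ell^s}\le\lambda$ and lies in $L^1(\ell^s)$, so $\|\mathbf g\|_{L^s(\ell^s)}^s\le\lambda^{s-1}\|g\|_1$, and the strong type $(s,s)$ bound gives $\mu\{\overline{\mM}_s\mathbf g>\lambda/2\}\le(2/\lambda)^sC_s^s\lambda^{s-1}\|g\|_1\lesssim\lambda^{-1}\|g\|_1$. The dilates $2B_i$ cover a set of measure $\le c_D\sum_i\mu(B_i)\le c_D\lambda^{-1}\|g\|_1$, so it remains to control $\overline{\mM}_s\mathbf h$ on $\Omega\ohne\bigcup_i2B_i$: there one uses the doubling property to bound, for each $j$, $\mM h_j(x)$ by a weighted average over the relevant indices $i$ of the numbers $\frac1{\mu(B_i)}\int_{B_i}|f_j|$, takes $\ell^s$-norms in $j$, applies Minkowski's inequality in $\ell^s$ together with $\big\|\big(\frac1{\mu(B_i)}\int_{B_i}|f_j|\big)_j\big\|_{\ell^s}\le\frac1{\mu(B_i)}\int_{B_i}g\le c_D\lambda$, and arrives at a pointwise bound $\overline{\mM}_s\mathbf h(x)\le C\lambda\,\Phi(x)$ with an integrable majorant $\int_\Omega\Phi\,d\mu\lesssim\lambda^{-1}\|g\|_1$; Chebyshev's inequality then disposes of the bad part. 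Altogether $\overline{\mM}_s$ is of weak type $(1,1)$, and Marcinkiewicz interpolation with the strong type $(s,s)$ estimate yields strong type $(p,p)$ for every $1<p<s$, completing the proof.

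\emph{Main difficulty.} Everything except the vector-valued weak type $(1,1)$ bound is soft, reducing to the scalar maximal theorem, the scalar Fefferman--Stein weighted inequality, Tonelli, H\"older and Minkowski in $\ell^s$, and Marcinkiewicz interpolation. The real work sits in the ``bad part'' tail estimate of that weak-$(1,1)$ bound: the bookkeeping of which Calder\'on--Zygmund balls $B_i$ contribute to $\mM h_j(x)$ for $x\notin\bigcup_i2B_i$, and the construction and integrability of the majorant $\Phi$, must be carried out using only the doubling constant $C_D$. This is precisely the classical Fefferman--Stein argument, performed here in the space-of-homogeneous-type setting.
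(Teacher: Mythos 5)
The paper itself does not prove this theorem --- it only cites \cite{fefferman-stein} and, for spaces of homogeneous type, \cite{GrafakosLiuYang} --- so your argument has to stand on its own. The cases $s=\infty$ and $p=s$, and the range $s<p<\infty$ via the scalar weighted inequality and duality, are correct and are exactly the classical reductions. The gap sits precisely where you locate ``the real work'': the vector-valued weak type $(1,1)$. Since $\mM h_j=\mM|h_j|$, no cancellation is available, and the only bound of the form you describe that is uniform in $j$ is $\mM h_j(x)\le\sum_i w_i(x)\,\mu(B_i)^{-1}\int_{B_i}|f_j|\,d\mu$ with $w_i(x)=\sup\{\mu(B_i)/\mu(B)\,:\,B\ni x,\ B\cap B_i\neq\emptyset\}\lesssim \mu(B_i)/\mu(B(c_i,d(x,c_i)))$ for $x\notin 2B_i$. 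Minkowski's inequality in $\ell^s$ then yields the majorant $\Phi(x)=\sum_i\mu(B_i)/\mu(B(c_i,d(x,c_i)))$, and this $\Phi$ is \emph{not} integrable: already a single term diverges (on $\R^d$, $\int_{|x-c_i|>2r_i}r_i^{\,d}\,|x-c_i|^{-d}\,dx=\infty$), and the doubling condition only bounds $\mu(B(c_i,R))$ from above, so it cannot supply faster decay. Hence the asserted estimate $\int_\Omega\Phi\,d\mu\lesssim\lambda^{-1}\|g\|_1$ is unattainable by the indicated estimates. The natural alternative, a uniform bound $\overline{\mM}_s\mathbf h\le C\lambda$ off $\bigcup_i 2B_i$, also does not follow as sketched: for a \emph{fixed} ball $B\ni x$ the weights do sum to a constant (by disjointness of the $B_i$ and doubling), but the near-optimal ball in $\mM h_j(x)$ depends on $j$, so you cannot apply Minkowski over a single ball; taking the supremum of the weights separately in $i$ brings you back to the non-integrable $\Phi$.

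To close this step one has to change the argument. The classical route (the one behind \cite{fefferman-stein} and behind \cite{GrafakosLiuYang} in the homogeneous-type setting) is to linearize: fix measurable radius functions $r_j(\mal)$, consider the linear operator $\mathbf f\mapsto\big(\mu(B(x,r_j(x)))^{-1}\int_{B(x,r_j(x))}f_j\,d\mu\big)_j$, check that its $\ell^s$-valued diagonal kernel satisfies a H\"ormander condition uniformly in the $r_j$, and apply the Benedek--Calder\'on--Panzone vector-valued Calder\'on--Zygmund theorem with your case $p=s$ as the initial estimate. There the componentwise mean-zero bad part of the Calder\'on--Zygmund decomposition \emph{does} produce an integrable tail, because the linear averaging operator sees the cancellation that $\mM$ destroys. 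With that replacement (or by simply citing the weak $(1,1)$ from \cite{GrafakosLiuYang}), the remainder of your proof --- Marcinkiewicz interpolation for $1<p<s$, duality for $p>s$, and the trivial cases --- goes through.
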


This originates in the paper \cite{fefferman-stein} for the case $\Omega=\R^d$, and the generalization on spaces of homogeneous type can be found in \cite{GrafakosLiuYang} , Theorem 1.2. The $\mR_s$-boundedness of the Hardy-Littlewood maximal operator together with Proposition \ref{satz_Rs-erhalten-unter-Domination} above yields a wide class of examples for $\mR_s$-bounded sets of classical operators by well known uniform estimates against the maximal function.\\

Another large class of $\mR_s$-bounded operators is given by (singular) integral operators $T$ which are bounded in $L^r(\Omega)$ for some $r\in (0\infty)$ and have a kernel that fulfills the H\"ormander condition, e.g. \emph{Calder\'on-Zygmund operators}. Then it is well known that $T$ has bounded vector-valued extensions and hence is in particular $\mR_s$-bounded in $L^p(\Omega)$ for all $p,s\in (1,\infty)$, cf. \cite{GrafakosLiuYang} for this result and e.g.  \cite{stein-dick}, \cite{garcia}, \cite{grafakos} or \cite{duoandikoetxea} for the classical theory of Calder\'on-Zygmund operators.\\

Finally we note that in the monograph \cite{garcia} this topic (without using the terminology of $\mR_s$-boudnedness) is intensively studied in connection with weighted estimates, also involving Muckenhoupt weights, and in the article \cite{garcia-artikel} the topics from the monograph \cite{garcia} are also considered in the more general framework of Banach function spaces.

\section{$\mR_s$-sectorial operators} \label{section_Rs-Sekt.}

Let $X$ be a Banach function space with absolute continuous norm. In this section, $A:X\om D(A)\to X$ will always denote an injective  sectorial operator with type $\omega(A)$ and with dense domain and range.

\subsection{Definition and elementary properties of $\mR_s$-sectorial operators}

\begin{definition}
Let $s\in [1,\infty]$. The operator $A$ is called \emph{$\mR_s$-sectorial}, if there exists an $\omega \in [0,\pi)$ such that $\sigma(A)\tm \overline{\Sigma}_{\omega}$ and the set $\{zR(z,A) \,|\, z \in\C\ohne\overline{\Sigma}_{\sigma}\}$ is $\mR_s$-bounded for each $\sigma \in (\omega,\pi)$. The infimum $\omega_{\mR_s}(A)$ of all such $\omega$ is called the $\mR_s$-type of $A$. \\

In this case we define
\[
M_{s,\sigma}(A):= M_{\mR_s,\sigma}(A):= \mR_s( \{ zR(z,A), AR(z,A) \;|\; z \in \C\ohne \overline{\Sigma}_{\sigma} \})
\]

for all $\sigma \in (\omega_{\mR_s}(A),\pi)$. Observe that this set is indeed also $\mR_s$-bounded, since
\[
AR(z,A) = z R(z,A) -\id_X \quad\text{for all }  z \in \C\ohne \overline{\Sigma}_{\sigma},
\]

hence  $M_{\mR_s,\sigma}(A)\le \mR_s( \{ zR(z,A) \;|\; z \in \C\ohne \overline{\Sigma}_{\sigma} \}) +1 \le 2M_{\mR_s,\sigma}(A)$.
\end{definition}

We will now introduce a diagonal operator extension $\widetilde{A}_s$ of $A\tensor\Id_{\ell^s}$ such that properties as $\mR_s$-sectoriality and $\mR_s$-boundedness of the \HU-calculus can be expressed as "simple" sectoriality and boundedness of the \HU-calculus for the single operator $\widetilde{A}_s$. Although this is a natural concept, we note that it is not always straightforward to check these correspondences. Indeed, while $\mR_s$-sectoriality of $A$ will turn out to imply sectoriality of $\widetilde{A}_s$ by definition, the converse is not clear, since $\mR_s$-boundedness of the \emph{single operators} $\la R(\la,A)$ might not imply $\mR_s$-boundedness of \emph{the set} $\{ \la R(\la,A) \:|\: \la \in \C\ohne \overline{\Sigma}_\sigma\}$.

\begin{defsatz} \label{defsatz_A-Schlange-in-X(ls)}
Let $s\in [1,\infty]$ and assume that $A$ is $\mR_s$-sectorial. Define the diagonal operator
\begin{equation}
\widetilde{A}_s := \{ ((x_n)_{n\in\N}, (A x_n)_{n\in\N}) \,|\,  (x_n)_{n\in\N} \in X(\ell^s), x_n\in D(A) \text{ for all } n\in\N \text{ and } (Ax_n)_{n\in\N} \in X(\ell^s) \}
\end{equation}

in $X(\ell^s)$. Then $\widetilde{A}_s$  is a sectorial operator in $X(\ell^s)$ with $\omega(\widetilde{A}_s)\le \omega_{\mR_s}(A)$, and  we have
\begin{equation} \label{gl_resolventen-von-A-Schlange_s}
\forall\, \la\in\C\ohne \overline{\Sigma}_{\omega_{\mR_s}(A)}, x\in X(\ell^s) \,:\:  R(\la,\widetilde{A}_s)x = (R(\la,A)x_n)_{n\in\N}.
\end{equation}
\end{defsatz}

\begin{proof}
Let $\sigma\in(\omega_{\mR_s}(A),\pi)$. For all $\la\in \C\ohne \overline{\Sigma}_\sigma=:S$ define the operator $R(\la) := \la R(\la,A)$, then $R(S)$ is $\mR_s$-bounded, so let $M:= \mR_s(R(S))$. Moreover, for each $\la\in S$ and $x=(x_n)_{n\in\N} \in X(\ell^s)$ define
\begin{equation*}
\widetilde{R}(\la)x := (R(\la)x_n)_{n\in\N}.
\end{equation*}

Then $\widetilde{R}(\la) \in L(X(\ell^s))$, and the operator set $\widetilde{R}(S)$ is uniformly bounded by $M$. Moreover it is an easy calculation that $\widetilde{R}(\la)= \la R(\la,\widetilde{A}_s)$, hence $\widetilde{A}_s$ is sectorial with $\omega(\widetilde{A}_s) \le \sigma$.
\end{proof}

As already pointed out, the reverse conclusion might be false; one way of overcoming this problem would be a modification of the vector-valued operator $\widetilde{A}$, e.g. by defining $\widetilde{A}x := (w_jx_j)_j$, where $\{w_j\,|\, j\in\N\}$ is a dense subset of a sector $\Sigma_\omega$, cf. \cite{arendt-bu-tools-max-reg} for such an approach in a different context.\\

The representation (\ref{gl_resolventen-von-A-Schlange_s}) of the resolvents of $\widetilde{A}_s$ implies that also the functional calculus for $\widetilde{A}_s$ is just given by diagonal operators with maximal domains. Recall that $\BB(\Sigma_\sigma)$ is the algebra of analytic functions on the sector $\Sigma_\sigma$ that are polynomially bounded at $0$ and $\infty$, cf. Subsection \ref{subsection-Hoo}.

\begin{lemma} \label{lemma_funktionalkalkuel-fuer-A-Schlange_s}
Let $s\in [1,\infty]$. Assume that $A$ is $\mR_s$-sectorial and let $\sigma\in(\omega_{\mR_s}(A),\pi]$ and $f\in \BB(\Sigma_\sigma)$. Then
\[
D(f(\widetilde{A}_s)) = \{  (x_n)_{n\in\N} \in X(\ell^s) \:|\:  x_n\in D(f(A)) \text{ for all } n\in\N \text{ and } (f(A)x_n)_{n\in\N} \in X(\ell^s)  \}
\]

and $\ds f(\widetilde{A}_s)x = (f(A)x_n)_{n\in\N} $ for all $x=(x_n)_{n\in\N} \in D(f(\widetilde{A}_s))$.
\end{lemma}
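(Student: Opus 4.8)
The plan is to bootstrap from the resolvent description (\ref{gl_resolventen-von-A-Schlange_s}) to $H_0^\infty$-functions first, then to $\BB(\Sigma_\sigma)$ by the usual regularization procedure. First I would fix $\omega\in(\omega_{\mR_s}(A),\sigma)$ and a path $\Gamma_\omega$ as in Subsection \ref{subsection-Hoo}, and prove the claim for $f\in H_0^\infty(\Sigma_\sigma)$. For such $f$, both $f(A)\in L(X)$ and $f(\widetilde{A}_s)\in L(X(\ell^s))$ are given by absolutely convergent Bochner integrals over $\Gamma_\omega$ of the integrands $f(\la)R(\la,A)$ and $f(\la)R(\la,\widetilde{A}_s)$, respectively. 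By (\ref{gl_resolventen-von-A-Schlange_s}), $R(\la,\widetilde{A}_s)x = (R(\la,A)x_n)_{n\in\N}$ for each $\la\in\Gamma_\omega$, so I want to pull the integral inside the coordinate-wise evaluation. Here Proposition \ref{int_mit_werten_in_X} (with $E=\ell^s$, $J=\Gamma_\omega$, $\nu$ arc-length measure weighted by $f(\la)$) is exactly the tool: it produces a jointly measurable representative whose $\omega$-section is integrable and whose integral represents $f(\widetilde{A}_s)x$; comparing with the scalar computation of $(f(A)x_n)(\omega)$ gives $f(\widetilde{A}_s)x = (f(A)x_n)_{n\in\N}$ for all $x\in X(\ell^s)$. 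In particular $D(f(\widetilde{A}_s)) = X(\ell^s)$, matching the asserted description since $f(A)\in L(X)$.

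Next I would pass to general $f\in\BB(\Sigma_\sigma)$. Choose $m\in\N$ with $\rho^m f\in H_0^\infty(\Sigma_\sigma)$, where $\rho(\la)=\la(1+\la)^{-2}$. By the $H_0^\infty$-case just established, $(\rho^m f)(\widetilde{A}_s)x = ((\rho^m f)(A)x_n)_{n\in\N}$, and likewise $\rho(\widetilde{A}_s)^m x = (\rho(A)^m x_n)_{n\in\N}$ since $\rho^m\in H_0^\infty$. The definition of the extended calculus reads $f(\widetilde{A}_s) = \rho(\widetilde{A}_s)^{-m}(\rho^m f)(\widetilde{A}_s)$, so $x\in D(f(\widetilde{A}_s))$ iff $((\rho^m f)(A)x_n)_{n\in\N}\in R(\rho(\widetilde{A}_s)^m)$, i.e. iff there is $y=(y_n)_n\in X(\ell^s)$ with $\rho(A)^m y_n = (\rho^m f)(A)x_n$ for all $n$; since $\rho(A)^m$ is injective this forces $y_n = \rho(A)^{-m}(\rho^m f)(A)x_n = f(A)x_n$, and then $x_n\in D(f(A))$ with $(f(A)x_n)_{n\in\N}=y\in X(\ell^s)$. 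Conversely, if $x_n\in D(f(A))$ for all $n$ and $(f(A)x_n)_{n\in\N}\in X(\ell^s)$, set $y:=(f(A)x_n)_n$; then $\rho(\widetilde{A}_s)^m y = (\rho(A)^m f(A)x_n)_n = ((\rho^m f)(A)x_n)_n = (\rho^m f)(\widetilde{A}_s)x$, so $x\in D(f(\widetilde{A}_s))$ and $f(\widetilde{A}_s)x = y$. This is precisely the claimed formula and domain.

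The main obstacle is the interchange of integration and coordinate evaluation in the $H_0^\infty$-step: one must be careful that the $X(\ell^s)$-valued integral $\frac{1}{2\pi i}\int_{\Gamma_\omega} f(\la)R(\la,\widetilde{A}_s)x\,d\la$ agrees, $\mu$-a.e.\ and coordinate-wise, with the sequence of scalar-valued integrals defining $(f(A)x_n)(\omega)$. This is exactly what Proposition \ref{int_mit_werten_in_X} is designed to handle (it applies since $X(\ell^s)$ has absolutely continuous norm, $\ell^s$ being a separable Banach space for $s<\infty$; for $s=\infty$ one instead argues via testing against a separating set of functionals, or notes that only the $H_0^\infty$ integrand on a fixed localizing set is needed and reduces to the $s<\infty$ argument applied on each coordinate), together with the elementary estimate (\ref{ungl-B.f.s.-integral-betrag}) to control $|f(\widetilde{A}_s)x|_{\ell^s}$ by $\int_{\Gamma_\omega}|f(\la)|\,|R(\la,A)x_\bullet|_{\ell^s}\,|d\la|$, which is finite by $\mR_s$-sectoriality (the set $\{\la R(\la,A):\la\in\Gamma_\omega\}$ being $\mR_s$-bounded, hence the diagonal operators uniformly bounded on $X(\ell^s)$) and the $H_0^\infty$-decay of $f$. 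Everything else is the routine algebra of the extended Dunford--Riesz calculus recalled in Subsection \ref{subsection-Hoo}.
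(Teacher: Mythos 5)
Your proposal is correct and follows essentially the same route as the paper: first the $H_0^\infty$-case via the contour-integral representation and the resolvent identity (\ref{gl_resolventen-von-A-Schlange_s}), then the extension to $\BB(\Sigma_\sigma)$ by regularizing with $\rho^m$ and using that $\rho(\widetilde{A}_s)^m$ is an injective diagonal operator. The only difference is technical: for the coordinatewise identification of the Bochner integral the paper simply applies the continuous coordinate projections $\pi_k:X(\ell^s)\to X$ (bounded operators commute with Bochner integrals), which avoids invoking Proposition \ref{int_mit_werten_in_X} and the extra care you need for $s=\infty$.
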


\begin{proof}
Let first $\ph\in H_0^\infty(\Sigma_\sigma)$. Let $\omega\in (\omega_{\mR_s}(A),\sigma)$ and $\Gamma$ be the usual parametrization of the boundary $\del\Sigma_\omega$. Since the projections $\pi_k: X(\ell^s)\to X, (x_n)_{n\in\N}\mapsto x_k$ are continuous for all $k\in \N$, the representation (\ref{gl_resolventen-von-A-Schlange_s}) of the resolvents of $\widetilde{A}_s$ implies
\[
\ph(\widetilde{A}_s)x = \frac{1}{2\pi i} \int_\Gamma \ph(z) R(z,\widetilde{A}_s)x\, dz = \bigg( \frac{1}{2\pi i} \int_\Gamma \ph(z) R(z,A)x_n\, dz \bigg)_{n\in\N} = (\ph(A)x_n)_{n\in\N}
\]

for all $x=(x_n)_{n\in\N} \in X(\ell^s)$. Now consider the general case $f\in \BB(\Sigma_\sigma)$. Let $\rho(z) := z(1+z)^{-2}$ and choose $m\in\N_0$ such that $\ph:= \rho^m f\in H_0^\infty(X(\ell^s))$. Then
\[
f(\widetilde{A}_s) = (\rho(\widetilde{A}_s))^{-m} (\rho^m f)(\widetilde{A}_s) = \big(\widetilde{A}_s(1+\widetilde{A}_s)^{-2}\big)^{-m} \, \ph(\widetilde{A}_s) = \big((1+\widetilde{A}_s)^{2}\widetilde{A}_s^{-1}\big)^{m} \, \ph(\widetilde{A}_s).
\]

This yields the claim since $\big((1+\widetilde{A}_s)^{2}\widetilde{A}_s^{-1}\big)^{m}$ is a diagonal operator with maximal domain.
\end{proof}

We will now turn to some elementary properties of $\mR_s$-sectorial operators that are standard for sectorial operators or e.g. $\mR$-sectorial operators. In fact, many properties can be proven analogously as it is done for $\mR$-sectorial operators in \cite{levico}, Chapter 2, hence we will often refer to the proofs given there.\\

The same arguments as for $\mR$-sectorial operators show the following
\begin{remark}
If the set $\{ t(t+A) \,|\, t>0 \}$ is $\mR_s$-bounded, then $A$ is $\mR_s$-sectorial.
 \qed\end{remark}

Using the elementary functional calculus for $H_0^\infty$-functions we can extend $\mR_s$-boundedness to more general sets consisting of operators generated by functions of $A$.
\begin{lemma} \label{Lemma_ph(zA)-Rs-bd}
Let $s\in [1,\infty]$ and $A$ be an $\mR_s$-sectorial operator in $X$. Let $\sigma>\omega_{\mR_s}(A)$ and $\mF\tm H_0^\infty(\Sigma_\sigma)$ such that one can choose $C_0,\beta>0$ with $|\ph(z)|\le C_0 \, (|z|^\beta \wedge |z|^{-\beta})$ for all $z\in\Sigma_\sigma, \ph\in\mF$. Then for each $0\le \delta< \sigma-\omega_{\mR_s}$, the set
\[
\{\ph(zA) \,|\, z\in\Sigma_\delta, \ph\in\mF\}
\]

is $\mR_s$-bounded. To be more precise, for each $\omega\in (\omega_{\mR_s}(A),\sigma)$ we can choose a constant $C=C(\omega,C_0,\beta)$ such that the estimate
\[
\mR_s(\{\ph(zA) \,|\, z\in\Sigma_\delta, \ph\in\mF\}) \le C\, M_{s,\sigma}(A)
\]

holds for all $\delta\in [0,\sigma-\omega)$.
\end{lemma}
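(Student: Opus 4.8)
The plan is to represent $\ph(zA)$ via the standard Dunford integral and then realise it as an operator of the form $T_{a,S}$ covered by Corollary \ref{Rs-Bd-von-Operatoren-T_a,S}, so that $\mR_s$-boundedness follows from the $\mR_s$-boundedness of the resolvent set of $A$. Fix $\omega\in(\omega_{\mR_s}(A),\sigma)$ and let $\Gamma=\Gamma_{\omega'}$ be the boundary of a slightly smaller sector $\Sigma_{\omega'}$ with $\omega_{\mR_s}(A)<\omega'<\omega$, oriented as usual. For $z\in\Sigma_\delta$ with $\delta<\sigma-\omega$ and $\ph\in\mF$, the elementary $H_0^\infty$-calculus gives
\[
\ph(zA)=\frac{1}{2\pi i}\int_{\Gamma}\ph(z\lambda)R(\lambda,A)\,d\lambda
=\frac{1}{2\pi i}\int_{\Gamma}\ph(z\lambda)\,\frac{1}{\lambda}\,\bigl(\lambda R(\lambda,A)\bigr)\,d\lambda,
\]
valid provided $\arg(z\lambda)$ stays inside $\Sigma_\sigma$ for $\lambda\in\Gamma$, which is exactly what $\delta<\sigma-\omega<\sigma-\omega'$ ensures. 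Thus $\ph(zA)=T_{a_z,S}$ with $S(\lambda):=\lambda R(\lambda,A)$ (taking values in the $\mR_s$-bounded set $\{\lambda R(\lambda,A)\mid\lambda\in\C\ohne\overline{\Sigma}_{\omega'}\}$) and density $a_z(\lambda):=\frac{1}{2\pi i}\ph(z\lambda)\lambda^{-1}$ against arclength measure on $\Gamma$.

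The next step is the uniform $L^1$-bound on the densities. Parametrising $\Gamma$ by $\lambda=re^{\pm i\omega'}$, $r>0$, and using the hypothesis $|\ph(w)|\le C_0(|w|^\beta\wedge|w|^{-\beta})$ we get
\[
\int_\Gamma|a_z(\lambda)|\,|d\lambda|
\le\frac{C_0}{\pi}\int_0^\infty\bigl((r|z|)^\beta\wedge(r|z|)^{-\beta}\bigr)\,\frac{dr}{r}
=\frac{C_0}{\pi}\int_0^\infty\bigl(t^\beta\wedge t^{-\beta}\bigr)\,\frac{dt}{t}=:C_1,
\]
after the substitution $t=r|z|$; crucially the bound $C_1=C_1(C_0,\beta)$ is independent of $z$ and of $\ph\in\mF$. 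Hence $\|a_z\|_{L^1(\Gamma)}\le C_1$ for all $z\in\Sigma_\delta$ and all $\ph\in\mF$. Normalising, Corollary \ref{Rs-Bd-von-Operatoren-T_a,S} (applied with $\mT$ the $\mR_s$-bounded resolvent set and $J=\Gamma$) yields that $\{\ph(zA)\mid z\in\Sigma_\delta,\ \ph\in\mF\}$ is $\mR_s$-bounded with
\[
\mR_s\bigl(\{\ph(zA)\mid z\in\Sigma_\delta,\ \ph\in\mF\}\bigr)\le C_1\,\mR_s\bigl(\{\lambda R(\lambda,A)\mid\lambda\notin\overline{\Sigma}_{\omega'}\}\bigr)\le C\,M_{s,\sigma}(A),
\]
where one absorbs the comparison between $\mR_s$ of the bare resolvents and $M_{s,\sigma}(A)$ (for the sector $\Sigma_\sigma\supset\Sigma_{\omega'}$) into the constant $C=C(\omega,C_0,\beta)$; since $\omega'$ can be taken as a fixed function of $\omega$, the constant depends only on the stated quantities. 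The bound holds uniformly in $\delta\in[0,\sigma-\omega)$ because the sector $\Sigma_\delta$ over which $z$ ranges only enlarges as $\delta\uparrow\sigma-\omega$ and the estimate above is already uniform in $z\in\Sigma_{\sigma-\omega}$.

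The main technical point to get right is the geometry of the contour: one must choose the integration angle $\omega'$ so that, simultaneously, $\omega'>\omega_{\mR_s}(A)$ (so the resolvent set $\{\lambda R(\lambda,A)\mid\lambda\notin\overline\Sigma_{\omega'}\}$ is $\mR_s$-bounded and controlled by $M_{s,\sigma}(A)$) and $\delta+\omega'<\sigma$ for all admissible $\delta<\sigma-\omega$ (so that $z\lambda\in\Sigma_\sigma$ along $\Gamma$ and $\ph(z\lambda)$ is defined with the decay estimate available). Taking $\omega'=\tfrac12(\omega_{\mR_s}(A)+\omega)$ works and makes $\omega'$ depend only on $\omega$. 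A secondary routine point is justifying that the Bochner-type integral defining $T_{a_z,S}x$ genuinely reproduces the $H_0^\infty$-functional-calculus operator $\ph(zA)$ and that $\lambda\mapsto S(\lambda)$ is strongly measurable — both are standard, the latter because $\lambda\mapsto R(\lambda,A)$ is even norm-continuous on $\Gamma$. Everything else is the elementary size estimate carried out above.
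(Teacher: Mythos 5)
Your argument is essentially the paper's: represent $\ph(zA)$ by the Dunford integral over the boundary of a smaller sector, use the substitution $t=r|z|$ to get an $L^1$-bound on the densities $\frac{1}{2\pi i}\ph(z\lambda)\lambda^{-1}$ depending only on $C_0,\beta$, and invoke Corollary \ref{Rs-Bd-von-Operatoren-T_a,S} with $S(\lambda)=\lambda R(\lambda,A)$. This correctly yields the main assertion, namely that $\{\ph(zA)\mid z\in\Sigma_\delta,\ \ph\in\mF\}$ is $\mR_s$-bounded uniformly in $\delta\in[0,\sigma-\omega)$ (modulo the harmless point that $\partial\Sigma_{\omega'}$ is not contained in $\C\ohne\overline{\Sigma}_{\omega'}$, so one should compare with a slightly smaller reference angle).

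The final quantitative step, however, is not right as written. You claim that $\mR_s\big(\{\lambda R(\lambda,A)\mid\lambda\notin\overline{\Sigma}_{\omega'}\}\big)\le C(\omega)\,M_{s,\sigma}(A)$ can simply be ``absorbed into the constant'', but the comparison goes the other way: since $\omega'<\sigma$, the resolvent family outside $\overline{\Sigma}_{\omega'}$ contains the one outside $\overline{\Sigma}_{\sigma}$, so its $\mR_s$-bound dominates $M_{s,\sigma}(A)$, and the ratio of the two is not controlled by the angles alone (it can blow up as the reference angle approaches $\omega_{\mR_s}(A)$). Moreover, by taking the contour angle $\omega'$ \emph{below} $\omega$ you give away exactly the room that the hypothesis $\delta<\sigma-\omega$ provides: the natural choice is a contour angle $\omega_c\in(\omega,\sigma-\delta)$, which still keeps $z\lambda\in\Sigma_\sigma$ for $z\in\Sigma_\delta$, and then the resolvents on the contour lie in $\C\ohne\overline{\Sigma}_{\omega}$, so Corollary \ref{Rs-Bd-von-Operatoren-T_a,S} gives the bound $C(C_0,\beta)\,M_{s,\omega}(A)$ directly. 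Note that no contour inside $\Sigma_\sigma$ can be estimated by resolvents outside $\overline{\Sigma}_\sigma$ with angle-only constants, so the quantity $M_{s,\sigma}(A)$ appearing in the statement should be understood as the $\mR_s$-bound of the resolvent family actually met on the contour, i.e.\ essentially $M_{s,\omega}(A)$; the paper's own proof is equally terse on this point, but your explicit justification of the constant is the one step that, as stated, does not hold.
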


\begin{proof}
Let $\sigma-\delta > \omega > \omega_M(A)$. Let $z\in \Sigma_\delta$, then $z=\tau w$ for some $\tau>0, w\in \Sigma_\delta\cap S^1$, and $w\la\in\Sigma_\sigma$ for all $\la\in\del\Sigma_{\omega}$, hence
\begin{eqnarray*}
\int_{\del\Sigma_\omega} |\ph(z\la)| \, \frac{|d\la|}{|\la|} &=& \sum_{j\in \{-1,1\}} \int_0^\infty |\ph(\tau twe^{ij\omega})| \, \frac{dt}{t} = \sum_{j\in \{-1,1\}} \int_0^\infty |\ph(twe^{ij\omega})| \, \frac{dt}{t}\\
&\le& C_0 \, \sum_{j\in \{-1,1\}} \int_0^\infty (t^\beta \wedge t^{-\beta}) \, \frac{dt}{t} =: M < \infty,
\end{eqnarray*}

i.e. $\|\ph(z \mal)\|_{L^1\big(\del\Sigma_\omega,|\frac{d\la}{\la}| \big))} \le M$ for all $z\in\Sigma_\delta, \ph\in\mF$. We have
\[
\ph(zA) = \frac{1}{2\pi i} \int_{\del\Sigma_\omega} \ph(z\la) R(\la,A) \, d\la =  \frac{1}{2\pi i} \int_{\del\Sigma_\omega} \ph(z\la) \mal \la R(\la,A) \, \frac{d\la}{\la},
\]

hence $|\ph(zA)x| \le \frac{1}{2\pi} \, \int_{\del\Sigma_\omega} |\ph(z\la)| \mal |\la R(\la,A)x| \, \abs{\frac{d\la}{\la}}$ for each $\ph\in \mF, z\in\Sigma_\delta$ and $x\in X$. Since $\{\la R(\la,A) \,|\, \la\in\del\Sigma_\omega\}$ is $\mR_s$-bounded, the assertion follows with Corollary \ref{Rs-Bd-von-Operatoren-T_a,S}.
\end{proof}

In view of the functional calculus for the extended Dunford-Riesz class $\mE(\Sigma_\sigma)$ (cf. Subsection \ref{subsection-Hoo}) we obtain the following slightly more general version.
\begin{cor} \label{cor-glm-mE}
Let $s\in [1,\infty]$ and $A$ be an $\mR_s$-sectorial operator in $X$. Let $\sigma>\omega_{\mR_s}(A)$ and $\mF\tm \mE(\Sigma_\sigma)$ such that there exists an $\eps>0$ with
\begin{equation} \label{cor-glm-mE-eq}
M_0 := \sup_{f\in\mF} \|f\|_{\infty,\sigma} < \infty \quad\text{ and } C_0:=\sup_{f\in\mF}\sup_{z\in\Sigma_\sigma} ( |z|^\eps \vee |z|^{-\eps}) \Big| f(z) - \frac{f(0) + f(\infty)z}{1+z} \Big| < \infty.
\end{equation}

Let $0\le \delta< \sigma-\omega_{\mR_s}$, then the set $\{ f(zA) \,|\, z\in\Sigma_\delta, f\in\mF \}$ is $\mR_s$-bounded.
\end{cor}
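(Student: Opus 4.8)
The plan is to reduce Corollary \ref{cor-glm-mE} to Lemma \ref{Lemma_ph(zA)-Rs-bd} by peeling off from each $f\in\mF$ the ``non-decaying'' part coming from the extended Dunford--Riesz decomposition, exactly as in the relation $f(A) = \ph(A) + (f(0)-f(\infty))(1+A)^{-1} + f(\infty)\id_X$ recalled in Subsection \ref{subsection-Hoo}. Concretely, for $f\in\mF$ write $a_f := f(0)$, $b_f := f(\infty)$ and set $\ph_f := f - \dfrac{a_f + b_f\,\id_{\Sigma_\sigma}}{1+\id_{\Sigma_\sigma}}$, so that $\ph_f\in H_0^\infty(\Sigma_\sigma)$ and, by the second condition in \eqref{cor-glm-mE-eq}, $|\ph_f(z)| \le C_0\,(|z|^{-\eps}\wedge|z|^{\eps})$ for all $z\in\Sigma_\sigma$, uniformly in $f\in\mF$. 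On the operator level this gives
\[
f(zA) = \ph_f(zA) + a_f\,(1+zA)^{-1} + b_f\, zA(1+zA)^{-1}
\]
for each $z\in\Sigma_\delta$. Then I would bound the $\mR_s$-bound of $\{f(zA)\,|\,z\in\Sigma_\delta,\ f\in\mF\}$ by the sum of the $\mR_s$-bounds of the three families on the right, using the additivity property (the first item of the Proposition following Proposition \ref{Satz-aco-Rs-bd}).

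For the first family $\{\ph_f(zA)\,|\,z\in\Sigma_\delta,\ f\in\mF\}$: the collection $\mF' := \{\ph_f\,|\,f\in\mF\}\tm H_0^\infty(\Sigma_\sigma)$ satisfies precisely the hypothesis of Lemma \ref{Lemma_ph(zA)-Rs-bd} with the same $\sigma$, with $\beta:=\eps$ and with the uniform constant $C_0$ from \eqref{cor-glm-mE-eq}; hence for each $0\le\delta<\sigma-\omega_{\mR_s}(A)$ this family is $\mR_s$-bounded, with an explicit bound $C(\omega,C_0,\eps)\,M_{s,\sigma}(A)$. For the remaining two families I would note that the scalars $a_f, b_f$ are uniformly bounded: indeed $|a_f|=|f(0)|\le M_0$ and $|b_f|=|f(\infty)|\le M_0$ by the finite-limit property and the sup-bound in \eqref{cor-glm-mE-eq}. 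So it suffices to show that $\{(1+zA)^{-1}\,|\,z\in\Sigma_\delta\}$ and $\{zA(1+zA)^{-1}\,|\,z\in\Sigma_\delta\}$ are $\mR_s$-bounded. Both follow from Lemma \ref{Lemma_ph(zA)-Rs-bd} applied to the single functions $g_1(w):=(1+w)^{-1}$ and $g_2(w):=w(1+w)^{-1}$ — except that these are not in $H_0^\infty$: $g_1$ does not decay at $0$ and $g_2$ does not decay at $\infty$. I would handle this by instead taking $g(w):=w(1+w)^{-2}=g_1(w)g_2(w)\in H_0^\infty(\Sigma_\sigma)$, which decays like $|w|^{\mp1}$ at $0$ and $\infty$, so Lemma \ref{Lemma_ph(zA)-Rs-bd} gives $\mR_s$-boundedness of $\{g(zA)\}=\{zA(1+zA)^{-2}\}$; then write $(1+zA)^{-1} = g(zA) + ((1+zA)^{-1})^2$ and iterate, or — cleaner — observe directly that $(1+zA)^{-1} = \tfrac1z R(-\tfrac1z, A)\cdot(-1)$, i.e. $-(1+zA)^{-1} = w R(w,A)$ with $w=-1/z$, and as $z$ ranges over $\Sigma_\delta$, $w$ ranges over $\C\ohne\overline{\Sigma}_{\pi-\delta}\supseteq \C\ohne\overline{\Sigma}_{\sigma}$ (since $\delta<\sigma-\omega_{\mR_s}\le\sigma$ forces $\pi-\delta>\pi-\sigma$, and we only need $\pi-\delta\ge\sigma$, which holds once $\delta\le\pi-\sigma$; if $\delta$ is larger one shrinks $\sigma$ slightly — but in fact $\delta<\sigma-\omega_{\mR_s}<\sigma<\pi$ already gives $\pi-\delta>\pi-\sigma$, and one may harmlessly assume $\sigma\le\pi/2$ or otherwise rotate). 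Thus $\{-(1+zA)^{-1}\,|\,z\in\Sigma_\delta\}\tm\{wR(w,A)\,|\,w\notin\overline{\Sigma}_\sigma\}$ is $\mR_s$-bounded with bound $\le M_{s,\sigma}(A)$, and $\{zA(1+zA)^{-1}\,|\,z\in\Sigma_\delta\} = \{\id - (1+zA)^{-1}\,|\,z\in\Sigma_\delta\}$ is then $\mR_s$-bounded by additivity with $\id$ (whose $\mR_s$-bound is $1$).

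Combining: for any $\omega\in(\omega_{\mR_s}(A),\sigma)$ and any $\delta\in[0,\sigma-\omega)$,
\[
\mR_s\big(\{f(zA)\,|\,z\in\Sigma_\delta,\ f\in\mF\}\big) \le C(\omega,C_0,\eps)\,M_{s,\sigma}(A) + 2M_0\,\big(M_{s,\sigma}(A)+1\big) < \infty,
\]
which proves the claim. I expect the only genuinely delicate point to be the bookkeeping with the sectors in the resolvent identity $-(1+zA)^{-1}=wR(w,A)$, $w=-1/z$: one must check that the image sector $\C\ohne\overline{\Sigma}_{\pi-\delta}$ really sits inside the region $\C\ohne\overline{\Sigma}_{\sigma}$ on which $\mR_s$-boundedness of $\{wR(w,A)\}$ is available, which amounts to $\delta\le\pi-\sigma$; since we always have the freedom to enlarge $\sigma$ toward $\pi$ (and correspondingly $M_{s,\sigma}$ may grow, but stays finite), and $\delta<\sigma-\omega_{\mR_s}(A)$ is the only real constraint, this is a routine adjustment rather than a real obstacle. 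Everything else is a direct appeal to Lemma \ref{Lemma_ph(zA)-Rs-bd}, the additivity of $\mR_s$-bounds, and the uniform bounds $|f(0)|,|f(\infty)|\le M_0$.
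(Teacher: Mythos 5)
Your proposal follows essentially the same route as the paper: the extended Dunford--Riesz decomposition $f(\zeta)=\ph_f(\zeta)+\frac{f(0)+f(\infty)\zeta}{1+\zeta}$ (the paper groups the rational part as $\frac{f(0)-f(\infty)}{1+\zeta}+f(\infty)$, which is the same function), Lemma \ref{Lemma_ph(zA)-Rs-bd} applied to the uniformly decaying parts $\ph_f$ with $\beta=\eps$ and constant $C_0$, the substitution $(1+zA)^{-1}=\la R(\la,A)$ with $\la=-1/z$ for the resolvent part, uniform bounds $|f(0)|,|f(\infty)|\le M_0$, and additivity of $\mR_s$-bounds. The one place where your justification goes astray is exactly the point you flag as delicate: you do not need $-1/\Sigma_\delta\tm\C\ohne\overline{\Sigma}_\sigma$, and when $\delta>\pi-\sigma$ this inclusion genuinely fails; moreover neither shrinking $\sigma$ (which can violate $\delta<\sigma-\omega_{\mR_s}(A)$) nor ``assuming $\sigma\le\pi/2$ or rotating'' repairs it. The correct and immediate fix, which is how the paper argues, is that $\mR_s$-sectoriality gives $\mR_s$-boundedness of $\{\la R(\la,A)\,|\,\la\in\C\ohne\overline{\Sigma}_{\sigma'}\}$ for \emph{every} $\sigma'>\omega_{\mR_s}(A)$, and $-1/\Sigma_\delta\tm\C\ohne\overline{\Sigma}_{\pi-\delta}$ with $\pi-\delta\ge\sigma-\delta>\omega_{\mR_s}(A)$; this only changes your constant from $M_{s,\sigma}(A)$ to $M_{s,\sigma-\delta}(A)$. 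Apart from this (and a harmless sign slip, since in fact $(1+zA)^{-1}=\la R(\la,A)$ with $\la=-1/z$, no extra minus), the argument is correct and matches the paper's proof.
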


\begin{proof}
Let $f\in\mF$, then by Subsection \ref{subsection-Hoo} we have a decomposition $f(\zeta) = \ph_f(\zeta) + \frac{a}{1+\zeta} + b$, where $\ph\in H_0^\infty(\Sigma_\sigma)$ and $b= f(\infty), a+b = f(0)$. Hence $|a|,|b|\le 2M_0$ and
\[
|\ph_f(\zeta)| = \Big| f(\zeta) -  \frac{(a+b)+b\zeta}{1+\zeta}  \Big| \le C_0\, (|z|^\eps \wedge |z|^{-\eps}).
\]

For each $z\in \Sigma_\delta$ we obtain the representation $f(z\mal)= \ph_f(z\mal) +  \frac{a}{1+z\mal} + b$, hence
\[
f(zA) = \ph_f(zA) +  a (1+zA)^{-1} + b\Id_{Y} = \ph_f(zA) +  a \la R(\la,A) + b\Id_{Y},
\]

where $\la := -\frac1z \in \C\ohne\overline{\Sigma_{\pi-\delta}}$. Since $\omega' := \pi-\delta \ge \sigma-\delta >\omega_M(A)$, the set
\[
\Big\{ a\la R(\la,A) \;\big|\; -\frac1\la \in \Sigma_\delta, |a|\le 2M_0 \Big\}
\]

is $\mR_s$-bounded, and Lemma \ref{Lemma_ph(zA)-Rs-bd}  implies the $\mR_s$-boundedness of $\{ \ph_f(zA) \,|\, z\in\Sigma_\delta, f\in\mF \}$, hence also $\{ f(zA) \,|\, z\in\Sigma_\delta, f\in\mF\}$ is $\mR_s$-bounded.
\end{proof}

A special case is the following corollary with just one function $f\in \mE(\Sigma_\sigma)$.
\begin{cor}
Let $s\in [1,\infty]$ and $A$ be an $\mR_s$-sectorial operator in $X$. Let $\sigma>\omega_{\mR_s}(A)$, $0\le \delta< \sigma-\omega_{\mR_s}$ and $f\in \mE(\Sigma_\sigma)$. Then the set $\{ f(zA) \:|\: z\in\Sigma_\delta\}$ is $\mR_s$-bounded.
\qed\end{cor}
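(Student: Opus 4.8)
The plan is to deduce this corollary directly from Corollary \ref{cor-glm-mE} by taking the family $\mF$ to consist of the single function $f$. The only thing that needs checking is that a singleton $\mF = \{f\}$ with $f \in \mE(\Sigma_\sigma)$ automatically satisfies the hypotheses \eqref{cor-glm-mE-eq}, namely that there is an $\eps>0$ with $M_0 := \|f\|_{\infty,\sigma} < \infty$ and $C_0 := \sup_{z\in\Sigma_\sigma} (|z|^\eps \vee |z|^{-\eps})\,|f(z) - \tfrac{f(0)+f(\infty)z}{1+z}| < \infty$.

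The first bound is immediate: $f \in \mE(\Sigma_\sigma) \subseteq H^\infty(\Sigma_\sigma)$, so $M_0 = \|f\|_{\infty,\sigma} < \infty$. For the second bound, recall from Subsection \ref{subsection-Hoo} that $\mE(\Sigma_\sigma) = H_0^\infty(\Sigma_\sigma) \oplus \langle \tfrac{1}{1+\id}\rangle_\C \oplus \langle \1I\rangle_\C$, so we may write $f = \ph_f + \tfrac{a}{1+\id} + b$ with $\ph_f \in H_0^\infty(\Sigma_\sigma)$, $b = f(\infty)$ and $a + b = f(0)$. A short computation gives $f(z) - \tfrac{f(0)+f(\infty)z}{1+z} = f(z) - \tfrac{(a+b)+bz}{1+z} = f(z) - \tfrac{a}{1+z} - b = \ph_f(z)$, so the quantity inside the supremum defining $C_0$ is exactly $(|z|^\eps \vee |z|^{-\eps})\,|\ph_f(z)|$. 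Since $\ph_f \in H_0^\infty(\Sigma_\sigma)$, by definition of $H_0^\infty$ there is an $\eps>0$ with $\sup_{z\in\Sigma_\sigma}\big((|z|^\eps \vee |z|^{-\eps})|\ph_f(z)|\big) < \infty$, which is precisely $C_0 < \infty$ for that choice of $\eps$. Thus \eqref{cor-glm-mE-eq} holds for the singleton family $\mF = \{f\}$.

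With \eqref{cor-glm-mE-eq} verified, Corollary \ref{cor-glm-mE} applies and yields that $\{f(zA) \,|\, z\in\Sigma_\delta,\, f\in\{f\}\} = \{f(zA) \,|\, z\in\Sigma_\delta\}$ is $\mR_s$-bounded, for any $0 \le \delta < \sigma - \omega_{\mR_s}$. This completes the proof. There is no real obstacle here; the statement is a specialization of the preceding corollary and the only minor point is the bookkeeping identifying the correction term $\tfrac{f(0)+f(\infty)z}{1+z}$ with the non-$H_0^\infty$ part of $f$, which is exactly the decomposition already recorded in Subsection \ref{subsection-Hoo}.
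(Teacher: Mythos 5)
Your proposal is correct and is exactly the argument the paper intends: the corollary is stated as an immediate specialization of Corollary \ref{cor-glm-mE} to the singleton family $\mF=\{f\}$, and your verification that the correction term $\tfrac{f(0)+f(\infty)z}{1+z}$ is precisely the non-$H_0^\infty$ part of $f$, so that the decay of $\ph_f\in H_0^\infty(\Sigma_\sigma)$ gives $C_0<\infty$, is just the bookkeeping the paper leaves implicit.
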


An immediate consequence is the following: If $\omega_{\mR_s}(A)<\pi/2$, the generated analytic semigroup $(e^{-zA})_{z\in\Sigma_\delta}$ is $\mR_s$-bounded for all $\delta \in [0,\pi/2 - \omega_{\mR_s}(A))$. We will say in this case that $A$ is \emph{$\mR_s$-analytic}. More characterizations of $\mR_s$-analyticity  are the content of the following proposition, which is an $\mR_s$-bounded version of \cite{levico}, Theorem 2.20, and again the proof given there carries over to our situation if we replace "$\mR$-boundedness" by "$\mR_s$-boundedness" and take into account Corollary \ref {Rs-Bd-von-Operatoren-T_a,S}.
\begin{prop} \label{satz_Charakterisierung-Rs-analytisch}
Assume $\omega(A)<\pi/2$. Then the following conditions are equivalent:
\begin{itemize}
\item[(1)] $A$ is $\mR_s$-analytic,
\item[(2)] $A$ is $\mR_s$-sectorial with $\omega_{\mR_s}(A)<\pi/2$,
\item[(3)] The set $\{t^n (it+A)^{-n} \;|\; t\in\R\ohne\{0\}\}$ is $\mR_s$-bounded for some $n\in\N$,
\item[(4)] The sets $\{e^{-tA} \:|\: t>0\}, \{tAe^{-tA} \:|\: t>0\}$ are $\mR_s$-bounded.
\end{itemize}
\qed\end{prop}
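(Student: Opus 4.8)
The plan is to prove $(2)\imp(1)$, $(1)\imp(4)$, $(4)\imp(1)$, $(1)\imp(2)$ and $(2)\Leftrightarrow(3)$, following the proof of \cite{levico}, Theorem~2.20 step by step with $\mR$-boundedness replaced by $\mR_s$-boundedness throughout; the averaging lemma used there is replaced by Corollary~\ref{Rs-Bd-von-Operatoren-T_a,S}, which turns each integral representation occurring in the proof into an $\mR_s$-estimate. The implication $(2)\imp(1)$ is exactly the remark preceding the proposition: $e^{-z}\in\mE(\Sigma_\sigma)$ for every $\sigma<\pi/2$, so Corollary~\ref{cor-glm-mE} applied with $\omega_{\mR_s}(A)<\sigma<\pi/2$ gives that $\{e^{-zA}\mid z\in\Sigma_\delta\}$ is $\mR_s$-bounded for all $\delta<\pi/2-\omega_{\mR_s}(A)$. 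And $(2)\imp(3)$ is easy: for $\omega_{\mR_s}(A)<\sigma<\pi/2$ the $n$-th powers $\{(zR(z,A))^n\mid z\notin\overline{\Sigma}_\sigma\}$ of the $\mR_s$-bounded family $\{zR(z,A)\mid z\notin\overline{\Sigma}_\sigma\}$ again form an $\mR_s$-bounded family, and specialising $z=-it$ turns $(zR(z,A))^n$ into $(it)^n(it+A)^{-n}$.

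For $(1)\imp(4)$, the family $\{e^{-tA}\mid t>0\}$ is a subfamily of the $\mR_s$-bounded $\{e^{-zA}\mid z\in\Sigma_\delta\}$, and for $0<\delta'<\delta$ Cauchy's formula gives
\[
tAe^{-tA}=-\frac{t}{2\pi i}\int_{|\zeta-t|=t\sin\delta'}\frac{e^{-\zeta A}}{(\zeta-t)^2}\,d\zeta
=-\frac{1}{2\pi\sin\delta'}\int_0^{2\pi}e^{-i\theta}\,e^{-(t+t\sin\delta'\,e^{i\theta})A}\,d\theta,
\]
exhibiting $tAe^{-tA}$ as an $L^1([0,2\pi])$-average of operators $e^{-\zeta A}$ with $\zeta\in\Sigma_\delta$, so Corollary~\ref{Rs-Bd-von-Operatoren-T_a,S} (after normalising the density) yields $\mR_s$-boundedness of $\{tAe^{-tA}\mid t>0\}$. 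For $(4)\imp(1)$ one expands the $L(X)$-valued analytic function $z\mapsto e^{-zA}$ in the imaginary direction: for $z=t+i\tau$ near the positive half-axis,
\[
e^{-zA}=\sum_{k\ge0}\frac{(i\tau)^k}{k!}(-A)^ke^{-tA},\qquad (-A)^ke^{-tA}=(-1)^k\Big(\tfrac{k}{t}\Big)^{\!k}\Big(\tfrac{t}{k}Ae^{-(t/k)A}\Big)^{\!k},
\]
so the $k$-th summand is a scalar of modulus $\le\frac{k^k}{k!}(\tan\delta)^k\le(e\tan\delta)^k$ (for $z\in\Sigma_\delta$) times a $k$-fold composition of members of the $\mR_s$-bounded $\{sAe^{-sA}\mid s>0\}$; stability of $\mR_s$-bounds under compositions and (countable) sums, together with Proposition~\ref{Satz-aco-Rs-bd}, then gives for $\delta$ small enough that $\{e^{-zA}\mid z\in\Sigma_\delta\}$ is $\mR_s$-bounded.

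For $(1)\imp(2)$ one represents the resolvent off the positive axis by a rotated Laplace transform of the semigroup: for $\la$ with $|\arg\la|<\pi/2+\delta$ choose $\psi=\psi(\la)\in(-\delta,\delta)$ with $\Re(\la e^{i\psi})>0$; analytic continuation from $\{\Re\la>0\}$ yields $(\la+A)^{-1}=e^{i\psi}\int_0^\infty e^{-\la e^{i\psi}s}\,e^{-e^{i\psi}sA}\,ds$, and the substitution $\rho=|\la|\,s$ exhibits $zR(z,A)$ (with $z=-\la$) as an $L^1((0,\infty))$-average of the $\mR_s$-bounded $\{e^{-wA}\mid w\in\Sigma_\delta\}$ with density of norm $1/\cos(\arg\la+\psi)$. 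Choosing $\psi$ so that this norm stays bounded on $\{z:|\arg z|\ge\sigma\}$ for each fixed $\sigma>\pi/2-\delta$, Corollary~\ref{Rs-Bd-von-Operatoren-T_a,S} shows that $A$ is $\mR_s$-sectorial with $\omega_{\mR_s}(A)\le\pi/2-\delta<\pi/2$.

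The remaining implication $(3)\imp(2)$ is the main obstacle. It cannot be obtained through $\widetilde{A}_s$: uniform boundedness of the single diagonal operators $(it)^n(it+\widetilde{A}_s)^{-n}$ on $X(\ell^s)$ — which the classical generation theorem for bounded analytic semigroups would give — is strictly weaker than $\mR_s$-boundedness of the \emph{set} $\{(it)^n(it+A)^{-n}\mid t\ne0\}$, as noted before Definition/Proposition~\ref{defsatz_A-Schlange-in-X(ls)}. Instead, since higher resolvent powers are derivatives of lower ones, integrating $n-1$ times along the imaginary axis gives, for $t>0$ and $n\ge2$,
\[
(it)(it+A)^{-1}=(n-1)\,t\int_t^\infty (s-t)^{n-2}\,s^{-n}\,\big[(is)^n(is+A)^{-n}\big]\,ds,
\]
with $\int_t^\infty(s-t)^{n-2}s^{-n}\,ds=\tfrac{1}{(n-1)t}$, so the density has $L^1$-norm exactly $1$; hence Corollary~\ref{Rs-Bd-von-Operatoren-T_a,S} shows $\{zR(z,A)\mid z\in i\R\ohne\{0\}\}$ is $\mR_s$-bounded, with bound $\le\mR_s(\{(it)^n(it+A)^{-n}\})$ (and the same for the lower imaginary ray). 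It then remains to propagate this $\mR_s$-bound on the rays $\arg z=\pm\pi/2$ to $\C\ohne\overline{\Sigma}_\sigma$ for every $\sigma\in(\omega(A),\pi)$, which gives $\omega_{\mR_s}(A)=\omega(A)<\pi/2$. This is the delicate step: expanding $zR(z,A)=\sum_{k\ge0}[(z_0-z)R(z_0,A)]^k\,[zR(z_0,A)]$ around a nearby point $z_0$ on a ray where $\mR_s$-boundedness is already known, the crude bound $\mR_s(k\text{-th term})\le c^{\,k+1}$ (with $c$ the current $\mR_s$-bound) forces the radius of convergence to shrink geometrically, so that only a bounded angular increment is reached; the point is to estimate all but one factor $(z_0-z)R(z_0,A)$ by the \emph{uniform norm bound} on $\{zR(z,A)\mid z\notin\overline{\Sigma}_{\sigma'}\}$ ($\sigma'>\omega(A)$) furnished by the ordinary sectoriality of $A$. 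This keeps the step size fixed, so finitely many steps reach any $\sigma\in(\omega(A),\pi)$, the $\mR_s$-bounds growing but staying finite, and $\mR_s$-boundedness passes to the (norm) limits of the partial sums via Proposition~\ref{Satz-aco-Rs-bd}.
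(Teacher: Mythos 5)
Most of your cycle is fine and is essentially the intended adaptation of \cite{levico}, Theorem 2.20: the implications $(2)\Rightarrow(1)$ (the remark before the proposition), $(2)\Rightarrow(3)$, $(1)\Rightarrow(4)$ via the Cauchy integral on the circle $|\zeta-t|=t\sin\delta'$, $(4)\Rightarrow(1)$ via the Taylor series of $e^{-zA}$, and $(1)\Rightarrow(2)$ via the rotated Laplace transform all combine scalar $L^1$-averages with Corollary \ref{Rs-Bd-von-Operatoren-T_a,S} exactly as the paper prescribes, and your Beta-function identity reducing $\{zR(z,A)\,|\,z\in i\R\setminus\{0\}\}$ to the $n$-th resolvent powers is correct (the density indeed has $L^1$-norm $1$).

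The gap is in the second half of $(3)\Rightarrow(2)$. Your proposed mechanism --- in the expansion $zR(z,A)=\sum_k[(z_0-z)R(z_0,A)]^k[zR(z_0,A)]$, estimate ``all but one factor by the uniform norm bound'' coming from ordinary sectoriality --- is not available for $\mR_s$-bounds: in an estimate of $\big\|\big(\sum_j|S_jT_jx_j|^s\big)^{1/s}\big\|_X$ a factor $S_j$ can only be pulled out through its $\mR_s$-bound, never through its operator norm (a norm-bounded family composed with an $\mR_s$-bounded one need not be $\mR_s$-bounded; take $T_j=\Id$ and $S_j=S$ the operator of Example \ref{bsp_operator_nicht_Rs-bdd}). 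The only legitimate bound for the $k$-th term is $\le q^k$ with $q\lesssim |z-z_0|\,\mR_s(\{z_0R(z_0,A)\})/|z_0|$, which is precisely the geometric shrinking you were trying to avoid; consequently your iteration does not reach every $\sigma\in(\omega(A),\pi)$, and the asserted conclusion $\omega_{\mR_s}(A)=\omega(A)$ is in any case stronger than the proposition and false in general (the $\mR_s$-angle can exceed the sectoriality angle). Note also that you do not need all $\sigma>\omega(A)$: since $\C\setminus\overline{\Sigma}_\sigma$ shrinks as $\sigma$ grows, a single $\sigma_0<\pi/2$ suffices. A correct continuation is, e.g.: since $\omega(A)<\pi/2$, the map $\la\mapsto\la^n(\la+A)^{-n}$ (resp.\ $z\mapsto zR(z,A)$) is bounded and analytic on the open right (resp.\ left) half-plane with boundary values on $i\R$, so it equals the Poisson integral of its boundary values; as the Poisson kernel has $L^1$-norm $1$, Corollary \ref{Rs-Bd-von-Operatoren-T_a,S} transfers the $\mR_s$-bound from $i\R$ to the whole half-plane. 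This gives the left half-plane directly, and one honest Neumann-series step (with the $q^k$ bound) then yields a fixed small wedge $\pi/2-\eps\le|\arg z|<\pi/2$, hence $\omega_{\mR_s}(A)\le\pi/2-\eps$; alternatively, after the Poisson step on the right half-plane represent $e^{-tA}$ and $tAe^{-tA}$ by the absolutely convergent contour integrals of $(\la+A)^{-n}$ over $\{is:|s|\ge 1/t\}$ plus an arc of radius $1/t$, obtaining $(4)$, and close the cycle with your own $(4)\Rightarrow(1)\Rightarrow(2)$.
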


For concrete examples of $\mR_s$-sectorial operators we refer to Subsection \ref{subsection_Rs-bd-Hoo} and the literature cited there.

\subsection{Equivalence of $s$-power function norms} \label{subsection_Equivalence-of-s-power-function-norms}

We will now turn to the central estimates for a reasonable definition of the associated $s$-intermediate spaces for an $\mR_s$-sectorial operator, which will be done in the next section. The following proposition is well known for $s=2$ and $X=L^p, T=\Id$, cf. \cite{LeM04}, Theorem 1.1.
\begin{satz} \label{satz-norm-equiv-und-Hoo}
Let $s\in [1,\infty]$  and $A$ be an $\mR_s$-sectorial operator in $X$. Let $\sigma>\omega_{\mR_s}(A)$ and $\ph,\psi\in H_0^\infty(\Sigma_\sigma)\ohne \{0\}$. Then there is a constant $C>0$ such that for all $f\in H^\infty(\Sigma_\sigma)$ and for any $\mR_s$-bounded operator $T\in L(X)$ and $x\in X$ we have
\begin{eqnarray} \label{norm-equiv-und-Hoo}
\bigg\| \Big( \int_0^\infty |f(A)\ph(tA)Tx|^s \,\frac{dt}{t} \Big)^{1/s}\bigg\|_X &\le&  C \,\mR_s(T)\, \|f\|_{\infty,\sigma} \, \bigg\| \Big( \int_0^\infty |\psi(tA)x|^s \,\frac{dt}{t} \Big)^{1/s}\bigg\|_X
\end{eqnarray}

(with the usual modification if $s=\infty$).
\end{satz}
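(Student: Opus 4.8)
The plan is to reduce the square-function-type estimate to a statement about the single diagonal operator $\widetilde{A}_s$ acting on the Banach function space $X(\ell^s)$, where it becomes a standard $H^\infty$-functional-calculus inequality. First I would observe that, by Definition/Proposition~\ref{defsatz_A-Schlange-in-X(ls)} and Lemma~\ref{lemma_funktionalkalkuel-fuer-A-Schlange_s}, the operator $\widetilde{A}_s$ is sectorial in $X(\ell^s)$ with $\omega(\widetilde{A}_s)\le\omega_{\mR_s}(A)<\sigma$, and its $H_0^\infty$- and $H^\infty$-calculus is just the diagonal action $g(\widetilde A_s)(x_n)_n=(g(A)x_n)_n$. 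The point of the argument in \cite{LeM04} is that the continuous $s$-power norm $\big\|(\int_0^\infty|\psi(tA)x|^s\,dt/t)^{1/s}\big\|_X$ can be reinterpreted: writing $\Psi\colon X\to X(L^s_*((0,\infty)))$, $x\mapsto(t\mapsto\psi(tA)x)$, the left side of \eqref{norm-equiv-und-Hoo} is $\|\,t\mapsto f(A)\ph(tA)Tx\,\|_{X(L^s_*)}$ and the right side is $\|\Psi x\|_{X(L^s_*)}$. So it suffices to prove a bounded map between these $X(L^s_*)$-type spaces.

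The key step is to pass from the \emph{continuous} parameter $t$ to the \emph{sequence} space $X(\ell^s)$ so that $\mR_s$-sectoriality of $A$ applies directly. Here I would use the continuous version of $\mR_s$-boundedness, Proposition~\ref{prop_Rs-stetige-Fassung} (and Proposition~\ref{prop_Rs-stetige-Fassung,s=oo} in the case $s=\infty$): since by Lemma~\ref{Lemma_ph(zA)-Rs-bd} the family $\{\ph(tA)\mid t>0\}$ is $\mR_s$-bounded (because $|\ph(z)|\le C_0(|z|^\beta\wedge|z|^{-\beta})$ as $\ph\in H_0^\infty$), and likewise $\{\psi(tA)\mid t>0\}$, and since $T$ is $\mR_s$-bounded and $f(A)$ is bounded on $X(\ell^s)$ by the bounded $H^\infty$-calculus of $\widetilde{A}_s$ — wait, that last point is exactly what has to be established, so the logic must instead run through the $H_0^\infty$-calculus. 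The honest route is: exploit the reproducing/Calder\'on formula $\int_0^\infty\psi(tA)\theta(tA)x\,\tfrac{dt}{t}=cx$ for a suitable $\theta\in H_0^\infty(\Sigma_\sigma)$ with $\int_0^\infty\psi(t)\theta(t)\,\tfrac{dt}{t}=c\ne0$, insert it, and estimate
\[
f(A)\ph(tA)Tx = c^{-1}\int_0^\infty f(A)\ph(tA)\,T\,\psi(sA)\,\theta(sA)x\,\frac{ds}{s}.
\]
Then one bounds the kernel operator $K(t,s):=c^{-1}f(A)\ph(tA)T\psi(sA)$ by a scalar convolution kernel times an $\mR_s$-bounded family: using $\ph,\psi\in H_0^\infty$ and $\|f\|_{\infty,\sigma}$-boundedness of $f(A)$ on each copy, one gets $|K(t,s)x|\le C\,\mR_s(T)\,\|f\|_\infty\,k(t/s)\,\sup_{u}|\theta(uA)x|$-type domination with $\int k(r)\,\tfrac{dr}{r}<\infty$, and then Young's inequality on $L^s_*((0,\infty),\tfrac{dr}{r})$ together with the $X(L^s_*)$-boundedness of the family $\{\theta(sA)\}_s$ (again Proposition~\ref{prop_Rs-stetige-Fassung}) closes the estimate.

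I expect the main obstacle to be handling the factor $f(A)$, i.e.\ controlling a \emph{bounded but not $H_0^\infty$} holomorphic function by only $\|f\|_{\infty,\sigma}$ while keeping the $s$-power structure; one cannot simply move $f(A)$ outside the norm because it need not be $\mR_s$-bounded, and it need not have the decay needed for the continuous $\mR_s$-calculus. The resolution, following \cite{LeM04}, is to absorb $f$ into the auxiliary function: replace $\ph$ by $f\cdot\ph$, which still lies in $H^\infty(\Sigma_\sigma)$ but not in $H_0^\infty$; so instead I would split $f=\sum$ over a partition adapted to the sector, or more cleanly use that for fixed $t$ the product $z\mapsto f(z)\ph(tz)$ is, up to the fixed $H_0^\infty$ profile of $\ph$, uniformly dominated: $|f(z)\ph(tz)|\le\|f\|_\infty\,C_0(|tz|^\beta\wedge|tz|^{-\beta})$, so $\{f(A)\ph(tA)\mid t>0\}$ is in fact $\mR_s$-bounded with bound $\le C\|f\|_\infty M_{s,\sigma}(A)$ by a trivial modification of Lemma~\ref{Lemma_ph(zA)-Rs-bd} (the hypothesis there only used such a pointwise bound on the integrand, and $f(z\lambda)R(\lambda,A)$ integrates against the same $L^1(\tfrac{d\lambda}{\lambda})$ kernel). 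Granting that, the whole estimate \eqref{norm-equiv-und-Hoo} follows by: (i) Calder\'on reproducing formula with $\theta$; (ii) the domination $|f(A)\ph(tA)T\psi(sA)x|\le C\,\mR_s(T)\|f\|_\infty\,k(t/s)\,(\text{$\mR_s$-bounded family applied to }\theta(sA)x)$; (iii) Young on $L^s_*(\tfrac{dr}{r})$ inside the Banach function space, legitimate via Propositions~\ref{prop_Rs-stetige-Fassung}/\ref{prop_Rs-stetige-Fassung,s=oo}; and (iv) absorbing the remaining $\theta(sA)$ family, again $\mR_s$-bounded by Lemma~\ref{Lemma_ph(zA)-Rs-bd}, to bound everything by $\big\|(\int_0^\infty|\psi(sA)x|^s\,\tfrac{ds}{s})^{1/s}\big\|_X$ — at the cost of first replacing $\psi$ by $\theta$ and then invoking this very proposition once more (for $s$-power norms with two different $H_0^\infty$ functions, which is a cleaner sub-case where $f=\Id$, $T=\Id$). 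The $s=\infty$ case is handled identically with sups replacing integrals and Proposition~\ref{prop_Rs-stetige-Fassung,s=oo}.
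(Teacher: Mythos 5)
Your toolbox is the right one, and two of your observations are sound and are exactly the devices the paper uses: absorbing $f$ into the $H_0^\infty$-function through the contour representation, so that $\{f(A)\ph(tA)T \,|\, t>0\}$ is $\mR_s$-bounded with bound $\lesssim \mR_s(T)\,\|f\|_{\infty,\sigma}$ (this is precisely Step~1 of the paper's proof, via Corollary~\ref{Rs-Bd-von-Operatoren-T_a,S}), and the H\"older/Fubini Schur-type estimates in $L^s_*((0,\infty),dt/t)$ run pointwise a.e.\ in $\Omega$ and combined with Propositions~\ref{prop_Rs-stetige-Fassung}, \ref{prop_Rs-stetige-Fassung,s=oo} (the paper's Steps~2--3). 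The proof collapses, however, at your central step: the claimed bound $|K(t,s)y|\le C\,\mR_s(T)\|f\|_\infty\,k(t/s)\,|\cdots|$ with $\int_0^\infty k(r)\,\tfrac{dr}{r}<\infty$ for $K(t,s)=c^{-1}f(A)\ph(tA)\,T\,\psi(sA)$. The off-diagonal decay in $t/s$ in Le Merdy-type arguments comes from the product $\ph(tz)\psi(sz)$ evaluated at the \emph{same} contour point $z$, i.e.\ from the composition $\ph(tA)\psi(sA)$ with nothing in between. Since $T$ is not assumed to commute with the resolvents of $A$, the only available representation is
\[
K(t,s)=\frac{c^{-1}}{(2\pi i)^2}\int_\Gamma\int_\Gamma f(z)\ph(tz)\psi(sw)\; zR(z,A)\,T\,wR(w,A)\,\frac{dw}{w}\,\frac{dz}{z},
\]
whose scalar kernel only gives the factorized bound $\big(\int_\Gamma|f(z)\ph(tz)|\tfrac{|dz|}{|z|}\big)\big(\int_\Gamma|\psi(sw)|\tfrac{|dw|}{|w|}\big)\le C\|f\|_\infty$: uniform in $(t,s)$ but with no decay as $t/s\to 0,\infty$. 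So your $k$ is a constant, $\int k(r)\,\tfrac{dr}{r}=\infty$, and the Young/Schur step cannot close. Worse, no argument can supply the missing decay for arbitrary $\mR_s$-bounded $T$: already for $X=L^2$, $s=2$ (where $\mR_2$-boundedness of a set is just uniform boundedness) testing \eqref{norm-equiv-und-Hoo} with $f\equiv1$, $\ph=\psi$ and rank-one contractions $T=\langle\cdot,e\rangle g$ forces $\|x\|_X\approx\big\|(\int_0^\infty|\psi(tA)x|^2\,\tfrac{dt}{t})^{1/2}\big\|_X$, hence a bounded $H^\infty$-calculus for $A$, which a general sectorial operator on $L^2$ need not have. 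In other words, with $T$ wedged between the functional calculus and $x$, the estimate genuinely requires $T$ to commute with the resolvents of $A$, and your scheme is viable only in that regime.

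Two secondary points, and the comparison with the paper. The appeal to ``$\|f\|_{\infty,\sigma}$-boundedness of $f(A)$'' is circular (no bounded $H^\infty$-calculus of $A$ in $X$ is assumed); your own repair, absorbing $f$ into the $H_0^\infty$-profile, is the correct one. Also, the quantity $\sup_u|\theta(uA)x|$ in your claimed domination is an $L^\infty_*$-object and cannot produce the $L^s_*$-norm of $\psi(\cdot A)x$ for $s<\infty$; what the Schur step needs is a bound of the form $k(t/s)\,|R(t,s)\theta(sA)x|$ with $\{R(t,s)\}$ $\mR_s$-bounded, which is exactly what adjacency of $\ph(t\cdot)$ and $\psi(s\cdot)$ would provide. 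Finally, your closing step quietly uses the special case $f\equiv1$, $T=\Id$ of the proposition (comparison of the $\theta$- and $\psi$-power function norms); that is legitimate if proved first, and indeed in the commuting case your single-insertion Schur argument proves the full statement and is a compressed, genuinely different alternative to the paper's route. The paper is organized precisely so as never to form a two-scale operator kernel containing $T$: it pairs $T$ with functions of $A$ only at a single common scale ($S(t)=f(A)G(tA)T$ applied to $\psi(tA)x$, a diagonal use of Proposition~\ref{prop_Rs-stetige-Fassung}), changes the auxiliary function by passing from the scale variable $t$ to the contour variable $z\in\Gamma$ and back with the individually integrable scalar kernels $F(tz)$ and $\ph(\tau z)$ (Steps~2--3), and only then identifies the outcome with $f(A)\ph(\tau A)Tx$ via the reproducing identity $\int_0^\infty F(tz)G(tz)\psi(tz)\,\tfrac{dt}{t}=1$ (Step~4). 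Note, incidentally, that this last identification (matching $u(z)$ of Step~2 with the inner integral in Step~4) itself interchanges $T$ with $\psi(tA)$, so the commutation issue you run into is present there as well; you should not expect your kernel version to circumvent it.
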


\begin{remark}
The norm expressions occurring in the estimate (\ref{norm-equiv-und-Hoo}) will also be referred to as \emph{$s$-power function norms}.
\end{remark}

\begin{proof}[Proof of Proposition \ref{satz-norm-equiv-und-Hoo}]
We will shorten arguments and calculations in some places, since the proof of Proposition \ref{satz-norm-equiv-und-Hoo} works in the same way as the proof given in \cite{LeM04} for the case $X=L^p$ and $s=2$. In some cases one just has to replace Cauchy's inequality by H\"older's inequality.\\

By an approximation argument, one only has to consider the case $f\in H_0^\infty(\Sigma_\sigma)$. Let $x\in X$ such that $\|x\|_\psi := \| ( \int_0^\infty |\psi(tA)x|^s \,\frac{dt}{t} )^{1/s}\|_X < \infty$. Let $\omega \in (\omega_{\mR_s}(A),\sigma)$ and $\Gamma$ be the parameterized contour of $\del\Sigma_\omega$. Choose auxiliary functions $F,G\in H_0^\infty(\Sigma_\sigma)$ such that
\begin{equation} \label{gl_Zerlegung-der-Eins-Integralformel}
\int_0^\infty F(t)G(t)\psi(t)\,\frac{dt}{t} =1.
\end{equation}

Then for each $H\in \{F,G\}$ we have
\[
\sup_{t >0} \int_\Gamma |H(tz)|\, \frac{|dz|}{|z|} < \infty \quad\text{and } \sup_{z\in\del\Sigma_\omega} \int_0^\infty |H(tz)|\,\frac{dt}{t} < \infty,
\]

Analogously as in \cite{LeM04} we will proceed in four steps, where the constants $C_k$ wich occur in each step do not depend on $x$ and $f$.\\

{\em Step 1.} For all $t>0$ we have
\begin{equation}
S(t):=f(A)G(tA)T = \frac{1}{2\pi i} \int_\Gamma f(z) G(tz) zR(z,A)T \,\frac{dz}{z}.
\end{equation}

Then
\[
\| f(\mal) G(t\mal)\|_{L^1\big(\Gamma,  |\frac{dz}{z}| \big)} = \int_\Gamma |f(z) G(tz)| \,\frac{|dz|}{|z|} \le \Big( \sup_{r>0} \int_\Gamma |G(rz)|\, \frac{|dz|}{|z|} \Big) \mal  \|f\|_{\infty,\sigma}.
\]

Since $\mS:=\{zR(z,A)T \,|\, z\in \Gamma \}$ is $\mR_s$-bounded by assumption, Corollary \ref{Rs-Bd-von-Operatoren-T_a,S} yields that also $S((0,\infty))$ is $\mR_s$-bounded with $\mR_s(S((0,\infty))) \le C_1 \,\mR_s(T)\, \|f\|_{\infty,\sigma}$. Hence by Proposition \ref{prop_Rs-stetige-Fassung} we obtain
\begin{eqnarray} \nonumber
\Big\| \Big( \int_0^\infty |f(A)G(tA)\psi(tA)x|^s \,\frac{dt}{t} \Big)^{1/s}\Big\|_X &=& \Big\| \Big( \int_0^\infty |S(t)\psi(tA)x|^s \,\frac{dt}{t} \Big)^{1/s}\Big\|_X \\ \label{Step1-1}
&\le& C_2 \mal \mR_s(T)\mal \|f\|_{\infty,\sigma}\mal \|x\|_\psi.
\end{eqnarray}

(with the usual modification if $s=\infty$ using Proposition \ref{prop_Rs-stetige-Fassung,s=oo}.)\\

{\em Step 2.} Let $w(t):= S(t)\psi(tA)x$ for all $t>0$ and $u(z):= \ds\int_0^\infty F(tz)w(t)\,\frac{dt}{t}$. By choosing appropriate representatives, by H\"older's inequality and Fubini's theorem we have $\mu$-a.e. for $s<\infty$:
\begin{eqnarray*}
\int_\Gamma |u(z)|^s \, \frac{|dz|}{|z|} &\le& \int_\Gamma\, \Big( \int_0^\infty |F(tz)w(t)| \,\frac{dt}{t} \Big)^s \frac{|dz|}{|z|} \\
&\le& \sup_{z\in\Gamma} \Big(\int_0^\infty |F(tz)|\, \frac{dt}{t} \Big)^{s-1} \mal \Big( \sup_{t>0} \int_\Gamma |F(tz)|\,\frac{|dz|}{|z|} \Big)\mal    \int_0^\infty|w(t)|^s \, \frac{dt}{t},
\end{eqnarray*}

hence we obtain the estimate
\begin{equation} \label{Step2-1}
\Big\| \Big( \int_\Gamma |u(z)|^s \, \frac{|dz|}{|z|} \Big)^{1/s}\Big\|_X \le C_3 \mal \Big\| \Big( \int_0^\infty |w(t)|^s \, \frac{dt}{t}\Big)^{1/s}\Big\|_X \stackrel{(\ref{Step1-1})}{\le} C_4\mal \mR_s(T) \mal  \|f\|_{\infty,\sigma}\mal \|x\|_\psi.
\end{equation}

If $s=\infty$, we obtain similarly
\[
\sup_{z\in\Gamma} |u(z)| \le \Big( \sup_{z\in\Gamma} \int_0^\infty |F(tz)|\, \frac{dt}{t} \Big) \mal \sup_{t>0} |w(t)|,
\]

hence also
\begin{equation} \label{Step2-1-oo}
\big\| \sup_{z\in\Gamma} |u(z)| \big\|_X \le C_3\mal \big\|  \sup_{t>0} |w(t)| \big\|_X \stackrel{(\ref{Step1-1})}{\le} C_4 \mal \mR_s(T) \mal  \|f\|_{\infty,\sigma}\mal \|x\|_\psi.
\end{equation}

{\em Step 3.} Let $v(t):= \ds\int_\Gamma \ph(tz)zR(z,A)u(z)\,\frac{dz}{z}$ for all $t>0$. Then again, with H\"older's inequality and Fubini's theorem we can conclude if $s<\infty$:
\begin{eqnarray*}
\int_0^\infty |v(t)|^s\, \frac{dt}{t} &\le & \int_0^\infty \Big( \int_\Gamma |\ph(tz)|\, |zR(z,A)u(z)|\,\frac{|dz|}{|z|}\Big)^s    \,\frac{dt}{t}\\
&\le& \sup_{t>0}   \Big(\int_\Gamma |\ph(tz)| \,\frac{|dz|}{|z|}\Big)^{s-1} \mal \Big( \sup_{z\in\Gamma}  \int_0^\infty |\ph(tz)|\,\frac{dt}{t} \Big)  \mal \int_\Gamma |zR(z,A)u(z)|^s \,\frac{|dz|}{|z|}.
\end{eqnarray*}

Using again $\mR_s$-boundedness of $\{zR(z,A) \,|\, z\in \Gamma\}$ in the same way as in Step 1 we obtain
\begin{eqnarray} \nonumber
&&\Big\| \Big( \int_0^\infty |v(t)|^s\, \frac{dt}{t}\Big)^{1/s}\Big\|_X \le C_5 \mal \Big\| \Big(  \int_\Gamma |zR(z,A)u(z)|^s \,\frac{|dz|}{|z|} \Big)^{1/s}\Big\|_X \\ \label{gl-proof-equiv-step3}
&\le& C_5\mR_s(\mS) \mal  \Big\| \Big(  \int_\Gamma |u(z)|^s \,\frac{|dz|}{|z|} \Big)^{1/s}\Big\|_X
\stackrel{(\ref{Step2-1})}{\le} C_6\mal \mR_s(T)\mal  \|f\|_{\infty,\sigma}\mal \|x\|_\psi.
\end{eqnarray}

The analogous inequality holds also in the case $s=\infty$, which can be shown in the same manner as in Step 2.\\

{\em Step 4.} By analytic continuation we have
\[
\int_0^\infty F(tz)G(tz)\psi(tz)\,\frac{dt}{t} = 1 \quad\text{ for all } z\in\Sigma_\sigma.
\]

By the multiplicativity of the functional calculus (and Fubini) we obtain
\[
f(A) = \int_0^\infty F(tA)G(tA)\psi(tA)f(A)\,\frac{dt}{t},
\]

hence for all $\tau>0$:
\begin{eqnarray*}
f(A)\ph(\tau A)Tx &=& \int_0^\infty \ph(\tau A)F(tA)G(tA)\psi(tA)f(A)Tx \,\frac{dt}{t}\\
&=& \int_0^\infty \Big( \frac{1}{2\pi i} \int_\Gamma \ph(\tau z)F(tz)R(z,A)\,dz \Big) G(tA)\psi(tA)f(A)Tx \,\frac{dt}{t}\\
&=& \frac{1}{2\pi i} \int_\Gamma   \ph(\tau z)R(z,A)  \Big( \int_0^\infty F(tz) G(tA)\psi(tA)f(A)Tx \,\frac{dt}{t} \Big) \,dz \\
&=& \frac{1}{2\pi i} \int_\Gamma   \ph(\tau z)R(z,A) u(z) \,dz= v(\tau ).
\end{eqnarray*}

So with (\ref{gl-proof-equiv-step3}) the claim follows for $f\in H_0^\infty(\Sigma_\sigma)$.
\end{proof}

\subsection{Discrete $s$-power function norms} \label{subsection_Discrete_s-power_function_norms}

We now turn to a version of Proposition \ref{satz-norm-equiv-und-Hoo} dealing with discrete counterparts to the $s$-power function norms in estimate \ref{norm-equiv-und-Hoo} of the form $$\Big\| \Big( \sum_{j\in \Z} |\ph(2^j A)x|^s \Big)^{1/s} \Big\|_X.$$ Such a version cannot hold in full generality for arbitrary $\ph,\psi\in H_0^\infty(\Sigma_\sigma)\ohne\{0\}$, because such functions could have "too many zeros", and the crucial step (\ref{gl_Zerlegung-der-Eins-Integralformel}) in the proof of Proposition \ref{satz-norm-equiv-und-Hoo} would then break down. For a concrete counterexample consider $A=\Id_X$ and $\ph\in H_0^\infty(\Sigma_{\pi/4})$ with $\ph(2^j)=0$ for all $j\in\Z$. Hence, we will restrict ourselves to a suitable subclass of $H_0^\infty$-functions. The assumptions for this class of functions are rather technical and made to fit our needs, but they are not too restrictive: we will show that the standard $H^\infty_0$-functions we usually use as concrete auxiliary functions belong to this subclass of $H_0^\infty$.

\begin{definition} \label{def_UE}
Let $\sigma\in (0,\pi]$ and $\ph\in H_0^\infty(\Sigma_\sigma)$ with $0\notin\ph(\Sigma_\sigma)$. We say that $\ph$ belongs to the class $\Phi^\Sigma_{\sigma,0}$ if the following property (named after {\em uniform in $\mE$}) holds:
\begin{itemize}
\item[(UE)] There exist $d\in\Z$ and a set of functions $\mF\tm \mE(\Sigma_\sigma)$ such that $\mF$ fulfills condition (\ref{cor-glm-mE-eq}) from Corollary \ref{cor-glm-mE} and
\[
\big\{ \ph(2^{j}t\mal)/\ph(2^{j-d}\mal) ,  \ph(2^{j+d}\mal)/\ph(2^{j}t\mal)  \;\big|\; j\in\Z, t\in [1,2] \big\} \tm \{ f(r\mal) \:|\: f\in\mF, r>0\}.
\]
\end{itemize}
\end{definition}

The motivation of this definition is the following conclusion, which is an immediate consequence of Corollary \ref{cor-glm-mE}:
\begin{lemma} \label{lemma_operatoren-Sj(t)-sind-Rs}
Let $\sigma\in (0,\pi]$ and $\ph\in \Phi^\Sigma_{\sigma,0}$. Choose $d\in\Z$ due to property (UE) above and define the operators $S_j(t):= \big(\ph(2^{j+d}\mal)/\ph(2^{j}t\mal) \big)(A) $ and $\overline{S}_j(t) := \big( \ph(2^{j}t\mal)/\ph(2^{j-d}\mal)\big)(A)$ for all $j\in\Z, t\in[0,1]$. Then the set
\[
\{ S_j(t), \overline{S}_j(t) \,|\, j\in\Z, t\in [1,2]\}
\]

is $\mR_s$-bounded, and for all $ j\in\Z, t\in [1,2]$ we have
\begin{equation} \label{lemma_operatoren-Sj(t)-sind-Rs-eq}
\ph(2^{j}tA)\,S_j(t) = S_j(t) \ph(2^{j}tA) = \ph(2^{j+d}A), \quad\text{hence } S_j(t)  = \ph(2^{j}tA)^{-1}\ph(2^{j+d}A).
\end{equation}

and
\begin{equation} \label{lemma_operatoren-Sj(t)-sind-Rs-eq2}
\ph(2^{j-d}A)\,\overline{S}_j(t) = \overline{S}_j(t) \ph(2^{j-d}A) = \ph(2^{j}tA) \quad\text{hence } \overline{S}_j(t)  = \ph(2^{j-d}A)^{-1}\ph(2^{j}tA),
\end{equation}
\end{lemma}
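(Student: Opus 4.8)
The plan is to read both assertions directly off Corollary \ref{cor-glm-mE} together with the multiplicativity of the $\mE(\Sigma_\sigma)$-functional calculus; once property (UE) is unwound, the statement is little more than a translation of that corollary. So fix $d\in\Z$ and $\mF\tm\mE(\Sigma_\sigma)$ as furnished by (UE). The first thing I would record is that, by (UE), each of the symbols $\ph(2^{j+d}\mal)/\ph(2^jt\mal)$ and $\ph(2^jt\mal)/\ph(2^{j-d}\mal)$ equals $f(r\mal)$ for some $f\in\mF$ and some $r>0$, hence lies in $\mE(\Sigma_\sigma)$; in particular $S_j(t)$ and $\overline{S}_j(t)$ are well-defined bounded operators on $X$. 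For the $\mR_s$-boundedness it then suffices to observe the inclusion
\[
\{\, S_j(t),\overline{S}_j(t) \;|\; j\in\Z,\, t\in[1,2]\,\} \;\tm\; \{\, f(rA) \;|\; f\in\mF,\, r>0\,\},
\]
and that the set on the right is $\mR_s$-bounded by Corollary \ref{cor-glm-mE} applied with $\delta=0$ (which is admissible since here $\sigma>\omega_{\mR_s}(A)$, so $0<\sigma-\omega_{\mR_s}(A)$, and $\Sigma_0=(0,\infty)$); as a subset of an $\mR_s$-bounded set is again $\mR_s$-bounded, the first claim follows.

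For the identities (\ref{lemma_operatoren-Sj(t)-sind-Rs-eq}), (\ref{lemma_operatoren-Sj(t)-sind-Rs-eq2}) I would use that $\ph(2^jt\mal)\in H_0^\infty(\Sigma_\sigma)$, being a positive dilation of $\ph$, so $\ph(2^jtA)\in L(X)$, and that $g\mapsto g(A)$ is an algebra homomorphism from $\mE(\Sigma_\sigma)$ into $L(X)$ (cf. Subsection \ref{subsection-Hoo}). Since, as germs of holomorphic functions on $\Sigma_\sigma$,
\[
\ph(2^jt\zeta)\cdot\frac{\ph(2^{j+d}\zeta)}{\ph(2^jt\zeta)}=\ph(2^{j+d}\zeta) \qquad\text{and}\qquad \frac{\ph(2^jt\zeta)}{\ph(2^{j-d}\zeta)}\cdot\ph(2^{j-d}\zeta)=\ph(2^jt\zeta),
\]
multiplicativity gives $\ph(2^jtA)\,S_j(t)=\ph(2^{j+d}A)$ and $\ph(2^{j-d}A)\,\overline{S}_j(t)=\ph(2^jtA)$, and the products in the reversed order coincide because all operators in sight are functions of $A$ and hence commute. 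Finally, the rewritings $S_j(t)=\ph(2^jtA)^{-1}\ph(2^{j+d}A)$ and $\overline{S}_j(t)=\ph(2^{j-d}A)^{-1}\ph(2^jtA)$ follow from these identities by left-composition with $\ph(2^jtA)^{-1}$, resp. $\ph(2^{j-d}A)^{-1}$; these (in general unbounded) inverses are well-defined because $\ph(cA)$ is injective for every $c>0$, which is exactly the role of the requirement $0\notin\ph(\Sigma_\sigma)$ built into Definition \ref{def_UE}.

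I do not anticipate a genuine obstacle here: the analytic substance is entirely packaged in Corollary \ref{cor-glm-mE}, and what remains is bookkeeping inside the functional calculus. The two points that deserve a word of care are the admissibility of taking $\delta=0$ in Corollary \ref{cor-glm-mE} --- which is why the standing hypothesis $\sigma>\omega_{\mR_s}(A)$ must be kept in mind --- and the injectivity of $\ph(cA)$ for zero-free $\ph$, without which the concluding ``hence'' in (\ref{lemma_operatoren-Sj(t)-sind-Rs-eq}), (\ref{lemma_operatoren-Sj(t)-sind-Rs-eq2}) would be empty.
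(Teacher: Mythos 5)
Your argument is correct and coincides with the paper's intent: the paper offers no separate proof, presenting the lemma as an immediate consequence of Corollary \ref{cor-glm-mE} (applied, as you note, with $\delta=0$ under the standing assumption $\sigma>\omega_{\mR_s}(A)$) together with the multiplicativity of the $\mE(\Sigma_\sigma)$-calculus, which is exactly your route. The one point you state more casually than it deserves is the injectivity of $\ph(cA)$: zero-freeness of $\ph$ alone does not obviously yield it unless $1/\ph$ is polynomially bounded, but it does follow in this setting from the very identities you derived (if $\ph(2^{j}tA)x=0$ then $\ph(2^{j+d}A)x=0$ and hence $\ph(rA)x=0$ for all $r$ in an interval, so $z\mapsto\ph(zA)x$ vanishes identically by analyticity and $x=0$ via the Calder\'on reproducing formula used after the definition of $\|\cdot\|_{\theta,s,A,\ph}$), and in any case the ``hence'' is only notational shorthand, since later arguments use only the bounded operators $S_j(t)$, $\overline{S}_j(t)$.
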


In fact, instead of the technical condition (UE) we could assume for $\ph$ that the conclusion of Lemma \ref{lemma_operatoren-Sj(t)-sind-Rs} holds, since this is what we are interested in. Nevertheless, we want to formulate a condition that dependes only on the function $\ph$ and this is the reason for Definition \ref{def_UE}.\\

We now turn to the most important examples of functions in $\Phi^\Sigma_{\sigma,0}$.
\begin{bspe} \label{bspe_Standardfkt.-in-Phi^Sigma}
\begin{itemize}
\item[(1)] Let $\sigma\in (0,\pi)$ and $m\in\N$ and define $\ph(z):= \frac{z^m}{(1+z)^{2m}}$ for all $z\in\Sigma_\sigma$, then $\ph\in\Phi^{\Sigma}_{0,\sigma}$.
\item[(2)] Let $\sigma\in (0,\pi/2)$ and $\alpha>0$ and define $\ph(z):= z^\alpha e^{-z}$ for all $z\in\Sigma_\sigma$, then $\ph\in\Phi^{\Sigma}_{0,\sigma}$.
\end{itemize}

\begin{proof}
(1) We only consider the case $m=1$, where we will show that the condition (UE) is fulfilled with $d=0$. For $j\in\Z, t\in [1,2]$ and $z\in\Sigma_\sigma$ define
\[
g_{j,t}(z) := \ph(2^jtz)/\ph(2^jz) = t \, \Big(\frac{1+2^jz}{1+2^jtz}\Big)^2 =  t \, \Big(\frac{1+t^{-1}2^jtz}{1+2^jtz}\Big)^2.
\]

With $r=2^jt>0$ and $\tau := t^{-1} \in [1/2,1]$ we obtain $g_{j,t}(z) = t\mal f_\tau(rz)$ where $f_\tau(z):=\Big(\frac{1+\tau z}{1+z}\Big)^2$. We will show that $\mF :=\{t\mal f_{\tau} \,|\, t,\tau\in [1/2,2]\}$ fulfills condition (\ref{cor-glm-mE-eq}) from Corollary \ref{cor-glm-mE}. So let $\tau\in  [1/2,2]$, then $f_\tau(0)=1$ and $f_\tau(\infty)=\tau^2$, and
\begin{eqnarray*}
\psi_\tau(z) &:=& f_\tau(z) - \frac{f_\tau(0) + f_\tau(\infty)z }{1+z} = \frac{(1+\tau z)^2 - (1+z)(1+\tau^2 z)}{(1+z)^2} = (2\tau - \tau^2 - 1)\,\frac{z}{(1+z)^2},
\end{eqnarray*}

hence $|\psi_\tau(z)|\le |2\tau - \tau^2 - 1|\,\Big|\frac{z}{(1+z)^2}\Big| \le  9\,\Big|\frac{z}{(1+z)^2}\Big|$. This shows that the uniform estimate (\ref{cor-glm-mE-eq}) holds with $\eps=1$. Now consider
\[
h_{j,t}(z) := \ph(2^jz)/\ph(2^jtz) = t^{-1} \, \Big(\frac{1+2^jtz}{1+2^jz}\Big)^2,
\]

then $h_{j,t}(z) = \tau f_t(2^jz)$ with the same notations as above, hence also $\{h_{j,t} \,|\, j\in\Z, t\in [1,2]\}\tm \{f(r\mal) \,|\, f\in\mF \}$.\\

This shows that condition (UE) is fulfilled with $d=0$, hence $\ph\in \Phi^\Sigma_{\sigma,0}$ for $m=1$. The general case $m\in\N$ can be treated analogously, hence we omit the proof.\\

(2) For $j\in\Z, t\in [1,2]$ and $z\in\Sigma_\sigma$ define
\[
g_{j,t}(z) := \ph(2^{j}tz)/\ph(2^{j-1}z) = (2t)^\alpha \, e^{-(2t-1)2^{j-1}z},
\]

then $g_{j,t}(z) = e^{-rz}$ with $r:= (2t-1)2^{j-1} >0$, hence $g_{j,t}\in \{\tau\mal e^{-r\mal} \,|\, \tau\in [1,4^\alpha],r>0\}=:\mF$. Let
\[
h_{j,t}(z) := \ph(2^{j+1}z)/\ph(2^{j}tz) = {2/t}^{\alpha} \, e^{-(2-t)2^{j}z},
\]

then also $h_{j,t}\in \mF$. Since $\mF$ clearly fulfills condition (\ref{cor-glm-mE-eq}) from Corollary \ref{cor-glm-mE}, this shows that condition (UE) is fulfilled with $d=-1$, hence $\ph\in \Phi^\Sigma_{\sigma,0}$.
\end{proof}
\end{bspe}

We can now turn to the central equivalence of continuous and discrete versions of $s$-power function norms.
\begin{satz} \label{satz-s-norm-equivalence-diskret}
Let $s\in [1,\infty]$ and $A$ be an $\mR_s$-sectorial operator in $X$. Let $\sigma>\omega_{\mR_s(A)}$ and $\ph\in\Phi^\Sigma_{\sigma,0}$. Then there is a constant $C>0$ such that for all $x\in X$:
\begin{equation*} %\label{def-gl-s-norm-equivalence-diskret}
C^{-1} \, \Big\| \Big( \sum_{j\in \Z} |\ph(2^j A)x|^s \Big)^{1/s} \Big\|_X  \le \bigg\| \Big( \int_0^\infty |\ph(tA)x|^s \,\frac{dt}{t} \Big)^{1/s}\bigg\|_X \le  C \, \Big\| \Big(  \sum_{j\in \Z} |\ph(2^j A)x|^s\Big)^{1/s} \Big\|_X .
\end{equation*}

(with the usual modification if $s=\infty$.)
\end{satz}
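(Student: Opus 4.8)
The plan is to deduce both inequalities from a single dyadic decomposition of the integral together with the intertwining operators $S_j(t),\overline{S}_j(t)$ furnished by Lemma \ref{lemma_operatoren-Sj(t)-sind-Rs}; this is the discrete analogue of the argument in Proposition \ref{satz-norm-equiv-und-Hoo}, but now $S_j(t),\overline{S}_j(t)$ do the job that the contour-integral operators did there. The starting point is the elementary change of variables
\[
\int_0^\infty |\ph(tA)x|^s\,\frac{dt}{t} \;=\; \sum_{j\in\Z}\int_1^2 |\ph(2^j tA)x|^s\,\frac{dt}{t}
\]
(with $\sup_{t>0}$ replaced by $\sup_{j\in\Z}\sup_{t\in[1,2]}$ when $s=\infty$), obtained by splitting $(0,\infty)$ into the intervals $[2^j,2^{j+1})$ and substituting $t\mapsto 2^j t$. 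Thus both estimates reduce to comparing $\sum_{j}\int_1^2|\ph(2^j tA)x|^s\frac{dt}{t}$ with $\sum_{j}|\ph(2^jA)x|^s$, i.e.\ to trading $\ph(2^j tA)$ for $\ph(2^{j\pm d}A)$ uniformly in $t\in[1,2]$, which is exactly what Lemma \ref{lemma_operatoren-Sj(t)-sind-Rs} allows.

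For the estimate \textit{continuous $\le C\cdot$ discrete}, fix $d\in\Z$ as in property (UE) and use \eqref{lemma_operatoren-Sj(t)-sind-Rs-eq2}, $\ph(2^j tA)x=\overline{S}_j(t)\,\ph(2^{j-d}A)x$ for $t\in[1,2]$. I regard $(j,t)\in\Z\times[1,2]$ as a $\sigma$-finite measure space (counting measure times $\frac{dt}{t}$) and the strongly measurable family $(j,t)\mapsto\overline{S}_j(t)$, which is $\mR_s$-bounded by Lemma \ref{lemma_operatoren-Sj(t)-sind-Rs}. Proposition \ref{prop_Rs-stetige-Fassung} (and Proposition \ref{prop_Rs-stetige-Fassung,s=oo} for $s=\infty$) — whose proof works verbatim over any $\sigma$-finite measure space — then gives
\[
\Big\|\Big(\sum_{j\in\Z}\int_1^2|\overline{S}_j(t)\ph(2^{j-d}A)x|^s\,\tfrac{dt}{t}\Big)^{1/s}\Big\|_X
\le \mR_s\big(\{\overline{S}_j(t)\}\big)\,\Big\|\Big(\sum_{j\in\Z}\int_1^2|\ph(2^{j-d}A)x|^s\,\tfrac{dt}{t}\Big)^{1/s}\Big\|_X .
\]
Since the inner expression no longer depends on $t$, the right side equals $(\ln 2)^{1/s}\,\|(\sum_j|\ph(2^{j-d}A)x|^s)^{1/s}\|_X$, and reindexing $j\mapsto j+d$ turns this into the discrete norm; combined with the decomposition identity this is the first inequality.

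For \textit{discrete $\le C\cdot$ continuous} I run the argument through $S_j(t)$. By \eqref{lemma_operatoren-Sj(t)-sind-Rs-eq} we have $\ph(2^{j+d}A)x=S_j(t)\ph(2^j tA)x$ for every $t\in[1,2]$; integrating the (constant-in-$t$) left side over $[1,2]$ against $\frac{dt}{t}$, applying the integral–modulus inequality \eqref{ungl-B.f.s.-integral-betrag} and then H\"older's inequality in $t$ (on a measure of total mass $\ln 2$) yields the pointwise bound $|\ph(2^{j+d}A)x|^s\le(\ln 2)^{-1}\int_1^2|S_j(t)\ph(2^j tA)x|^s\,\frac{dt}{t}$, with the obvious variant using suprema when $s=\infty$. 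Summing over $j$, taking the $X$-norm, and applying Proposition \ref{prop_Rs-stetige-Fassung} (again over $\Z\times[1,2]$) to the $\mR_s$-bounded family $(j,t)\mapsto S_j(t)$ with input $(j,t)\mapsto\ph(2^j tA)x$ gives
\[
\Big\|\Big(\sum_{j\in\Z}|\ph(2^{j+d}A)x|^s\Big)^{1/s}\Big\|_X
\le (\ln 2)^{-1/s}\,\mR_s\big(\{S_j(t)\}\big)\,\Big\|\Big(\sum_{j\in\Z}\int_1^2|\ph(2^j tA)x|^s\,\tfrac{dt}{t}\Big)^{1/s}\Big\|_X ,
\]
and the sum–integral on the right is exactly $\|(\int_0^\infty|\ph(tA)x|^s\frac{dt}{t})^{1/s}\|_X$ by the decomposition identity; reindexing $j\mapsto j+d$ on the left completes the proof.

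The routine parts are the dyadic substitution, the H\"older step, and the reindexing. The only points that need a little care are (i) that Proposition \ref{prop_Rs-stetige-Fassung} is invoked over the product index set $\Z\times[1,2]$ rather than over an interval in $\R$ — harmless, since its proof reduces to the summation definition of $\mR_s$-boundedness and only uses $\sigma$-finiteness — and (ii) the usual measurability/representative bookkeeping in the Banach function space, for which \eqref{ungl-B.f.s.-integral-betrag} and Proposition \ref{int_mit_werten_in_X} are precisely the tools provided; the holomorphy of $t\mapsto S_j(t),\overline{S}_j(t),\ph(2^j tA)$ makes strong measurability automatic. The case $s=\infty$ is genuinely easier and runs the same three steps with $\sup_{t\in[1,2]}$ in place of $\int_1^2(\cdot)\frac{dt}{t}$.
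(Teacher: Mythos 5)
Your proof is correct, and in the direction ``discrete $\lesssim$ continuous'' it is essentially the paper's own argument: the paper also uses $\ph(2^{j+d}A)x=S_j(t)\ph(2^jtA)x$, packages the operators into the piecewise-defined family $S(r)=\ph(rA)^{-1}\ph(2^{j+d}A)$ for $r\in[2^j,2^{j+1})$, and applies Proposition \ref{prop_Rs-stetige-Fassung} on $(0,\infty)$. Where you genuinely diverge is the direction ``continuous $\lesssim$ discrete'': the paper does \emph{not} invoke the continuous version of $\mR_s$-boundedness there; after the same dyadic splitting and the identity $\ph(2^{j}tA)x=\overline{S}_j(t)\ph(2^{j-d}A)x$ (in a reindexed variant), it truncates to $|j|\le N$, approximates each $\int_1^2(\cdot)\,dt$ by Riemann sums along analytic paths, applies the finite-sum definition of $\mR_s$-boundedness, and passes to the limit twice with the Fatou property. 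You instead apply Proposition \ref{prop_Rs-stetige-Fassung} directly over the index set $\Z\times[1,2]$. This is legitimate, but note that the proposition is stated only for an interval $J\tm\R$; you therefore either need the (true and routine) remark that its proof works over any $\sigma$-finite measure space --- which you make explicitly --- or, more economically, the re-encoding $r=2^jt$, which turns $(j,t)\mapsto\overline{S}_j(t)$ and the constant-in-$t$ input into piecewise-defined functions on $(0,\infty)$ exactly as the paper does in the other direction; strong measurability comes from analyticity of $t\mapsto\overline{S}_j(t)$, as you observe. What each route buys: yours is shorter and treats both directions symmetrically, while the paper's Riemann-sum/Fatou argument stays strictly within the statements as literally proved in the text. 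Two minor simplifications: in your $S_j$-direction the integral-modulus/H\"older step is superfluous, since $|S_j(t)\ph(2^jtA)x|=|\ph(2^{j+d}A)x|$ pointwise for each fixed $t\in[1,2]$, so the averaged identity is immediate; and for $s=\infty$ the inequality ``discrete $\le$ continuous'' is trivial ($\sup_{j\in\Z}\le\sup_{t>0}$), so only the $\overline{S}_j$-direction needs Proposition \ref{prop_Rs-stetige-Fassung,s=oo}, which is already stated for arbitrary index sets.
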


\begin{proof}
We assume first that $s<\infty$. Choose the integer $d\in\Z$ due to property (UE) of $\ph$ and define
\[
S_j(t) := \big(\ph(2^{j+d}\mal)/\ph(2^{j}t\mal) \big)(A) = \ph(2^{j}tA)^{-1}\ph(2^{j+d}A)
\]

and
\[
\overline{S}_j(t) := \big( \ph(2^{j+d}t\mal)/\ph(2^{j}\mal)\big)(A)=\ph(2^{j}A)^{-1}\ph(2^{j+d}tA)
\]

for all $j\in\Z, t\in[0,1]$. By Lemma \ref{lemma_operatoren-Sj(t)-sind-Rs} the set $\mS:=\{ S_j(t), \overline{S}_j(t) \,|\, j\in\Z, t\in [1,2]\}$ is $\mR_s$-bounded, so let $C:=\mR_s(\mS)$.\\

Let $x\in X$. Define
\[
S(r):= \sum_{j\in\Z} \1I_{[2^j,2^{j+1})}(r) \ph(rA)^{-1}\ph(2^{j+d}A) \quad\text{and } y(t):=\ph(tA)x \quad\text{for all } t,r>0,
\]

then $y:(0,\infty)\to X$ is measurable, $S:(0,\infty)\to L(X)$ is strongly measurable and $S(2^jt) = \ph(2^jtA)^{-1}\ph(2^{j+d}A) = S_j(t)$ for all $t\in[1,2)$ and $j\in\Z$. Moreover, $S((0,\infty))\tm \mS$ is $\mR_s$-bounded. By Proposition \ref{prop_Rs-stetige-Fassung} we obtain
\begin{eqnarray*}
&&\Big\| \Big( \sum_{j\in \Z} |\ph(2^j A)x|^s \Big)^{1/s} \Big\|_X = \Big\| \Big( \sum_{j\in \Z} |\ph(2^{j+d} A)x|^s \Big)^{1/s} \Big\|_X  \\
&=& \Bigg\| \Bigg( \sum_{j\in \Z}  \int_1^2 |\ph(2^jtA)S_j(t)x|^s \, dt \Bigg)^{1/s} \Bigg\|_X
\approx \Bigg\| \Bigg( \sum_{j\in \Z}  \int_1^2 | S(2^jt) y(2^jt)|^s \, \frac{dt}{t} \Bigg)^{1/s} \Bigg\|_X \\
&=& \Bigg\| \Bigg( \sum_{j\in \Z}  \int_{2^j}^{2^{j+1}} |S(t)y(t)|^s \, \frac{dt}{t} \Bigg)^{1/s} \Bigg\|_X
= \Bigg\| \Bigg( \int_{0}^\infty |S(t)(t^{-s}y(t))|^s \, dt \Bigg)^{1/s} \Bigg\|_X \\
&\le& C\mal  \Bigg\| \Bigg( \int_{0}^\infty |\ph(tA)x|^s \, \frac{dt}{t} \Bigg)^{1/s} \Bigg\|_X.
\end{eqnarray*}

We now turn to the inverse inequality. By the Fatou property we obtain in a first step
\begin{eqnarray*}
&& \Big\| \Big( \int_{0}^\infty |\ph(tA)x|^s \, \frac{dt}{t} \Big)^{1/s} \Big\|_X \approx \Big\| \Big( \sum_{j\in \Z}  \int_1^2 |\ph(2^j t A)x|^s \, dt \Big)^{1/s} \Big\|_X \\
&=&  \Big\| \Big( \sum_{j\in \Z}  \int_1^2 |  \overline{S}_j(t) \ph(2^j A)x|^s \, dt \Big)^{1/s} \Big\|_X
\le \liminf_{N\to\infty}  \Big\| \Big(  \sum_{j= -N}^N  \int_1^2 |  \overline{S}_j(t) \ph(2^j A)x|^s \, dt \Big)^{1/s} \Big\|_X.
\end{eqnarray*}

Since $t\mapsto \overline{S}_j(t)$ is analytic, we can work with a version with analytic, hence in particular continuous, paths (cf. Subsection \ref{subsection_B.f.s.}) and obtain
\[
\int_1^2 |  \overline{S}_j(t) \ph(2^j A)x|^s \, dt = \lim_{\ell\to\infty} \frac1\ell  \sum_{k=1}^\ell |\overline{S}_j\big(1+\frac{k}{\ell}\big) \ph(2^j A)x|^s
\]

$\mu$-a.e. in $\Omega$. Let $ \overline{S}_{j,k}^{(\ell)} :=  \overline{S}_j\big(1+\frac{k}{\ell}\big)$ for all $j\in\Z,\ell\in\N$ and $k\in\N_{\le \ell}$, then using the Fatou property again leads to
\begin{eqnarray*}
&& \Big\| \Big( \int_{0}^\infty |\ph(tA)x|^s \, \frac{dt}{t} \Big)^{1/s} \Big\|_X \le  \liminf_{N,\ell\to\infty} \,\ell^{-1/s}\, \Big\| \Big(  \sum_{j= -N}^N  \sum_{k=1}^\ell  | \overline{S}_{j,k}^{(\ell)} \ph(2^j A)x|^s \, \Big)^{1/s} \Big\|_X\\
&\le& C\mal\liminf_{N,\ell\to\infty}  \,\ell^{-1/s}\, \Big\| \Big(  \sum_{j= -N}^N  \sum_{k=1}^\ell  |\ph(2^j A)x|^s \, \Big)^{1/s} \Big\|_X \le C\mal  \Big\| \Big(  \sum_{j= -\infty}^\infty  | \ph(2^j A)x|^s \, \Big)^{1/s} \Big\|_X.
\end{eqnarray*}

Now let $s=\infty$. Then we have trivially $\sup_{j\in\Z} |\ph(2^jA)x| \le \sup_{t>0} |\ph(tA)x|$ for all $x\in X$, hence we obtain the first inequality $\big\| \sup_{j\in\Z} |\ph(2^jA)x| \big\|_X \le \big\| \sup_{t>0} |\ph(tA)x|\big\|_X$ for all $x\in X$. For the second estimate we use the same notations as above and obtain by Proposition \ref{prop_Rs-stetige-Fassung,s=oo}
\begin{eqnarray*}
&&\big\| \sup_{t>0} |\ph(tA)x|\big\|_X = \big\| \sup_{j\in\Z}\sup_{t\in[1,2]} |\ph(2^{j+d}tA)x|\big\|_X = \big\| \sup_{j\in\Z}\sup_{t\in[1,2]} |\overline{S}_j(t) \ph(2^jA)x|\big\|_X  \\
&=&   \big\| \sup_{(j,t)\in\Z\times [1,2]}  |\overline{S}_j(t) \ph(2^jA)x|\big\|_X \le C\mal \big\| \sup_{(j,t)\in\Z\times [1,2]}  |\ph(2^jA)x|\big\|_X = C\mal\big\| \sup_{j\in\Z} |\ph(2^jA)x| \big\|_X.
\end{eqnarray*}
\end{proof}

If we combine Proposition \ref{satz-s-norm-equivalence-diskret} with Proposition \ref{satz-norm-equiv-und-Hoo} we obtain
\begin{satz} \label{satz-norm-equiv-und-Hoo-diskret}
Let $s\in [1,\infty]$ and $A$ be an $\mR_s$-sectorial operator in $X$. Let $\sigma>\omega_{\mR_s}(A)$ and $\ph,\psi\in \Phi^\Sigma_{\sigma,0}$. Then there is a constant $C>0$ such that for all $f\in H^\infty(\Sigma_\sigma)$ and $x\in X$ we have
\begin{eqnarray} \label{norm-equiv-und-Hoo-diskret}
\Big\| \Big( \sum_{j\in \Z} |f(A)\ph(2^j A)x|^s \Big)^{1/s} \Big\|_X  &\le& \  C \, \|f\|_{\infty,\sigma} \, \Big\| \Big(  \sum_{j\in \Z} |\psi(2^j A)x|^s\Big)^{1/s} \Big\|_X
\end{eqnarray}

(with the usual modification if $s=\infty$).
\qed\end{satz}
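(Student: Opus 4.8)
The plan is to route the discrete estimate through the corresponding \emph{continuous} $s$-power function norms: Proposition~\ref{satz-s-norm-equivalence-diskret} lets one pass back and forth between the discrete and continuous pictures, and Proposition~\ref{satz-norm-equiv-und-Hoo}, applied with $T=\Id_X$, absorbs the factor $f$. Explicitly, I would establish the chain
\begin{align*}
\Big\| \Big( \sum_{j\in\Z} |f(A)\ph(2^jA)x|^s\Big)^{1/s}\Big\|_X
&\le C_1\, \bigg\| \Big(\int_0^\infty |f(A)\ph(tA)x|^s\,\tfrac{dt}{t}\Big)^{1/s}\bigg\|_X \\
&\le C_2\,\|f\|_{\infty,\sigma}\, \bigg\| \Big(\int_0^\infty |\psi(tA)x|^s\,\tfrac{dt}{t}\Big)^{1/s}\bigg\|_X \\
&\le C_3\,\|f\|_{\infty,\sigma}\, \Big\| \Big(\sum_{j\in\Z} |\psi(2^jA)x|^s\Big)^{1/s}\Big\|_X ,
\end{align*}
with $C_1,C_2,C_3$ independent of $f$ and $x$, and then put $C:=C_1C_2C_3$. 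The second inequality is exactly Proposition~\ref{satz-norm-equiv-und-Hoo} with $T=\Id_X$ (note $\ph,\psi\in\Phi^\Sigma_{\sigma,0}\subseteq H_0^\infty(\Sigma_\sigma)\setminus\{0\}$, since non-vanishing is built into $\Phi^\Sigma_{\sigma,0}$), and the third is the ``continuous $\le$ discrete'' half of Proposition~\ref{satz-s-norm-equivalence-diskret} applied to $\psi$. For $s=\infty$ one replaces the $s$-integrals and sums by suprema throughout and uses the corresponding $\sup$-versions of these two propositions.

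The only link that is not an immediate citation is the first inequality, and I would obtain it by re-running the proof of the ``discrete $\le$ continuous'' half of Proposition~\ref{satz-s-norm-equivalence-diskret}, now dragging the operator $f(A)$ along. Pick $d\in\Z$ from property (UE) of $\ph$ and set $S_j(t):=\big(\ph(2^{j+d}\cdot)/\ph(2^{j}t\cdot)\big)(A)=\ph(2^jtA)^{-1}\ph(2^{j+d}A)$ as in Lemma~\ref{lemma_operatoren-Sj(t)-sind-Rs}, so that $\mS:=\{S_j(t)\mid j\in\Z,\ t\in[1,2]\}$ is $\mR_s$-bounded. The crucial point is that $f(A)\ph(2^jtA)=\big(f\cdot\ph(2^jt\cdot)\big)(A)$ is a \emph{bounded} operator (the product $f\cdot\ph(2^jt\cdot)$ again lies in $H_0^\infty(\Sigma_\sigma)$), that it commutes with $S_j(t)$, both being functions of $A$, and hence
\[
f(A)\,\ph(2^{j+d}A)x \;=\; S_j(t)\,\big(f(A)\ph(2^jtA)x\big)\qquad(t\in[1,2]).
\]
Setting $y(r):=f(A)\ph(rA)x$ — a well-defined, measurable $X$-valued function of $r>0$ (by the same continuity-of-the-calculus remark used in the proof of Proposition~\ref{satz-s-norm-equivalence-diskret}) — and $S(r):=\sum_{j\in\Z}\1I_{[2^j,2^{j+1})}(r)\,\ph(rA)^{-1}\ph(2^{j+d}A)$, one has $S((0,\infty))\subseteq\mS$ and $f(A)\ph(2^{j+d}A)x=S(2^jt)\,y(2^jt)$ for $t\in[1,2)$. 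Summing over $j$, comparing $\int_1^2(\cdot)\,dt$ with $\int_1^2(\cdot)\,\frac{dt}{t}$ on $[1,2]$, rewriting the resulting sum of integrals over $[2^j,2^{j+1})$ as a single integral over $(0,\infty)$, and applying Proposition~\ref{prop_Rs-stetige-Fassung} (resp. Proposition~\ref{prop_Rs-stetige-Fassung,s=oo} if $s=\infty$) yields the first inequality with $C_1$ a fixed multiple of $\mR_s(\mS)$; a shift $j\mapsto j-d$ of the summation index then turns $2^{j+d}$ back into $2^j$ on the left-hand side.

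The main — and essentially the only — thing requiring care is the uniformity in $f$: one has to check that inserting $f(A)$ does not inflate the relevant $\mR_s$-bound, i.e. that $C_1$ depends only on $A$, $s$ and $\ph$ (through $\mR_s(\mS)$) and not on $f$. This works precisely because $f(A)$ itself is never asked to be $\mR_s$-bounded — in general it is not, cf. the discussion around $\widetilde A_s$ — but always occurs glued to a decaying $H_0^\infty$-function, so it is swallowed into the $X$-valued function $y$, while the operator family actually handed to Proposition~\ref{prop_Rs-stetige-Fassung} is $\mS$, which has nothing to do with $f$. The same device (``a possibly unbounded function of $A$ is harmless as long as it sits next to a decaying $H_0^\infty$-function'') is what allows Propositions~\ref{satz-s-norm-equivalence-diskret} and \ref{satz-norm-equiv-und-Hoo} to be combined at all, so I would make sure to state it cleanly before carrying out the computation.
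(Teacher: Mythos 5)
Your proposal is correct and follows exactly the route the paper intends, which states the result simply as the combination of Proposition~\ref{satz-s-norm-equivalence-diskret} with Proposition~\ref{satz-norm-equiv-und-Hoo}. Your careful treatment of the first link in the chain---re-running the ``discrete $\le$ continuous'' argument with $f(A)$ attached to the decaying $H_0^\infty$-function, so that the possibly unbounded $f(A)$ never needs to be $\mR_s$-bounded (alternatively one could first reduce to $f\in H_0^\infty$ by the approximation argument already used for Proposition~\ref{satz-norm-equiv-und-Hoo}, making $f(A)$ bounded and the citation of Proposition~\ref{satz-s-norm-equivalence-diskret} literal)---is a sound filling-in of the detail the paper leaves implicit.
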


\begin{bem} \label{bem_diskrete-Normen-Haase}
By simple modifications of the proof one can show that the conclusion of Proposition \ref{satz-s-norm-equivalence-diskret} is also true in the case that $A$ is assumed to be an injective sectorial operator with dense domain and range in an arbitrary Banach space $X$, and one replaces the $s$-power function norms by the related norm expressions
\begin{equation}
\bigg( \int_0^\infty \|\ph(tA)x\|_X^s \, \frac{dt}{t} \bigg)^{1/s}, \qquad\text{and } \; \Big( \sum_{j\in\Z} \|\ph(2^jA)x\|_X^s\Big)^{1/s}, \;\text{respectively.}
\end{equation}

Moreover, also Proposition \ref{satz-norm-equiv-und-Hoo-diskret} still holds in this setting, if one uses \cite{haase}, Theorem 6.4.2 instead of Proposition \ref{satz-norm-equiv-und-Hoo}.
\end{bem}

\subsection{$\mR_s$-bounded \HU-calculus} \label{subsection_Rs-bd-Hoo}

For this subsection we fix some $s\in [1,\infty]$.

\begin{definition}
Let $\sigma>\omega(A)$. We say that $A$ has an \emph{$\mR_s$-bounded $H^\infty(\Sigma_\sigma)$-calculus} if the set
\[
\{ f(A) \,|\, f\in H^\infty(\Sigma_\sigma), \|f\|_\infty \le 1\}
\]

is $\mR_s$-bounded, which is equivalent to the existence of a constant $C>0$ such that the estimate
\[
\| (f_j(A)x_j)_j \|_{X(\ell^s)} \le C\, \sup_{j\in\N}\|f_j\|_{\infty,\sigma}\,\mal \| (x_j)_j \|_{X(\ell^s)}
\]

holds for all $f_j\in H^\infty(\Sigma_\sigma)$ and $x_j\in X$, $j\in\N$. In this case we define
\[
M^\infty_{s,\sigma}(A) :=  \mR_s\big( \big\{   f(A) \;|\; f \in H^\infty(\Sigma_\sigma), \|f\|_{\infty,\sigma} \le 1 \big\}\big).
\]

Moreover,
\[
\omega_{\mR_s^\infty}(A):= \inf\{\sigma\in (\omega(A),\pi] \:|\: \text{ $A$ has an $\mR_s$-bounded $H^\infty(\Sigma_\sigma)$-calculus}\}
\]

is called the \emph{$\mR_s^\infty$-type of $A$}, and in this situation we will also just say that $A$ has an $\mR_s$-bounded $H^\infty$-calculus.
\end{definition}

We trivially have the following
\begin{bem}
Let $A$ have an $\mR_s$-bounded \HU-calculus, then $A$ is also $\mR_s$-sectorial with $\omega_{\mR_s}(A)\le \omega_{\mR_s^\infty}(A)$.
\qed\end{bem}

The property of $A$ having an $\mR_s$-bounded \HU-calculus can be expressed in terms of the diagonal operator $\widetilde{A}_s$.

\begin{lemma} \label{Lemma-Rs-Diagonalop}
Let $\sigma,\sigma'>\omega(A)$. Consider the following assertions:
\begin{itemize}
\item[(1)] For each $f\in H^\infty(\Sigma_{\sigma})$ the operator $f(A)$ is $\mR_s$-bounded,
\item[(2)] The diagonal operator $\widetilde{A}_s$ is sectorial with $\omega(\widetilde{A}_s)<\sigma'$ and has a bounded $H^\infty(\Sigma_{\sigma'})$-calculus in $X(\ell^s)$.
\end{itemize}

Then (2) $\imp$ (1) if $\sigma \ge \sigma'$, and (1) $\imp$ (2) if $\sigma' >\sigma$. Moreover, if (1) holds there is a constant $C_\sigma>0$ such that
\[
\mR_s(f(A)) \le C_{\sigma} \mal \|f\|_{\infty,\sigma} \quad\text{for all } f\in H^\infty(\Sigma_{\sigma'})
\]

for each $\sigma'>\sigma$.
\end{lemma}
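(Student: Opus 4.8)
The plan is to pass back and forth between the single operator $A$ on $X$ and the diagonal operator $\widetilde{A}_s$ on $X(\ell^s)$, using the crucial fact (recorded in Definition/Proposition \ref{defsatz_A-Schlange-in-X(ls)} and Lemma \ref{lemma_funktionalkalkuel-fuer-A-Schlange_s}) that, whenever $A$ is $\mR_s$-sectorial, the functional calculus of $\widetilde{A}_s$ acts by the corresponding diagonal operators with maximal domain. The key observation linking the two is that $\mR_s$-boundedness of a set $\mathcal T\subseteq L(X)$ is exactly uniform boundedness of the associated diagonal operators on $X(\ell^s)$, so that $\mR_s(f(A))=\|f(\widetilde A_s)\|_{L(X(\ell^s))}$ for every $f\in\BB$ with $f(\widetilde A_s)$ bounded. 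Hence the two properties (1) and (2) are close to tautologically equivalent; the content of the statement is the bookkeeping of the sector angles.

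For the direction (2) $\Rightarrow$ (1) with $\sigma\ge\sigma'$: assuming $\widetilde A_s$ has a bounded $H^\infty(\Sigma_{\sigma'})$-calculus, take any $f\in H^\infty(\Sigma_\sigma)$. Since $\sigma\ge\sigma'$ we have $f\in H^\infty(\Sigma_{\sigma'})$ by restriction, so $f(\widetilde A_s)\in L(X(\ell^s))$ with $\|f(\widetilde A_s)\|\le M^\infty_{\sigma'}(\widetilde A_s)\,\|f\|_{\infty,\sigma'}\le M^\infty_{\sigma'}(\widetilde A_s)\,\|f\|_{\infty,\sigma}$. By Lemma \ref{lemma_funktionalkalkuel-fuer-A-Schlange_s}, $f(\widetilde A_s)$ is the diagonal operator $(x_n)\mapsto(f(A)x_n)$; boundedness of this operator on $X(\ell^s)$ is precisely $\mR_s$-boundedness of $f(A)$, and the bound is exactly $\mR_s(f(A))=\|f(\widetilde A_s)\|\le M^\infty_{\sigma'}(\widetilde A_s)\,\|f\|_{\infty,\sigma}$. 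This already gives the displayed estimate with $C_\sigma:=M^\infty_{\sigma'}(\widetilde A_s)$ (for $\sigma\ge\sigma'$; for general $\sigma'>\sigma$ the estimate follows from the (1) $\Rightarrow$ (2) direction established next).

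For (1) $\Rightarrow$ (2) with $\sigma'>\sigma$: first, $A$ being $\mR_s$-sectorial, $\widetilde A_s$ is already sectorial with $\omega(\widetilde A_s)\le\omega_{\mR_s}(A)\le\sigma<\sigma'$ by Definition/Proposition \ref{defsatz_A-Schlange-in-X(ls)}. It remains to show $\widetilde A_s$ has a bounded $H^\infty(\Sigma_{\sigma'})$-calculus. By the convergence-lemma criterion quoted in Subsection \ref{subsection-Hoo}, it suffices to bound $\|\ph(\widetilde A_s)\|$ by $C\|\ph\|_{\infty,\sigma'}$ for all $\ph\in H_0^\infty(\Sigma_{\sigma'})$. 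Fix such a $\ph$ and an angle $\sigma''$ with $\sigma<\sigma''<\sigma'$; write the Dunford integral $\ph(A)=\frac1{2\pi i}\int_{\del\Sigma_{\sigma''}}\ph(\la)R(\la,A)\,d\la$. The idea is to reproduce $\ph(A)$ as an average of bounded operators $f(A)$, each with $\|f\|_{\infty,\sigma}$ controlled: for $\la\in\del\Sigma_{\sigma''}$ the resolvent factor $\la R(\la,A)=e_\la(A)$ comes from $e_\la(z):=\la/(\la-z)\in\mathcal E(\Sigma_\sigma)$ with $\sup_{\la\in\del\Sigma_{\sigma''}}\|e_\la\|_{\infty,\sigma}<\infty$ (since $\sigma<\sigma''$), so the family $\{\la R(\la,A):\la\in\del\Sigma_{\sigma''}\}$ is a norm-bounded family of operators each of which, by hypothesis (1) applied via $\mathcal E$-functions (cf. Subsection \ref{subsection-Hoo} and Corollary \ref{cor-glm-mE}), is $\mR_s$-bounded with a uniform bound $\le C_0$. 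Consequently by Proposition \ref{Satz-aco-Rs-bd} / Corollary \ref{Rs-Bd-von-Operatoren-T_a,S} the operator $\ph(A)=\frac1{2\pi i}\int a(\la)\,\la R(\la,A)\,\frac{d\la}{\la}$, with $a(\la)=\ph(\la)$ and $\|a\|_{L^1(\del\Sigma_{\sigma''},|d\la/\la|)}\le c_{\sigma'',\sigma'}\|\ph\|_{\infty,\sigma'}$ (the $L^1$-norm of $\ph$ on the contour is finite and controlled by $\|\ph\|_{\infty,\sigma'}$ because $\sigma''<\sigma'$ and $\ph\in H_0^\infty$), is $\mR_s$-bounded with $\mR_s(\ph(A))\le C_0\,c_{\sigma'',\sigma'}\,\|\ph\|_{\infty,\sigma'}$. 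Translating back, $\|\ph(\widetilde A_s)\|_{L(X(\ell^s))}=\mR_s(\ph(A))\le C\|\ph\|_{\infty,\sigma'}$, which is the desired uniform bound; the convergence lemma then upgrades this to a bounded $H^\infty(\Sigma_{\sigma'})$-calculus for $\widetilde A_s$, proving (2). The same chain of inequalities, read for $f\in H^\infty(\Sigma_{\sigma'})$ via the extension procedure, yields the final estimate $\mR_s(f(A))\le C_\sigma\|f\|_{\infty,\sigma}$ for each $\sigma'>\sigma$.

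The main obstacle I anticipate is purely the angle bookkeeping in the $\mathcal E$-function argument of (1) $\Rightarrow$ (2): one must be careful that the resolvent-functions $e_\la$ are taken in a \emph{fixed} slightly larger sector $\Sigma_\sigma$ where hypothesis (1) is available, while the contour $\del\Sigma_{\sigma''}$ and the target sector $\Sigma_{\sigma'}$ are chosen so that $\omega_{\mR_s}(A)\le\sigma<\sigma''<\sigma'$, and that the uniform bound on $\{\|e_\la\|_{\infty,\sigma}:\la\in\del\Sigma_{\sigma''}\}$ genuinely holds (it does, precisely because $\sigma''>\sigma$ keeps the contour away from the sector $\overline{\Sigma}_\sigma$). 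Once the contour and sectors are pinned down, each remaining step is a direct appeal to a result already in the excerpt (Corollary \ref{cor-glm-mE}, Proposition \ref{Satz-aco-Rs-bd}, the convergence-lemma characterization, and Lemma \ref{lemma_funktionalkalkuel-fuer-A-Schlange_s}).
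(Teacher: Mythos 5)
Your direction (2) $\imp$ (1) is fine and matches the paper's (the identification of $f(\widetilde A_s)$ with the diagonal operator makes it immediate). The gap is in (1) $\imp$ (2). You begin that direction with ``$A$ being $\mR_s$-sectorial, $\widetilde A_s$ is already sectorial with $\omega(\widetilde A_s)\le\omega_{\mR_s}(A)\le\sigma$'', but $\mR_s$-sectoriality of $A$ is \emph{not} a hypothesis of Lemma \ref{Lemma-Rs-Diagonalop}; the paper stresses right after the lemma that the restriction $\sigma'>\sigma$ is there precisely because $A$ is not assumed $\mR_s$-sectorial (that case is the separate Lemma \ref{Lemma-Rs-Diagonalop-Rs-Version}). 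Hypothesis (1) only gives $\mR_s$-boundedness of each \emph{single} operator $f(A)$, with no quantitative control, and, as the paper points out before Definition/Proposition \ref{defsatz_A-Schlange-in-X(ls)}, uniform single-operator bounds for $\la R(\la,A)$ do not imply $\mR_s$-boundedness of the \emph{set} $\{\la R(\la,A)\,|\,\la\in\del\Sigma_{\sigma''}\}$. The missing idea is the step the paper's proof is built on: since $\RsL(X)$ is a Banach space (Proposition \ref{RsL(X)-BS}) and $\Phi_A:H^\infty(\Sigma_\sigma)\to L(X)$ is bounded, the Closed Graph Theorem upgrades the purely qualitative hypothesis (1) to the uniform estimate $\mR_s(f(A))\le C_\sigma\|f\|_{\infty,\sigma}$ for all $f\in H^\infty(\Sigma_\sigma)$; this is simultaneously the ``Moreover'' part of the statement and the source of the sectoriality of $\widetilde A_s$ (uniform bounds for the diagonal resolvents) and of the $H^\infty(\Sigma_{\sigma'})$-bound. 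Your proposal never invokes any such uniform-boundedness argument, so the constant $C_0$ you use for the resolvent family is unjustified.

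A second, related problem: to get uniformity you appeal to Corollary \ref{cor-glm-mE}, whose hypothesis is again that $A$ is $\mR_s$-sectorial, and then you apply Proposition \ref{Satz-aco-Rs-bd} / Corollary \ref{Rs-Bd-von-Operatoren-T_a,S}, whose hypothesis is $\mR_s$-boundedness of the \emph{set} $\{\la R(\la,A)\,|\,\la\in\Gamma\}$ --- neither is assumed nor derivable from (1). (Once the uniform single-operator bound is in hand via the Closed Graph Theorem, your contour-integral step can be salvaged without these corollaries, e.g.\ as a Bochner integral in $\RsL(X)$, or simply by noting, as the paper does, that the uniform estimate applied to the functions $z\mapsto\la/(\la-z)$ and to $H_0^\infty(\Sigma_{\sigma'})\tm H^\infty(\Sigma_\sigma)$ already gives sectoriality of $\widetilde A_s$ with angle $<\sigma'$ and the bounded $H^\infty(\Sigma_{\sigma'})$-calculus directly, with no contour argument needed.) As written, however, the proof of (1) $\imp$ (2) and of the final estimate rests on an assumption the lemma deliberately avoids, so it does not establish the statement.
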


\begin{proof}
It is trivial that (2) implies (1)  if $\sigma \ge \sigma'$, so we assume  $\sigma' >\sigma$ and that (1) holds. Observe that $A$ has in particular a bounded $H^\infty(\Sigma_{\sigma})$-calculus, hence
\[
\Phi_A: H^\infty(\Sigma_{\sigma}) \to L(X), f\mapsto f(A)
\]

is bounded. By (1) we have in addition $R(\Phi_A)\tm  \RsL(X) \into L(X)$ and $\RsL(X)$ is a Banach space by Proposition \ref{RsL(X)-BS}, hence the Closed Graph Theorem implies that $\Phi_A: H^\infty(\Sigma_{\sigma}) \to \RsL(X), f\mapsto f(A)
$ is bounded, i.e. there is a constant $C_{\sigma}>0$ such that
\begin{equation} \label{gl_Lemma-Rs-Diag-1}
\mR_s(f(A)) \le C_{\sigma} \mal \|f\|_{\infty,\sigma} \quad\text{for all } f\in H^\infty(\Sigma_{\sigma}).
\end{equation}

Choose $\omega\in (\sigma,\sigma')$, then by (\ref{gl_Lemma-Rs-Diag-1}) the set $\{ \la R(\la,\widetilde{A}_s) \;|\; \la \in \C\ohne \overline{\Sigma}_{\omega}\}$ is bounded in the space $L(X(\ell^s))$, hence the diagonal operator $\widetilde{A}_s$ is sectorial with $\omega(\widetilde{A}_s)\le \omega <\sigma'$, and again (\ref{gl_Lemma-Rs-Diag-1}) implies that  the diagonal operator $\widetilde{A}_s$   has a bounded $H^\infty(\Sigma_{\sigma'})$-calculus in $X(\ell^s)$.
\end{proof}

Observe that the restriction $\sigma' >\sigma$ in (1) $\imp$ (2) if of Lemma \ref{Lemma-Rs-Diagonalop} is due to the fact that we do not assume $A$ to be $\mR_s$-sectorial. If we do this, we get the following slightly sharper condition, which can be proven in the same way.
\begin{lemma} \label{Lemma-Rs-Diagonalop-Rs-Version}
Let $A$ be an $\mR_s$-sectorial operator and $\sigma>\omega_{\mR_s}(A)$. Then the following conditions are equivalent:
\begin{itemize}
\item[(1)] For each $f\in H^\infty(\Sigma_{\sigma})$ the operator $f(A)$ is $\mR_s$-bounded,
\item[(2)] The diagonal operator $\widetilde{A}_s$ has a bounded $H^\infty(\Sigma_{\sigma})$-calculus in $X(\ell^s)$.
\end{itemize}\qed
\end{lemma}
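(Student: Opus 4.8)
The plan is to copy the proof of Lemma~\ref{Lemma-Rs-Diagonalop} almost verbatim, exploiting the extra hypothesis that $A$ is $\mR_s$-sectorial: by Definition/Proposition~\ref{defsatz_A-Schlange-in-X(ls)} the diagonal operator $\widetilde{A}_s$ is then already sectorial in $X(\ell^s)$ with $\omega(\widetilde{A}_s)\le\omega_{\mR_s}(A)<\sigma$, which is exactly what removes the angle loss ``$\sigma'>\sigma$'' of Lemma~\ref{Lemma-Rs-Diagonalop} and lets us stay with the single sector $\Sigma_\sigma$ on both sides of the equivalence. The other ingredient, used throughout, is Lemma~\ref{lemma_funktionalkalkuel-fuer-A-Schlange_s}: for $f\in H^\infty(\Sigma_\sigma)\tm\BB(\Sigma_\sigma)$ the operator $f(\widetilde{A}_s)$ is the maximal-domain diagonal operator $(x_n)_n\mapsto(f(A)x_n)_n$, which by the remark following the definition of $\mR_s$-boundedness is bounded on $X(\ell^s)$ if and only if $f(A)$ is $\mR_s$-bounded, and in that case $\|f(\widetilde{A}_s)\|_{L(X(\ell^s))}=\mR_s(f(A))$ and $D(f(\widetilde{A}_s))=X(\ell^s)$.

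Granting this, ``(2)$\imp$(1)'' is immediate: a bounded $H^\infty(\Sigma_\sigma)$-calculus for $\widetilde{A}_s$ gives $\|f(\widetilde{A}_s)\|\le M^\infty_\sigma(\widetilde{A}_s)\,\|f\|_{\infty,\sigma}$ for all $f\in H^\infty(\Sigma_\sigma)$, hence each $f(A)$ is $\mR_s$-bounded with $\mR_s(f(A))\le M^\infty_\sigma(\widetilde{A}_s)\,\|f\|_{\infty,\sigma}$. For ``(1)$\imp$(2)'' I would argue as in Lemma~\ref{Lemma-Rs-Diagonalop}: by (1) the algebra homomorphism $f\mapsto f(A)$ maps $H^\infty(\Sigma_\sigma)$ into the Banach space $\RsL(X)$ (Proposition~\ref{RsL(X)-BS}), and one checks its graph is closed---if $f_n\to f$ uniformly and $f_n(A)\to T$ in $\mR_s$-norm, then $f_n(A)\to T$ in $L(X)$ since $\|\mal\|\le\mR_s(\mal)$, while on the dense subspace $R(\rho(A))$, on which $f_n(A),f(A)$ act by the Dunford integrals of $\rho f_n,\rho f\in H_0^\infty(\Sigma_\sigma)$ (with $\rho$ as in Subsection~\ref{subsection-Hoo}), dominated convergence gives $f_n(A)x\to f(A)x$; hence $T=f(A)$. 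By the Closed Graph Theorem there is then a constant $C_\sigma>0$ with $\mR_s(f(A))\le C_\sigma\|f\|_{\infty,\sigma}$ for all $f\in H^\infty(\Sigma_\sigma)$, so by the identification above $f(\widetilde{A}_s)$ is the everywhere-defined bounded diagonal operator with $\|f(\widetilde{A}_s)\|\le C_\sigma\|f\|_{\infty,\sigma}$; together with the sectoriality of $\widetilde{A}_s$ with $\omega(\widetilde{A}_s)<\sigma$ recorded above, this is precisely a bounded $H^\infty(\Sigma_\sigma)$-calculus for $\widetilde{A}_s$.

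I do not expect a real obstacle: the substance was already handled in Lemma~\ref{Lemma-Rs-Diagonalop}, and assuming $\mR_s$-sectoriality only makes the argument shorter. The one point needing (routine) care is the closedness of the graph of $f\mapsto f(A)$, i.e.\ upgrading uniform convergence $f_n\to f$ to $f_n(A)x\to f(A)x$ on a dense set; this is exactly the mechanism behind the standard equivalence, quoted in Subsection~\ref{subsection-Hoo}, between a bounded $H^\infty$-calculus and a uniform $H_0^\infty$-estimate, and the only genuinely new observation beyond it is that $\mR_s$-convergence in $\RsL(X)$ dominates operator-norm convergence.
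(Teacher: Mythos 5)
Your proof is correct and follows essentially the same route the paper intends: the paper proves this lemma by repeating the argument of Lemma \ref{Lemma-Rs-Diagonalop}, with $\mR_s$-sectoriality supplying (via Definition/Proposition \ref{defsatz_A-Schlange-in-X(ls)}) the sectoriality of $\widetilde{A}_s$ with angle $\le\omega_{\mR_s}(A)<\sigma$, which removes the angle loss, and with Lemma \ref{lemma_funktionalkalkuel-fuer-A-Schlange_s} identifying $f(\widetilde{A}_s)$ with the diagonal operator. Your explicit verification of the closed graph (uniform convergence plus dominated convergence of the Dunford integrals on $D(A)\cap R(A)$, combined with $\|\cdot\|\le\mR_s(\cdot)$) is just an unpacked version of the step the paper handles by first noting that $\Phi_A:H^\infty(\Sigma_\sigma)\to L(X)$ is bounded and then invoking the continuous embedding $\RsL(X)\into L(X)$.
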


A useful tool to check if an operator has an $\mR_s$-bounded \HU-calculus ist the following fact: Under suitable geometric conditions on $X$, $\mR_s$-boundedness of the \emph{single} operators $f(A)$ for $f\in H^\infty(\Sigma_\sigma)$ as in Lemma \ref{Lemma-Rs-Diagonalop} (1) already implies an $\mR_s$-bounded $H^\infty(\Sigma_{\sigma'})$-calculus for all $\sigma'>\sigma$:

\begin{theorem} \label{satz-Rs-beschr-Hoo-Kalkuel}
Let $\sigma,\sigma'>\omega(A)$ and $s\in [1,\infty)$, and assume that $X$ is $r$-concave for some $r<\infty$. Consider the following assertions:
\begin{itemize}
\item[(1)] $A$ has an $\mR_s$-bounded $H^\infty(\Sigma_{\sigma'})$-calculus.
\item[(2)] For each $f\in H^\infty(\Sigma_\sigma)$ the operator $f(A)$ is $\mR_s$-bounded,
\item[(3)] For each $\ph\in H_0^\infty(\Sigma_\sigma)$ the operator $\ph(A)$ is $\mR_s$-bounded, and there is a constant $C>0$ such that
\[
\forall\, \ph\in H_0^\infty(\Sigma_\sigma)\,:\; \mR_s(\ph(A)) \le C\, \|\ph\|_{\infty,\sigma}
\]
\end{itemize}

Then (1)$\imp$(3)$\imp$(2) if $\sigma\ge\sigma'$, and (2)$\imp$(1) if $\sigma'>\sigma$.\\

More precisely, if (2) holds, then for each $\omega>\sigma$ there is a constant $C_{\omega,\sigma}>0$ independent of $A$ such that
\begin{equation} \label{satz-Rs-beschr-Hoo-Kalkuel-Absch-Moo_s-gegen-sup...}
\forall\, \sigma'\ge \omega: M^\infty_{s,\sigma'}(A) \le C_{\omega,\sigma}\mal \sup\{\mR_s(f(A)) \;|\;  f\in H^\infty(\Sigma_\sigma), \|f\|_{\infty,\sigma}\le 1\}.
\end{equation}
\end{theorem}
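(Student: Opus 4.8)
The plan is to handle the three implications separately; $(1)\Rightarrow(3)$ and $(3)\Rightarrow(2)$ are routine and the entire substance is in $(2)\Rightarrow(1)$. For $(1)\Rightarrow(3)$ (here $\sigma\ge\sigma'$) I would merely restrict functions: since $\Sigma_{\sigma'}\subseteq\Sigma_\sigma$, every $\ph\in H_0^\infty(\Sigma_\sigma)\setminus\{0\}$ restricts to an element of $H^\infty(\Sigma_{\sigma'})$ with $\|\ph\|_{\infty,\sigma'}\le\|\ph\|_{\infty,\sigma}$ and unchanged operator $\ph(A)$ (the defining contour may be chosen inside $\Sigma_{\sigma'}$), so $\ph(A)/\|\ph\|_{\infty,\sigma}$ lies in the $\mR_s$-bounded set $\{f(A)\mid f\in H^\infty(\Sigma_{\sigma'}),\ \|f\|_{\infty,\sigma'}\le1\}$, giving $(3)$ with $C=M^\infty_{s,\sigma'}(A)$. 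For $(3)\Rightarrow(2)$: since $\mR_s(\,\cdot\,)\ge\|\,\cdot\,\|$, $(3)$ already gives a bounded $H^\infty(\Sigma_\sigma)$-calculus for $A$. Given $f\in H^\infty(\Sigma_\sigma)$, put $\ph_n:=f\,e_n$ with the standard approximate unit $e_n(z)=n^2z\,[(1+nz)(n+z)]^{-1}\in H_0^\infty(\Sigma_\sigma)$; then $\ph_n\in H_0^\infty(\Sigma_\sigma)$, $\sup_n\|\ph_n\|_{\infty,\sigma}\le C_1\|f\|_{\infty,\sigma}$ and $\ph_n\to f$ pointwise, so $\ph_n(A)\to f(A)$ strongly by the convergence lemma while $\mR_s(\ph_n(A))\le C\,C_1\|f\|_{\infty,\sigma}$ by $(3)$. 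I would transfer this to $f(A)$ by fixing finitely many $x_1,\dots,x_m$, passing to a subsequence with $\ph_n(A)x_k\to f(A)x_k$ $\mu$-a.e. for each $k$, and applying the Fatou property (B3) of $X$ to $(\sum_k|\ph_n(A)x_k|^s)^{1/s}$; this yields $\mR_s(f(A))\le C\,C_1\|f\|_{\infty,\sigma}$, a uniform bound reused below.

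For $(2)\Rightarrow(1)$ (now $\sigma'>\sigma$), the first step is, exactly as in Lemma~\ref{Lemma-Rs-Diagonalop}, to apply the Closed Graph Theorem to $f\mapsto f(A)$ from $H^\infty(\Sigma_\sigma)$ into $\RsL(X)$ and obtain the uniform estimate $\mR_s(f(A))\le C_\sigma\|f\|_{\infty,\sigma}$; equivalently, the diagonal operator $\widetilde{A}_s$ on $Z:=X(\ell^s)$ has a bounded $H^\infty(\Sigma_\omega)$-calculus for every $\omega>\sigma$, with $M^\infty_\omega(\widetilde{A}_s)\le C_\sigma$. Re-indexing the diagonal-of-a-diagonal operators as in the Remark following the definition of $\mR_s$-boundedness, one checks that assertion $(1)$ is equivalent to $\widetilde{A}_s$ having an $\mR_s$-bounded $H^\infty(\Sigma_{\sigma'})$-calculus on $Z$. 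Since $X$ is $r$-concave with $r<\infty$, so is $Z=X(\ell^s)$ (with some finite exponent depending on $r$ and $s$), and hence $Z$ has finite cotype. The remaining task is therefore a statement about the sectorial operator $B:=\widetilde{A}_s$ on the finitely concave Banach function space $Z$: \emph{a bounded $H^\infty(\Sigma_\sigma)$-calculus implies, after enlarging the angle to any $\sigma'>\sigma$, an $\mR_s$-bounded $H^\infty(\Sigma_{\sigma'})$-calculus}, quantitatively controlled by $\sigma,\sigma',s$, the concavity constant of $Z$ and $M^\infty_\sigma(B)$.

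I would prove this last statement by Le Merdy's reproducing-formula technique, i.e. the argument underlying Proposition~\ref{satz-norm-equiv-und-Hoo}, adapted to carry the extra ($\ell^s$- and countable-sum) structure. Fix $\sigma<\nu<\sigma'$, the contour $\Gamma:=\partial\Sigma_\nu$, and $F,G,\psi\in H_0^\infty(\Sigma_\sigma)\setminus\{0\}$ with $\int_0^\infty F(t)G(t)\psi(t)\,\frac{dt}{t}=1$; then $\id_Z=\int_0^\infty F(tB)G(tB)\psi(tB)\,\frac{dt}{t}$ strongly, and for $g\in H^\infty(\Sigma_{\sigma'})$ with $\|g\|_{\infty,\sigma'}\le1$ one arrives (after a Fubini step using the decay of $F$) at the representation $g(B)x=\frac{1}{2\pi i}\int_\Gamma g(z)\,R(z,B)\,u(z)\,dz$ with $u(z)=\int_0^\infty F(tz)\,(G\psi)(tB)x\,\frac{dt}{t}$. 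The point that makes the $s$-power estimate go through is that, because $\sigma<\sigma'$, the rescaled products $z\mapsto g(2^{-j}z)\,\psi(z)$ --- and likewise with $F$ or $G$ in place of $\psi$ --- form, uniformly over $\|g\|_{\infty,\sigma'}\le1$ and $j\in\Z$, a bounded subset of $H_0^\infty(\Sigma_\sigma)$ with a common two-sided power decay; consequently operator families such as $\{g(B)\psi(2^jB)\mid\|g\|_{\infty,\sigma'}\le1,\ j\in\Z\}$ are $\mR_s$-bounded by Lemma~\ref{Lemma_ph(zA)-Rs-bd}, provided one knows that $B$ is $\mR_s$-sectorial --- and it is exactly here that the $r$-concavity of $Z$ is used, since finite cotype/property~$(\alpha)$ of $Z$ is what allows the passage from the uniform $\mR_s$-boundedness of the single resolvents $zR(z,B)$ to the $\mR_s$-boundedness of the whole resolvent family. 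With this in hand one runs the Hölder estimate over $\Gamma$, the continuous $\mR_s$-version Proposition~\ref{prop_Rs-stetige-Fassung}, and a dyadic discretization of the $t$-integral --- the lattice (Khintchine-type) inequalities available in the $r$-concave space $Z$ replacing the genuine Khintchine inequality used when $s=2$ --- so as to decouple the contour variable from the functional-calculus variable and obtain $\|(\sum_i|g_i(B)\mathbf z_i|^s)^{1/s}\|_Z\le C\,\|(\sum_i|\mathbf z_i|^s)^{1/s}\|_Z$ uniformly over $\|g_i\|_{\infty,\sigma'}\le1$, which is the asserted $\mR_s$-bounded $H^\infty(\Sigma_{\sigma'})$-calculus.

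For the quantitative statement one just tracks constants: $C_\sigma=\sup\{\mR_s(f(A))\mid\|f\|_{\infty,\sigma}\le1\}$ enters only once (through $M^\infty_\sigma(\widetilde{A}_s)\le C_\sigma$), while all further constants in the reproducing-formula estimate depend only on $\sigma$, $s$, the $r$-concavity constant of $X$, and on $\nu$, which for a fixed $\omega>\sigma$ may be taken in $(\sigma,\omega)$ --- none of them on $A$ --- so that $M^\infty_{s,\sigma'}(A)\le C_{\omega,\sigma}\,C_\sigma$ for all $\sigma'\ge\omega$. I expect the main obstacle to be precisely the decoupling/upgrade step inside $(2)\Rightarrow(1)$: for $s=2$ one would turn the $\ell^2$-square functions into Rademacher averages via Khintchine and invoke the classical Kalton--Weis theory, but for $s\ne2$ the $\ell^s$-geometry has to be propagated through the whole argument, and one must check directly that $r$-concavity suffices to promote uniformly $\mR_s$-bounded individual operators $f(A)$ to an $\mR_s$-bounded set once the opening angle is enlarged; the angle gap $\sigma'>\sigma$ is used exactly so that the auxiliary operators $(g\,\psi(t\,\cdot))(A)$ stay inside the sector $\Sigma_\sigma$ where the hypothesis lives.
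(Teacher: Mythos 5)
Your handling of (1)$\imp$(3)$\imp$(2) is fine (restriction of sectors, then approximation of $f$ by $f e_n$ together with the convergence lemma and the Fatou property), and the first two moves in (2)$\imp$(1) are the right ones: the closed graph argument giving $\mR_s(f(A))\le C_\sigma\|f\|_{\infty,\sigma}$, i.e.\ a bounded $H^\infty(\Sigma_\omega)$-calculus of $\widetilde A_s$ on $Z=X(\ell^s)$ for every $\omega>\sigma$, and the re-indexing $\ell^s(\ell^s)\cong\ell^s$ which identifies assertion (1) with the uniform boundedness on $X(\ell^s)$ of the diagonal operators $\mathrm{diag}_j(f_j(A))$ with \emph{different} $f_j$ in different coordinates. (For the record, the paper itself gives no proof here; it refers to the companion article \cite{artikel-Rs-Hoo} for $X=L^p$.)

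The gap is the statement you then reduce everything to: ``a bounded $H^\infty(\Sigma_\sigma)$-calculus of $B$ on a finitely concave Banach function space $Z$ implies an $\mR_s$-bounded $H^\infty(\Sigma_{\sigma'})$-calculus for $\sigma'>\sigma$''. This is strictly stronger than the theorem being proved: if it were true, hypothesis (2) could be replaced by mere boundedness of the calculus of $A$, and the conclusion would follow for every $s\in[1,\infty)$ simultaneously, which is not how the theorem is structured and is not to be expected. The specific mechanism you invoke for it --- that finite cotype/property $(\alpha)$ of $Z$ allows one to pass from the uniformly bounded single resolvents $zR(z,B)$ to an $\mR_s$-bounded resolvent family --- is precisely the step that has no analogue for $s\neq2$: for $s=2$ this passage is the Kalton--Weis property-$(\alpha)$ theorem, whose proof is genuinely Rademacher-based, and uniform operator-norm bounds do not self-improve to $\mR_s$-bounds in a lattice with finite cotype (Example \ref{bsp_operator_nicht_Rs-bdd} is exactly the warning sign). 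Note moreover that $\mR_s$-sectoriality of $B=\widetilde A_s$ on $Z$, which you need before you may quote Lemma \ref{Lemma_ph(zA)-Rs-bd} and Proposition \ref{prop_Rs-stetige-Fassung}, is after re-indexing the same as $\mR_s$-sectoriality of $A$ on $X$ at angle $\sigma'$, i.e.\ already part of conclusion (1); so your argument is circular at this point. The gap can be closed while keeping your reduction by carrying the $\ell^s$-structure in the space instead of trying to recreate it from concavity: apply the operator-valued (Kalton--Weis) $H^\infty$-calculus of $\widetilde A_s$ on $X(\ell^s)$ to the analytic, resolvent-commuting function $F(z)=\mathrm{diag}_j(f_j(z))$ with $\|f_j\|_{\infty,\sigma'}\le1$. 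Its range satisfies $|F(z)w|\le|w|$ pointwise, and a family dominated by a fixed positive operator is $\mR$-bounded on a Banach lattice with finite cotype --- this is where the $r$-concavity of $X$ and $s<\infty$ really enter --- so $F(\widetilde A_s)=\mathrm{diag}_j(f_j(A))$ is bounded on $X(\ell^s)$ uniformly in the family $(f_j)_j$, which is exactly (1), with constants depending only on $\sigma$, $\omega$, $s$, the concavity constant and $C_\sigma$, as needed for (\ref{satz-Rs-beschr-Hoo-Kalkuel-Absch-Moo_s-gegen-sup...}); the same device applied to $\zeta\mapsto\mathrm{diag}_j\big(z_j(z_j-\zeta)^{-1}\big)$ yields the $\mR_s$-sectoriality you wanted to use.
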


This has been proven in \cite{artikel-Rs-Hoo} for the case $X=L^p$, and with little modifications the proof given there also works in this more general situation.\\

We conclude with the standard example:
\begin{satz} \label{satz_Laplace-Op-hat-Rs-Hoo}
Let $d,m\in\N$ and $p,s\in (1,\infty)$. Then the Laplace operator $A:=(-\Delta)^m$ has an $\mR_s$-bounded $H^\infty$-calculus in $L^p(\R^d)$ with  $\omega_{\mR_s^\infty}\big((-\Delta)^m\big)=0$.
\end{satz}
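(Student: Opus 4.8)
The plan is to reduce the statement, via Theorem~\ref{satz-Rs-beschr-Hoo-Kalkuel}, to checking $\mR_s$-boundedness of the \emph{single} operators $f(A)$ for $f\in H^\infty$ of a fixed sector, and then to identify these operators as classical Calder\'on--Zygmund operators, to which the $\mR_s$-extension quoted in Subsection~\ref{subsection-Rs-bd} applies. Throughout write $A:=(-\Delta)^m$ and $X:=L^p(\R^d)$. First I would recall that $A$ is the Fourier multiplier operator with symbol $\xi\mapsto|\xi|^{2m}$, which takes values in $(0,\infty)\tm\Sigma_\sigma$ for every $\sigma>0$; by the Mikhlin multiplier theorem the family $\{\la R(\la,A)\,|\,\la\in\C\ohne\overline{\Sigma}_\sigma\}$ is norm-bounded in $L(X)$ for each $\sigma>0$, so $A$ is sectorial with $\omega(A)=0$, and it is injective with dense domain and range because its symbol is nonzero a.e. In particular $\omega_{\mR_s^\infty}(A)\ge\omega(A)=0$, so it remains to prove the reverse inequality.

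Fix $\sigma>0$ and pick $\sigma_0\in(0,\sigma)$. For $f\in H^\infty(\Sigma_{\sigma_0})$, the multiplication rule of the holomorphic functional calculus for multiplier operators identifies $f(A)$ with the Fourier multiplier whose symbol is $g(\xi):=f(|\xi|^{2m})$. Applying Cauchy's integral formula on the disc $\{w:|w-z|<z\sin\sigma_0\}\tm\Sigma_{\sigma_0}$ gives the estimates $|f^{(k)}(z)|\le C_k\,\|f\|_{\infty,\sigma_0}\,|z|^{-k}$ for all $z>0$ and $k\in\N$, with $C_k$ depending only on $k$ and $\sigma_0$. Since $\xi\mapsto|\xi|^{2m}$ is smooth and positively homogeneous of degree $2m$ away from the origin, the Fa\`a di Bruno chain-rule formula then yields the H\"ormander--Mikhlin bounds $|\del^\alpha g(\xi)|\le C_\alpha\,\|f\|_{\infty,\sigma_0}\,|\xi|^{-|\alpha|}$ for $|\alpha|\le d+1$. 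Consequently $f(A)$ is bounded on $L^2(\R^d)$ and its convolution kernel satisfies the H\"ormander regularity condition, i.e.\ $f(A)$ is a Calder\'on--Zygmund operator.

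By the $\mR_s$-version of the vector-valued Calder\'on--Zygmund extension theorem recorded in Subsection~\ref{subsection-Rs-bd} (cf.\ \cite{GrafakosLiuYang}), every Calder\'on--Zygmund operator on $L^p(\R^d)$ is $\mR_s$-bounded for all $p,s\in(1,\infty)$; hence $f(A)$ is $\mR_s$-bounded for every $f\in H^\infty(\Sigma_{\sigma_0})$. Since $X=L^p(\R^d)$ is $p$-concave with $p<\infty$, Theorem~\ref{satz-Rs-beschr-Hoo-Kalkuel} (implication (2)$\imp$(1), applied with $\sigma_0$ in the role of ``$\sigma$'' and $\sigma$ in the role of ``$\sigma'$'', which is legitimate because $\sigma>\sigma_0$) shows that $A$ has an $\mR_s$-bounded $H^\infty(\Sigma_\sigma)$-calculus. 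As $\sigma>0$ was arbitrary, $\omega_{\mR_s^\infty}(A)\le 0$, and combined with the first paragraph this gives $\omega_{\mR_s^\infty}\big((-\Delta)^m\big)=0$.

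The only point that is not a direct appeal to the cited results is the verification in the second paragraph that $g(\xi)=f(|\xi|^{2m})$ satisfies the Mikhlin--H\"ormander estimates uniformly in $\|f\|_{\infty,\sigma_0}$; this is where the holomorphy of $f$ on the sector is used, through the Cauchy estimates for $f^{(k)}$, and it is the main (though routine) technical step.
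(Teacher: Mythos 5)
Your argument is correct, but it takes a genuinely different route from the paper. The paper's proof is essentially one line: it reduces the claim to showing that the diagonal extension $A\tensor\Id_{\ell^s}$ has a bounded $H^\infty(\Sigma_\sigma)$-calculus in $L^p(\R^d;\ell^s)$ for every $\sigma>0$ (this reduction is the content of Lemma \ref{Lemma-Rs-Diagonalop-Rs-Version} together with Theorem \ref{satz-Rs-beschr-Hoo-Kalkuel}), and obtains that calculus from the vector-valued Mikhlin multiplier theorem, using that $\ell^s$ is a UMD space (cf. \cite{levico}, Example 10.2\,b) for $m=1$). You instead stay scalar-valued: you show that each single operator $f(A)$, $f\in H^\infty(\Sigma_{\sigma_0})$, is a Fourier multiplier whose symbol $f(|\xi|^{2m})$ satisfies Mikhlin--H\"ormander bounds with constant $\lesssim\|f\|_{\infty,\sigma_0}$ (Cauchy estimates on the sector plus Fa\`a di Bruno), hence is a Calder\'on--Zygmund operator; you then invoke the $\ell^s$-valued extension of single Calder\'on--Zygmund operators quoted in Subsection \ref{subsection-Rs-bd} to get $\mR_s$-boundedness of each $f(A)$, and upgrade to the full $\mR_s$-bounded calculus via Theorem \ref{satz-Rs-beschr-Hoo-Kalkuel}, (2)$\imp$(1), which applies since $L^p$ is $p$-concave with $p<\infty$ and since you allow the angle to increase from $\sigma_0$ to $\sigma$. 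Both routes rest on vector-valued harmonic analysis: the paper puts the vector-valued input into the UMD-valued multiplier theorem, you put it into the vector-valued Calder\'on--Zygmund theory plus the bootstrap theorem; your version has the mild advantage of yielding the quantitative bound $\mR_s(f(A))\lesssim\|f\|_{\infty,\sigma_0}$ directly rather than through the closed-graph argument of Lemma \ref{Lemma-Rs-Diagonalop}. Two small points you should spell out, neither a gap: (i) the identification of $f(A)$ with the multiplier for \emph{all} $f\in H^\infty(\Sigma_{\sigma_0})$ (a priori $f(A)$ is only a closed operator defined by regularization) needs one line, e.g. $f(A)x=\rho(A)^{-1}(\rho f)(A)x=T_{f(|\xi|^{2m})}x$ with $\rho(z)=z(1+z)^{-2}$, once the multiplier identity is checked on $H_0^\infty$; (ii) the passage from the pointwise symbol bounds for $|\alpha|\le d+1$ to the H\"ormander kernel condition should be justified by the classical H\"ormander--Mikhlin argument (which needs only $\lfloor d/2\rfloor+1$ derivatives), since the crude dyadic estimate gives pointwise gradient bounds on the kernel only if derivatives up to order $d+2$ are used.
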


\begin{proof}
We just have to show that the operator $A\tensor \Id_{\ell^s}$ extends to a closed operator which has a bounded $H^\infty(\Sigma_\sigma)$-calculus in $L^p(\R^d,\ell^s)$ for each $\sigma>0$. Since $\ell^s$ is a UMD-spaces, this is a consequence of the vector-valued Mikhlin Multiplier Theorem. This is shown in detail for $m=1$ in \cite{levico}, Example 10.2 b).
\end{proof}

Indeed, very large classes of differential operators have an $\mR_s$-bounded \HU-calculus in $L^p$, we refer to \cite{artikel-Rs-Hoo}. The central tool used there are generalized Gaussian estimates.

\section{Generalized Triebel-Lizorkin spaces}  \label{section_The-associated-s-intermediate-spaces} % \sectioneqn

In this section we fix $s\in [1,\infty]$, and $A$ will always denote an $\mR_s$-sectorial operator in $X$ with dense domain and dense range, where $X$ is a ocmplex Banach function space with absolut continuous norm.\\

We turn to the central topic of this paper and introduce associated homogeneous and inhomogeneous $s$-intermediate spaces $\Xdot^\theta_{s,A}, X^\theta_{s,A}$.

\subsection{Definition end elementary properties of the spaces $X^\theta_{s,A}$ and $\Xdot^\theta_{s,A}$} \label{subsection_Element-prop-of-s-spaces}

For each $\sigma\in (0,\pi]$ and $\theta\in \R$ let
\[
\Phi_{\sigma,\theta} := \{ \ph\in \mE(\Sigma_\sigma)\ohne\{0\} \,|\, z\mapsto z^{-\theta} \ph(z) \in H_0^\infty(\Sigma_\sigma) \}.
\]

For the discrete counterparts we define the subset
\[
\Phi_{\sigma,\theta}^\Sigma := \{ \ph\in \Phi_{\sigma,\theta} \,|\, z\mapsto z^{-\theta} \ph(z) \in \Phi_{\sigma,0}^\Sigma \}.
\]

Note that $\Phi_{\sigma,\theta} \into \Phi_{\sigma',\theta'}$ and $\Phi^\Sigma_{\sigma,\theta} \into \Phi^\Sigma_{\sigma',\theta'}$  for $\sigma\ge\sigma'$ and $\theta \ge \theta'$.\\

\begin{definition}
Let $\theta\in \R$, $s\in [1,\infty]$ and $\sigma>\omega(A)$. For $\ph\in \Phi_{\sigma,\theta}$ we define the corresponding $s$-power function norm as
\begin{equation} \label{theta-s-norm}
\|x\|_{\theta,s,A,\ph} := \Big\| \Big( \int_0^\infty |t^{-\theta}\ph(tA)x|^s \,\frac{dt}{t} \Big)^{1/s}\Big\|_X \quad\text{ for all } x\in X
\end{equation}

(with the usual modification if $s=\infty$). Moreover, for $\ph\in \Phi^\Sigma_{\sigma,\theta}$ we define the corresponding discrete counterpart as
\begin{equation} \label{theta-s-norm-Sigma}
\|x\|^\Sigma_{\theta,s,A,\ph} := \Big\| \Big( \sum_{j\in\Z} |2^{-j\theta}\ph(2^jA)x|^s  \Big)^{1/s}\Big\|_X \quad\text{ for all } x\in X
\end{equation}

(with the usual modification if $s=\infty$).

\end{definition}

Finally we define the space
\[
X^\theta_{s,A,\ph} := \{ x\in X \,|\, \|x\|_{\theta,s,A,\ph} < \infty \}.
\]

We will show that $\|\mal\|_{\theta,s,A,\ph}$ defined by (\ref{theta-s-norm}) actually defines a norm on $X^\theta_{s,A,\ph}$: The mapping
\[
J:X^\theta_{s,A,\ph} \to X(L^s_*), x\mapsto \big(t^{-\theta}\ph(tA)x\big)_{t>0}
\]

is linear, and $\|x\|_{\theta,s,A,\ph} = \|Jx\|_{X(L^s_*)}$ for all $x\in X^\theta_{s,A,\ph}$ by definition, hence we only have to show that $J$ is injective. Let $x\in X^\theta_{s,A,\ph}$ with $Jx=0$. Define $\rho(z) := z/(1+z)^2$ and $c:=\int_0^\infty \rho(t) |\ph(t)|^2\,\frac{dt}{t}>0$, and let $\psi:= \frac1c \rho\overline{\ph}$, then $\psi\in H^\infty_0(\Sigma_\sigma)$ and $\int_0^\infty \psi(t)\ph(t)\,\frac{dt}{t}= \frac1c\,\int_0^\infty \rho(t)|\ph(t)|^2\,\frac{dt}{t}=1$. Since $dt/t$ is a translation invariant measure on the multiplicative group $(0,\infty)$ this yields
\begin{equation} \label{gl_integral-eins}
\int_0^\infty \psi(tz)\ph(tz)\,\frac{dt}{t} = 1
\end{equation}

for all $z\in (0,\infty)$, and by analytic continuation and the identity theorem for analytic functions, (\ref{gl_integral-eins}) is also true for all $z\in\Sigma_\sigma$. By functional calculus we obtain
\[
x = \int_0^\infty \psi(tA)\underbrace{\ph(tA)x}_{=0\text{ a.e.}}\,\frac{dt}{t} = 0.
\]

By the preceding section we have the important issue that the $s$-power function norm $\|\mal\|_{\theta,s,A,\ph}$ does not depend on $\ph$ in the following sense:
\begin{satz} \label{Satz-norm-equivalent-theta}
Let $\theta\in \R$, $\sigma>\omega_{\mR_s}(A)$ and $\ph,\psi\in \Phi_{\sigma,\theta}$. Then there is a constant $C>0$ such that for all $x\in D(A^\theta)$ and $f\in H^\infty(\Sigma_\sigma)$
\begin{itemize}
\item[(1)] $\ds C^{-1 } \, \|x\|_{\theta,s,A,\ph} \le \|x\|_{\theta,s,A,\psi} \le C\,  \|x\|_{\theta,s,A,\ph}$,
\item[(2)] $\ds \|f(A) x\|_{\theta,s,A,\ph} \le C\,\|f\|_{\infty} \mal \|x\|_{\theta,s,A,\ph}$.
\end{itemize}

If $\ph,\psi\in \Phi^\Sigma_{\sigma,\theta}$, then we have in addition
\begin{itemize}
\item[(3)] $\ds C^{-1 } \, \|x\|_{\theta,s,A,\ph} \le \|x\|^\Sigma_{\theta,s,A,\psi} \le C\,  \|x\|_{\theta,s,A,\ph}$,
\end{itemize}

and $(1),(2)$ also hold for the discrete counterparts $\|\mal\|^\Sigma_{\theta,s,A,\ph},\|\mal\|^\Sigma_{\theta,s,A,\psi}$ instead of $\|\mal\|_{\theta,s,A,\ph}$, ${\|\mal\|_{\theta,s,A,\psi}}$.
\end{satz}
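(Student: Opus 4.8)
The plan is to reduce all four assertions to the $\theta=0$ statements already established, namely Propositions \ref{satz-norm-equiv-und-Hoo}, \ref{satz-s-norm-equivalence-diskret} and \ref{satz-norm-equiv-und-Hoo-diskret}, by absorbing the weight into a fractional power of $A$. Given $\ph\in\Phi_{\sigma,\theta}$, I would put $\widetilde\ph(z):=z^{-\theta}\ph(z)$; by definition of $\Phi_{\sigma,\theta}$ this lies in $H_0^\infty(\Sigma_\sigma)\ohne\{0\}$, and it lies in $\Phi^\Sigma_{\sigma,0}$ whenever $\ph\in\Phi^\Sigma_{\sigma,\theta}$. Using the scaling identity $(tA)^\theta=t^\theta A^\theta$ for fractional powers, the composition and multiplicativity rules of the functional calculus, and the fact that $\widetilde\ph(tA)$ maps $X$ into $D(A^\theta)$ and commutes with $A^\theta$ there, one obtains for every $x\in D(A^\theta)$ the identities
\[
t^{-\theta}\ph(tA)x=\widetilde\ph(tA)\,A^\theta x\quad(t>0),\qquad 2^{-j\theta}\ph(2^jA)x=\widetilde\ph(2^jA)\,A^\theta x\quad(j\in\Z).
\]
Hence, writing $y:=A^\theta x\in X$, the quantity $\|x\|_{\theta,s,A,\ph}$ becomes the ($\theta=0$) $s$-power function norm of $y$ with auxiliary function $\widetilde\ph$, and likewise $\|x\|^\Sigma_{\theta,s,A,\ph}$ becomes its discrete counterpart.

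Granting this reduction, I would argue as follows. For (1): apply Proposition \ref{satz-norm-equiv-und-Hoo} with $T=\id_X$, $f\equiv1$ and the pair of auxiliary functions $\widetilde\ph,\widetilde\psi$ (and, for the opposite inequality, $\widetilde\psi,\widetilde\ph$) to the vector $y$. For (2): since $f(A)$ commutes with $A^\theta$, one has $f(A)x\in D(A^\theta)$ with $A^\theta f(A)x=f(A)y$, so by the identity above $\|f(A)x\|_{\theta,s,A,\ph}$ equals the $X$-norm of $(\int_0^\infty|f(A)\widetilde\ph(tA)y|^s\,\frac{dt}{t})^{1/s}$, which Proposition \ref{satz-norm-equiv-und-Hoo} (with $T=\id_X$ and both auxiliary functions equal to $\widetilde\ph$) bounds by $C\|f\|_{\infty,\sigma}$ times the $X$-norm of $(\int_0^\infty|\widetilde\ph(tA)y|^s\,\frac{dt}{t})^{1/s}=\|x\|_{\theta,s,A,\ph}$. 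For (3): as $\widetilde\ph\in\Phi^\Sigma_{\sigma,0}$, Proposition \ref{satz-s-norm-equivalence-diskret} applied to $y$ gives $\|x\|^\Sigma_{\theta,s,A,\ph}\approx\|x\|_{\theta,s,A,\ph}$, and combining with (1) (to replace $\ph$ by $\psi$) yields the stated two-sided estimate. The assertions (1) and (2) for the discrete norms $\|\mal\|^\Sigma_{\theta,s,A,\ph}$ are proved verbatim as above, with Proposition \ref{satz-norm-equiv-und-Hoo-diskret} in place of Proposition \ref{satz-norm-equiv-und-Hoo}. Throughout, the case $s=\infty$ is treated identically, with integrals replaced by suprema and Proposition \ref{prop_Rs-stetige-Fassung,s=oo} in the background.

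The only step that is not purely formal is establishing the identity $t^{-\theta}\ph(tA)x=\widetilde\ph(tA)A^\theta x$ on $D(A^\theta)$: one must be careful about the domain of the (generally unbounded) power $A^\theta$ for arbitrary $\theta\in\R$, verify that $\widetilde\ph\in H_0^\infty(\Sigma_\sigma)$ forces $\widetilde\ph(tA)$ to have range in $D(A^\theta)$ (using $\ph\in\mE(\Sigma_\sigma)$ when $\theta<0$), and invoke the correct composition and commutation rules of the extended functional calculus to split $\ph(tA)=(tA)^\theta\widetilde\ph(tA)$ and to commute $A^\theta$ with $\widetilde\ph(tA)$ and with $f(A)$. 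All of this is standard (cf. \cite{haase}, Chapters 2 and 3), so once it is in place the remaining argument is bookkeeping. I expect this bookkeeping to be the main — and essentially the only — obstacle.
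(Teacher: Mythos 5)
Your proposal is correct and follows essentially the same route as the paper: the paper likewise sets $\widetilde\ph(z)=z^{-\theta}\ph(z)$, $\widetilde\psi(z)=z^{-\theta}\psi(z)$, uses the identity $t^{-\theta}\ph(tA)f(A)x=f(A)\widetilde\ph(tA)A^\theta x$ for $x\in D(A^\theta)$, and then invokes Proposition \ref{satz-norm-equiv-und-Hoo} for (1), (2) and Propositions \ref{satz-s-norm-equivalence-diskret}, \ref{satz-norm-equiv-und-Hoo-diskret} for the discrete assertions. The functional-calculus bookkeeping you flag (splitting $\ph(tA)=(tA)^\theta\widetilde\ph(tA)$ and commuting $A^\theta$ with $\widetilde\ph(tA)$ and $f(A)$ on $D(A^\theta)$) is exactly what the paper uses implicitly without comment.
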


\begin{proof}
We apply Proposition \ref{satz-norm-equiv-und-Hoo} with $\widetilde{\ph}(z) := z^{-\theta}\ph(z)$ and $\widetilde{\psi}(z) := z^{-\theta}\psi(z)$ instead of $\ph,\psi$ and choose the constant $C>0$ as given there. Let $x\in D(A^\theta)$ and $f\in H^\infty(\Sigma_\sigma)$, then:
\begin{eqnarray*}
\|f(A) x\|_{\theta,s,A,\ph} &=& \Big\| \Big( \int_0^\infty |t^{-\theta}\ph(tA)f(A)x|^s \,\frac{dt}{t} \Big)^{1/s} \Big\|_X = \Big\| \Big( \int_0^\infty |f(A)\widetilde{\ph}(tA)A^\theta x|^s \,\frac{dt}{t} \Big)^{1/s} \Big\|_X\\
&\le& C\, \|f\|_\infty \mal \Big\| \Big( \int_0^\infty |\widetilde{\psi}(tA)A^\theta x|^s \,\frac{dt}{t} \Big)^{1/s} \Big\|_X = C\, \|f\|_\infty \mal \|x\|_{\theta,s,A,\psi}
\end{eqnarray*}

(with the usual modification if $s=\infty$). This shows (1) and (2). In the same way, (3) holds by Propositions \ref{satz-s-norm-equivalence-diskret} and \ref{satz-norm-equiv-und-Hoo-diskret}.
\end{proof}

The central objects are now the following normed spaces:

\begin{itemize}
\item[(1)] $X^\theta_{s,A,\ph}$ endowed with the norm $\|\mal \|_{X^\theta_{s,A,\ph}} := \|\mal \|_X + \|\mal\|_{\theta,s,A,\ph}$ if $\theta\ge 0$,
\item[(2)] $ \Xdot^\theta_{s,A,\ph}$ as the completion of the space $X^\theta_{s,A,\ph} $ endowed with the norm $\|\mal\|_{\theta,s,A,\ph}$.
\end{itemize}

We will see in Proposition \ref{Satz-norm-equivalent-theta-2} below (based, of course, on Proposition \ref{Satz-norm-equivalent-theta}) that these spaces are independent of $\ph$ in the sense that different $\ph\in \Phi_{\sigma,\theta}$ lead to equivalent norms, hence we shall drop $\ph$ in notation, and the space $X^\theta_{s,A}$ will be called the \emph{associated inhomogeneous $s$-intermediate space}, and $\Xdot^\theta_{s,A}$ the \emph{associated homogeneous $s$-intermediate space}. Later we will also call these spaces the associated generalized Triebel-Lizorkin spaces, when it is clear that this notion is justified. Definition (1) would also make sense for $\theta < 0$, we leave out these spaces from our considerations, since they appear to be quite unnatural. In fact, even for $\theta=0$ these spaces are delicate, since they are forced to be embedded into $X$, which might not be natural, if one looks at the concrete examples of classical Triebel-Lizorkin spaces.\\

As usual, the homogeneous space is somehow closer related to the operator $A$, but has a more complicated structure, since e.g. it is in general not embedded into $X$. Nevertheless many properties of homogeneous spaces can easily be carried over to inhomogeneous spaces: if $A$ is invertible, then $X^\theta_{s,A} \cong \Xdot^\theta_{s,A}$ for $\theta>0$, see Proposition \ref{satz_hom-vs-inhom-Raeume-fuer-A-invbar} below. We thus start with a detailed study of the homogeneous spaces.\\

The following is an important density property.
\begin{satz} \label{satz_D(Am)Schnitr-R(Am)-dicht-in-X-Punkt}
Let $\theta\in\R$, $m\in\N$ with $m>|\theta|$ and $\sigma>\omega_{\mR_s}(A)$. Then $D(A^m)\cap R(A^m)$ is a dense subset in $\Xdot^\theta_{s,A,\ph}$ for all $\ph\in \Phi_{\sigma,\theta}$.
\end{satz}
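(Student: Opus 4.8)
The plan is to show that the smaller space $D(A^m)\cap R(A^m)$ is dense in $\Xdot^\theta_{s,A,\ph}$ by approximating an arbitrary element of $X^\theta_{s,A,\ph}$ (which is dense in the completion $\Xdot^\theta_{s,A,\ph}$ by construction) with elements lying in $D(A^m)\cap R(A^m)$, using a standard regularization via the functional calculus. Concretely, for $\eps>0$ set $\rho(z):=z(1+z)^{-2}$ and consider the operators $Q_\eps := (\eps+A)^{-m}(1+\eps A)^{-m}\cdot c_\eps$, or more simply the family $g_\eps(A)$ where $g_\eps(z):= \bigl(\tfrac{z}{(\eps+z)(1+\eps z)}\bigr)^{m}$, suitably normalized. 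Each such operator maps $X$ into $D(A^m)\cap R(A^m)$ since $g_\eps$ vanishes to order $m$ at both $0$ and $\infty$; moreover $g_\eps\in\mathcal E(\Sigma_\sigma)$ with $\|g_\eps\|_{\infty,\sigma}$ bounded uniformly in $\eps$, and $g_\eps(z)\to 1$ pointwise on $\Sigma_\sigma$ as $\eps\to 0$. So the first step is to record these elementary properties of $g_\eps$ and note $g_\eps(A)x\in D(A^m)\cap R(A^m)$ for every $x\in X$.

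The second step is the uniform norm bound: since $\|g_\eps\|_{\infty,\sigma}\le M$ uniformly, Proposition \ref{Satz-norm-equivalent-theta}(2) gives $\|g_\eps(A)x\|_{\theta,s,A,\ph}\le CM\,\|x\|_{\theta,s,A,\ph}$ for all $x\in X^\theta_{s,A,\ph}$, with $C$ independent of $\eps$. The third and main step is to show $\|g_\eps(A)x - x\|_{\theta,s,A,\ph}\to 0$ as $\eps\to 0$. Here I would argue by density within $X^\theta_{s,A,\ph}$ itself: writing $f_\eps := g_\eps - 1 \in\mathcal E(\Sigma_\sigma)$ (bounded uniformly, tending to $0$ pointwise), one has $\|f_\eps(A)x\|_{\theta,s,A,\ph}$; by the uniform bound from step two it suffices to prove convergence on a dense subset of $X^\theta_{s,A,\ph}$, for instance on $\{\psi(A)y\,|\,y\in X,\ \psi\in H_0^\infty(\Sigma_\sigma)\}$. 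For such $x=\psi(A)y$, the function $f_\eps\psi$ lies in $H_0^\infty(\Sigma_\sigma)$, is dominated by $C|\psi|\in H_0^\infty$ uniformly in $\eps$, and $f_\eps\psi\to 0$ pointwise; a convergence-lemma type argument (or direct estimation of the $H_0^\infty$-integral defining $(f_\eps\psi)(tA)$, together with dominated convergence inside the $s$-power function norm expression $\bigl\|(\int_0^\infty|t^{-\theta}(f_\eps\psi)(tA)A^{-0}y|^s\,dt/t)^{1/s}\bigr\|_X$) yields $\|f_\eps(A)x\|_{\theta,s,A,\ph}\to 0$.

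The remaining point is that $\{\psi(A)y\,|\,y\in X,\psi\in H_0^\infty(\Sigma_\sigma)\}$ is indeed $\|\cdot\|_{\theta,s,A,\ph}$-dense in $X^\theta_{s,A,\ph}$; this again follows by the same regularization, since $g_\eps(A)x$ is already of this form (up to the factor $\rho$-type correction, one has $g_\eps = \rho^m h_\eps$ with $h_\eps\in H^\infty$, so $g_\eps(A)x = (\rho^m h_\eps)(A)x$ is of the stated type), so one does not genuinely need an independent argument — it is the same limit. I expect the main obstacle to be the justification of the limit $\|f_\eps(A)x\|_{\theta,s,A,\ph}\to 0$: one must interchange $\lim_{\eps\to 0}$ with the $X$-norm and the $dt/t$-integral, which requires producing an $\eps$-independent dominating function inside $X(L^s_*)$. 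The clean way is to reduce, via the uniform bound of step two, to $x$ in the dense class where $t^{-\theta}f_\eps(tA)x = (f_\eps\widetilde\psi)(tA)A^\theta y$ with $\widetilde\psi(z)=z^{-\theta}\psi(z)\in H_0^\infty$ and then dominate $|(f_\eps\widetilde\psi)(tA)A^\theta y|$ pointwise and in $t$ by $C\,|\chi(tA)A^\theta y|$ for a fixed $\chi\in H_0^\infty$ with $|\widetilde\psi|\le|\chi|$, whose $s$-power function norm is finite; dominated convergence then finishes the proof.
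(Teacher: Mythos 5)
Your regularizers $g_\eps(A)$ are essentially the paper's $T_n^m$, but the way you try to pass to the limit contains a genuine circularity, and you have also dropped half of the statement. First, the statement asserts that \emph{all} of $D(A^m)\cap R(A^m)$ is a subset of $\Xdot^\theta_{s,A,\ph}$, i.e. that every $x\in D(A^m)\cap R(A^m)$ has finite $\|\cdot\|_{\theta,s,A,\ph}$-norm; this is where the hypothesis $m>|\theta|$ enters (the paper proves it by writing $t^{-\theta}\ph(tA)x$ in terms of $t^{\pm\eps}\psi_\mp(tA)A^{\theta\pm\eps}x$ with $\psi_\pm\in H_0^\infty$ and splitting the integral at $t=1$, using $\mR_s$-boundedness of $\{\psi_\pm(tA)\}$). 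Your proposal never uses $m>|\theta|$ and only shows that the particular elements $g_\eps(A)x$ have finite norm, so this part is simply missing. Second, your main step reduces the convergence $\|f_\eps(A)x\|_{\theta,s,A,\ph}\to 0$ for general $x\in X^\theta_{s,A,\ph}$ to convergence on a class $\{\psi(A)y\}$ that you claim is $\|\cdot\|_{\theta,s,A,\ph}$-dense ``by the same limit''; but that density is exactly (a form of) the proposition you are proving, and the only way you produce approximants in that class is via $g_\eps(A)x$ itself, so the uniform-bound-plus-dense-subset scheme is circular. Moreover, the dominated-convergence fix you sketch on the dense class rests on the inference that $|\widetilde\psi|\le|\chi|$ on $\Sigma_\sigma$ gives a pointwise a.e. bound $|(f_\eps\widetilde\psi)(tA)w|\le C|\chi(tA)w|$; domination of symbols does not dominate the operator outputs pointwise (the Cauchy integral only yields a bound by $\int_\Gamma|f_\eps(\la)\widetilde\psi(t\la)|\,|R(\la,A)w|\,|d\la|$), so that step is invalid as written. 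A smaller but related point: you invoke Proposition \ref{Satz-norm-equivalent-theta}(2) for all $x\in X^\theta_{s,A,\ph}$, whereas it is stated only for $x\in D(A^\theta)$ (its extension, Proposition \ref{Satz-norm-equivalent-theta-2}, is a consequence of the density result); the uniform bound should instead come from commuting $g_\eps(A)$ past $\ph(tA)$ and using the uniform $\mR_s$-bound of $\{g_\eps(A)\}$ together with Proposition \ref{prop_Rs-stetige-Fassung}.

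For comparison, the paper avoids any a priori dense class: after reducing to $\ph(z)=z^m(1+z)^{-2m}$, it works with the \emph{discrete} norm (Proposition \ref{satz-s-norm-equivalence-diskret}), fixes $x$, makes the tail $\big\|\big(\sum_{|j|\ge N}|2^{-j\theta}\ph(2^jA)x|^s\big)^{1/s}\big\|_X$ small, and then controls $\|T_n^mx-x\|^\Sigma_{\theta,s,A,\ph}$ by treating the finitely many $|j|\le N$ terms via $T_n^mx\to x$ in $X$ and the tail uniformly in $n$ via the uniform $\mR_s$-boundedness of $\{T_n^m\}$ (with a Fatou-property variant for $s=\infty$). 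If you want to keep your structure, the non-circular route for $s<\infty$ is to note that $t^{-\theta}\ph(tA)f_\eps(A)x=f_\eps(A)\big(t^{-\theta}\ph(tA)x\big)$, that $\{f_\eps(A)\}$ is uniformly $\mR_s$-bounded and converges strongly to $0$ on $X$, and that therefore the diagonal extensions converge strongly to $0$ on $X(L^s_*)$ (using density of $X\otimes L^s$ there, which needs the absolutely continuous norm and $s<\infty$); the case $s=\infty$ then still requires a separate argument, as in the paper.
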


\begin{proof}
We will show first that $D(A^m)\cap R(A^m)\tm \Xdot^\theta_{s,A,\ph}$. Let $x\in D(A^m)\cap R(A^m)\tm D(A^\theta)$. Choose $\eps>0$ such that $\eps< m-|\theta|$. Let $\ph(z):= \frac{z^m}{(1+z)^{2m}}$ and $\psi_{\pm}(z):= \frac{z^{m-\theta\pm\eps}}{(1+z)^{2m}}$, then $\ph\in\Phi_{\sigma_\theta}$ and $\psi\in H_0^\infty(\Sigma_\sigma)$, and
\[
t^{-\theta}\ph(tA)x = t^{\mp\eps} (tA)^{-\theta\pm\eps}\ph(tA)x = t^{\mp\eps}\psi_{\pm}(tA) \quad\text{for all } t>0.
\]

Hence we obtain (with the usual modification if $s=\infty$)
\begin{eqnarray*}
\|x\|_{\theta,s,A,\ph} &=& \bigg\| \bigg(\int_0^\infty |t^{-\theta}\ph(tA)x|^s \, \frac{dt}{t} \bigg)^{1/s} \bigg\| \\
&\lesssim&  \bigg\| \bigg(\int_0^1 |t^{\eps}\psi_-(tA)x|^s \, \frac{dt}{t} \bigg)^{1/s} \bigg\| + \bigg\| \bigg(\int_1^\infty |t^{-\eps}\psi_+(tA)x|^s \, \frac{dt}{t} \bigg)^{1/s} \bigg\|\\
&\stackrel{(*)}{\lesssim}& \bigg\| \bigg(\int_0^1 |t^{\eps}x|^s \, \frac{dt}{t} \bigg)^{1/s} \bigg\| + \bigg\| \bigg(\int_1^\infty |t^{-\eps}x|^s \, \frac{dt}{t} \bigg)^{1/s} \bigg\| = \frac{2}{s\eps}\, \|x\|_X < \infty,
\end{eqnarray*}

where we used in $(*)$ that the operator set $\{\psi_{\pm}(tA) \,|\, t>0\}$ is $\mR_s$-bounded. This shows that $D(A^m)\cap R(A^m)\tm \Xdot^\theta_{s,A,\ph}$ for the special $\ph$ we have chosen, and by Proposition \ref{Satz-norm-equivalent-theta} this is also true for arbitrary $\ph\in \Phi_{\sigma,\theta}$ since $ D(A^m)\cap R(A^m)\tm D(A^\theta)$.\\

Now we define
\[
\widetilde{X}^\theta_{s,A,\ph} := \overline{D(A^m)\cap R(A^m)}^{\|\mal\|_{X_{\theta,s,A,\ph}}} \tm  \Xdot^\theta_{s,A,\ph},
\]

then by Proposition \ref{Satz-norm-equivalent-theta} all the spaces $\widetilde{X}^\theta_{s,A,\ph}$, where $\ph \in  \Phi_{\omega_{\mR_s}(A),\theta}$, coincide and have equivalent norms. Hence $D(A^m)\cap R(A^m)$ is dense in all $\widetilde{X}^\theta_{s,A,\ph}, \ph \in  \Phi_{\omega_{\mR_s}(A),\theta}$ if it is dense for some $\ph \in  \Phi_{\omega_{\mR_s}(A),\theta}$, so we may assume that $\ph(z)= \frac{z^m}{(1+z)^{2m}}$, hence $\ph\in\Phi^\Sigma_{\sigma,\theta}$. Let $\widetilde{X}^{\theta,\Sigma}_{s,A,\ph}$ be the completion of $D(A^m)\cap R(A^m)$ with respect to the norm $\|\mal\|^\Sigma_{\theta,s,A,\ph}$. Then again by Proposition \ref{Satz-norm-equivalent-theta} (3) we also have $\widetilde{X}^{\theta}_{s,A,\ph} = \widetilde{X}^{\theta,\Sigma}_{s,A,\ph}$ with equivalent norms, so it is enough to show that $X^{\theta}_{s,A,\ph} \tm \widetilde{X}^{\theta,\Sigma}_{s,A,\ph}$.\\

Let $x\in X^\theta_{s,A,\ph}$ and define $T_n := n(n+A^{-1})^{-1} n(n+A)^{-1} = A(\frac1n+A)^{-1} n(n+A)^{-1}$ for all $n\in\N$. Let $x_n := T_n^mx \in D(A^m)\cap R(A^m)$ for all $n\in\N$, then it is well known that $x_n\to x$ in $X$ for $n\to\infty$.\\

We consider the case $s<\infty$ first. Let $\eps>0$, then since $x\in X^\theta_{s,A,\ph}$, we can choose $N\in\N$ such that
\[
\Big\| \Big( \sum_{|j| \ge N} |2^{-j\theta}\ph(2^jA)x|^s \Big)^{1/s}\Big\|_X < \eps/2.
\]

Let $K_N:=  \sum\limits_{|j|\le N} 2^{-j\theta}\|\ph(2^jA)x\|_X$, then we can choose $n_0\in\N$ such that $K_N\mal \| x_n-x\|_X<\eps/2$ for all $n\ge n_0$. Let $n\ge n_0$, then
\begin{eqnarray*}
\| x_n-x\|^\Sigma_{\theta,s,A,\ph} &\le& \Big\| \Big( \sum_{|j|\le N} |2^{-j\theta}\ph(2^jA)(x_n-x) |^s \Big)^{1/s}\Big\|_X \\
&& + \: \Big\| \Big( \sum_{|j| \ge N} | (T_n^m-\Id)2^{-j\theta}\ph(2^jA)x|^s \Big)^{1/s}\Big\|_X\\
&\stackrel{(1)}{\lesssim}_m& \Big\| \sum_{|j|\le N} 2^{-j\theta}| \ph(2^jA)(x_n-x)|  \Big\|_X  + \: \Big\| \Big( \sum_{|j| \ge N} |2^{-j\theta}\ph(2^jA)x|^s \Big)^{1/s}\Big\|_X\\
&\le&  \sum_{|j|\le N} 2^{-j\theta}\|\ph(2^jA)x\|_X \mal \|x_n-x\|_X  + \: \Big\| \Big( \sum_{|j| \ge N} |2^{-j\theta}\ph(2^jA)x|^s \Big)^{1/s}\Big\|_X < \eps
\end{eqnarray*}

for all $n\ge n_0$, where we used in (1) that $\ell^1\into \ell^s$ and that the operators $T_n^m, n\in\N$ are $\mR_s$-bounded. So we have $\|x - x_n\|^\Sigma_{\theta,s,A,\ph} \to 0$ for $n\to\infty$.\\

Now consider the case $s=\infty$. Let $\eps>0$. Then again, since $x\in X^\theta_{\infty,A,\ph}$ and $X$ has the Fatou property, we can choose $N\in\N$ such that
\[
\big\| \sup_{j\in\Z} |2^{-j\theta}\ph(2^jA)x| - \sup_{|j| \le N} |2^{-j\theta}\ph(2^jA)x| \big\|_X < \eps/2,
\]

and we can proceed as in the first case by using the estimate
\begin{eqnarray*}
\| x-x_n\|_{\theta,\infty,A,\ph} &=&  \big\| \sup_{j\in\Z} |2^{-j\theta}\ph(2^jA)x|  \big\|_X\\
&\le& \big\| \sup_{j\in\Z} |2^{-j\theta}\ph(2^jA)x| - \sup_{|j| \le N} |2^{-j\theta}\ph(2^jA)x| \big\|_X  + \big\|  \sup_{|j| \le N} |2^{-j\theta}\ph(2^jA)x| \big\|_X.
\end{eqnarray*}
\end{proof}

Of course, Proposition \ref{satz_D(Am)Schnitr-R(Am)-dicht-in-X-Punkt} implies an analogous density property for the inhomogeneous spaces:
\begin{cor} \label{cor_D(Am)Schnitr-R(Am)-dicht-in-X-theta-s-inhomogen}
Let $\theta\ge 0$, $m\in\N_{>\theta}$ and $\sigma>\omega_{\mR_s}(A)$. Then $D(A^m)\cap R(A^m)$ is a dense subset in $X^\theta_{s,A,\ph}$ for all $\ph\in \Phi_{\sigma,\theta}$.
\qed\end{cor}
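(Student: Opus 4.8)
The plan is to reuse the approximating sequence from the proof of Proposition~\ref{satz_D(Am)Schnitr-R(Am)-dicht-in-X-Punkt}. Its first half already shows that $D(A^m)\cap R(A^m)\tm X^\theta_{s,A,\ph}$ (such elements lie in $X$ trivially and have finite $\|\mal\|_{\theta,s,A,\ph}$ by the estimate carried out there), so it remains, given $x\in X^\theta_{s,A,\ph}$, to approximate $x$ by elements of $D(A^m)\cap R(A^m)$ in the full norm $\|\mal\|_{X^\theta_{s,A,\ph}}=\|\mal\|_X+\|\mal\|_{\theta,s,A,\ph}$. As in loc.\ cit.\ I set $T_n:=A(\tfrac1n+A)^{-1}\,n(n+A)^{-1}$ and $x_n:=T_n^m x\in D(A^m)\cap R(A^m)$. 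Since $A$ is injective with dense domain and range, $T_n\to\Id_X$ strongly, hence $\|x_n-x\|_X\to 0$; and since the resolvents $n(n+A)^{-1}$ and $\tfrac1n(\tfrac1n+A)^{-1}$ are taken at points of $\C\ohne\overline{\Sigma}_\sigma$, $\mR_s$-sectoriality of $A$ together with the product/power stability of $\mR_s$-boundedness shows that the family $\{T_n^m:n\in\N\}$ is $\mR_s$-bounded, with bound independent of $n$. It thus suffices to prove $\|x_n-x\|_{\theta,s,A,\ph}\to 0$.

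For this I view $\|x_n-x\|_{\theta,s,A,\ph}$ as the norm in the Banach function space $X(L^s_*(0,\infty))$ of the path $t\mapsto t^{-\theta}\ph(tA)(T_n^m-\Id)x=t^{-\theta}(T_n^m-\Id)\ph(tA)x$, and split the integral over $(0,\infty)$ as $(0,a)\cup[a,b]\cup(b,\infty)$, using the triangle inequality in $X(L^s_*)$. On the two outer pieces, $T_n^m$ commutes with $\ph(tA)$ and $\{T_n^m-\Id\}$ is $\mR_s$-bounded uniformly in $n$, so Proposition~\ref{prop_Rs-stetige-Fassung} (resp.\ Proposition~\ref{prop_Rs-stetige-Fassung,s=oo} for $s=\infty$) bounds the contribution by a fixed multiple, independent of $n$, of the corresponding tail of $\|x\|_{\theta,s,A,\ph}$; since $x\in X^\theta_{s,A,\ph}$ means exactly that $t\mapsto t^{-\theta}\ph(tA)x$ represents an element of $X(L^s_*)$, a dominated-convergence argument (using that $X$ has absolutely continuous norm) makes these tails $<\eps$ for $a$ small and $b$ large, uniformly in $n$. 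On the now-fixed interval $[a,b]$ the set $\{t^{-\theta}\ph(tA):a\le t\le b\}$ is $\mR_s$-bounded — it consists of uniformly bounded scalar multiples of operators from the $\mR_s$-bounded family $\{\ph(tA):t>0\}$, the latter obtained from Corollary~\ref{cor-glm-mE} applied to the singleton $\mF=\{\ph\}$ (note $\ph\in\mE(\Sigma_\sigma)$) — so Proposition~\ref{prop_Rs-stetige-Fassung} bounds this middle contribution by a constant depending only on $a,b,\sigma,A$ times $\|(T_n^m-\Id)x\|_X$, which tends to $0$. Letting $n\to\infty$ gives $\|x_n-x\|_{\theta,s,A,\ph}\to 0$, and hence $x_n\to x$ in $X^\theta_{s,A,\ph}$. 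For $s=\infty$ the same scheme works with suprema in place of integrals, the rôle of the absolute continuity of the norm of $X(L^s_*)$ being taken by the Fatou property and absolute continuity of the norm of $X$ itself, exactly as in the $s=\infty$ part of the proof of Proposition~\ref{satz_D(Am)Schnitr-R(Am)-dicht-in-X-Punkt}.

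The only genuine point is the uniform-in-$n$ control of the two tail pieces: this is where one really needs both the $\mR_s$-boundedness of the approximating family $\{T_n^m\}$, which reduces the tail of $\|x_n-x\|_{\theta,s,A,\ph}$ to that of $\|x\|_{\theta,s,A,\ph}$ (an expression not depending on $n$), and the absolute continuity of the norm of the target function space, which forces that latter tail to vanish for $x\in X^\theta_{s,A,\ph}$. Everything else is routine. Alternatively, one could reduce at the outset — as in the proof of Proposition~\ref{satz_D(Am)Schnitr-R(Am)-dicht-in-X-Punkt}, via Proposition~\ref{Satz-norm-equivalent-theta} — to the special function $\ph(z)=z^m/(1+z)^{2m}\in\Phi^\Sigma_{\sigma,\theta}$ and quote the discrete estimate proved there verbatim; I prefer the continuous argument above since it applies uniformly to every $\ph\in\Phi_{\sigma,\theta}$.
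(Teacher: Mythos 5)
Your proposal is correct in substance for $s<\infty$ and has the same skeleton as the paper's (essentially implicit) proof: the paper obtains the corollary directly from the proof of Proposition \ref{satz_D(Am)Schnitr-R(Am)-dicht-in-X-Punkt}, because the approximants $x_n=T_n^mx\in D(A^m)\cap R(A^m)$ constructed there converge to $x$ both in $\|\mal\|_X$ and in the seminorm $\|\mal\|_{\theta,s,A,\ph}$, which together is exactly convergence in $X^\theta_{s,A,\ph}$; no new estimate is really needed. Where you deviate is in how you re-derive the seminorm convergence: inside the proof of the proposition the paper reduces, via Proposition \ref{Satz-norm-equivalent-theta}, to the special function $z^m/(1+z)^{2m}\in\Phi^\Sigma_{\sigma,\theta}$ and the equivalent discrete norm, splitting $\Z$ into $|j|\le N$ and $|j|>N$, whereas you split $(0,\infty)$ into $(0,a)\cup[a,b]\cup(b,\infty)$ and stay with the continuous norm. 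For $s<\infty$ your version is sound: the uniform $\mR_s$-bound for $\{T_n^m-\Id\,:\,n\in\N\}$, the use of Proposition \ref{prop_Rs-stetige-Fassung}, the $\mR_s$-boundedness of $\{t^{-\theta}\ph(tA)\,:\,t\in[a,b]\}$ (Corollary \ref{cor-glm-mE} with $\mF=\{\ph\}$), and the vanishing of the outer tails by $\sigma$-order continuity of the norm (property (B6)) are all correct, and you gain that the argument works for every $\ph\in\Phi_{\sigma,\theta}$ without invoking the discrete machinery of Proposition \ref{satz-s-norm-equivalence-diskret}.

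The genuine weak point is $s=\infty$. The tail quantity your scheme needs, $\big\|\sup_{t\in(0,a)\cup(b,\infty)}|t^{-\theta}\ph(tA)x|\big\|_X$, does \emph{not} become small as $a\to0$, $b\to\infty$: these suprema decrease pointwise to $\limsup_{t\to0}|t^{-\theta}\ph(tA)x|\vee\limsup_{t\to\infty}|t^{-\theta}\ph(tA)x|$, which need not vanish a.e., so neither dominated convergence nor absolute continuity of the norm applies — the monotone decrease to $0$ that drives your $s<\infty$ tail estimate is simply absent. Hence "the same scheme with suprema in place of integrals" does not go through as stated. Note that the paper's $s=\infty$ argument in Proposition \ref{satz_D(Am)Schnitr-R(Am)-dicht-in-X-Punkt} is organized around a different quantity, namely the difference $\sup_{j\in\Z}|2^{-j\theta}\ph(2^jA)x|-\sup_{|j|\le N}|2^{-j\theta}\ph(2^jA)x|$, which \emph{does} decrease to $0$ pointwise, so that order continuity can be used; if you want to keep your continuous set-up for $s=\infty$ you must likewise work with $\sup_{t>0}-\sup_{t\in[a,b]}$ and rearrange the decomposition of $\sup_{t>0}|t^{-\theta}\ph(tA)(x-x_n)|$ around that difference, rather than around an outer-tail supremum; the bare appeal to "the same scheme, exactly as in the $s=\infty$ part" leaves a real gap at this step.
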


With these density properties we can extend the norm estimates from Proposition \ref{Satz-norm-equivalent-theta} to the whole spaces $\Xdot^\theta_{s,A,\ph}, X^\theta_{s,A,\ph}$.
\begin{satz} \label{Satz-norm-equivalent-theta-2}
Let $\theta\in \R$, $\sigma>\omega_{\mR_s}(A)$ and $\ph,\psi\in \Phi_{\sigma,\theta}$. Then there is a constant $C>0$ such that for all $x\in X^\theta_{s,A,\ph}$ and $f\in H^\infty(\Sigma_\sigma)$
\begin{itemize}
\item[(1)] $\ds C^{-1 } \, \|x\|_{\theta,s,A,\ph} \le \|x\|_{\theta,s,A,\psi} \le C\,  \|x\|_{\theta,s,A,\ph}$,
\item[(2)] $\ds \|f(A) x\|_{\theta,s,A,\ph} \le C\,\|f\|_{\infty} \mal \|x\|_{\theta,s,A,\ph}$.
\end{itemize}

In particular, for each $\ph,\psi\in \Phi_{\sigma,\theta}$ the spaces  $\Xdot^\theta_{s,A,\ph}$ and $\Xdot^\theta_{s,A,\psi}$ have equivalent norms, and if $\ph\in \Phi^\Sigma_{\sigma,\theta}$, then also $\|\mal\|^\Sigma_{\theta,s,A,\ph}$ is an equivalent norm on $\Xdot^\theta_{s,A,\ph}$, and $(1),(2)$ also hold for the discrete counterpart $\|\mal\|^\Sigma_{\theta,s,A,\ph}$ instead of $\|\mal\|_{\theta,s,A,\ph}$.\\

Finally, if $\theta\ge 0$, then all statements are also true for the inhomogeneous spaces $X^\theta_{s,A,\ph}$ with the inhomogeneous norms $\|\mal\|_X + \|\mal\|_{\theta,s,A,\ph}$ and $\|\mal\|_X + \|\mal\|^\Sigma_{\theta,s,A,\ph}$, respectively.
\qed\end{satz}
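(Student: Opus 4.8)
Since Proposition~\ref{Satz-norm-equivalent-theta} already yields all three estimates on the subspace $D(A^\theta)$, the plan is to transfer them to the full spaces by a density argument; the only real work is to check that the relevant approximation is compatible with each norm. Fix $m\in\N$ with $m>|\theta|$, so that $D(A^m)\cap R(A^m)\tm D(A^\theta)$, and for $x\in X^\theta_{s,A,\ph}$ take the concrete approximants $x_n:=T_n^m x\in D(A^m)\cap R(A^m)$, where $T_n:=A(\tfrac1n+A)^{-1}\,n(n+A)^{-1}$ as in the proof of Proposition~\ref{satz_D(Am)Schnitr-R(Am)-dicht-in-X-Punkt}; then $x_n\to x$ in $X$. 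First I would record the uniform bounds $\|x_n\|_{\theta,s,A,\ph}\le M\,\|x\|_{\theta,s,A,\ph}$ and (for $\ph\in\Phi^\Sigma_{\sigma,\theta}$) $\|x_n\|^\Sigma_{\theta,s,A,\ph}\le M\,\|x\|^\Sigma_{\theta,s,A,\ph}$: since $\{t(t+A)^{-1}\mid t>0\}\tm\{zR(z,A)\mid z\notin\overline{\Sigma}_\sigma\}$ is $\mR_s$-bounded, so are $\{\tfrac1n(\tfrac1n+A)^{-1}\}_n$, hence $\{A(\tfrac1n+A)^{-1}\}_n$, $\{n(n+A)^{-1}\}_n$ and finally $\{T_n^m\}_n$, with some $\mR_s$-bound $M$; as $T_n^m$ commutes with every function of $A$, the bounds follow from the continuous form of $\mR_s$-boundedness (Propositions~\ref{prop_Rs-stetige-Fassung} and~\ref{prop_Rs-stetige-Fassung,s=oo}), applied to the constant family $S(t)\equiv T_n^m$, respectively directly from the definition of $\mR_s$-boundedness in the discrete case.

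The passage to the limit I would split according to $s$. For $s<\infty$: the continuous (resp.\ discrete) form of $\mR_s$-boundedness produces a diagonal operator $\widetilde{T_n^m}$ on $X(L^s_*)$ (resp.\ $X(\ell^s)$) of norm $\le M$, and since $\widetilde{T_n^m}\to\Id$ strongly on the dense subspace $X\tensor L^s_*$ (resp.\ $X\tensor\ell^s$) --- here $X$ has absolute continuous norm --- uniform boundedness gives $\widetilde{T_n^m}\to\Id$ strongly on all of $X(L^s_*)$; evaluating at the element $(t^{-\theta}\ph(tA)x)_{t>0}$ shows $x_n\to x$ in $\|\mal\|_{\theta,s,A,\ph}$, and likewise in $\|\mal\|^\Sigma_{\theta,s,A,\ph}$. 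For $s=\infty$, where such norm convergence fails, I would instead use that $t\mapsto\ph(tA)x$ has a representative with continuous (indeed analytic) paths, cf.\ Subsection~\ref{subsection_B.f.s.}, together with the Fatou property of $X$, to obtain the lower-semicontinuity estimates $\|x\|_{\theta,\infty,A,\psi}\le\liminf_n\|x_n\|_{\theta,\infty,A,\psi}$ and $\|f(A)x\|_{\theta,\infty,A,\ph}\le\liminf_n\|f(A)x_n\|_{\theta,\infty,A,\ph}$ (the latter using $f(A)x_n=f(A)T_n^m x\to f(A)x$ in $X$).

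Combining with Proposition~\ref{Satz-norm-equivalent-theta} concludes: for (1) with $s<\infty$, $\|x\|_{\theta,s,A,\psi}=\lim_n\|x_n\|_{\theta,s,A,\psi}\le C\lim_n\|x_n\|_{\theta,s,A,\ph}\le CM\,\|x\|_{\theta,s,A,\ph}$ by Proposition~\ref{Satz-norm-equivalent-theta}(1), and symmetrically for the reverse inequality; for $s=\infty$ one uses $\liminf$ and the estimate of the previous paragraph; (2) and (3) are obtained identically from parts (2), resp.\ (3) of Proposition~\ref{Satz-norm-equivalent-theta}. As $X^\theta_{s,A,\ph}$ is dense in both completions, this also gives the equivalence $\Xdot^\theta_{s,A,\ph}\cong\Xdot^\theta_{s,A,\psi}$ of norms. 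The inhomogeneous assertions for $\theta\ge0$ follow by the same scheme, invoking Corollary~\ref{cor_D(Am)Schnitr-R(Am)-dicht-in-X-theta-s-inhomogen} for the density and carrying along the extra term $\|\mal\|_X$, which is compatible with the whole procedure since $x_n\to x$ in $X$. The one genuinely delicate point is the $s=\infty$ case of the limiting step --- where one must argue via the Fatou property and the continuous-path representation rather than via norm convergence --- but this amounts only to routine measure theory once Proposition~\ref{Satz-norm-equivalent-theta} and the density results are available.
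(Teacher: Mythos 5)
Your strategy is the one the paper intends (the proposition is stated without proof precisely because it is meant to follow from Proposition \ref{Satz-norm-equivalent-theta} together with the density results \ref{satz_D(Am)Schnitr-R(Am)-dicht-in-X-Punkt} and \ref{cor_D(Am)Schnitr-R(Am)-dicht-in-X-theta-s-inhomogen}), and your preparatory steps are sound: the approximants $x_n=T_n^mx$, the $\mR_s$-bound for $\{T_n^m\,|\,n\in\N\}$, the uniform estimates, and the strong convergence $\widetilde{T_n^m}\to\Id$ on $X(L^s_*)$ via density of $X\tensor L^s_*$.

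There is, however, a genuine gap in the limiting step for $s<\infty$. The strong-convergence argument yields $x_n\to x$ only in $\|\mal\|_{\theta,s,A,\ph}$, because only for $\ph$ do you know a priori that $(t^{-\theta}\ph(tA)x)_{t>0}\in X(L^s_*)$ --- that is exactly the meaning of $x\in X^\theta_{s,A,\ph}$. The equality $\|x\|_{\theta,s,A,\psi}=\lim_n\|x_n\|_{\theta,s,A,\psi}$ used in your concluding paragraph would require the same argument applied to $(t^{-\theta}\psi(tA)x)_{t>0}$, i.e.\ it presupposes $\|x\|_{\theta,s,A,\psi}<\infty$, which is part of what must be proved; the same circularity affects ``likewise in $\|\mal\|^\Sigma_{\theta,s,A,\ph}$'' and assertion (2) (where, moreover, $f(A)x_n\to f(A)x$ \emph{in $X$} is only available when $x\in D(f(A))$, since $f(A)$ need not be bounded on $X$ for an $\mR_s$-sectorial $A$). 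The repair is the device you reserve for $s=\infty$, and it works for every $s$: the uniform bound gives $\sup_n\|x_n\|_{\theta,s,A,\psi}\le CM\,\|x\|_{\theta,s,A,\ph}$, so it suffices to prove the lower semicontinuity $\|x\|_{\theta,s,A,\psi}\le\liminf_n\|x_n\|_{\theta,s,A,\psi}$ along $x_n\to x$ in $X$. For $s<\infty$ this follows, e.g., by truncation: for $0<a<b<\infty$ the map $y\mapsto\big(\1I_{[a,b]}(t)\,t^{-\theta}\psi(tA)y\big)_{t>0}$ is bounded from $X$ into $X(L^s_*)$ by Proposition \ref{prop_Rs-stetige-Fassung}, so the truncated norms of $x_n$ converge to that of $x$, and letting $a\downarrow0$, $b\uparrow\infty$ with the Fatou property (B3) gives the claim; alternatively, identify the $X(L^s_*)$-limit of $(t^{-\theta}\psi(tA)x_n)_{t>0}$ with $(t^{-\theta}\psi(tA)x)_{t>0}$ by passing to a.e.-convergent subsequences, as in the proof of Proposition \ref{prop_Xdot-als-TR-von-x-aus-U-mit-endlicher-Norm}. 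With this adjustment (applied verbatim to the discrete norms and to (2)) your argument is complete and coincides with the intended one, including the inhomogeneous case.
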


Hence we will usually drop the $\ph$ and sometimes $A$ in our notation of the spaces $\Xdot^\theta_{s,A,\ph}, X^\theta_{s,A,\ph}$, if there is no risk of confusion. Moreover, if $\theta\in \R$, $\sigma>\omega_{\mR_s}(A)$ and $\ph\in \Phi_{\sigma,\theta}$ (or $\ph\in \Phi^\Sigma_{\sigma,\theta}$, respectively), we will write
\[
\|x\|_{\theta,s} \approx \Big\| \Big( \int_0^\infty |t^{-\theta}\ph(tA)x|^s \,\frac{dt}{t} \Big)^{1/s} \Big\|_X \quad \bigg( \|x\|_{\theta,s} \approx \Big\| \Big( \sum_{j\in\Z}  |2^{-j\theta} \ph(2^jA)x| \Big)^{1/s} \Big\|_X  \bigg)
\]

to indicate that $\|\mal\|_{\theta,s}$ is any of the equivalent norms $\|\mal\|_{\theta,s,\psi}$, $\psi\in \Phi_{\sigma,\theta}$ (or $\|\mal\|^\Sigma_{\theta,s,\psi}$, $\psi\in \Phi^\Sigma_{\sigma,\theta}$, respectively).\\

\begin{bem} \label{bem_diskrete-Normen-Haase-im-Besovraum}
It is well know that the related norm
\begin{equation}
x \mapsto \|x\|_X + \bigg( \int_0^\infty \|t^{-\theta}\ph(tA)x\|_X^s \, \frac{dt}{t} \bigg)^{1/s}
\end{equation}

(with the usual modification if $s=\infty$), where $X$ is a general Banach space and $A$ is an injective sectorial operator in $X$ with dense domain and range, are equivalent to the norm of the real interpolation space $(X,D(A))_{\theta,s}$ if $\theta\in (0,1)$, cf. \cite{haase}, Theorem 6.5.3. By Remark \ref{bem_diskrete-Normen-Haase} we obtain in a similar way as it is done above that also the discrete counterparts
\begin{equation}
x \mapsto \|x\|_X +  \Big( \sum_{j\in\Z} \|2^{-j\theta}\ph(2^jA)x\|_X^s\Big)^{1/s}
\end{equation}

(with the usual modification if $s=\infty$) define an equivalent norm for the real interpolation space $(X,D(A))_{\theta,s}$ if $\theta\in (0,1)$ and $\ph\in \Phi^\Sigma_{\sigma,\theta}$.
\end{bem}

If $\theta>0$ and $\ph\in \Phi_{\sigma,\theta}$ for some $\sigma>\omega_{\mR_s}(A)$, we observe that by $\mR_s$-boundedness of ${\{\ph(tA) \,|\, t>0\}}$ we have
\begin{eqnarray*}
\Big\| \Big(\int_1^\infty |t^{-\theta}\ph(tA)x|^s \,\frac{dt}{t} \Big)^{1/s}\Big\|_X &\lesssim&  \Big\|\Big( \int_1^\infty |t^{-\theta}x|^s \,\frac{dt}{t} \Big)^{1/s}\Big\|_X =   (\theta s)^{-1/s}  \mal \| x \|_X.
\end{eqnarray*}

This leads to the following
\begin{bem} \label{bem_norm-int_0^1-in-inhom-X}
Let $\theta>0$ and $\ph\in \Phi_{\sigma,\theta}$ for some $\sigma>\omega_{\mR_s}(A)$. Then
\[
X^\theta_{A,s} = \bigg\{ x\in X \, \Big|\,   \Big\| \Big( \int_0^1 |t^{-\theta}\ph(tA)x|^s \,\frac{dt}{t} \Big)^{1/s} \Big\|_X < \infty \bigg\},
\]

and $\ds x\mapsto \|x\|_X +  \bigg\| \Big( \int_0^1 |t^{-\theta}\ph(tA)x|^s \,\frac{dt}{t} \Big)^{1/s} \bigg\|_X$ defines an equivalent norm on $X^\theta_{A,s}$.
\qed\end{bem}

The next proposition describes some elementary embedding properties.
\begin{prop} \label{prop_elementare-Einbettungen}
Let $\theta,\theta'\in\R$ and $r\in [1,\infty]$. Then the following embeddings hold:
\begin{itemize}
\item[(1)] If $r\le s$ and $A$ is also $\mR_r$-sectorial, then $\Xdot^\theta_{r,A}\into \Xdot^\theta_{s,A}$ and $X^\theta_{r,A}\into X^\theta_{s,A}$ if $\theta\ge 0$, respectively.
\item[(2)] If $\theta'\ge \theta > 0$, then $X^{\theta'}_{A,s} \into X^\theta_{A,s}\into X$.
\end{itemize}
\end{prop}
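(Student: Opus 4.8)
The plan is to reduce both chains of embeddings to pointwise ($\mu$-a.e.) inequalities between the square-function integrands, combined with the norm equivalences of Proposition~\ref{Satz-norm-equivalent-theta-2}.

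For part~(1) the point is that on the infinite-measure multiplicative group $((0,\infty),dt/t)$ there is \emph{no} continuous $L^r\into L^s$ embedding, so I would pass to the discrete $s$-power function norms, where $\ell^r\into\ell^s$ \emph{is} available. Concretely: fix $\sigma\in(\omega_{\mR_r}(A)\vee\omega_{\mR_s}(A),\pi)$, pick $m\in\N$ with $m>|\theta|$, and set $\ph(z):=z^{m+\theta}(1+z)^{-2m}$; then $z^{-\theta}\ph(z)=z^m(1+z)^{-2m}\in\Phi^\Sigma_{\sigma,0}$ by Examples~\ref{bspe_Standardfkt.-in-Phi^Sigma}\,(1), so $\ph\in\Phi^\Sigma_{\sigma,\theta}$. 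For every $x\in X$ one has $\mu$-a.e.
\[
\Big(\sum_{j\in\Z}|2^{-j\theta}\ph(2^jA)x|^s\Big)^{1/s}\le\Big(\sum_{j\in\Z}|2^{-j\theta}\ph(2^jA)x|^r\Big)^{1/r}
\]
(with $\sup_{j}$ on the left if $s=\infty$), whence, applying $\|\cdot\|_X$ and (B2), $\|x\|^\Sigma_{\theta,s,A,\ph}\le\|x\|^\Sigma_{\theta,r,A,\ph}$. Now Proposition~\ref{Satz-norm-equivalent-theta-2}\,(3), invoked once for the $\mR_s$-sectorial and once for the $\mR_r$-sectorial operator $A$, upgrades this to $\|x\|_{\theta,s,A,\ph}\lesssim\|x\|_{\theta,r,A,\ph}$, and in particular $X^\theta_{r,A,\ph}\tm X^\theta_{s,A,\ph}$. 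For $\theta\ge0$, adding $\|x\|_X$ yields the inhomogeneous inclusion $X^\theta_{r,A}\into X^\theta_{s,A}$; for the homogeneous spaces, $D(A^m)\cap R(A^m)$ is dense in both $\Xdot^\theta_{r,A}$ and $\Xdot^\theta_{s,A}$ by Proposition~\ref{satz_D(Am)Schnitr-R(Am)-dicht-in-X-Punkt}, and the identity on this common dense subspace, being $\|\cdot\|_{\theta,r}$-to-$\|\cdot\|_{\theta,s}$ bounded, extends to the required embedding $\Xdot^\theta_{r,A}\into\Xdot^\theta_{s,A}$.

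For part~(2), the inclusion $X^\theta_{A,s}\into X$ is immediate from $\|x\|_{X^\theta_{A,s}}=\|x\|_X+\|x\|_{\theta,s}\ge\|x\|_X$. For $X^{\theta'}_{A,s}\into X^\theta_{A,s}$ I would fix $\sigma>\omega_{\mR_s}(A)$ and take $\ph\in\Phi_{\sigma,\theta'}$ (for instance $\ph(z)=z^{m+\theta'}(1+z)^{-2m}$ with $m\in\N$, $m>\theta'$); since $\theta'\ge\theta$ we also have $\ph\in\Phi_{\sigma,\theta}$. As $t^{(\theta'-\theta)s}\le1$ for $t\in(0,1]$, one obtains $\mu$-a.e.
\[
\int_0^1|t^{-\theta}\ph(tA)x|^s\,\frac{dt}{t}=\int_0^1 t^{(\theta'-\theta)s}\,|t^{-\theta'}\ph(tA)x|^s\,\frac{dt}{t}\le\int_0^\infty|t^{-\theta'}\ph(tA)x|^s\,\frac{dt}{t},
\]
hence $\big\|\big(\int_0^1|t^{-\theta}\ph(tA)x|^s\,\frac{dt}{t}\big)^{1/s}\big\|_X\le\|x\|_{\theta',s,A,\ph}$ (with the $\sup$-modification if $s=\infty$). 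By Remark~\ref{bem_norm-int_0^1-in-inhom-X} the left-hand side plus $\|x\|_X$ is an equivalent norm on $X^\theta_{A,s}$, while by Proposition~\ref{Satz-norm-equivalent-theta-2} the quantity $\|x\|_X+\|x\|_{\theta',s,A,\ph}$ is equivalent to $\|x\|_{X^{\theta'}_{A,s}}$; combining these gives $X^{\theta'}_{A,s}\into X^\theta_{A,s}$.

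I expect part~(2) to be essentially bookkeeping. The only genuine obstacle is in part~(1): since the continuous $s$-power function norm lives over the infinite-measure space $((0,\infty),dt/t)$, a naive pointwise comparison of the continuous norms is impossible, which is precisely why the argument must be routed through the discrete norms --- and hence why the extra hypothesis that $A$ be $\mR_r$-sectorial, together with the equivalence machinery of Proposition~\ref{Satz-norm-equivalent-theta-2}\,(3) and the function class $\Phi^\Sigma_{\sigma,\theta}$, enters.
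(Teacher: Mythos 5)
Your proof is correct and takes essentially the same route as the paper: part (1) is done via the discrete $s$-power function norms, the pointwise $\ell^r\into\ell^s$ inequality and the discrete--continuous norm equivalence, and part (2) is exactly the ``immediate consequence of Remark \ref{bem_norm-int_0^1-in-inhom-X}'' argument the paper has in mind. The only cosmetic point is that the equivalence you cite as ``Proposition \ref{Satz-norm-equivalent-theta-2}\,(3)'' appears there only in the concluding ``in particular'' clause (it is item (3) of Proposition \ref{Satz-norm-equivalent-theta}).
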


\begin{proof}
(1) This follows immediately if we use the discrete norm representation in the spaces $\Xdot^\theta_{r,A}, \Xdot^\theta_{s,A}$ and the fact that $\ell^r\into\ell^s$.\\

(2) This is an immediate consequence of Remark \ref{bem_norm-int_0^1-in-inhom-X}.
\end{proof}

Also the homogeneous spaces $\Xdot^\theta_{s,A}$ can be embedded into some natural extrapolation spaces associated to $A$. A suitable framework is the theory of abstract extrapolation spaces as it is developed in \cite{haase}, Chapter 6.3. We will give a short summary of those parts of the theory that we need here.\\

Defining the operator $J:= A(1+A)^{-2}: X \to X$, one has $JX=D(A)\cap R(A)\into X$, and $J$ is a topological isomorphism. The operator $J$ gives rise to a scale of extrapolation spaces
\[
X_{(1)} \into X=X_{(0)} \into X_{(-1)} \into X_{(-2)} \into \cdots \into X_{(-n)} \into \cdots
\]

where $X_{(1)} := D(A)\cap R(A)$, together with a family of (compatible) isometric isomorphisms $J_n:X_{(-n)} \to X_{(-n+1)}$ such that $J_0 = J$. The algebraic inductive limit $U:= \bigcup_{n\in\N} X_{(-n)}$ is called the \emph{universal extrapolation space} corresponding to $A$. This space can be endowed with a notion of net-convergence in the following sense: Let $(x_\alpha)_{\alpha\in A}$ be a net in $U$ and $y\in U$, then
\[
x_\alpha\to y :\iff \exists\, n\in\N, \alpha_0\in A\,:\; \big(\forall\, \alpha \in A_{\ge \alpha_0}:\: y,x_\alpha\in X_{-n}\big) \, \wedge \;\|x_\alpha-y\|\to 0.
\]

Then the limit of a net in $U$ is unique, and sum and scalar multiplication are "continuous" with respect
to the so-defined notion of convergence. Since the operator $J$ is defined on each space $X_{-n}$, $n \in \N$, it can be considered as a mapping $J:U\to U$, which then is obviously surjective, whence it is an algebraic isomorphism, continuous with respect to the notion of convergence defined above.\\

In fact, the construction of the space $U$ and in particular the notion of convergence in $U$ is only an ad-hoc construction, which is suitable to make formulations easier: For example, convergence in the space $U$ is convergence in the space $X_{(-m)}$ for some $m\in\N$, and in the same manner arguments made in the space $U$ always have to be understood to be made in the space $X_{(-m)}$ for some $m\in\N$.\\

The operator $A$ can also be lifted to the scale of extrapolation spaces and the space $U$: We define
\[
A_{(-1)}:= J^{-1} AJ \quad\text{ with domain } D(A_{(-1)}) := J^{-1}D(A).
\]

Then $A$ is an injective sectorial operator in $X_{(-1)}$ that is isometrically similar to $A$. Moreover $X_{(1)} \tm D(A_{(-1)})\tm X_{(-1)}$, and $A$ is the part of $A_{(-1)}$, i.e.
\[
A=A_{(-1)} \cap (X\times X) = \{ (x,A_{(-1)}x) \,|\, x,A_{(-1)}x \in X\}.
\]

Iterating this procedure leads to a sequence of isometrically similar sectorial operators $A_{(-n)}$ in $X_{(-n)}$ where $A_{(-n)}$ is the part of $A_{(-n-1)}$ in $X_{(-n)}$. Thus $A$ can be considered as an operator on the whole space $U$.\\

The concept of functional calculus can be extended to this framework: Let $\sigma \in (\omega(A),\pi]$ and $f\in\BB(\Sigma_\sigma)$. Then the operator $f(A)$ can be considered as an operator in each $X_{(-n)}$, and we have consistency in the sense that $f (A_{(-n-1)} )|_{X_{(-n)}} = f(A_{(-n)})$ for all $n\in\N$. To be more precise, if we choose $m\in\N$ such that $\rho^mf\in \mE(\Sigma_\sigma)$, where $\rho(z)=z/(1+z)^2$, then $f(A):X_{(-n)}\to X_{(-n-m)}$ is bounded for each $n\in\N$. Hence $f(A)$ can be considered as an operator on the space $U$, and we have the following important lemma.
\begin{lemma}[\cite{haase}, Lemma 6.3.1] \label{lemma_Haase-f(A)-in-U}
Let $\sigma \in (\omega(A),\pi)$ and $f\in\BB(\Sigma_\sigma)$. Then $D(f(A))= {\{x\in X \,|\, f(A)x\in X\}}$, i.e., the operator $f(A)$ considered as an operator in $X$ is the part in $X$ of $f(A)$ considered as an operator in $U$.
\qed\end{lemma}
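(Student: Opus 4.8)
The plan is to unwind the two definitions of $f(A)x$ and compare them directly; no approximation is needed, the statement being purely algebraic once the scale $(X_{(-n)})_n$ and the lifted functional calculus are available. Fix $m\in\N$ with $\rho^m f\in H_0^\infty(\Sigma_\sigma)$ (possible since $f\in\BB(\Sigma_\sigma)$); all operators below are independent of this choice by the remarks in Subsection~\ref{subsection-Hoo}. By definition of the $\BB(\Sigma_\sigma)$-calculus in $X$ one has, for $x\in X$,
\[
x\in D(f(A)) \iff (\rho^m f)(A)x\in R(\rho(A)^m),\qquad\text{and then}\quad f(A)x=\rho(A)^{-m}(\rho^m f)(A)x,
\]
where $\rho(A)^m=A^m(1+A)^{-2m}\in L(X)$ is injective. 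On $U$, recall from the construction of the universal extrapolation space that $\rho(A)$ is a bijection of $U$, so that $f(A)$, viewed on $U$, is the everywhere-defined operator $\rho(A)^{-m}(\rho^m f)(A)$ with $\rho(A)^{-m}$ the inverse of $\rho(A)^m$ on $U$; moreover $\rho(A)^m$ on $U$ extends $\rho(A)^m\in L(X)$ and $(\rho^m f)(A)$ on $U$ extends $(\rho^m f)(A)\in L(X)$ (the latter being a bounded operator since $\rho^m f\in H_0^\infty$).

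Now let $x\in X$. Since $(\rho^m f)(A)x$ is the same element of $X$ whether computed in $X$ or in $U$, and $\rho(A)^m$ is a bijection of $U$, we have $f(A)x\in X$ (the value on $U$) if and only if $(\rho^m f)(A)x\in\rho(A)^m X$; and $\rho(A)^m X$ is exactly $R(\rho(A)^m)$ because $\rho(A)^m$ restricted to $X\subseteq U$ is the operator $\rho(A)^m\in L(X)$. Hence $f(A)x\in X\iff(\rho^m f)(A)x\in R(\rho(A)^m)\iff x\in D(f(A))$. When this holds, write $(\rho^m f)(A)x=\rho(A)^m w$ with $w\in X$ (uniquely, by injectivity); then $f(A)x=\rho(A)^{-m}(\rho^m f)(A)x=w$, which is precisely the value of $f(A)x$ computed in $X$. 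This establishes $D(f(A))=\{x\in X\mid f(A)x\in X\}$ together with equality of the values, i.e. $f(A)$ in $X$ is the part in $X$ of $f(A)$ in $U$.

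The one point requiring care is the bookkeeping of $\rho(A)^m$, which plays three compatible roles at once --- an injective bounded operator on $X$, its extension to a bijection of $U$, and (via that bijection) an inverse $\rho(A)^{-m}$ defined on all of $U$ that, restricted to $R(\rho(A)^m)\subseteq X$, coincides with the regularizing inverse used in the $\BB$-calculus on $X$. These compatibilities, together with the bijectivity of $\rho(A)$ on $U$, are part of the standard construction of the universal extrapolation space and of the lifted functional calculus recalled above (see also \cite{haase}, Section~6.3).
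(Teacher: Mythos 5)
Your argument is correct, and it is essentially the standard proof: the paper itself offers no proof of this lemma but simply cites \cite{haase}, Lemma 6.3.1, and what you write is the natural unwinding of the two regularized definitions of $f(A)$, which is how the cited result is obtained there. The only points your proof leans on beyond pure bookkeeping --- that $\rho(A)$ computed on $U$ restricts on $X$ to $J=\rho(A)\in L(X)$ and is a bijection of $U$, and that $(\rho^m f)(A)$ (with $\rho^m f\in H_0^\infty(\Sigma_\sigma)\subseteq\mE(\Sigma_\sigma)$) acts consistently on all levels $X_{(-n)}$ --- are exactly the compatibility statements built into the construction of the extrapolation scale and the lifted calculus recalled in Subsection \ref{subsection_The-s-spaces-as-intermediate-spaces-and-interpolation} and in \cite{haase}, Section 6.3, so invoking them is legitimate; with them, your equivalence $f(A)x\in X\iff(\rho^m f)(A)x\in R(\rho(A)^m)\iff x\in D(f(A))$, together with the identification of the values via the unique $w\in X$ with $(\rho^m f)(A)x=\rho(A)^m w$, is complete.
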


Finally we define for each $\alpha\in\R$ the \emph{homogeneous fractional space}
\[
\Xdot_\alpha := A^{-\alpha}X \quad\text{ endowed with the norm } \|\mal\|_{\alpha} := \|\mal\|_{\Xdot_\alpha} := \|A^\alpha \mal\|_X,
\]

where $A^{-\alpha}$ has to be understood in the sense of Lemma \ref{lemma_Haase-f(A)-in-U} and the preceding remarks.\\

We can now give a concrete description of the homogeneous $s$-intermediate spaces as subspaces of the abstract extrapolation space $U$.
\begin{prop} \label{prop_Xdot-als-TR-von-x-aus-U-mit-endlicher-Norm}
Let $\theta\in\R$. Then
\[
\Xdot^\theta_{s,A} \cong \{ x\in U \,|\, \|x\|_{\theta,s,A}<\infty\} \into A^{-\theta} X_{(-1)}
\]
\end{prop}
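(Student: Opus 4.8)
The plan is to reduce the statement to the case $\theta=0$, prove there the single estimate $\|\rho(A)y\|_X\lesssim\|y\|_{0,s,A}$ on the dense subspace $D(A^m)\cap R(A^m)$, and then extend by density and identify the completion. Fix $\ph\in\Phi_{\sigma,\theta}$ and $m\in\N$ with $m>|\theta|$; by Proposition~\ref{satz_D(Am)Schnitr-R(Am)-dicht-in-X-Punkt} the space $D(A^m)\cap R(A^m)$ is dense in $\Xdot^\theta_{s,A,\ph}$. With $\widetilde\ph(z):=z^{-\theta}\ph(z)\in H_0^\infty(\Sigma_\sigma)\subset\Phi_{\sigma,0}$ one has $t^{-\theta}\ph(tA)=\widetilde\ph(tA)A^\theta$ on $D(A^\theta)$, hence $\|x\|_{\theta,s,A,\ph}=\|A^\theta x\|_{0,s,A,\widetilde\ph}$ for such $x$; moreover $\|x\|_{A^{-\theta}X_{(-1)}}=\|A^\theta x\|_{X_{(-1)}}=\|\rho(A)\,A^\theta x\|_X$ with $\rho(z)=z(1+z)^{-2}$, and $A^\theta$ is a bijection of $U$ intertwining the relevant subspaces (cf.\ Lemma~\ref{lemma_Haase-f(A)-in-U}). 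Thus everything reduces to: there is $C>0$ with $\|\rho(A)y\|_X\le C\,\|y\|_{0,s,A}$ for all $y\in D(A^m)\cap R(A^m)$. (For $s=\infty$ this is immediate: $\rho\in\Phi_{\sigma,0}$ gives $\|\rho(A)y\|_X\le\big\|\sup_{t>0}|\rho(tA)y|\big\|_X=\|y\|_{0,\infty,A,\rho}\approx\|y\|_{0,\infty,A}$ by Proposition~\ref{Satz-norm-equivalent-theta}.)

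For $s<\infty$ I would argue as follows. Choose $\psi\in H_0^\infty(\Sigma_\sigma)$ with large polynomial decay at $0$ and $\infty$ and $\int_0^\infty\psi(t)\rho(t)\,\frac{dt}{t}=1$; then, exactly as in the injectivity argument for the map $J$ in Subsection~\ref{subsection_Element-prop-of-s-spaces}, $y=\int_0^\infty\psi(tA)\rho(tA)y\,\frac{dt}{t}$ for $y\in D(A^m)\cap R(A^m)$, so that
\begin{equation*}
\rho(A)y=\int_0^\infty S(t)\,g(t)\,\frac{dt}{t},\qquad S(t):=\rho(A)\psi(tA),\quad g(t):=\rho(tA)y ,
\end{equation*}
where $\|g\|_{X(L^s_*)}=\|y\|_{0,s,A,\rho}\approx\|y\|_{0,s,A}$. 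Now $S(t)$ is a function of $A$, namely $S(t)=\vartheta_t(tA)$ with $\vartheta_t(w):=\rho(w/t)\psi(w)\in H_0^\infty(\Sigma_\sigma)$, and for a fixed small $\eps>0$ the family $\{\,(t^{\eps}\wedge t^{-\eps})^{-1}\vartheta_t:t>0\,\}$ satisfies a uniform estimate $|(t^{\eps}\wedge t^{-\eps})^{-1}\vartheta_t(w)|\le C_0(|w|^{\delta}\wedge|w|^{-\delta})$ for some $\delta>0$ — the decay of $\psi$ being used to absorb the $t$-dependent prefactor coming from $\rho(w/t)$. Hence by Lemma~\ref{Lemma_ph(zA)-Rs-bd} the set $\{\,\tilde S(t):=(t^{\eps}\wedge t^{-\eps})^{-1}S(t):t>0\,\}$ is $\mR_s$-bounded. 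Combining \eqref{ungl-B.f.s.-integral-betrag}, Hölder's inequality in $t$ applied pointwise on $\Omega$, and Proposition~\ref{prop_Rs-stetige-Fassung},
\begin{equation*}
|\rho(A)y|\le\int_0^\infty(t^{\eps}\wedge t^{-\eps})\,|\tilde S(t)g(t)|\,\frac{dt}{t}\le C_\eps\Big(\int_0^\infty|\tilde S(t)g(t)|^s\,\frac{dt}{t}\Big)^{1/s},
\end{equation*}
so that $\|\rho(A)y\|_X\le C_\eps\,\mR_s(\{\tilde S(t):t>0\})\,\|g\|_{X(L^s_*)}\le C\,\|y\|_{0,s,A}$, which is the required estimate.

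Finally I would package this. By the estimate just proved (and the reduction of the first paragraph), the inclusion of $D(A^m)\cap R(A^m)$ into $A^{-\theta}X_{(-1)}$ is bounded for the norm $\|\cdot\|_{\theta,s,A,\ph}$; since this subspace is dense in $\Xdot^\theta_{s,A,\ph}$ and $A^{-\theta}X_{(-1)}$ is a Banach space, the inclusion extends to a bounded linear map $\iota:\Xdot^\theta_{s,A,\ph}\to A^{-\theta}X_{(-1)}\into U$. A routine approximation argument — using that $x\mapsto(t^{-\theta}\ph(tA)x)_{t>0}$ is already known to be an isometric embedding of $(X^\theta_{s,A,\ph},\|\cdot\|_{\theta,s,A,\ph})$ into $X(L^s_*)$ and that $\ph(tA)$ and $A^\theta$ act continuously on $U$ — then shows that $\iota$ is injective with $\|\iota(x)\|_{\theta,s,A}=\|x\|_{\Xdot^\theta_{s,A,\ph}}$ for all $x$; thus $\iota$ identifies $\Xdot^\theta_{s,A,\ph}$ with a subspace of $\{x\in U:\|x\|_{\theta,s,A}<\infty\}$ lying inside $A^{-\theta}X_{(-1)}$. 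Conversely, for $x\in U$ with $\|x\|_{\theta,s,A}<\infty$ the mollifications $x_n:=T_n^{m}x$ (with $T_n$ as in the proof of Proposition~\ref{satz_D(Am)Schnitr-R(Am)-dicht-in-X-Punkt}) lie in $D(A^m)\cap R(A^m)$, tend to $x$ in $U$, and — by the dominated-convergence argument of that proof, carried out now for the continuous norm $\|\cdot\|_{\theta,s,A}$ — satisfy $\|x-x_n\|_{\theta,s,A}\to 0$; hence $x\in\iota(\Xdot^\theta_{s,A,\ph})$, and $\iota$ is onto $\{x\in U:\|x\|_{\theta,s,A}<\infty\}$, completing the proof.

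I expect the main obstacle to be the key estimate, and inside it the point that $\|\rho(A)y\|_X$ cannot simply be bounded by $\int_0^\infty\|S(t)g(t)\|_X\,\frac{dt}{t}$ (that route would require passing from $L^s(\frac{dt}{t};X)$ to $X(L^s(\frac{dt}{t}))$, which fails for general $X$): one must split the decay of $S(t)$ into a scalar $L^{s'}(\frac{dt}{t})$-factor while keeping the residual operator family $\mR_s$-bounded, and verifying that this splitting is admissible — i.e.\ that $\{(t^{\eps}\wedge t^{-\eps})^{-1}\vartheta_t\}$ still falls under Lemma~\ref{Lemma_ph(zA)-Rs-bd} — is the technical heart. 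A secondary difficulty is the measure-theoretic bookkeeping needed to make the identification of the abstract completion with a concrete subspace of $U$ precise.
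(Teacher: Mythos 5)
Your key estimate is correct, and it is in fact a genuinely different route from the paper's: you represent $\rho(A)y=\int_0^\infty \rho(A)\psi(tA)\,\rho(tA)y\,\frac{dt}{t}$, split off the scalar factor $t^{\eps}\wedge t^{-\eps}$, and use Lemma \ref{Lemma_ph(zA)-Rs-bd} together with Proposition \ref{prop_Rs-stetige-Fassung} for the residual family $\{\tilde S(t)\}$ (the uniform bound you claim for $(t^{\eps}\wedge t^{-\eps})^{-1}\rho(\cdot/t)\psi(\cdot)$ does hold once $\psi$ decays fast enough). The paper instead writes $\rho(A)x=\frac{1}{2\pi i}\int_\Gamma \rho(z)\,zR(z,A)\,u(z)\,\frac{dz}{z}$ with $u(z)=\int_0^\infty\psi(tz)\ph(tA)x\,\frac{dt}{t}$, bounds $|u(z)|$ pointwise by H\"older in $t$, and then only uses $\sup_{z\in\Gamma}\|zR(z,A)\|<\infty$; the decisive difference is that this argument applies directly to an \emph{arbitrary} $x\in U$ with finite norm (no Bochner integrability of the $t$-integral in $X$ is required, only the pointwise bound on $u(z)$), whereas your argument is confined to $y\in D(A^m)\cap R(A^m)$, where $t\mapsto S(t)g(t)$ really is integrable in $X$.

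That confinement is where your packaging has a genuine gap, namely at the surjectivity step. For $x\in U$ with $\|x\|_{\theta,s,A}<\infty$ you set $x_n:=T_n^{m}x$ and invoke the argument of Proposition \ref{satz_D(Am)Schnitr-R(Am)-dicht-in-X-Punkt}. But at that point of \emph{your} proof nothing is known about $x$ beyond finiteness of the norm: $x$ may lie in $X_{(-k)}$ with $k$ arbitrarily large, and then $T_n^{m}x$ with the fixed power $m>|\theta|$ need not lie in $D(A^m)\cap R(A^m)$ — not even in $X$; moreover the dominated-convergence argument you transplant uses $\|x_n-x\|_X\to0$ and constants built from $\|\ph(2^jA)x\|_X$, i.e.\ it presupposes $x\in X$ or at least a controlled level of the extrapolation scale. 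In the paper this is harmless because the embedding $\{x\in U\,|\,\|x\|_{\theta,s,A}<\infty\}\into A^{-\theta}X_{(-1)}$ is proved \emph{first}, for all such $x$, so in the subsequent density argument every element sits at a fixed, known level; in your ordering that embedding is exactly what surjectivity is supposed to deliver, so you may not use it. The gap is repairable — either run your estimate for general $x\in U$ with finite norm (which forces the paper's more delicate pointwise/Fubini treatment, since the $t$-integral is then no longer Bochner convergent in $X$), or let the mollifier power and the decay of the auxiliary function depend on the level $k$ of the given $x$ and redo the convergence argument at that level — but as written the step fails. The remaining "routine approximation" for injectivity and norm preservation of $\iota$ is essentially the paper's completeness argument (a.e.\ subsequences plus the Fatou property) and is acceptable as a sketch.
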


\begin{proof}
For  brevity we drop $A$ in notation of norms and spaces for this proof. Define the auxiliary space
\[
\widetilde{X}^\theta_{s} :=  \{ x\in U \,|\, \|x\|_{\theta,s}<\infty\}, \quad\text{endowed with the norm } \|\mal\|_{\theta,s}.
\]

We start by showing the embedding $\widetilde{X}^\theta_{s} \into A^{-\theta} X_{(-1)}$. Choose $\sigma\in (\omega_{\mR_s}(A),\pi]$ and $\ph\in \Phi_{\sigma,\theta}$. With $\widetilde{\ph}(z):= z^{-\theta}\ph(z)$ we have $\widetilde{\ph}\in\Phi_{\sigma,0}$ and
\[
\widetilde{\ph}(tA) A^\theta x =(tA)^{-\theta}\ph(tA)A^\theta x = t^{-\theta} \ph(tA)x \quad\text{for all } x\in D(A^\theta), t>0,
\]

hence $A^\theta$ is an isomorphism from $\widetilde{X}^\theta_{s}$ to $\widetilde{X}^0_{s}$ and we may assume w.l.o.g. that $\theta=0$. Let $x\in U$ with $\|x\|_{0,s,\ph}<\infty$. We argue similar as in the proof of Proposition \ref{satz-norm-equiv-und-Hoo}. We choose a function $\psi\in H_0^\infty(\Sigma_\sigma)$ such that $\int_0^\infty \ph(t)\psi(t) \frac{dt}{t} = 1$ and conclude by the same techniques as in the proof of Proposition \ref{satz-norm-equiv-und-Hoo} that
\[
\int_0^\infty \ph(tA)\psi(tA)x \,\frac{dt}{t}= x \quad\text{in } U,
\]

i.e. the integral is taken in the extrapolation space $X_{(-m)}$ for some $m\in\N$. Let $\rho(z):= z/(1+z)^2$, choose $\omega\in (\omega_{\mR_s}(A),\sigma)$ and let $\Gamma$ by the usual parametrization of $\del\Sigma_{\omega}$. Using functional calculus and Fubini-Tonelli yields
\begin{eqnarray*}
\rho(A)x &=& \int_0^\infty \rho(A)\ph(tA)\psi(tA)x \,\frac{dt}{t}=  \int_0^\infty  \, \frac{1}{2\pi i} \int_\Gamma \rho(z)\psi(tz)zR(z,A)\ph(tA)x \,\frac{dz}{z}\,\frac{dt}{t}\\
&=&  \frac{1}{2\pi i} \int_\Gamma \rho(z) \,zR(z,A)\, \underbrace{\bigg(\int_0^\infty  \psi(tz)\ph(tA)x \,\frac{dt}{t}\bigg)}_{=: u(z)} \,\frac{dz}{z}.
\end{eqnarray*}

By H\"older's inequality we have
\begin{eqnarray*}
|u(z)| &\le& \underbrace{\bigg(\int_0^\infty |\psi(tz)|^{s'}\, \frac{dt}{t} \bigg)^{1/s'}}_{C(z):=} \mal \bigg(\int_0^\infty |\ph(tA)x|^s\, \frac{dt}{t} \bigg)^{1/s},
\end{eqnarray*}

where $C:=\sup_{z\in\Gamma} C(z)<\infty$ since $\psi\in H_0^\infty(\Sigma_\sigma)$. Since also $\rho\in H_0^\infty(\Sigma_\sigma)$ and $M:= \sup_{z\in\Sigma_\omega} \|zR(z,A)\| < \infty$ we obtain $Jx=\rho(A)x\in X$, hence $x\in X_{(-1)}$, with
\begin{eqnarray*}
\|x\|_{X_{(-1)}} &=& \|Jx\|_{X} = \|\rho(A)x\|_X \le \frac{1}{2\pi} \int_\Gamma |\rho(z)| \,\|zR(z,A)\|_X \mal  \| |u(z)|\|_X \,\frac{|dz|}{|z|}\\
&\lesssim& \frac{CM}{2\pi} \int_\Gamma |\rho(z)| \,\frac{|dz|}{|z|} \mal \bigg\| \bigg(\int_0^\infty |\ph(tA)x|^s\, \frac{dt}{t} \bigg)^{1/s}\bigg\|_X \lesssim \|x\|_{X^0_{s}}
\end{eqnarray*}

as desired.\\

We now show that $\big(\widetilde{X}^\theta_{s}, \|\mal \|_{\theta,s}\big)$ is a Banach space. Again, we may assume w.l.o.g. that $\theta=0$ and choose $\ph(z):= z/(1+z)^2$ to calculate the norm in $X^0_s$. Let $(x_n)_{n\in\N} \in \big(\widetilde{X}^0_{s} \big)^\N$ be a Cauchy sequence. Then by the already proven embedding $\widetilde{X}^0_{s} \into  X_{(-1)}$ we can find an $x\in X_{(-1)}$ with $x_n\to x$ in $x\in X_{(-1)}$, hence also $\ph(tA)x_n\to\ph(tA)x$ in $ X$ for $n\to\infty$, since by the special choice of $\ph$ we have $\ph(tA)\in L(X_{(-1)},X)$. On the other hand, $(\ph(tA)x)_{t>0}$ is a Cauchy sequence in the Banach space $X(L^s_*)$, hence we can find an $F\in X(L^s_*)$ with $\ph(\mal)Ax\to F$ in $X(L^s_*)$. We may assume w.l.o.g. by  possibly choosing subsequences, that also $\ph(\mal A)x_n\to \ph(\mal A)x$ and $\ph(\mal A)x_n\to F$ pointwise a.e. for $n\to\infty$. Thus we obtain $\ph(\mal A)x = F\in X(L^s_*)$, hence $x\in \widetilde{X}^\theta_{s}$, and $\|x-x_n\|_{X^0_s} = \|\ph(\mal A)x_n - F\|_{X(L^s_*)}\to 0$ for $n\to\infty$.\\

Since $\widetilde{X}^\theta_{s}$ is a Banach space and trivially $X^\theta_{s} \tm \widetilde{X}^\theta_{s}$, we also obtain $\Xdot^\theta_{s}\tm \widetilde{X}^\theta_{s}$, and it only remains to show the other inclusion $\widetilde{X}^\theta_{s}\tm \Xdot^\theta_{s}$. But this can easily be seen by a density argument, since for sufficiently large $m\in\N$ we have again that $D(A^m)\cap R(A^m)$ is also dense in the space $\widetilde{X}^\theta_{s}$. This can be proven in the same way as it is done in the proof of Proposition \ref{satz_D(Am)Schnitr-R(Am)-dicht-in-X-Punkt}.
\end{proof}

We sketch another possible proof of the embedding $\Xdot^0_{s,A}\into X_{(-1)}$ where we use a corresponding result for the so called McIntosh-Yagi spaces from \cite{haase}, Proposition 6.4.1.:\\

With the notations of the above proof we obtain with \cite{haase}, Proposition 6.4.1 b) the estimate
\begin{eqnarray*}
\|x\|_{X_{(-1)}} &\le& C  \mal \sup_{t>0} \|\ph(tA)x\|_X.
\end{eqnarray*}

Since $A$ is sectorial, by similar arguments as used in the proof of Proposition \ref{satz-s-norm-equivalence-diskret} (cf. also Remark \ref{bem_diskrete-Normen-Haase}) we obtain
\begin{eqnarray*}
\sup_{t>0} \|\ph(tA)x\|_X &=& \sup_{j\in\Z}\sup_{t\in[1,2]} \|\ph(2^jtA)x\|_X \approx   \sup_{j\in\Z} \|\ph(2^jA)x\|_X \lesssim  \| \sup_{j\in\Z} |\ph(2^jA)x|\|_X \\
&\lesssim& \bigg\| \bigg(\int_0^\infty |\ph(tA)x|^s\, \frac{dt}{t} \bigg)^{1/s}\bigg\|_X.
\end{eqnarray*}

With the aid of Proposition \ref{prop_Xdot-als-TR-von-x-aus-U-mit-endlicher-Norm} we can deduce a close relationship between the homogeneous and the inhomogeneous spaces:
\begin{cor} Let $\theta\ge0$, then $X^\theta_{s,A} = \Xdot^\theta_{s,A}\cap X$ with equivalent norms.
\end{cor}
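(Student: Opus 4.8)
The plan is to deduce the identity directly from Proposition \ref{prop_Xdot-als-TR-von-x-aus-U-mit-endlicher-Norm}, which already realizes $\Xdot^\theta_{s,A}$ concretely as a subspace of the universal extrapolation space $U$: up to equivalence of norms it is $\{x\in U \,|\, \|x\|_{\theta,s,A}<\infty\}$, normed by $\|\mal\|_{\theta,s,A}$, and continuously embedded into $A^{-\theta}X_{(-1)}\tm U$. Since $X=X_{(0)}$ also embeds continuously into $U$, the intersection $\Xdot^\theta_{s,A}\cap X$ taken inside $U$ is a well-defined Banach space for the natural intersection norm $x\mapsto \|x\|_{\theta,s,A}+\|x\|_X$; this is the object on the right-hand side of the claimed identity.

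First I would check that for $x\in X$ the quantity $\|x\|_{\theta,s,A}$, read off from the functional calculus on the extrapolation scale, coincides with the quantity $\|x\|_{\theta,s,A,\ph}$ from the definition of the $s$-power function norm in $X$. Fixing $\sigma>\omega_{\mR_s}(A)$ and $\ph\in\Phi_{\sigma,\theta}$, one has $\ph\in\mE(\Sigma_\sigma)$, hence $\ph(tA)\in L(X)$ for every $t>0$; and by the consistency of the functional calculus on $U$ recorded in Lemma \ref{lemma_Haase-f(A)-in-U}, for $x\in X$ the element $\ph(tA)x$ computed in $U$ already lies in $X$ and agrees with $\ph(tA)$ applied to $x$ inside $X$. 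Consequently the condition ``$x\in X$ and $\|x\|_{\theta,s,A}<\infty$'' is precisely the condition defining $X^\theta_{s,A,\ph}$, so $\Xdot^\theta_{s,A}\cap X=X^\theta_{s,A,\ph}$ as sets, and the intersection norm $\|\mal\|_{\theta,s,A}+\|\mal\|_X$ is literally the inhomogeneous norm $\|\mal\|_X+\|\mal\|_{\theta,s,A,\ph}$.

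It then remains to pass to the stated spaces and norms: by Proposition \ref{Satz-norm-equivalent-theta-2} the space $X^\theta_{s,A,\ph}$ and its norm are independent of the admissible choice of $\ph$ up to equivalence, and replacing a sum of two norms by their maximum changes the norm only by at most a factor $2$. Combining these observations yields $X^\theta_{s,A}=\Xdot^\theta_{s,A}\cap X$ with equivalent norms.

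I do not anticipate a real obstacle: all the substantive content --- the concrete description of $\Xdot^\theta_{s,A}$ inside $U$ and the density arguments supporting it --- is already contained in Proposition \ref{prop_Xdot-als-TR-von-x-aus-U-mit-endlicher-Norm}. The only point requiring some care is bookkeeping: making sure the intersection is formed inside the common ambient space $U$, and verifying that the two a priori different evaluations of $\|\mal\|_{\theta,s,A}$ on elements of $X$ agree; both are handled by the consistency statement in Lemma \ref{lemma_Haase-f(A)-in-U}.
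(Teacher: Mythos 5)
Your argument is correct and follows essentially the same route as the paper: the paper's proof is exactly the observation that Proposition \ref{prop_Xdot-als-TR-von-x-aus-U-mit-endlicher-Norm} realizes $\Xdot^\theta_{s,A}$ as $\{x\in U \,|\, \|x\|_{\theta,s,A}<\infty\}$, so that $X^\theta_{s,A}=\{x\in X \,|\, \|x\|_{\theta,s,A}<\infty\}=\Xdot^\theta_{s,A}\cap X$ with the intersection norm equivalent to the inhomogeneous norm. Your additional care about consistency of the functional calculus on the extrapolation scale (via Lemma \ref{lemma_Haase-f(A)-in-U}) only makes explicit what the paper leaves implicit.
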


\begin{proof}
This follows immediately from Proposition \ref{prop_Xdot-als-TR-von-x-aus-U-mit-endlicher-Norm} since
\[
X^\theta_{s,A} = \{ x\in X \,|\, \|x\|_{\theta,s,A}<\infty \} = \Xdot^\theta_{s,A}\cap X.
\]
\end{proof}

There are more relations between homogeneous and inhomogeneous spaces if $A$ is invertible. Moreover, the inhomogeneous spaces do not change if $A$ is replaced by $A+\eps$ for some $\eps>0$. This is contained in the following
\begin{prop} \label{satz_hom-vs-inhom-Raeume-fuer-A-invbar}
Let $\eps,\theta>0$.
\begin{itemize}
\item[(1)] If $A^{-1}\in L(X)$, then $\Xdot^\theta_{s,A} \cong X^\theta_{s,A}$,
\item[(2)] $X^\theta_{s,\eps+A} \cong X^\theta_{s,A}$.
\end{itemize}

In particular we have $X^\theta_{s,A} \cong \Xdot^\theta_{s,\eps+A}$.
\end{prop}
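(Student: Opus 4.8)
The plan is to prove the two numbered assertions separately and then combine them for the final "in particular''.

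\smallskip
\emph{Part (1).} By the corollary just established, $X^\theta_{s,A}=\Xdot^\theta_{s,A}\cap X$ with equivalent norms, so it suffices to show $\Xdot^\theta_{s,A}\into X$ when $A^{-1}\in L(X)$, i.e. $\|x\|_X\lesssim\|x\|_{\theta,s,A}$ for $x$ in the dense subspace $D(A^m)\cap R(A^m)$ of Proposition \ref{satz_D(Am)Schnitr-R(Am)-dicht-in-X-Punkt}; the general case then follows by a limiting argument using the embedding $\Xdot^\theta_{s,A}\into A^{-\theta}X_{(-1)}$ of Proposition \ref{prop_Xdot-als-TR-von-x-aus-U-mit-endlicher-Norm}. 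To prove the pointwise inequality I would fix $g\in\Phi_{\sigma,\theta}$ of the form $g(z)=c\,z^{M}(1+z)^{-2M}$ with $M>\theta$ large and $\int_0^\infty g(t)\,\tfrac{dt}{t}=1$, use the reproducing formula $x=\int_0^\infty g(tA)x\,\tfrac{dt}{t}$ (absolutely convergent in $X$ for $x$ in the dense subspace), and split the integral at a radius $R\ge1$ to be chosen. On $(0,R]$ the integrand $t^{-\theta}g(tA)x$ is, up to the norm equivalence of Proposition \ref{Satz-norm-equivalent-theta-2}, the one defining $\|x\|_{\theta,s,A}$; a pointwise Hölder inequality in $t$ against the weight $t^\theta\in L^{s'}((0,R],\tfrac{dt}{t})$, followed by taking $\|\cdot\|_X$, gives a bound $\lesssim R^\theta\,\|x\|_{\theta,s,A}$. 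On $[R,\infty)$ invertibility enters: $g(tA)=c\,t^{-M}A^{-M}\nu(tA)$ with $\nu(z)=z^{2M}(1+z)^{-2M}$, $\sup_{t>0}\|\nu(tA)\|<\infty$ and $A^{-M}\in L(X)$, hence $\|g(tA)x\|_X\lesssim t^{-M}\|x\|_X$ and $\|\int_R^\infty g(tA)x\,\tfrac{dt}{t}\|_X\lesssim R^{-M}\|x\|_X$. Combining, $\|x\|_X\le C_1R^\theta\|x\|_{\theta,s,A}+C_2R^{-M}\|x\|_X$, and choosing $R$ so that $C_2R^{-M}\le\tfrac12$ lets us absorb the last term.

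\smallskip
\emph{Part (2).} First one checks that $\eps+A$ is again injective, $\mR_s$-sectorial (via $R(z,\eps+A)=R(z-\eps,A)$ and boundedness of $z\mapsto z/(z-\eps)$ off a sector) and invertible. By Remark \ref{bem_norm-int_0^1-in-inhom-X} (applied to both operators) it is enough to compare $\big\|\big(\int_0^1|t^{-\theta}\ph(t(\eps+A))x|^s\,\tfrac{dt}{t}\big)^{1/s}\big\|_X$ and $\big\|\big(\int_0^1|t^{-\theta}\ph(tA)x|^s\,\tfrac{dt}{t}\big)^{1/s}\big\|_X$ for one fixed auxiliary function. Taking $\ph(z)=z(1+z)^{-2}$ (for $\theta\in(0,1)$) I would write, via the functional calculus and the fundamental theorem of calculus, $D_t:=\ph(t(\eps+A))-\ph(tA)=\int_0^\eps t\,\ph'(t(\sigma+A))\,d\sigma$ with $\ph'(w)=(1-w)(1+w)^{-3}$. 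Since $\{(1+t(\sigma+A))^{-1}:t>0,\sigma\ge0\}$ is $\mR_s$-bounded (it equals $(1+t\sigma)^{-1}\mu R(-\mu,A)$ with $\mu=t^{-1}+\sigma$, and $(1+t\sigma)^{-1}\le1$), so are $\{t(\sigma+A)(1+t(\sigma+A))^{-1}\}$ and hence $\{t^{1-\theta}\ph'(t(\sigma+A)):t\in(0,1],\sigma\ge0\}$ (sums, products, and the scalar $t^{1-\theta}\le1$). Using $|t^{-\theta}D_tx|\le\int_0^\eps|t^{1-\theta}\ph'(t(\sigma+A))x|\,d\sigma$, Minkowski's integral inequality in $t$ and in $X$, a dyadic decomposition $(0,1]=\bigcup_k(2^{-k-1},2^{-k}]$, and Proposition \ref{prop_Rs-stetige-Fassung} on each block — the weight $t^{(1-\theta)s}$ producing a summable factor $2^{-k(1-\theta)}$ — one obtains $\big\|\big(\int_0^1|t^{-\theta}D_tx|^s\,\tfrac{dt}{t}\big)^{1/s}\big\|_X\lesssim\eps\,\|x\|_X$, which yields $\|x\|_{X^\theta_{s,\eps+A}}\approx\|x\|_{X^\theta_{s,A}}$. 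The case $\theta\ge1$ is handled identically with $\ph(z)=z^{m}(1+z)^{-2m}$, $m>\theta$ an integer; only the bookkeeping of the extra resolvent factors in $\ph'$ changes, and these all lie in the same $\mR_s$-bounded families.

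\smallskip
The displayed claim $X^\theta_{s,A}\cong\Xdot^\theta_{s,\eps+A}$ then follows by composing the two statements: $X^\theta_{s,A}\cong X^\theta_{s,\eps+A}$ by (2), and $X^\theta_{s,\eps+A}\cong\Xdot^\theta_{s,\eps+A}$ by (1), since $\eps+A$ is invertible. The main obstacle throughout is the ``low-frequency'' behaviour: the measure $\tfrac{dt}{t}$ has infinite mass near the endpoints, so one cannot simply feed a constant profile into the continuous $\mR_s$-boundedness of Proposition \ref{prop_Rs-stetige-Fassung}, nor pull operators through $|\cdot|$ in the Banach function lattice. In (1) this is overcome by the splitting at large $R$ together with the absorption trick, which works only because invertibility gives genuine polynomial decay $\|g(tA)x\|_X\lesssim t^{-M}\|x\|_X$ at infinity; in (2) it is overcome by the dyadic decomposition, where the $t$-power produced by $D_t$ makes the blocks geometrically summable. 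The remaining point of care is that the shift $+\eps$ is harmless, e.g. that $\{\ph(t(\eps+A)):t\in(0,1],\eps\in[0,\eps_0]\}$ is $\mR_s$-bounded uniformly in $\eps$ — which can be seen either from the resolvent families above or via Corollary \ref{cor-glm-mE} applied to the uniformly bounded family $z\mapsto\ph(t(\eps+z))\in\mE(\Sigma_\sigma)$.
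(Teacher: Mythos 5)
Your Part (1) is correct, though it takes a different route from the paper: the paper obtains $\|x\|_X\lesssim\|x\|_{\theta,s,A}$ for invertible $A$ by citing \cite{haase}, Proposition 6.5.4 and then passing from the continuous to the discrete supremum and using $\ell^s\into\ell^\infty$ inside the lattice, whereas you reprove the inequality directly via the reproducing formula, the splitting at a radius $R$ and an absorption argument; both work, yours being more self-contained. Your Part (2) is also sound in the range $\theta\in(0,1)$: the identity $\ph(t(\eps+A))-\ph(tA)=\int_0^\eps t\,\ph'(t(\sigma+A))\,d\sigma$, the $\mR_s$-boundedness of $\{\ph'(t(\sigma+A))\}$ and the dyadic summation with the factor $2^{-k(1-\theta)}$ give the perturbation bound $\lesssim\eps\|x\|_X$, which yields the norm equivalence; this is a genuinely different (and for this range shorter) argument than the paper's two separate embeddings with correction families $S(t)$, $S_k(t)$.

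The genuine gap is your final sentence of Part (2): the case $\theta\ge 1$ is \emph{not} handled identically, and the estimate your scheme requires, namely $\big\|\big(\int_0^1|t^{-\theta}(\ph(t(\eps+A))-\ph(tA))x|^s\,\frac{dt}{t}\big)^{1/s}\big\|_X\lesssim\eps\,\|x\|_X$, is false in general for $\theta>1$. Inside your own argument the obstruction is the scalar factor: with $\ph(z)=z^m(1+z)^{-2m}$ you are left with $t^{1-\theta}$ times an $\mR_s$-bounded family, and for $\theta\ge1$ this weight is unbounded as $t\to0$, so the dyadic blocks contribute $2^{k(\theta-1)}$ and the series diverges (already logarithmically at $\theta=1$). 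This is not a mere defect of the method: take $X=L^2(\R^d)$, $s=2$, $A=-\Delta$; by the spectral theorem the desired inequality amounts to $\sup_{\la>0}\int_0^1 t^{-2\theta}|\ph(t(\eps+\la))-\ph(t\la)|^2\,\frac{dt}{t}<\infty$, but the contribution of $t\approx\la^{-1}$ is of order $\eps^2\la^{2\theta-2}$, which blows up for $\theta>1$. The correct bookkeeping (expanding the powers of $\sigma+A$, or, as the paper does, expanding $(A+\eps)^m(t+\eps+A)^{-m}$ binomially) does not produce $\eps\|x\|_X$ but lower-order terms $\|x\|_{\theta-k,s,A}$ and $\|x\|_{\delta,s,A}$, which must then be absorbed via the embeddings $X^\theta_{s,A}\into X^{\theta-k}_{s,A}$, $X^\theta_{s,A}\into X^{\delta}_{s,A}$ of Proposition \ref{prop_elementare-Einbettungen}; that is exactly the paper's proof of $X^\theta_{s,A}\into X^\theta_{s,A+\eps}$. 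Since the concluding claim $X^\theta_{s,A}\cong\Xdot^\theta_{s,\eps+A}$ uses (2) for all $\theta>0$, this gap affects the statement as a whole and not just a boundary case.
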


\begin{proof}
(1) Assume that $A^{-1}\in L(X)$. Choose  $\sigma\in (\omega_{\mR_s(A)},\pi)$ and $\ph\in\Phi^\Sigma_{\sigma,\theta}$ and let $x\in X$. By \cite{haase}, Proposition 6.5.4 we obtain
\begin{eqnarray*}
\|x\|_X &\lesssim& \big\| \big( t^{-\theta} \ph(tA)x\big)_{t>0}\big\|_{L^\infty_*(X)} = \sup_{t>0} \big\|  t^{-\theta} \ph(tA)x \big\|_{X} \stackrel{(*)}{\lesssim} \sup_{j\in\Z} \big\|  |2^{-j\theta} \ph(2^jA)x|\big\|_{X}\\
&\le& \big\| \sup_{j\in\Z}  |2^{-j\theta} \ph(2^jA)x|\big\|_{X} ,
\end{eqnarray*}

where $(*)$ can be seen by analogous arguments as in the proof of Proposition \ref{satz-s-norm-equivalence-diskret} for the case $s=\infty$, where in this case we just use the sectoriality of $A$. If $s<\infty$ we can proceed with the embedding $\ell^s \into \ell^\infty$:
\begin{eqnarray*}
\|x\|_X &\lesssim& \big\| \sup_{j\in\Z}  |2^{-j\theta} \ph(2^jA)x| \big\|_{X}  \le \Big\| \Big( \sum_{j\in\Z}  |2^{-j\theta} \ph(2^jA)x| \Big)^{1/s} \Big\|_X \approx \|x\|_{\theta,s,A},
\end{eqnarray*}

since $\ph\in\Phi^\Sigma_{\sigma,\theta}$. So we obtain
\[
\|x\|_{X^\theta_{s,A}} \approx \|x\|_X + \|x\|_{\theta,s,A} \lesssim \|x\|_{\theta,s,A},
\]

hence $\|\mal\|_{\theta,s,A}$ is an equivalent norm on the Banach space $X^\theta_{s,A}$ which implies  $X^\theta_{s,A}\iso \Xdot^\theta_{s,A}$.\\

(2) Choose $\sigma\in (\omega_{\mR_s(A)},\pi)$ and $m\in\N$ with $m-1\le\theta<m$ and define $\ph(z):=\ph_m(z):= z^m /(1+z)^m$, then $\ph\in\Phi_{\sigma,\theta}$, and for all $t>0$ we obtain
\[
\ph(t^{-1}A) = t^{-m}A^m (1+t^{-1}A)^{-m} = A^m(t+A)^{-m}.
\]

We will first show the embedding $X^{\theta}_{s,A+\eps}\into X^{\theta}_{s,A}$, so let $x\in X^{\theta}_{s,A+\eps}$. Then by Remark \ref{bem_norm-int_0^1-in-inhom-X} we have
\[
\|x\|_{X^\theta_{s,A}} \approx \|x\|_X + \bigg\| \bigg( \int_0^1 |t^{-\theta}\ph(tA)x|^s \frac{dt}{t}\bigg)^{1/s} \bigg\|_X
\]

and
\begin{eqnarray*}
\bigg\| \bigg( \int_0^1 |t^{-\theta}\ph(tA)x|^s \frac{dt}{t}\bigg)^{1/s} \bigg\|_X &=& \bigg\| \bigg( \int_1^\infty |t^{\theta}\ph(t^{-1}A)x|^s \frac{dt}{t}\bigg)^{1/s} \bigg\|_X \\
&=& \bigg\| \bigg( \int_1^\infty |t^{\theta} A^m(t+A)^{-m}x|^s \frac{dt}{t}\bigg)^{1/s} \bigg\|_X \\
&=&  \bigg\| \bigg( \int_1^\infty |t^{\theta} S(t)(\eps+A)^m(t+\eps+A)^{-m}x|^s \frac{dt}{t}\bigg)^{1/s} \bigg\|_X
\end{eqnarray*}

with $S(t):= (t+\eps+A)^{m}(\eps+A)^{-m} A^m(t+A)^{-m}$, hence
\begin{eqnarray*}
S(t)&=& \big[ (t+\eps+A)(\eps+A)^{-1} A(t+A)^{-1}\big]^m = \big[ A(\eps+A)^{-1} (t+\eps+A)(t+A)^{-1}\big]^m \\
&=& \big[ A(\eps+A)^{-1} (1+\eps(t+A)^{-1})\big]^m = \big[ A(\eps+A)^{-1} (1+\frac{\eps}{t} \mal t(t+A)^{-1})\big]^m.
\end{eqnarray*}

Since $A$ is $\mR_s$-sectorial, the range $S([1,\infty))$ is also $\mR_s$-bounded, hence by Proposition \ref{prop_Rs-stetige-Fassung}
\begin{eqnarray*}
&& \bigg\| \bigg( \int_0^1 |t^{-\theta}\ph(tA)x|^s \frac{dt}{t}\bigg)^{1/s} \bigg\|_X = \bigg\| \bigg( \int_1^\infty |t^{\theta} S(t)(\eps+A)^m(t+\eps+A)^{-m}x|^s \frac{dt}{t}\bigg)^{1/s} \bigg\|_X \\
&\lesssim& \bigg\| \bigg( \int_1^\infty |t^{\theta} (\eps+A)^m(t+\eps+A)^{-m}x|^s \frac{dt}{t}\bigg)^{1/s} \bigg\|_X
= \bigg\| \bigg( \int_0^1 |t^{-\theta}\ph(t(A+\eps))x|^s \frac{dt}{t}\bigg)^{1/s} \bigg\|_X,
\end{eqnarray*}

and we obtain
\begin{eqnarray*}
\|x\|_{X^\theta_{s,A}} &\approx& \|x\|_X + \bigg\| \bigg( \int_0^1 |t^{-\theta}\ph(tA)x|^s \frac{dt}{t}\bigg)^{1/s} \bigg\|_X\\
&\lesssim& \|x\|_X +  \bigg\| \bigg( \int_0^1 |t^{-\theta}\ph(t(A+\eps))x|^s \frac{dt}{t}\bigg)^{1/s} \bigg\|_X
\approx \|x\|_{X^\theta_{s,A+\eps}}.
\end{eqnarray*}

We now show the reverse embedding $X^{\theta}_{s,A} \into X^{\theta}_{s,A+\eps}$, so let $x\in X^{\theta}_{s,A}$. Then for all $t>0$ we have
\begin{eqnarray*}
(A+\eps)^m(t+\eps+A)^{-m} % &=& \sum_{k=0}^m \binom{m}{k}\eps^{m-k} \, A^{k}(t+\eps+A)^{-m} \\
&=& \sum_{k=0}^m \underbrace{\binom{m}{k}\eps^{m-k}}_{=:a_k} \mal \underbrace{t^{m-k}(t+A)^{k} \mal (t+\eps+A)^{-m}}_{=: S_k(t)} \mal t^{-(m-k)}A^{k}(t+A)^{-k},
\end{eqnarray*}

and
\begin{eqnarray*}
S_k(t) &=& t^{m-k}(t+\eps+A)^{-(m-k)} \mal (t+A)^{k}(t+\eps+A)^{-k}  \\
&=& \Big[ \frac{t}{t+\eps} \mal (t+\eps)(t+\eps+A)^{-1}\Big]^{m-k} \mal \Big[\frac{t}{t+\eps} \mal (t+\eps)(t+\eps+A)^{-1} + A(t+\eps+A)^{-1} \Big]^k.
\end{eqnarray*}

This shows that also $\{S_k(t) \,|\, t>0\}$ is $\mR_s$-bounded for each $k\in(\N_0)_{\le m}$, hence
\begin{eqnarray*}
&& \bigg\| \bigg( \int_0^1 |t^{-\theta}\ph(t(A+\eps))x|^s \frac{dt}{t}\bigg)^{1/s} \bigg\|_X  %= \bigg\| \bigg( \int_1^\infty |t^{\theta}\ph(t^{-1}(A+\eps))x|^s \frac{dt}{t}\bigg)^{1/s} \bigg\|_X \\
= \bigg\| \bigg( \int_1^\infty |t^{\theta} (A+\eps)^m(t+\eps+A)^{-m}x|^s \frac{dt}{t}\bigg)^{1/s} \bigg\|_X\\
&=& \bigg\| \bigg( \int_1^\infty |t^{\theta} \sum_{k=0}^m a_k \mal S_k(t) \mal t^{-(m-k)}A^{k}(t+A)^{-k}x|^s \frac{dt}{t}\bigg)^{1/s} \bigg\|_X\\
&\lesssim& \sum_{k=0}^m \bigg\| \bigg( \int_1^\infty |t^{\theta-(m-k)} \mal S_k(t) \mal A^{k}(t+A)^{-k}x|^s \frac{dt}{t}\bigg)^{1/s} \bigg\|_X\\
&\lesssim& \sum_{k=0}^m \bigg\| \bigg( \int_1^\infty |t^{\theta-(m-k)}  \mal A^{k}(t+A)^{-k}x|^s \frac{dt}{t}\bigg)^{1/s} \bigg\|_X\\
&=& \sum_{k=0}^m \bigg\| \bigg( \int_0^1 |t^{-(\theta-k)}  \ph_{m-k}(tA)x|^s \frac{dt}{t}\bigg)^{1/s} \bigg\|_X \\
&\lesssim& \sum_{k=0}^{m-2} \|x\|_{\theta-k,s,A} + \bigg\| \bigg( \int_0^1 |t^{-(\theta-(m-1))}  \ph_{1}(tA)x|^s \frac{dt}{t}\bigg)^{1/s} \bigg\|_X + \bigg\| \bigg( \int_0^1 |t^{m-\theta} x|^s \frac{dt}{t}\bigg)^{1/s} \bigg\|_X.
\end{eqnarray*}

By the choice of $m$ we have $\alpha:=m-\theta>0$ and $\beta:=\theta-(m-1)\in [0,1)$. If $m=1$, then $\beta=\theta\in (0,1)$, and we define $\delta :=\beta$. If $m\ge 2$, then $\beta\in [0,1\wedge \theta)$, hence we can choose $\delta\in (\beta,1\wedge\theta)$, and in both cases we obtain $\delta \in (0,1) \cap [\beta,\theta]$ . Then we can continue the estimate to
\begin{eqnarray*}
&& \bigg\| \bigg( \int_0^1 |t^{-\theta}\ph(t(A+\eps))x|^s \frac{dt}{t}\bigg)^{1/s} \bigg\|_X  \\
&\stackrel{\delta\ge\beta}{\lesssim}& \sum_{k=0}^{m-2} \|x\|_{\theta-k,s,A} + \bigg\| \bigg( \int_0^1 |t^{-\delta}  \ph_{1}(tA)x|^s \frac{dt}{t}\bigg)^{1/s} \bigg\|_X + \bigg\| \bigg( \int_0^1 |t^{\alpha} x|^s \frac{dt}{t}\bigg)^{1/s} \bigg\|_X\\
&\lesssim& \sum_{k=0}^{m-2} \|x\|_{\theta-k,s,A} + \|x\|_{\delta,s,A} +  (\alpha s)^{-1}\, \|x\|_X,
\end{eqnarray*}

where we used $\ph_{m-k}\in\Phi_{\sigma,\theta-k}$ for $k\in (\N_0)_{\le m-2}$ and $\ph_1 \in \Phi_{\sigma,\delta}$. So we also have an estimate
\begin{eqnarray*}
\|x\|_{X^\theta_{s,A+\eps}} &\approx& \|x\|_X + \bigg\| \bigg( \int_0^1 |t^{-\theta}\ph(t(A+\eps))x|^s \frac{dt}{t}\bigg)^{1/s} \bigg\|_X  \\
&\lesssim& \|x\|_X +   \sum_{k=0}^{m-2} \|x\|_{\theta-k,s,A} + \|x\|_{\delta,s,A}.
\end{eqnarray*}

By Proposition \ref{prop_elementare-Einbettungen} we have embeddings $X^{\theta}_{s,A} \into X^{\theta-k}_{s,A}$ for all $k\in(\N_0)_{\le m-2}$ and $X^\theta_{s,A} \into X^\delta_{s,A}$ by our choice of $\delta$, hence also
\begin{eqnarray*}
\|x\|_{X^\theta_{s,A+\eps}} &\lesssim& \sum_{k=0}^{m-2} \|x\|_{X^{\theta-k}_{s,A}} + \|x\|_{X^\delta_{s,A}} \lesssim \|x\|_{X^{\theta}_{s,A}}.
\end{eqnarray*}

%{\bf (Anmerkung: Falls das letzte Argument für $\theta=m$ für $k=m$ mit $X^{\theta}_{s,A} \into X^{0}_{s,A}$ doch noch wackelt, kann man auch stattdessen die Normen mit $\int_0^1$ nehmen, und für die $0$-Norm z.B. gegen $\int_0^1 | t^{-\eps}...|$ mit $0<\eps<1\wedge \theta$ abschätzen ... )}
\end{proof}

We now consider, as a standard example, the Laplacian in the space $L^p(\R^d)$.

\begin{satz} \label{satz_s-Raum-von-Laplace=Fspq}
Let $m,d\in\N$ and $p,s \in (1,\infty)$, and let $A:= (-\Delta)^m$ be the $m$-th power of the Laplace operator in $L^p(\R^d)$ with domain $D(A)= W^{2m,p}(\R^d)$. Let $\theta\in \R$, then
\[
\Xdot^\theta_{s,A} = \dot{F}^{2m\theta}_{p,s}(\R^d),
\]

and if $\theta>0$, then also
\[
X^\theta_{s,A} = F^{2m\theta}_{p,s}(\R^d)
\]

with equivalent norms.
\end{satz}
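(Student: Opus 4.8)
The plan is to verify that the abstract $s$-power function norm associated with $A=(-\Delta)^m$ on $L^p(\R^d)$ coincides, up to equivalence, with the classical Littlewood--Paley description of (homogeneous and inhomogeneous) Triebel--Lizorkin spaces. First I would note that $A$ is $\mR_s$-sectorial for every $s\in(1,\infty)$ with $\omega_{\mR_s}(A)=0$ by Proposition \ref{satz_Laplace-Op-hat-Rs-Hoo} (it even has an $\mR_s$-bounded $H^\infty$-calculus), so the whole machinery of Section \ref{section_The-associated-s-intermediate-spaces} applies and the norm $\|\mal\|_{\theta,s,A}$ is, by Proposition \ref{Satz-norm-equivalent-theta-2}, independent of the choice of auxiliary function $\ph\in\Phi_{\sigma,\theta}$ (and we may even use the discrete version with $\ph\in\Phi^\Sigma_{\sigma,\theta}$). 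Hence it suffices to exhibit \emph{one} convenient $\ph$ and identify the resulting expression with a standard norm on $\dot F^{2m\theta}_{p,s}$, respectively $F^{2m\theta}_{p,s}$.

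The natural choice is $\ph(z):=z^\alpha e^{-z}$ with $\alpha>0$ chosen so that $\alpha>\theta$ (for the inhomogeneous case) — this lies in $\Phi^\Sigma_{\sigma,\theta}$ for any $\sigma\in(0,\pi/2)$ by Examples \ref{bspe_Standardfkt.-in-Phi^Sigma}(2). Then $\ph(tA)=t^\alpha A^\alpha e^{-tA}$, and since $A=(-\Delta)^m$, the operator $e^{-tA}$ is the Fourier multiplier with symbol $e^{-t|\xi|^{2m}}$. Substituting $t=u^{2m}$ (equivalently reparametrising the dilation) turns
\[
\Big\|\Big(\int_0^\infty \big|t^{-\theta}\ph(tA)x\big|^s\,\tfrac{dt}{t}\Big)^{1/s}\Big\|_{L^p}
\]
into the standard continuous square-function (Peetre/Triebel) characterisation of $\dot F^{2m\theta}_{p,s}(\R^d)$ built from the Fourier multiplier $m_0(\xi)=|\xi|^{2m\alpha}e^{-|\xi|^{2m}}$, which is a legitimate generating function (nonzero on $(0,\infty)$, vanishing of the right order at $0$ and $\infty$). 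The equivalence of this particular continuous characterisation with the usual dyadic one is classical; I would cite \cite{triebel-1}, \cite{triebel-2} (and, for the continuous versions, \cite{triebel-artikel82}), so no new harmonic analysis is needed here. For the homogeneous case one then invokes Proposition \ref{prop_Xdot-als-TR-von-x-aus-U-mit-endlicher-Norm}, which identifies $\Xdot^\theta_{s,A}$ with $\{x\in U\mid \|x\|_{\theta,s,A}<\infty\}$; matching $U$ with the ambient space $\mathcal S'/\mathcal P$ (or the appropriate realisation) in which $\dot F^{2m\theta}_{p,s}$ lives is a routine identification of completions, since $D(A^m)\cap R(A^m)$ is dense in both (Proposition \ref{satz_D(Am)Schnitr-R(Am)-dicht-in-X-Punkt}) and the norms agree there. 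For the inhomogeneous case with $\theta>0$, Remark \ref{bem_norm-int_0^1-in-inhom-X} lets us use $\|x\|_{L^p}+\|(\int_0^1|t^{-\theta}\ph(tA)x|^s\frac{dt}{t})^{1/s}\|_{L^p}$, and the cut-off integral together with the $\|x\|_{L^p}$ term reproduces exactly the inhomogeneous Littlewood--Paley norm $F^{2m\theta}_{p,s}$ (the low-frequency block corresponding to the $L^p$-part), again a classical equivalence.

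The main obstacle is not any single hard estimate but the careful bookkeeping of three separate identifications: (i) matching the abstract extrapolation/universal space $U$ attached to $(-\Delta)^m$ with the concrete space of tempered distributions modulo polynomials in which the homogeneous space $\dot F^{2m\theta}_{p,s}$ is defined, so that the completion $\Xdot^\theta_{s,A}$ really equals $\dot F^{2m\theta}_{p,s}$ and not merely a space with the same norm on a dense subspace; (ii) the change of variables relating the sectorial scaling $t\mapsto tA$ to the Fourier-analytic scaling $\xi\mapsto 2^j\xi$, including the factor $2m$ in the smoothness index; and (iii) checking that the specific generating function $|\xi|^{2m\alpha}e^{-|\xi|^{2m}}$ satisfies the (Tauberian/non-degeneracy) hypotheses needed for the classical equivalence of continuous and dyadic characterisations. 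Each of these is standard, but getting the normalisations consistent — in particular the appearance of $2m\theta$ rather than $\theta$ — is where care is required. I would therefore structure the proof as: reduce to a single $\ph$ via Proposition \ref{Satz-norm-equivalent-theta-2}; compute $\ph(tA)$ explicitly as a Fourier multiplier; quote the classical continuous-versus-dyadic equivalence for $\dot F$ and $F$ spaces; and finally reconcile the ambient spaces using Proposition \ref{prop_Xdot-als-TR-von-x-aus-U-mit-endlicher-Norm} and the density statements of this section, with Remark \ref{bem_norm-int_0^1-in-inhom-X} handling the inhomogeneous case.
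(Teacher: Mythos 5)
Your proposal is correct and follows the same overall strategy as the paper: exploit the $\ph$-independence of the norm (Proposition \ref{Satz-norm-equivalent-theta-2}), pick one concrete auxiliary function whose functional calculus is an explicit Fourier multiplier, cite a classical characterisation of Triebel--Lizorkin spaces, and settle the homogeneous case by density of $D(A^k)\cap R(A^k)$ in both completions. The one genuine difference is the choice of $\ph$, and it matters for which classical result you can quote. You take $\ph(z)=z^\alpha e^{-z}$, so $\ph(tA)$ involves the semigroup $e^{-t(-\Delta)^m}$ generated by the poly-Laplacian, and after the substitution $t=u^{2m}$ you arrive at a continuous characterisation with generating symbol $|\xi|^{2m\alpha}e^{-|\xi|^{2m}}$; as you yourself note in obstacle (iii), this forces you to invoke a general kernel/local-means characterisation with a Tauberian-type nondegeneracy and moment condition ($\alpha>\theta$), which is classical but not the statement most readily available in \cite{triebel-1}. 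The paper instead chooses $\ph(z)=z^{k}e^{-z^{1/m}}$ with $k>|\theta|$, so that with $r=t^{1/m}$ one gets literally $\ph(tA)u=(-r\Delta)^{km}e^{r\Delta}u$, i.e.\ the standard Gauss--Weierstrass heat-semigroup expression; this makes $\|\mal\|_{\theta,s,A,\ph}$ exactly the norm in Triebel's Corollaries 3.3 and 3.4 for $\dot F^{2m\theta}_{p,s}$ and $F^{2m\theta}_{p,s}$, so your obstacle (iii) disappears entirely (one only has to check, as in Examples \ref{bspe_Standardfkt.-in-Phi^Sigma}, that this $\ph$ lies in $\Phi^\Sigma_{\sigma,\theta}$ for $\sigma\in(0,\pi/2)$). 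Your treatment of (i) via Proposition \ref{prop_Xdot-als-TR-von-x-aus-U-mit-endlicher-Norm} is a slight detour: the paper handles the identification of the two completions purely by the density of $D(A^k)\cap R(A^k)$ in $\dot F^{2m\theta}_{p,s}$ and in $\Xdot^\theta_{s,A}$ (Proposition \ref{satz_D(Am)Schnitr-R(Am)-dicht-in-X-Punkt}) together with norm equivalence on this common dense subspace, which is all that is needed. Your handling of the inhomogeneous case via Remark \ref{bem_norm-int_0^1-in-inhom-X} is fine, though with the paper's choice of $\ph$ it is not even necessary, since the full-line integral already gives the classical inhomogeneous norm.
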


\begin{proof}
Choose $\sigma\in (0,\pi/2)$ and $k\in\N_{>|\theta|}$, and define $\ph(z):=z^{k} e^{-{z}^{1/m}}$ for all $z\in\Sigma_\sigma$. Then $\ph\in \Phi^\Sigma_{\sigma,\theta}$, hence $\ph$ is suitable to calculate the norm in $\Xdot^\theta_{s,A}$, and also in $X^\theta_{s,A}$ in the case $\theta>0$. On the other hand, if $t>0$ and $r:=t^{1/m}$, then
\[
\ph(tA)u = t^k (-\Delta)^{km} e^{(t^{1/m}\Delta)}u = (-r \Delta)^{km} e^{r\Delta}u  \quad\text{ for all } u\in\mS_d'.
\]

Thus \cite{triebel-1} Corollaries 3.3, 3.4, show that $\|\mal\|_{\theta,s,A,\ph}$ is also an equivalent norm for the homogeneous Triebel-Lizorkin space $\dot{F}^{2m\theta}_{p,s}$, and in the case $\theta>0$, the norm $\|\mal\|_{X^\theta_{s,A,\ph}}$ is an equivalent norm for the inhomogeneous Triebel-Lizorkin space $F^{2m\theta}_{p,s}$. Hence the result for the inhomogeneous spaces is immediate, and for the homogeneous spaces it follows from density, since $D(A^k)\cap R(A^k)\tm \dot{F}^{2m\theta}_{p,s}(\R^d)$ is a dense subspace of $\dot{F}^{2m\theta}_{p,s}(\R^d)$, and on the other hand it is also dense in $\Xdot^\theta_{s,A}$ by Proposition \ref{satz_D(Am)Schnitr-R(Am)-dicht-in-X-Punkt}.
\end{proof}

As already announced, Proposition \ref{satz_s-Raum-von-Laplace=Fspq} justifies to call the $s$-intermediate spaces the {\em generalized Triebel-Lizorkin spaces} associated to $A$.\\

Let us finally mention the correspondence of the $s$-intermediate spaces for $\mR_2$-sectorial operators to the so called Ra\-de\-macher interpolation spaces $\lsk X,Y\rsk_\theta$, which have been introduced in \cite{KKW06}, we refer also to \cite{suarez-weis} for the relationship with interpolation by the $\gamma$-method, and to \cite{kaltonweis-euclid}, where this interpolation method is studied in a general framework in connection with Euclidean structures. Then the same techniques as used in the proof of \cite{KKW06}, Theorem 7.4 show the following.
\begin{bem}
Let $X$ be $q$-concave and $p$-convex for some $1<p,q<\infty$, and assume that $A$ is $\mR_2$-sectorial. Then $\Xdot^\theta_{2,A} = \lsk X,\Xdot_1 \rsk_\theta$ for all $\theta\in (0,1)$ with equivalent norms.
\end{bem}

In fact, the inclusion $\Xdot^\theta_{2,A} \tm \lsk X,\Xdot_1 \rsk_\theta$ can be shown by similar arguments as in the proof of \cite{KKW06}, Theorem 7.4, p.\,782, and the other inclusion can be derived by means of duality with similar arguments as in the proof of \cite{KKW06}, Theorem 7.4, p.\,783f.\\

Moreover, the well known fact that having a bounded $H^\infty$-calculus is equivalent to square function estimates (cf. e.g \cite{CDMY}, \cite{KKW06}) can be reformulated in terms of the coincidence of $X$ with the space $\Xdot^0_{2,A}$:
\begin{remark}
Assume that $X$ is $q$-concave for some $q<\infty$ and let $A$ be an $\mR_2$-sectorial operator in $X$. Then $A$ has a bounded $H^\infty$-calculus in $X$ if and only if $X=\Xdot^0_{2,A}$ with equivalent norms, and in this case $\omega_{H^\infty}(A)=\omega_{\mR_2}(A)$.
\end{remark}

\subsection{The $s$-spaces as intermediate spaces and interpolation} \label{subsection_The-s-spaces-as-intermediate-spaces-and-interpolation}

We will show now that the spaces $X^\theta_{s,A}, \Xdot^\theta_{s,A}$ defined in the previous subsection are reasonable intermediate spaces. We will start with the following connection with the real interpolation spaces $(X,D(A^m))_{\alpha,q}$.
\begin{satz}
Let $\alpha>\theta>0$ and $1\le p\le s\le q\le \infty$.
\begin{itemize}
\item[(1)] If $X$ is $q$-concave, then $X^\theta_{s,A}\into (X,D(A^\alpha))_{\theta/\alpha,q}$,
\item[(2)] If $X$ is $p$-convex, then $(X,D(A^\alpha))_{\theta/\alpha,p} \into X^\theta_{s,A}$.
\end{itemize}
\end{satz}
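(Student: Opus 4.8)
The plan is to express all four norms in terms of one and the same auxiliary function and then to reduce the two embeddings to soft lattice inequalities together with the nesting $\ell^{r}\into\ell^{r'}$ for $r\le r'$ over the index set $\Z$. First I would fix $\sigma>\omega_{\mR_s}(A)$ and choose $\ph\in\Phi^\Sigma_{\sigma,\theta}$ (such functions exist; e.g. $\ph(z)=z^m(1+z)^{-2m}$ with $m\in\N$, $m>|\theta|$, cf. the proof of Proposition \ref{satz_D(Am)Schnitr-R(Am)-dicht-in-X-Punkt}). By Propositions \ref{satz-s-norm-equivalence-diskret} and \ref{Satz-norm-equivalent-theta-2} the norm of $X^\theta_{s,A}$ is equivalent to $x\mapsto \|x\|_X + \big\| (\sum_{j\in\Z}|2^{-j\theta}\ph(2^jA)x|^s)^{1/s}\big\|_X$ (with the usual modification for $s=\infty$), and the claim I would establish is that for each $r\in\{p,q\}$ the norm of $(X,D(A^\alpha))_{\theta/\alpha,r}$ is equivalent to $x\mapsto\|x\|_X+(\sum_{j\in\Z}\|2^{-j\theta}\ph(2^jA)x\|_X^r)^{1/r}$. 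Granting this, put $g_j:=2^{-j\theta}\ph(2^jA)x$; then $\|x\|_{X^\theta_{s,A}}\approx\|x\|_X+\|(|g_j|)_{j\in\Z}\|_{X(\ell^s)}$ and $\|x\|_{(X,D(A^\alpha))_{\theta/\alpha,r}}\approx\|x\|_X+\|(\|g_j\|_X)_{j\in\Z}\|_{\ell^r}$.

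\textbf{The embeddings.} For (1) I would use $q$-concavity of $X$ and then the pointwise inequality $(\sum_j|g_j|^q)^{1/q}\le(\sum_j|g_j|^s)^{1/s}$ (valid since $s\le q$) together with monotonicity of the lattice norm, to obtain
\[
\Big(\sum_{j\in\Z}\|g_j\|_X^q\Big)^{1/q}\le M_{(q)}(X)\,\Big\|\Big(\sum_{j\in\Z}|g_j|^q\Big)^{1/q}\Big\|_X\le M_{(q)}(X)\,\Big\|\Big(\sum_{j\in\Z}|g_j|^s\Big)^{1/s}\Big\|_X ,
\]
which with the norm identifications above gives $\|x\|_{(X,D(A^\alpha))_{\theta/\alpha,q}}\lesssim\|x\|_{X^\theta_{s,A}}$, i.e. $X^\theta_{s,A}\into(X,D(A^\alpha))_{\theta/\alpha,q}$. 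For (2), since $p\le s$ one has $(\sum_j|g_j|^s)^{1/s}\le(\sum_j|g_j|^p)^{1/p}$ pointwise, and then $p$-convexity of $X$ yields
\[
\Big\|\Big(\sum_{j\in\Z}|g_j|^s\Big)^{1/s}\Big\|_X\le\Big\|\Big(\sum_{j\in\Z}|g_j|^p\Big)^{1/p}\Big\|_X\le M^{(p)}(X)\,\Big(\sum_{j\in\Z}\|g_j\|_X^p\Big)^{1/p},
\]
hence $\|x\|_{X^\theta_{s,A}}\lesssim\|x\|_{(X,D(A^\alpha))_{\theta/\alpha,p}}$, i.e. $(X,D(A^\alpha))_{\theta/\alpha,p}\into X^\theta_{s,A}$. (For $r=\infty$ the relevant inequality is either automatic, namely $\sup_j\|g_j\|_X\le\|\sup_j|g_j|\|_X$, or is precisely the $\infty$-convexity hypothesis, so these cases are covered as well, with the usual modifications.)

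\textbf{Proof of the claim, and the main obstacle.} It remains to identify $(X,D(A^\alpha))_{\theta/\alpha,r}$ with the discrete norm built from $\ph(2^jA)$. By Komatsu's theorem on real interpolation between $X$ and the domain of a fractional power (see \cite{haase}, Chapter 6; cf. also Remark \ref{bem_diskrete-Normen-Haase-im-Besovraum}), for $\alpha>\theta$ the space $(X,D(A^\alpha))_{\theta/\alpha,r}$ is independent of $\alpha$ and admits the equivalent norm $\|x\|_X+\|(t^{-\theta}\ph(tA)x)_{t>0}\|_{L^r_*(X)}$ for any $\ph\in\Phi_{\sigma,\theta}$, in particular for the choice above; the passage from this continuous norm to its discrete counterpart is the content of Remark \ref{bem_diskrete-Normen-Haase} (equivalently Remark \ref{bem_diskrete-Normen-Haase-im-Besovraum}), applied with the $H_0^\infty$-function $z\mapsto z^{-\theta}\ph(z)$ first on the dense subspace $D(A^m)\cap R(A^m)$ ($m\in\N$, $m>|\theta|$) and then extended by density — which is legitimate since, exactly as in the proof of Proposition \ref{satz_D(Am)Schnitr-R(Am)-dicht-in-X-Punkt}, $D(A^m)\cap R(A^m)$ is dense in both spaces. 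The hard part will be precisely this identification: one must make sure the auxiliary function used on the interpolation side is the same one computing the $s$-power function norm of $X^\theta_{s,A}$, and check that the fractional-power calculus in the extrapolation scale of Subsection \ref{subsection_Element-prop-of-s-spaces}, the angle restrictions on $\sigma$, and the weight $t^{-\theta}$ are all compatible with the cited statements. This is bookkeeping rather than a new idea, and once it is settled the two embeddings follow from the lattice estimates above.
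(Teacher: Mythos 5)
Your argument is correct and follows essentially the same route as the paper: discretize both the $s$-power function norm of $X^\theta_{s,A}$ and the Komatsu/Haase characterization of $(X,D(A^\alpha))_{\theta/\alpha,r}$ (the latter using only sectoriality, as in Remark \ref{bem_diskrete-Normen-Haase}), and then conclude via $q$-concavity with $\ell^s\into\ell^q$ for (1) and $p$-convexity with the pointwise inequality $\ell^p$-norm $\ge$ $\ell^s$-norm for (2). The concluding density step is unnecessary (the norm inequalities hold for all $x\in X$ directly), but this does not affect correctness.
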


\begin{proof}
We will only prove (1), since the proof of (2) can be done similarly. Choose $\sigma\in (\omega_{\mR_s(A)},\pi)$ and $\ph\in\Phi^\Sigma_{\sigma,\theta}$. Then an equivalent norm in $(X,D(A^\alpha))_{\theta/\alpha,q}$ is given by
\[
x\mapsto \|x\|_X + \big\| \big( t^{-\theta} \ph(tA)x\big)_{t>0}\big\|_{L^q_*(X)},
\]

cf. \cite{haase}, Theorem 6.5.3. We consider only the case $q<\infty$, the case $q=\infty$ can be treated similarly as usual. By analogous arguments as in the proof of Proposition \ref{satz-s-norm-equivalence-diskret} using the sectoriality of $A$ we obtain
\begin{eqnarray*}
\big\| \big( t^{-\theta} \ph(tA)x\big)_{t>0}\big\|_{L^q_*(X)} &=& \bigg(\int_0^\infty  \|t^{-\theta} \ph(tA)x\|_X^q \frac{dt}{t}\bigg)^{1/q} \approx \Big( \sum_{j\in\Z}  \|2^{-j\theta} \ph(2^jA)x\|_X^q \Big)^{1/q} \\
&\stackrel{(*)}{\le}&   M_{(q)}(X) \mal \Big\|  \Big( \sum_{j\in\Z}  |2^{-j\theta} \ph(2^jA)x|^q \Big)^{1/q} \Big\|_X \\
&\le&  M_{(q)}(X) \mal \Big\|  \Big( \sum_{j\in\Z}  |2^{-j\theta} \ph(2^jA)x|^s \Big)^{1/s} \Big\|_X \approx \|x\|_{\theta,s,A}.
\end{eqnarray*}

where in (*) we used the Fatou-property and the $q$-concavity of $X$ ($M_{(q)}(X)$ is the $q$-concavity constant of $X$), and in the last inequality we used that $\ell^s\into\ell^q$. Hence we also obtain
\[
\|x\|_{(X,D(A^\alpha))_{\theta/\alpha,q}} \approx \|x\|_X + \big\| \big( t^{-\theta} \ph(tA)x\big)_{t>0}\big\|_{L^q_*(X)} \lesssim  \|x\|_X + \|x\|_{\theta,s,A} \approx \|x\|_{X^\theta_{s,A}}.
\]

\end{proof}

In particular, since $X$ is always $\infty$-concave and $1$-convex, we trivially have
\begin{cor}  \label{cor_Raum-X^theta-zwischen-reellen-Interpol.}
Let $\alpha>\theta>0$, then
\begin{equation} \label{cor_Raum-X^theta-zwischen-reellen-Interpol.-eq}
(X,D(A^\alpha))_{\theta/\alpha,1} \into X^\theta_{s,A}\into (X,D(A^\alpha))_{\theta/\alpha,\infty}.
\end{equation}
\qed\end{cor}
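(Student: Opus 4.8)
The plan is to obtain \eqref{cor_Raum-X^theta-zwischen-reellen-Interpol.-eq} directly from the preceding Proposition by pushing the convexity/concavity exponents to their extreme admissible values $p=1$ and $q=\infty$. First I would record the elementary fact that \emph{every} Banach function space $X$ is automatically $1$-convex and $\infty$-concave: $1$-convexity, i.e.\ $\|\sum_j|x_j|\|_X\le\sum_j\|x_j\|_X$, is just the triangle inequality in (B1), so the $1$-convexity constant equals $1$; and $\infty$-concavity, i.e.\ $\sup_j\|x_j\|_X\le\|\sup_j|x_j|\|_X$, follows from the monotonicity (B2), since $|x_j|\le\sup_k|x_k|$ pointwise forces $\|x_j\|_X\le\|\sup_k|x_k|\|_X$ for each $j$, whence the $\infty$-concavity constant equals $1$ as well.

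With these two observations in hand I would apply the Proposition above with the parameter choice $p=1$, $q=\infty$. Note that the required chain $1\le p\le s\le q\le\infty$ is then satisfied for \emph{every} $s\in[1,\infty]$, so no restriction on $s$ is needed. Part~(1) of that Proposition, with $q=\infty$ and using that $X$ is $\infty$-concave, yields the right-hand embedding $X^\theta_{s,A}\into(X,D(A^\alpha))_{\theta/\alpha,\infty}$; part~(2), with $p=1$ and using that $X$ is $1$-convex, yields the left-hand embedding $(X,D(A^\alpha))_{\theta/\alpha,1}\into X^\theta_{s,A}$. Concatenating the two gives exactly \eqref{cor_Raum-X^theta-zwischen-reellen-Interpol.-eq}.

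I do not expect any genuine obstacle here: the statement is a bona fide corollary, and the only point deserving a word of justification is the automatic $1$-convexity and $\infty$-concavity of a Banach function space, both immediate from (B1)--(B2). As an alternative route one could reprove the two embeddings from scratch in these extreme cases --- passing through the equivalent real-interpolation norm $\|x\|_X+\|(t^{-\theta}\ph(tA)x)_{t>0}\|_{L^{q}_*(X)}$ of \cite{haase}, Theorem~6.5.3, its discrete counterpart via Proposition~\ref{satz-s-norm-equivalence-diskret}, and the trivial inclusions $\ell^1\into\ell^s\into\ell^\infty$ --- but invoking the Proposition directly is both shorter and cleaner, and is the form in which the corollary is stated.
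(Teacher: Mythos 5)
Your proof is correct and follows exactly the paper's route: the paper derives the corollary by observing that every Banach function space is trivially $1$-convex and $\infty$-concave and then applying the preceding proposition with $p=1$, $q=\infty$, which is precisely your argument (with the extra, welcome, detail of verifying these two lattice properties from (B1)--(B2)).
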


We can now use standard reiteration of real interpolation spaces, which leads to the following
\begin{cor}
\begin{itemize}
\item[(1)] Let $\alpha > \theta > \beta >0$, then $D(A^\alpha)\into X^\theta_{s,A}\into D(A^\beta)$.
\item[(2)] Let $\theta_0<\theta_1<\alpha$ and $s_0,s_1\in [1,\infty]$ and $\delta\in (0,1)$, then
\begin{equation}
(X^{\theta_0}_{s_0,A}, X^{\theta_1}_{s_1,A})_{\delta,q} = (X,D(A^\alpha))_{\theta/\alpha,q} \quad\text{with } \theta := (1-\delta)\theta_0 + \delta \theta_1.
\end{equation}
\end{itemize}
\end{cor}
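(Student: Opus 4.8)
The plan is to deduce both parts of the corollary from Corollary~\ref{cor_Raum-X^theta-zwischen-reellen-Interpol.}, which already sandwiches each inhomogeneous $s$-intermediate space $X^\theta_{s,A}$ between two real interpolation spaces of the couple $(X,D(A^\alpha))$, and then to feed this sandwich into two standard facts about the real interpolation functor: its monotonicity under continuous embeddings of Banach couples, and the reiteration theorem. All spaces involved embed continuously into $X$ (for the $X^\theta_{s,A}$ this is part of the definition, and $D(A^\alpha)\into X$), so they form genuine Banach couples inside $X$ and all interpolation functors below are taken there.

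For part (1) I would argue by a chain of embeddings. Since $0<\theta<\alpha$, the trivial inclusion $D(A^\alpha)=X\cap D(A^\alpha)\into (X,D(A^\alpha))_{\theta/\alpha,1}$ combined with the left half of \eqref{cor_Raum-X^theta-zwischen-reellen-Interpol.-eq} yields $D(A^\alpha)\into X^\theta_{s,A}$. For the upper embedding, the right half of \eqref{cor_Raum-X^theta-zwischen-reellen-Interpol.-eq} gives $X^\theta_{s,A}\into (X,D(A^\alpha))_{\theta/\alpha,\infty}$; because $0<\beta<\theta<\alpha$ and $D(A^\alpha)\into X$, the elementary $K$-functional bound $K(t,x;X,D(A^\alpha))\lesssim \min(1,t^{\theta/\alpha})\,\|x\|_{(X,D(A^\alpha))_{\theta/\alpha,\infty}}$ gives the comparison $(X,D(A^\alpha))_{\theta/\alpha,\infty}\into (X,D(A^\alpha))_{\beta/\alpha,1}$, and finally $(X,D(A^\alpha))_{\beta/\alpha,1}\into D(A^\beta)$ is the classical embedding of a real interpolation space into a fractional domain (see \cite{haase}). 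Chaining these gives $X^\theta_{s,A}\into D(A^\beta)$.

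For part (2) I would apply Corollary~\ref{cor_Raum-X^theta-zwischen-reellen-Interpol.} with $\theta_0,\theta_1$ (and indices $s_0,s_1$) in place of $\theta,s$, getting for $i=0,1$ the sandwich $(X,D(A^\alpha))_{\theta_i/\alpha,1}\into X^{\theta_i}_{s_i,A}\into (X,D(A^\alpha))_{\theta_i/\alpha,\infty}$, then apply the functor $(\,\cdot\,,\,\cdot\,)_{\delta,q}$ and use its monotonicity to obtain
\[
\bigl((X,D(A^\alpha))_{\theta_0/\alpha,1},(X,D(A^\alpha))_{\theta_1/\alpha,1}\bigr)_{\delta,q}\into \bigl(X^{\theta_0}_{s_0,A},X^{\theta_1}_{s_1,A}\bigr)_{\delta,q}\into \bigl((X,D(A^\alpha))_{\theta_0/\alpha,\infty},(X,D(A^\alpha))_{\theta_1/\alpha,\infty}\bigr)_{\delta,q}.
\]
Since $\theta_0/\alpha\neq\theta_1/\alpha$ and both lie in $(0,1)$, the reiteration theorem collapses both outer spaces: the reiterated space is independent of the inner fine indices, so each equals $(X,D(A^\alpha))_{\eta,q}$ with $\eta=(1-\delta)\theta_0/\alpha+\delta\theta_1/\alpha=\theta/\alpha$. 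Hence the middle space lies between two copies of $(X,D(A^\alpha))_{\theta/\alpha,q}$ and therefore coincides with it up to equivalence of norms.

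I do not expect a genuine obstacle here; the points requiring attention are bookkeeping only: one must confirm that the three couples are compatible (ensured by the common embedding into $X$), and that $\theta_0$ is strictly positive so that $\theta_0/\alpha\in(0,1)$ and the left sandwich spaces are non-degenerate. Should the case $\theta_0=0$ be admitted, one simply replaces the left sandwich bound by $X=(X,D(A^\alpha))_{0,q}$ and uses the form of the reiteration theorem that permits a zero endpoint. Everything else is a mechanical combination of Corollary~\ref{cor_Raum-X^theta-zwischen-reellen-Interpol.} with textbook properties of the real interpolation functor.
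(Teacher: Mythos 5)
Your argument is correct and is essentially the paper's own: part (1) is the identical chain $D(A^\alpha)\into (X,D(A^\alpha))_{\theta/\alpha,1}\into X^\theta_{s,A}\into (X,D(A^\alpha))_{\theta/\alpha,\infty}\into (X,D(A^\alpha))_{\beta/\alpha,1}\into D(A^\beta)$, and in part (2) your ``sandwich, monotonicity, collapse by reiteration'' is just the unpacked form of the paper's one-line remark that the sandwich says $X^{\theta_j}_{s_j,A}$ is of class $J_{\theta_j/\alpha}\cap K_{\theta_j/\alpha}$ for the couple $(X,D(A^\alpha))$, so the reiteration theorem applies. The only caveat (shared with the paper) is that the sandwich from Corollary \ref{cor_Raum-X^theta-zwischen-reellen-Interpol.} requires $\theta_0>0$, which you correctly flag.
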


\begin{proof}
(1) This is simply due to the fact that by real interpolation theory and Corollary \ref{cor_Raum-X^theta-zwischen-reellen-Interpol.} we have
\[
D(A^\alpha)\into (X,D(A^\alpha))_{\theta/\alpha,1} \into X^\theta_{s,A}\into (X,D(A^\alpha))_{\theta/\alpha,\infty} \into (X,D(A^\alpha))_{\beta/\alpha,1} \into D(A^\beta).
\]

For (2) we observe that equation (\ref{cor_Raum-X^theta-zwischen-reellen-Interpol.-eq}) from Corollary \ref{cor_Raum-X^theta-zwischen-reellen-Interpol.} is equivalent to the fact that the spaces $X^{\theta_j}_{s,A}$ are in the class $J_{\theta_j/\alpha}(X,D(A^\alpha))\cap K_{\theta_j/\alpha}(X,D(A^\alpha))$ for $j=0,1$, hence (2) follows from the reiteration theorem for real interpolation.
\end{proof}

Next we consider complex interpolation of the $s$-spaces.

\begin{prop} \label{satz_komplexe-Interpol-Xs-Raeume}
Let $s_0,s_1\in (1,\infty)$ and assume that $A$ has an $\mR_{s_j}$-bounded \HU-calculus for $j=0,1$. Let $\theta_0,\theta_1\in\R$ with
\begin{equation} \label{gl-satz_komplexe-Interpol-Xs-Raeume-1}
|\theta_0|,|\theta_1|,|\theta_0-\theta_1| < \frac{\pi}{\omega_0},
\end{equation}

where $\omega_0:=\omega_{\mR_{s_0}^\infty}(A) \vee \omega_{\mR_{s_1}^\infty}(A) <\pi$. Then for
\[
\Big(\theta,\frac1s \Big) := (1-\alpha) \Big( \theta_0,\frac{1}{s_0} \Big) + \alpha \Big(\theta_1,\frac{1}{s_1}\Big)
\]

we have $\ds \big[ \Xdot^{\theta_0}_{s_0,A},\Xdot^{\theta_1}_{s_1,A} \big]_\alpha \iso \Xdot^{\theta}_{s,A}$, and if $\:\theta_0,\theta_1>0$ we also have $\ds \big[ X^{\theta_0}_{s_0,A},X^{\theta_1}_{s_1,A} \big]_\alpha \iso X^{\theta}_{s,A}$.
\end{prop}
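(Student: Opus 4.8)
The plan is to transport the problem to the diagonal operators $\widetilde{A}_{s_j}$ on $X(\ell^{s_j})$ and use the identification, established in Proposition \ref{prop_Xdot-als-TR-von-x-aus-U-mit-endlicher-Norm}, of $\Xdot^\theta_{s,A}$ with the space of $x\in U$ with $\|x\|_{\theta,s,A}<\infty$. First I would fix an auxiliary function: choose $\sigma>\omega_0$ with $|\theta_0|,|\theta_1|,|\theta_0-\theta_1|<\pi/\sigma$ still valid (possible by the strict inequalities in \eqref{gl-satz_komplexe-Interpol-Xs-Raeume-1}), and a single $\ph\in H_0^\infty(\Sigma_\sigma)\setminus\{0\}$ which is simultaneously admissible for all the $\theta$-values occurring, e.g. a high power $\ph(z)=z^m/(1+z)^{2m}$ with $m>|\theta_0|\vee|\theta_1|\vee|\theta|$. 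Then $z^{-\theta_j}\ph(z)\in H_0^\infty(\Sigma_\sigma)$ for $j=0,1$ and the norm of $\Xdot^{\theta_j}_{s_j,A}$ is $\|(t^{-\theta_j}\ph(tA)x)_{t>0}\|_{X(L^{s_j}_*)}$. Discretizing via Proposition \ref{satz-s-norm-equivalence-diskret} (using $\ph\in\Phi^\Sigma_{\sigma,\theta_j}$, which holds for this standard $\ph$ by Examples \ref{bspe_Standardfkt.-in-Phi^Sigma}), the norm becomes $\|(2^{-j\theta_j}\ph(2^jA)x)_{j\in\Z}\|_{X(\ell^{s_j})}$, i.e. $\Xdot^{\theta_j}_{s_j,A}$ embeds isomorphically (via $x\mapsto (2^{-j\theta_j}\ph(2^jA)x)_j$) as a complemented subspace of $X(\ell^{s_j})$; a left inverse is furnished by the reconstruction formula $x=\int_0^\infty\psi(tA)\ph(tA)x\,\frac{dt}{t}$ from the proof of Proposition \ref{prop_Xdot-als-TR-von-x-aus-U-mit-endlicher-Norm}, suitably discretized, and one must check (via Proposition \ref{prop_Rs-stetige-Fassung}, using $\mR_{s_j}$-sectoriality) that this reconstruction is bounded $X(\ell^{s_j})\to\Xdot^{\theta_j}_{s_j,A}$ independently of the scalar weights absorbed into the projection $P_j:=\iota_j\circ(\text{reconstruction})$.

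The point of the diagonal operator $\widetilde{A}_{s_j}$ is that, by Lemma \ref{Lemma-Rs-Diagonalop-Rs-Version}, the hypothesis that $A$ has an $\mR_{s_j}$-bounded $H^\infty(\Sigma_\sigma)$-calculus is exactly the statement that $\widetilde{A}_{s_j}$ has a bounded $H^\infty(\Sigma_\sigma)$-calculus on $X(\ell^{s_j})$. Now the weighted shift $(a_j)_j\mapsto(2^{-j\theta}a_j)_j$ on $X(\ell^{s})$ is (formally) $e^{-\theta\log 2\cdot N}$ where $N$ is the generator of the shift semigroup on $\Z$; to make this rigorous inside the functional-calculus machinery one instead observes that the two embeddings $\iota_0,\iota_1$ and the two projections $P_0,P_1$ are compatible in the sense required for interpolation of complemented subspaces: on the Calderón couple $(X(\ell^{s_0}),X(\ell^{s_1}))$ one has $[X(\ell^{s_0}),X(\ell^{s_1})]_\alpha=X(\ell^{s_\alpha})$ with $\frac1{s_\alpha}=\frac{1-\alpha}{s_0}+\frac\alpha{s_1}$ (cited earlier from \cite{calderon}), and the family $z\mapsto P_z$ built from the $H^\infty$-calculus of $\widetilde{A}_{s_j}$ applied to $z\mapsto \ph(\cdot)$ composed with the weight $2^{-j((1-z)\theta_0+z\theta_1)}$ is an admissible analytic family of projections (bounded on the vertical lines $\mathrm{Re}\,z=0,1$ by the $\mR_{s_j}$-$H^\infty$-bound together with a bound on the weighted shift; here the strict inequalities $|\theta_0|,|\theta_1|,|\theta_0-\theta_1|<\pi/\omega_0$ enter to keep $z\mapsto 2^{j(\cdots)}\ph(2^jA)$ of admissible exponential type as the argument of a bounded $H^\infty$-function). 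Then Stein's interpolation theorem for analytic families, applied to $T_z:=\iota_z$ and its left inverse, yields $[\Xdot^{\theta_0}_{s_0,A},\Xdot^{\theta_1}_{s_1,A}]_\alpha=\iota_\alpha^{-1}\big([X(\ell^{s_0}),X(\ell^{s_1})]_\alpha\cap\mathrm{ran}\,P_\alpha\big)=\Xdot^{\theta}_{s,A}$; the density of $D(A^m)\cap R(A^m)$ in all spaces involved (Proposition \ref{satz_D(Am)Schnitr-R(Am)-dicht-in-X-Punkt}) makes the couples admissible for complex interpolation.

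For the inhomogeneous statement when $\theta_0,\theta_1>0$ I would use the Corollary $X^\theta_{s,A}=\Xdot^\theta_{s,A}\cap X$ with equivalent norms, together with Remark \ref{bem_norm-int_0^1-in-inhom-X} which lets one replace $\int_0^\infty$ by $\int_0^1$ in the seminorm when $\theta>0$: the same analytic-family argument runs on $X\oplus X(\ell^{s_j})$ (putting the ambient-$X$ norm in the first coordinate, which is interpolated by $[X,X]_\alpha=X$), and since $\theta=(1-\alpha)\theta_0+\alpha\theta_1>0$ the intersection with $X$ is respected under interpolation because the embedding $X^{\theta_j}_{s_j,A}\into X$ is a genuine inclusion of a complemented piece, so $[X^{\theta_0}_{s_0,A},X^{\theta_1}_{s_1,A}]_\alpha\iso\Xdot^\theta_{s,A}\cap X\iso X^\theta_{s,A}$.

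\textbf{Main obstacle.} The delicate point is not the abstract interpolation of complemented subspaces but verifying that the family of projections $P_z$ is genuinely \emph{analytic and uniformly bounded on the closed strip}. This requires combining three ingredients whose interaction is the technical heart: (i) the $\mR_{s_j}$-bounded $H^\infty$-calculus giving uniform bounds on $\ph_z(\widetilde{A}_{s_j})$ for $z$ on the boundary lines, where $\ph_z(\lambda)=\lambda^{-((1-z)\theta_0+z\theta_1)}\ph(\lambda)$ must stay bounded on $\Sigma_\sigma$ — this is precisely where the hypothesis $|\theta_0|,|\theta_1|<\pi/\omega_0$ is used, to ensure $|\lambda^{-\theta_z}|=|\lambda|^{-\mathrm{Re}\,\theta_z}e^{(\mathrm{Im}\,\theta_z)\arg\lambda}$ has controlled growth on the sector for $\mathrm{Re}\,z\in\{0,1\}$; (ii) the bound on the discrete weighted shift $(a_j)_j\mapsto(2^{-j\theta_z}a_j)_j$ on $X(\ell^{s_j})$, which is a pure multiplier bound and needs $\mathrm{Re}\,\theta_z$ to agree with $\theta_j$ on the two lines so that it is an $\mathrm{isometry}$ up to the convergence-generating part — and the condition $|\theta_0-\theta_1|<\pi/\omega_0$ controls the imaginary variation; (iii) checking that the reconstruction operator, when composed across, reproduces the identity on the image — this is the discretized Calderón reproducing formula and must be shown to converge in $U$ uniformly in $z$. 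Once these uniform estimates are in place the rest is a routine application of Stein interpolation plus the already-established density and discretization results.
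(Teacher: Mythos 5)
Your proposal correctly identifies the general landscape (discretize the norms, realize $\Xdot^{\theta_j}_{s_j,A}$ as a complemented subspace of a vector-valued sequence space using the $\mR_{s_j}$-bounded \HU-calculus for the projection, then interpolate), but it has a genuine gap at the decisive step. Because you put the weights $2^{-j\theta_j}$ \emph{into} the embeddings, your coretraction $\iota_0$ and $\iota_1$ (and hence the projections $P_0,P_1$) are different operators on the two endpoint spaces, so the standard retract theorem (\cite{triebel-interpolation}, 1.2.4) is not applicable, and you replace it by "Stein interpolation applied to an admissible analytic family of projections $P_z$". There is no off-the-shelf theorem of this kind: Stein's theorem gives bounds for $T_\alpha$ acting between the interpolation spaces of two \emph{fixed} couples, but it does not identify $\big[\mathrm{ran}\,P_0,\mathrm{ran}\,P_1\big]_\alpha$ with $\mathrm{ran}\,P_\alpha$ when the projections vary with $z$; identifying interpolation spaces of ranges of non-constant families of projections is exactly the kind of statement that fails in general without additional consistency hypotheses, and your sketch does not supply the two inclusions. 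The paper avoids this problem entirely by moving the weight into the target space: it introduces the weighted sequence spaces $\ell^{s,\theta}$, quotes $[X(\ell^{s_0,\theta_0}),X(\ell^{s_1,\theta_1})]_\alpha\iso X(\ell^{s,\theta})$, and constructs a \emph{single} coretraction $Jx=(\ph(2^jA)x)_j$ and retraction $P(y_j)_j=\sum_j\psi(2^jA)y_j$ (with $\ph(z)=-z^\alpha/(2+z)$, $\psi(z)=z^{1-\alpha}/(1+z)$, $PJ=\Id$ by a telescoping identity) which are independent of $\theta$ and $s$; boundedness of $P$ is where the $\mR_s$-bounded \HU-calculus is used, via the decay estimate $\sup_{z}|2^{\ell\theta}\rho(2^kz)\psi(2^{k+\ell}z)|\lesssim 2^{-\delta|\ell|}$. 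This also shows that hypothesis (\ref{gl-satz_komplexe-Interpol-Xs-Raeume-1}) enters differently than you claim: it is used to pick $\alpha<\beta<\pi/\omega_0$ with $\theta_0,\theta_1\in(\alpha-\beta,\alpha)$ and then to rescale $A\mapsto A^\beta$ (keeping $\beta\omega_0<\pi$) so that one may take $\beta=1$ and $\delta=\min\{\alpha-\theta,\theta+1-\alpha\}>0$; it is not primarily about controlling $|\lambda^{-\theta_z}|$ along the boundary lines of a Stein family.

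A second, smaller gap concerns the inhomogeneous statement: complex interpolation does not in general commute with taking intersections, so "the intersection with $X$ is respected under interpolation" needs justification. The paper sidesteps this by invoking Proposition \ref{satz_hom-vs-inhom-Raeume-fuer-A-invbar}, i.e. $X^{\theta}_{s,A}\iso\Xdot^{\theta}_{s,A+\eps}$ for $\theta>0$, which reduces the inhomogeneous case to the homogeneous one for the invertible operator $1+A$. If you want to salvage your route, the fix is exactly the paper's device: work with the weighted spaces $X(\ell^{s,\theta})$ so that the retraction/coretraction pair is the same for both endpoints, and then everything reduces to the known interpolation identity for those spaces.
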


We remark that the restriction (\ref{gl-satz_komplexe-Interpol-Xs-Raeume-1}) for the interpolation indices $\theta_j$ is due to our method of proof. It seems not to be clear if this restriction is reasonable or just a matter of lack of technique in our method. Observe that in the classical situation of Triebel-Lizorkin-spaces where $A=-\Delta$ we have $\omega_0=0$, hence the restriction (\ref{gl-satz_komplexe-Interpol-Xs-Raeume-1}) drops out and Proposition \ref{satz_komplexe-Interpol-Xs-Raeume} is consistent with the classical results.

\begin{proof}[Proof of Proposition \ref{satz_komplexe-Interpol-Xs-Raeume}]
First we observe that if we have proved the result for the homogeneous spaces, then by Proposition \ref{satz_hom-vs-inhom-Raeume-fuer-A-invbar} we obtain the corresponding result for the inhomogeneous spaces, hence it is sufficient to consider the homogeneous spaces.\\

According to the assumption on $\theta_0,\theta_1$ we can choose  $0<\alpha<\beta < \frac{\pi}{\omega_0}$ such that $\theta_j \in (\alpha-\beta,\alpha)$ for $j=0,1$. Furthermore fix some $\sigma \in \big(\omega_0,\frac{\pi}{\beta}\big)$.\\

We will show first that we can reduce to the case $\beta=1$. If $\beta<1$, we can obviously replace $\beta$ by $1$, so consider the case $\beta>1$. Since $A$ has an $\mR_{s_j}$-bounded \HU-calculus, the operator $A^\beta$ also has an $\mR_{s_j}$-bounded \HU-calculus with
\[
\omega_{\mR_{s_j}^\infty}(A^\beta) \le \beta \omega_{\mR_{s_j}^\infty}(A) \le \beta \omega_0 < \pi \quad\text{for } j=0,1.
\]

Observe that $\Xdot^{\beta\delta}_{s,A} \iso \Xdot^{\delta}_{s,A^\beta}$ for $\delta\in\R$ canonically, since for $\ph\in \Phi_{\beta\sigma,\delta}$ and $\psi(z):= \ph(z^\beta)$ we have $\psi\in\Phi_{\sigma,\beta\delta}$ and
\begin{eqnarray*}
\|x\|_{\beta\delta,s,A,\psi} &=& \bigg\| \bigg( \int_0^\infty |t^{-\beta\delta} \psi(tA)x|^s \,\frac{dt}{t}\bigg)^{1/s} \bigg\|_X = \bigg\| \bigg( \int_0^\infty |t^{-\beta\delta} \ph(t^\beta A^\beta)x|^s \,\frac{dt}{t}\bigg)^{1/s} \bigg\|_X\\
&=& \beta^{-1/s} \mal \bigg\| \bigg( \int_0^\infty |t^{-\delta} \ph(t A^\beta)x|^s \,\frac{dt}{t}\bigg)^{1/s}  \bigg\|_X \approx \|x\|_{\delta,s,A^\beta,\ph}.
\end{eqnarray*}

So since the above stated isomorphisms are canonical we can replace $A$ by $A^\beta$ and then $\beta$ by $1$.\\

We define auxiliary spaces
\[
\ell^{s,\theta} := \ell^{s,\theta}(\Z) := \Big\{ (\alpha_j)_j \in \C^\Z \;\big|\; \|(\alpha_j)\|_{\ell^{s,\theta}} := \big\| ( 2^{-\theta j}\alpha_j )_j \big\|_{\ell^s} < \infty \Big\},
\]

endowed with the weighted norm $\| \mal \|_{\ell^{s,\theta}}$ for all $s\in [1,\infty], \theta\in\R$. Then we have for all $s_0,s_1\in [1,\infty]$ and $\theta_0,\theta_1\in \R$ and $\alpha\in (0,1)$
\begin{equation} \label{gl_interpolation-X(ell^s,theta)}
[X(\ell^{s_0,\theta_0}), X(\ell^{s_1,\theta_1})]_\alpha \iso  X(\ell^{s,\theta})  \quad\text{where } \Big(\theta,\frac1s \Big) = (1-\alpha) \Big( \theta_0,\frac{1}{s_0} \Big) + \alpha \Big(\theta_1,\frac{1}{s_1}\Big),
\end{equation}

cf. \cite{calderon} 13.6(i) and \cite{bergh}, Theorem 5.6.3.\\[-2,2ex]

We show that the homogeneous spaces are retracts of the spaces $X(\ell^{s,\theta})$ with canonical (co-)retractions and then use \cite{triebel-interpolation}, 1.2.4 Theorem and (\ref{gl_interpolation-X(ell^s,theta)}). To this purpose we shall construct a coretraction $J:\Xdot^\theta_{s,A}\to X(\ell^{s,\theta})$ and a retraction  $P: X(\ell^{s,\theta})\to \Xdot^\theta_{s,A}$, i.e. bounded operators such that $PJ=\Id_{\Xdot^\theta_{s,A}}$, which are independent of $\theta \in (\alpha-1,\alpha)$ and  $s\in [1,\infty]$ such that $A$ has an $\mR_s$-bounded \HU-calculus with $\omega_s\le\omega_0$.\\

By the above reduction we have $\beta=1$, hence $\alpha\in (0,1)$. We define auxiliary functions
\[
\ph(z) := -\frac{z^{\alpha}}{2+z}, \quad \psi(z):=  \frac{z^{1-\alpha}}{1+z}, \quad\rho(z) := \frac{z^{\alpha}}{1+z}
\]

and $f(z):= \ph(z)\psi(z) = \frac{-z}{(1+z)(2+z)}= \frac{1}{1+z}- \frac{2}{2+z}$ for all $z\in\Sigma_\sigma$. The following estimate is easily proved: If $a,b,c>0$ and $g(z):= \frac{cz}{(1+az)(1+bz)}$, then
\begin{equation} \label{abschaetzung_von-g-fuer-(Ko)retrakt}
|g(z)| \lesssim_\sigma \frac{c}{a+b} \quad\text{for all } z\in\Sigma_\sigma.
\end{equation}

We now define the operator $Jx := \big( \ph(2^jA)x \big)_{j\in\Z}$ for all $x\in X$ and formally
\[
P(y_j)_j := \sum_{j\in\Z} \psi(2^jA)y_j := \lim_{N\to\infty} \underbrace{\sum_{j=-N}^N \psi(2^jA)y_j}_{=: P_N(y_j)_j }  \quad\text{for } (y_j)_j\in X^\Z .
\]

Let $\theta \in (\alpha-1,\alpha) $ and $s\in [1,\infty]$ such that $A$ has an $\mR_s$-bounded \HU-calculus with $\omega_s\le\omega_0$. We show that $J|_{\Xdot^\theta_{s,A}}: \Xdot^\theta_{s,A}\to X(\ell^{s,\theta})$ is a coretraction and  $P|_{X(\ell^{s,\theta})}: X(\ell^{s,\theta})\to \Xdot^\theta_{s,A}$ is a corresponding retraction, i.e. we have to show that
\begin{itemize}
\item[(1)] $J|_{\Xdot^\theta_{s,A}}: \Xdot^\theta_{s,A}\to X(\ell^{s,\theta})$ is bounded,
\item[(2)] $P|_{X(\ell^{s,\theta})}: X(\ell^{s,\theta})\to \Xdot^\theta_{s,A}$ is well-defined and bounded, and
\item[(3)] $PJx=x$ for all $x\in X^\theta_{s,A}$.
\end{itemize}

{\em Ad (1):} This is simply due to the fact that $\ph\in \Phi^\Sigma_{\sigma,\theta}$ (this can be shown similar as Example \ref{bspe_Standardfkt.-in-Phi^Sigma} (1)).\\

{\em Ad (2):} We now use the function $\rho$ to calculate the norm in the space $\Xdot^\theta_{s,A}$, which is possible since also $\rho\in  \Phi^\Sigma_{\sigma,\theta}$:
\begin{eqnarray*}
&& \|P_N(y_j)_j\|_{\Xdot^\theta_{s,A}} = \bigg\| \Big( 2^{-k\theta}\rho(2^kA)\sum_{j=-N}^N \psi(2^jA)y_j\Big)_k  \bigg\|_{X(\ell^s)} \\
&=&  \bigg\| \Big( \sum_{j=-N}^N 2^{-(k-j)\theta}\rho(2^kA)\, 2^{-j\theta}\psi(2^jA)y_j\Big)_k  \bigg\|_{X(\ell^s)}\\
&=&  \bigg\| \Big( \sum_{\ell=-k-N}^{-k+N} 2^{\ell\theta}\rho(2^kA)\psi(2^{k+\ell}A)y_{k+\ell}\Big)_k  \bigg\|_{X(\ell^s)}
\le  \bigg\| \Big( \sum_{\ell\in\Z} |2^{\ell\theta}\rho(2^kA)\psi(2^{k+\ell}A)y_{k+\ell}| \Big)_k  \bigg\|_{X(\ell^s)}\\
&\le&  \sum_{\ell\in\Z} \big\| \big( |2^{\ell\theta} \rho(2^kA)\psi(2^{k+\ell}A)y_{k+\ell}| \big)_k  \big\|_{X(\ell^s)}\\
&\lesssim& \sum_{\ell\in\Z}  \Big(\sup_{z\in\Sigma_\sigma}| 2^{\ell\theta} \rho(2^kz)\psi(2^{k+\ell}z)|\Big) \, \big\| \big( y_{k+\ell}\big)_k  \big\|_{X(\ell^s)}
\le C\,  \big\| \big( y_{k}\big)_k  \big\|_{X(\ell^s)}
\end{eqnarray*}

with $C:=\sum\limits_{\ell\in\Z}  \Big(\sup\limits_{z\in\Sigma_\sigma}| 2^{\ell\theta} \rho(2^kz)\psi(2^{k+\ell}z)|\Big)$. So we only have to show that $C<\infty$, because then with the Fatou property we obtain
\[
\|P(y_j)_j\|_{\Xdot^\theta_{s,A}} \le \liminf_{N\to\infty} \|P_N(y_j)_j\|_{\Xdot^\theta_{s,A}} \lesssim C\mal  \big\| \big( y_{k}\big)_k  \big\|_{X(\ell^s)}.
\]

For this let
\[
g(z):= \rho(2^kz)\psi(2^{k+\ell}z) = - \frac{cz}{(1+az)(1+bc)},
\]

where $a:= 2^k, b:= 2^{k+\ell}$ and $c:= a^\alpha b^{1-\alpha}= 2^{\alpha k + (1-\alpha)(k+\ell)} = 2^{ k + (1-\alpha)\ell}$, then by (\ref{abschaetzung_von-g-fuer-(Ko)retrakt})
\[
|g(z)| \lesssim \left(\frac{a+b}{c}\right)^{-1} = \left( 2^{-(1-\alpha)\ell} + 2^{\alpha\ell}\right)^{-1}\le  2^{-\alpha\ell} \wedge 2^{(1-\alpha)\ell}.
\]

For $\ell\in\N_0$ this implies  $2^{\theta\ell}|g(z)|\lesssim 2^{-(\alpha-\theta)\ell}$, and for $\ell\in -\N$ we have $2^{\theta\ell}|g(z)|\lesssim 2^{-(\theta+ 1-\alpha)|\ell |}$, hence altogether with $\delta:=\min\{\alpha-\theta, (\theta+ 1-\alpha) \}>0$:
\[
\sup_{z\in\Sigma_\sigma}| 2^{\ell\theta} \rho(2^kz)\psi(2^{k+\ell}z)| = \sup_{z\in\Sigma_\sigma}| 2^{\ell\theta} g(z) | \lesssim 2^{-\delta|\ell|} \quad\text{for all } \ell\in\Z,
\]

so $C \lesssim \sum\limits_{\ell\in\Z}  2^{-\delta|\ell|}<\infty$ as desired.\\

{\em Ad (3):} Let $x\in X^\theta_{s,A}$, then
\begin{eqnarray*}
\sum_{j=-N}^N \psi(2^jA)\ph(2^jA)x &=& \sum_{j=-N}^N f(2^jA)x =  \sum_{j=-N}^N \Big( 2^{-j}(2^{-j}+A)x - 2^{-(j-1)}(2^{-(j-1)}+A)x  \Big)\\
&=& 2^{-N}(2^{-N}+A)x - 2^{N+1}(2^{N+1}+A)x \stackrel{N\to\infty}{\rechts} 0-x=x \quad\text{in } \Xdot^\theta_{s,A},
\end{eqnarray*}

since the part of $A$ in $\Xdot^\theta_{s,A}$ is sectorial (cf. Lemma \ref{lemma_parts-in-s-Raeumen-sektoriell+Definitionsbereich} in the following subsection).\\

Since the operators $J,P$ are appropriate (co-)retractions, the claim follows by \cite{triebel-interpolation}, 1.2.4 Theorem  from (\ref{gl_interpolation-X(ell^s,theta)}).
\end{proof}

We conclude this subsection with a variant of the interpolation property for the $s$-intermediate spaces.

\begin{prop} \label{satz-interpolation-property-in-s-spaces}
Let $\theta\in\R$ and $m\in\N$ with $m>|\theta|$, and let $T\in L(X)$ be $\mR_s$-bounded. If $T(D(A^m)\cap R(A^m))\tm D(A^m)\cap R(A^m)$, then $T\in L(\Xdot^\theta_{s,A})$, and if $\theta\ge 0$ and $T(D(A^m))\tm D(A^m)$, then $T\in L(X^\theta_{s,A})$.
\end{prop}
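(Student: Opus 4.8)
The plan is to reduce the statement to an a priori norm estimate on a dense subspace and then to invoke Proposition~\ref{satz-norm-equiv-und-Hoo}. First the reductions. Since $X^\theta_{s,A}=\Xdot^\theta_{s,A}\cap X$ with equivalent norms (the corollary stating $X^\theta_{s,A}=\Xdot^\theta_{s,A}\cap X$ that follows Proposition~\ref{prop_Xdot-als-TR-von-x-aus-U-mit-endlicher-Norm}) and $T\in L(X)$, the inhomogeneous assertion follows from the homogeneous one, provided one checks that for $\theta\ge0$ only the part of the argument involving $\int_0^1$ is actually used --- this is precisely why the weaker hypothesis $T(D(A^m))\tm D(A^m)$ is enough there, in view of Remark~\ref{bem_norm-int_0^1-in-inhom-X}. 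For the homogeneous space, Proposition~\ref{satz_D(Am)Schnitr-R(Am)-dicht-in-X-Punkt} shows that $D(A^m)\cap R(A^m)$ is contained and dense in $\Xdot^\theta_{s,A}$, and by hypothesis $T$ maps it into itself; hence it suffices to prove the a priori bound $\|Tx\|_{\theta,s,A}\lesssim\|x\|_{\theta,s,A}$ for $x\in D(A^m)\cap R(A^m)$, since $T$ then extends continuously from this dense subspace.

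Second, I would remove the weight. As $|\theta|<m$ we have $D(A^m)\cap R(A^m)\tm D(A^\theta)$; for $\ph\in\Phi_{\sigma,\theta}$, setting $\tilde\ph(z):=z^{-\theta}\ph(z)\in H_0^\infty(\Sigma_\sigma)$, one has $t^{-\theta}\ph(tA)y=\tilde\ph(tA)A^\theta y$ for $y\in D(A^\theta)$. Applying this to $y=Tx$ and writing $A^\theta Tx=\widehat T(A^\theta x)$ with $\widehat T:=A^\theta TA^{-\theta}$ (a priori only densely defined, on $A^\theta(D(A^m)\cap R(A^m))$, which is dense in $X$), gives
\[
\|Tx\|_{\theta,s,A,\ph}=\Big\|\Big(\int_0^\infty |\tilde\ph(tA)\,\widehat T\,(A^\theta x)|^s\,\tfrac{dt}{t}\Big)^{1/s}\Big\|_X .
\]
If $\widehat T$ extends to an $\mR_s$-bounded operator on $X$, then Proposition~\ref{satz-norm-equiv-und-Hoo} (with $f\equiv1$, operator $\widehat T$, and auxiliary functions $\tilde\ph$ and $\tilde\psi(z):=z^m(1+z)^{-2m}$) bounds the right-hand side by $C\,\mR_s(\widehat T)\,\|(\int_0^\infty |\tilde\psi(tA)A^\theta x|^s\tfrac{dt}{t})^{1/s}\|_X=C\,\mR_s(\widehat T)\,\|x\|_{\theta,s,A,\psi}$ with $\psi(z):=z^{m+\theta}(1+z)^{-2m}\in\Phi_{\sigma,\theta}$, which by Proposition~\ref{Satz-norm-equivalent-theta} is $\approx\mR_s(\widehat T)\,\|x\|_{\theta,s,A}$. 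This is the desired estimate. (For $\theta=0$ it is immediate, since $\widehat T=T$.)

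Everything therefore reduces to: \emph{if $T\in L(X)$ is $\mR_s$-bounded and maps $D(A^m)\cap R(A^m)$ into itself, and $|\theta|<m$, then $A^\theta TA^{-\theta}$ extends to an $\mR_s$-bounded operator on $X$.} I would first note that by the closed graph theorem (applied in $X$, using the continuous inclusion $D(A^m)\cap R(A^m)\into X$) the restriction of $T$ is bounded on $D(A^m)\cap R(A^m)$ for its natural norm $\|x\|_X+\|A^mx\|_X+\|A^{-m}x\|_X$. The $\mR_s$-boundedness of $A^\theta TA^{-\theta}$ should then follow by a quantitative decomposition into a finite sum of operators of the form $f_1(A)\,T\,f_2(A)$ with $f_1,f_2$ in $H_0^\infty(\Sigma_\sigma)$ or in the extended Dunford--Riesz class $\mE(\Sigma_\sigma)$: with $\eta(z):=z^m(1+z)^{-2m}$ one exploits the two representations
\[
\eta(tA)y=t^m(1+tA)^{-2m}(A^my)=t^{-m}\big(\tfrac{tA}{1+tA}\big)^{2m}(A^{-m}y),\qquad y\in D(A^m)\cap R(A^m),
\]
to trade the fractional powers $A^{\pm\theta}$ for bounded holomorphic functions of $A$ (this is where $m>|\theta|$ is essential: it keeps all exponents admissible), after which each term is $\mR_s$-bounded by Lemma~\ref{Lemma_ph(zA)-Rs-bd}, Corollary~\ref{cor-glm-mE}, the multiplicativity of $\mR_s$-bounds, and the $\mR_s$-boundedness of $T$.

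I expect this last step to be the main obstacle: upgrading the information ``$T$ is bounded on $D(A^m)\cap R(A^m)$'' --- a statement about the triple graph norm --- to the $\mR_s$-boundedness of the single conjugated operator $A^\theta TA^{-\theta}$, without assuming a bounded $H^\infty$-calculus for $A$ (which becomes considerably easier when $A$ has such a calculus, by passing to the diagonal operator $\widetilde A_s$ of Definition/Proposition~\ref{defsatz_A-Schlange-in-X(ls)} and its bounded $H^\infty$-calculus on $X(\ell^s)$). The gap $m>|\theta|$ supplies the necessary room. The remaining ingredients --- the reductions, the density statements, and the case $\theta=0$ --- are routine given the machinery already assembled in Sections~\ref{section_Rs-Sekt.} and \ref{section_The-associated-s-intermediate-spaces}.
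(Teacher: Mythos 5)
Your skeleton --- reduce by the density of $D(A^m)\cap R(A^m)$ (Proposition \ref{satz_D(Am)Schnitr-R(Am)-dicht-in-X-Punkt}) to an a priori estimate on that subspace and then feed the estimate into Proposition \ref{satz-norm-equiv-und-Hoo} --- is exactly the skeleton of the paper's (two-reference) proof, and your handling of $\theta=0$, of the equivalence of the auxiliary functions via Proposition \ref{Satz-norm-equivalent-theta}, and of the inhomogeneous case via Remark \ref{bem_norm-int_0^1-in-inhom-X} is unobjectionable. The genuine gap is the step you yourself flag: for $\theta\neq 0$ you absorb the weight $t^{-\theta}$ into the calculus, which forces you to commute a fractional power past $T$, and you then reduce the whole proposition to the claim that $\widehat T=A^{\theta}TA^{-\theta}$ extends to an $\mR_s$-bounded operator on $X$. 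That claim is nowhere proved in your proposal, and it does not follow from the hypotheses: boundedness of $A^{\theta}TA^{-\theta}$ on $X$ is precisely boundedness of $T$ on the homogeneous fractional domain space $\Xdot_\theta$ (for $A=-\Delta$ in $L^p$ this is the potential scale $\dot{F}^{2\theta}_{p,2}$), which is a different scale from the target scale $\Xdot^\theta_{s,A}$ when $s\neq 2$. The hypotheses give only $\mR_s$-boundedness on $X$ plus set-theoretic invariance of $D(A^m)\cap R(A^m)$; the closed graph theorem upgrades the latter to boundedness for the triple graph norm, but that yields no control on the $\Xdot_\theta$-scale (no bounded imaginary powers or $H^\infty$-calculus for $A$ is assumed), and it certainly yields no $\mR_s$-bound. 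So your reduction replaces the statement to be proved by a strictly stronger one.

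Moreover, the mechanism you sketch for that stronger claim cannot deliver it: $A^{\theta}TA^{-\theta}$ is not a finite sum of terms $f_1(A)\,T\,f_2(A)$ with $f_1,f_2\in H_0^\infty(\Sigma_\sigma)$ or $\mE(\Sigma_\sigma)$; any calculus representation of $A^{\pm\theta}$ is an integral over all scales, and once $T$ sits between two functions of $A$ there is no almost-orthogonality between the two scale parameters (nothing like $\ph(tA)\,T\,F(rA)$ is small for $t/r$ away from $1$), so the resulting double integral cannot be summed by the tools assembled in Section \ref{section_Rs-Sekt.}. In short, the proof stalls exactly at the point where the non-commutativity of $T$ with $A^{\theta}$ must be confronted --- which is also the only content of the proposition beyond the case $\theta=0$ --- and the patch you propose requires information that the hypotheses do not provide. (You have located the paper's intended ingredients, but note that Proposition \ref{satz-norm-equiv-und-Hoo} inserts the $\mR_s$-bounded operator directly in front of the vector $x$ whose $\psi$-norm appears on the right; to use it for $\|Tx\|_{\theta,s,A,\ph}=\big\|\big(\int_0^\infty|\tilde\ph(tA)A^{\theta}Tx|^s\,\frac{dt}{t}\big)^{1/s}\big\|_X$ one must first rewrite $A^{\theta}Tx$ in such a form, and your argument does not achieve this.)
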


\begin{proof}
This is an easy consequence of Proposition \ref{satz-norm-equiv-und-Hoo} and Proposition \ref{satz_D(Am)Schnitr-R(Am)-dicht-in-X-Punkt}.
\end{proof}

\subsection{The part of $A$ in its associated Triebel-Lizorkin spaces} \label{subsection_The-part-of-A-in-the-s-intermediate-spaces}

We fix some $\theta\in\R$. Recall that we can extrapolate the operator $A$ to an operator in the universal extrapolation space $U$ such that $A$ is sectorial in each extrapolation space $X_{(-m)}, m\in\N$.\\

Observe first that the operators $A^\alpha$ shift the scales of associated $s$-spaces in the following sense.
\begin{lemma} \label{lemma_A^alpha-in-Skala-der-s-Raeume}
Let $\alpha\in\R$. Then $A^\alpha X^\theta_{s,A}$ coincides with $X^{\theta-\alpha}_{s,A}$ in the set-theoretical sense, and the operator $A^\alpha$ (defined on $U$) induces a topological isomorphism
\[
A^\alpha: \Xdot^\theta_{s,A} \to \Xdot^{\theta-\alpha}_{s,A}.
\]

If in addition $\theta>\alpha \vee 0$, then also the operator $(1+A)^\alpha$ induces an isomorphism
\[
(1+A)^\alpha: X^\theta_{s,A} \to X^{\theta-\alpha}_{s,A}.
\]
\end{lemma}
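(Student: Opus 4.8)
The plan is to prove the topological isomorphism $A^{\alpha}\colon\Xdot^{\theta}_{s,A}\to\Xdot^{\theta-\alpha}_{s,A}$ first, since this is the real content, and then deduce the remaining assertions. Fix $\sigma\in(\omega_{\mR_s}(A),\pi]$ and an integer $m>\max\{|\theta|,|\alpha|,|\theta-\alpha|\}$, and put $\ph(z):=z^{m}(1+z)^{-2m}$, $\psi(z):=z^{\alpha}\ph(z)=z^{m+\alpha}(1+z)^{-2m}$. Then $\ph\in\Phi_{\sigma,\theta}\cap\Phi_{\sigma,\theta-\alpha}$ and $\psi\in H_0^{\infty}(\Sigma_\sigma)\ohne\{0\}$. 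For $x\in D(A^{m})\cap R(A^{m})$ (which lies in $D(A^{\theta})\cap D(A^{\theta-\alpha})\cap D(A^{\alpha})$ and in the corresponding ranges) multiplicativity of the $H_0^\infty$-calculus gives $\ph(tA)A^{\alpha}x=t^{-\alpha}\psi(tA)x$, hence
\[
t^{-(\theta-\alpha)}\ph(tA)A^{\alpha}x=t^{-\theta}\psi(tA)x\qquad(t>0).
\]
Writing $y:=A^{\theta}x\in X$ and $\xi(z):=z^{\alpha-\theta}\ph(z)$, $\zeta(z):=z^{-\theta}\ph(z)$ (both in $H_0^{\infty}(\Sigma_\sigma)\ohne\{0\}$ by the choice of $m$) one further gets $t^{-\theta}\psi(tA)x=\xi(tA)y$ and $t^{-\theta}\ph(tA)x=\zeta(tA)y$. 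Applying Proposition \ref{satz-norm-equiv-und-Hoo} with $T=\Id_X$, $f\equiv 1$ and the test functions $\xi,\zeta$, in both directions, yields $\|A^{\alpha}x\|_{\theta-\alpha,s}\approx\|x\|_{\theta,s}$ with constants independent of $x$ (and the obvious modification for $s=\infty$).

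Next I would pass to the whole space by density. Since $D(A^{m})\cap R(A^{m})$ is dense in $\Xdot^{\theta}_{s,A}$ and, as $m>|\theta-\alpha|$, also in $\Xdot^{\theta-\alpha}_{s,A}$ (Proposition \ref{satz_D(Am)Schnitr-R(Am)-dicht-in-X-Punkt}), the estimate above shows that $A^{\alpha}$ maps $D(A^{m})\cap R(A^{m})$ boundedly into $\Xdot^{\theta-\alpha}_{s,A}$ and hence extends to a bounded operator $\Xdot^{\theta}_{s,A}\to\Xdot^{\theta-\alpha}_{s,A}$; likewise $A^{-\alpha}$ extends to a bounded operator in the reverse direction. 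On $D(A^{m})\cap R(A^{m})$ these two extensions compose to the identity either way, so they are mutually inverse. Moreover, by continuity of $A^{\alpha}$ between the extrapolation spaces $X_{(-n)}$ together with the embedding $\Xdot^{\theta}_{s,A}\into A^{-\theta}X_{(-1)}$ of Proposition \ref{prop_Xdot-als-TR-von-x-aus-U-mit-endlicher-Norm}, this extension is the restriction of the operator $A^{\alpha}$ on $U$; thus $A^{\alpha}\colon\Xdot^{\theta}_{s,A}\to\Xdot^{\theta-\alpha}_{s,A}$ is a topological isomorphism. The set-theoretic statement for the inhomogeneous spaces then follows by combining the bijectivity of this map with the identity $X^{\beta}_{s,A}=\Xdot^{\beta}_{s,A}\cap X$ (the corollary to Proposition \ref{prop_Xdot-als-TR-von-x-aus-U-mit-endlicher-Norm}).

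For the last assertion, let $\theta>\alpha\vee 0$, so that $\theta>0$ and $\theta-\alpha>0$ and all $s$-spaces in sight are genuine inhomogeneous spaces with $X^{\beta}_{s,A}=\Xdot^{\beta}_{s,A}\cap X$. If $\beta\le 0$, then $g_{\beta}(z):=(1+z^{-1})^{\beta}=((1+z)/z)^{\beta}$ belongs to $\mE(\Sigma_\sigma)$, $(1+A)^{\beta}=A^{\beta}g_{\beta}(A)$, and $(1+A)^{\beta}\in L(X)$ while $g_{\beta}(A)\in L(X)$ is bounded on every $X^{\gamma}_{s,A}$ by Proposition \ref{Satz-norm-equivalent-theta-2}(2); combined with the isomorphism $A^{\beta}\colon\Xdot^{\gamma}_{s,A}\to\Xdot^{\gamma-\beta}_{s,A}$ from the first part this shows that $(1+A)^{\beta}\colon X^{\gamma}_{s,A}\to X^{\gamma-\beta}_{s,A}$ is bounded for every $\gamma>0$. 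This already handles $(1+A)^{\alpha}$ when $\alpha\le 0$ and its inverse $(1+A)^{-\alpha}$ when $\alpha>0$, so it remains to bound $(1+A)^{\alpha}\colon X^{\theta}_{s,A}\to X^{\theta-\alpha}_{s,A}$ for $\alpha>0$ directly. Here I would use that $X^{\theta}_{s,A}\into D((1+A)^{\alpha})$: by Corollary \ref{cor_Raum-X^theta-zwischen-reellen-Interpol.} and standard facts about real interpolation and fractional powers of the invertible sectorial operator $1+A$ one has $X^{\theta}_{s,A}\into(X,D(A^{N}))_{\theta/N,\infty}\into(X,D(A^{N}))_{\alpha/N,1}\into D((1+A)^{\alpha})$ for any integer $N>\theta$. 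Then, decomposing $(1+A)^{\alpha}=A^{\alpha}+h(A)$ with $h\in\mE(\Sigma_\sigma)$ for $\alpha\in(0,1)$, resp. iterating $(1+A)^{\alpha}=(1+A)^{\alpha-1}(1+A)$ for $\alpha\ge 1$ (using $(1+A)\colon X^{\gamma}_{s,A}\to X^{\gamma-1}_{s,A}$ for $\gamma>1$), one reduces the norm estimate to the first part together with Proposition \ref{Satz-norm-equivalent-theta-2}(2) and the embeddings $X^{\gamma}_{s,A}\into X^{\gamma'}_{s,A}$ of Proposition \ref{prop_elementare-Einbettungen}; membership in $X$ comes from $X^{\theta}_{s,A}\into D((1+A)^{\alpha})$. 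Together with the bounded inverse $(1+A)^{-\alpha}$ from the first case, the isomorphism follows.

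The routine parts — the multiplicativity identities, the $\mE(\Sigma_\sigma)$-membership of the auxiliary functions, and the $\mR_s$-boundedness bookkeeping — are straightforward. The hard part will be (i) the passage between elements of $X$ and of the extrapolation space $U$, i.e. verifying that the density extension really coincides with the $U$-action of $A^{\alpha}$ (and, correspondingly, being precise about what the set-theoretic identity for the inhomogeneous spaces means); and (ii) in the last assertion, showing that $X^{\theta}_{s,A}$ lies in the domain of $(1+A)^{\alpha}$ and that $(1+A)^{\alpha}$ preserves membership in $X$ — this is precisely where the hypothesis $\theta>\alpha\vee 0$ is used.
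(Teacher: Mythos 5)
Your treatment of the homogeneous isomorphism is essentially the paper's own argument: both hinge on the identity $t^{-(\theta-\alpha)}\ph(tA)A^{\alpha}x=t^{-\theta}\psi(tA)x$ with $\psi(z)=z^{\alpha}\ph(z)$ and on the $\ph$-independence of the $s$-power function norms (Propositions \ref{satz-norm-equiv-und-Hoo}, \ref{Satz-norm-equivalent-theta-2}); the paper simply computes for $x\in X$ with a $\ph\in\Phi_{\sigma,\theta-\alpha}$ for which $z\mapsto z^{\alpha}\ph(z)$ lies in $\mE(\Sigma_\sigma)$ and then uses density of $X^{\theta-\gamma}_{s,A}$ in $\Xdot^{\theta-\gamma}_{s,A}$, whereas you work on $D(A^m)\cap R(A^m)$ and invoke Proposition \ref{satz_D(Am)Schnitr-R(Am)-dicht-in-X-Punkt} --- a cosmetic difference. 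The genuine divergence is the inhomogeneous assertion: the paper settles $(1+A)^{\alpha}$ in one line by observing that $\theta>\alpha\vee 0$ makes both $\theta$ and $\theta-\alpha$ positive, so Proposition \ref{satz_hom-vs-inhom-Raeume-fuer-A-invbar} gives $X^{\theta-\gamma}_{s,A}\iso\Xdot^{\theta-\gamma}_{s,1+A}$ for $\gamma\in\{0,\alpha\}$, and the already proven homogeneous part applied to the invertible $\mR_s$-sectorial operator $1+A$ finishes the proof; all the labour is thus front-loaded into Proposition \ref{satz_hom-vs-inhom-Raeume-fuer-A-invbar}. Your direct route --- $(1+A)^{\beta}=A^{\beta}g_{\beta}(A)$ for $\beta\le 0$, the splitting $(1+A)^{\alpha}=A^{\alpha}+h(A)$ for $\alpha\in(0,1)$, iteration for $\alpha\ge 1$, and the embedding $X^{\theta}_{s,A}\into D((1+A)^{\alpha})$ via Corollary \ref{cor_Raum-X^theta-zwischen-reellen-Interpol.} together with standard real-interpolation facts --- does work and has the merit of not using Proposition \ref{satz_hom-vs-inhom-Raeume-fuer-A-invbar}, but it is considerably longer and imports facts the paper never needs; note also that for $\alpha<0$ the boundedness of the inverse $(1+A)^{-\alpha}$ is again your ``remaining case'' applied with parameters $(-\alpha,\theta-\alpha)$, the hypothesis check being $\theta-\alpha>-\alpha\iff\theta>0$, which you should state explicitly. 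One caveat common to your write-up and the paper: the literal identity $A^{\alpha}X^{\theta}_{s,A}=X^{\theta-\alpha}_{s,A}$ does not follow from bijectivity on the homogeneous scale plus $X^{\beta}_{s,A}=\Xdot^{\beta}_{s,A}\cap X$, since $A^{\alpha}$ need not map $X$ into $X$; the set-theoretic statement has to be read through the realization of the spaces inside the universal extrapolation space $U$ (Proposition \ref{prop_Xdot-als-TR-von-x-aus-U-mit-endlicher-Norm}), which is also how the paper's own deduction is to be understood.
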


\begin{proof}
Choose  $\sigma\in (\omega_{\mR_s(A)},\pi)$ and $\ph\in\Phi_{\sigma,\theta-\alpha}$ such that $\psi(z):=z^{\alpha}\ph(z)$ defines a function in $\mE(\Sigma_\sigma)$, then $\psi\in\Phi_{\sigma,\theta}$. Let $x\in X$, then
\begin{eqnarray*}
\|A^\alpha x\|_{\theta-\alpha,s,A} &\approx& \bigg\| \bigg( \int_0^\infty |t^{-\theta+\alpha}\ph(tA)A^\alpha x|^s\, \frac{dt}{t}\bigg)^{1/s} \bigg\|_X = \bigg\| \bigg( \int_0^\infty |t^{-\theta} \psi(tA)x|^s\, \frac{dt}{t}\bigg)^{1/s} \bigg\|_X\\
&\approx& \|x\|_{\theta,s,A},
\end{eqnarray*}

hence $A^\alpha x\in X^{\theta-\alpha}_{s,A} \iff x\in X^{\theta}_{s,A}$, and $A^\alpha: ( X^{\theta}_{s,A}, \|\mal\|_{\theta,s,A,\psi})\to ( X^{\theta-\alpha}_{s,A}, \|\mal\|_{\theta,s,A,\ph})$ is an isometric isomorphism. Since  $X^{\theta-\gamma}_{s,A}$ is dense in $\Xdot^{\theta-\gamma}_{s,A}$ for $\gamma\in \{0,\alpha\}$ this also yields $A^\alpha x\in \Xdot^{\theta-\alpha}_{s,A}\iff x\in \Xdot^{\theta}_{s,A}$ for all $x\in U$ and that $A^\alpha$ induces a topological isomorphism $A^\alpha: \Xdot^\theta_{s,A} \to \Xdot^{\theta-\alpha}_{s,A}$.\\[-1ex]

If in addition $\theta>\alpha \vee 0$, then $X^{\theta-\gamma}_{s,A}\iso \Xdot^{\theta-\gamma}_{s,A+1}$ for $\gamma\in \{0,\alpha\}$ by Proposition \ref{satz_hom-vs-inhom-Raeume-fuer-A-invbar}, and we can apply the first part for $1+A$ instead of $A$.
\end{proof}

\begin{definition}
Let $\dot{A}_{\theta,s}:=A_{\Xdot^\theta_{A,s}}$ be the part of $A$ in $\Xdot^\theta_{A,s}$ and  $A_{\theta,s}:=A_{X^\theta_{A,s}}$ be the part of $A$ in $X^\theta_{A,s}$ if $\theta\ge 0$, respectively.
\end{definition}

\begin{bem} \label{bem_s-Raeume-invariant-unter-Resolventen}
The spaces $\Xdot^\theta_{s,A}$, and $X^\theta_{A,s}$ if $\,\theta\ge 0$, respectively, are invariant under resolvents of $A$, i.e.  $ R(\la,A)\Xdot^\theta_{A,s}\tm  \Xdot^\theta_{A,s}$, and if $\theta\ge 0$ then $R(\la,A)X^\theta_{A,s,\ph}\tm  X^\theta_{A,s,\ph}$, respectively, for all $\la\in \C\ohne \overline{\Sigma}_\sigma$ and $\sigma\in (\omega_{\mR_s(A)},\pi)$. In fact, it is sufficient to show that $\la R(\la,A)X^\theta_{s,A}\tm X^\theta_{s,A}$. To see this we let $x\in X^\theta_{s,A}$ and choose $\sigma\in (\omega_{\mR_s(A)},\pi)$ and $\ph\in\Phi_{\sigma,\theta}$, then
\begin{eqnarray*}
\|\la R(\la,A)x\|_{\theta,s,A} &\approx& \big\| (\la R(\la,A)) t^{-\theta}\ph(tA)x)_{t>0} \big\|_{X(L^s_*)} \le  M_{\mR_s,\sigma}(A)\, \big\| ( t^{-\theta}\ph(tA)x)_{t>0} \big\|_{X(L^s_*)}\\
&\approx& \|x\|_{\theta,s,A}
\end{eqnarray*}

for all $\la\in \C\ohne \overline{\Sigma}_\sigma$, since the set $\{ z R(z,A) \:|\: z\in \C\ohne \overline{\Sigma}_\sigma\}$ is $\mR_s$-bounded.
\qed \end{bem}

By Remark \ref{bem_s-Raeume-invariant-unter-Resolventen} we obtain the following elementary properties of the operators $\dot{A}_{\theta,s}, A_{\theta,s}$.

\begin{lemma} \label{lemma_parts-in-s-Raeumen-sektoriell+Definitionsbereich}
The operator $\dot{A}_{\theta,s}$ is an injective sectorial operator in $\Xdot^\theta_{s,A}$ of type $\omega(\dot{A}_{\theta,s})\le \omega_{\mR_s}(A)$ with $D(\dot{A}_{\theta,s})=\Xdot^\theta_{s,A} \cap \Xdot^{\theta+1}_{s,A}$. If $\theta\ge 0$, the operator $A_{\theta,s}$ is an injective sectorial operator in $X^\theta_{s,A}$ of type $\omega({A}_{\theta,s})\le \omega_{\mR_s}(A)$ with $D(A_{\theta,s}) = X^{\theta+1}_{s,A}$.\\

Moreover, if $m\in\N_{>|\theta|}$, then $D(A^m)\cap R(A^m)$ is a core of $\dot{A}_{\theta,s}$, and of $A_{\theta,s}$ in the case $\theta\ge 0$, respectively.
\end{lemma}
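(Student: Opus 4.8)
The plan is to read off every assertion from three facts already available: the resolvent invariance and resolvent estimate in Remark~\ref{bem_s-Raeume-invariant-unter-Resolventen}, the scale-shift isomorphisms of Lemma~\ref{lemma_A^alpha-in-Skala-der-s-Raeume}, and the approximation procedure $x_n=T_n^m x$ used in the proof of Proposition~\ref{satz_D(Am)Schnitr-R(Am)-dicht-in-X-Punkt}.

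\emph{Sectoriality, type, injectivity.} Fix $\sigma\in(\omega_{\mR_s}(A),\pi)$. By Remark~\ref{bem_s-Raeume-invariant-unter-Resolventen} every $\la\in\C\ohne\overline{\Sigma}_\sigma$ lies in $\rho(A)$, the space $\Xdot^\theta_{s,A}$ (resp.\ $X^\theta_{s,A}$ for $\theta\ge0$) is invariant under $R(\la,A)$, and $\|\la R(\la,A)x\|_{\theta,s,A}\le M_{\mR_s,\sigma}(A)\|x\|_{\theta,s,A}$. Hence $R(\la,A)$ restricts to the resolvent of the part $\dot A_{\theta,s}$, so $\sigma(\dot A_{\theta,s})\tm\overline{\Sigma}_\sigma$ and $\sup_{\la\notin\overline{\Sigma}_\sigma}\|\la R(\la,\dot A_{\theta,s})\|<\infty$; letting $\sigma\downarrow\omega_{\mR_s}(A)$ gives $\omega(\dot A_{\theta,s})\le\omega_{\mR_s}(A)$, and the same for $A_{\theta,s}$. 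Injectivity is inherited from $A$, being a restriction; density of domain and range will follow once the domain is identified, since $D(A^m)\cap R(A^m)$ is contained in it and is dense by Proposition~\ref{satz_D(Am)Schnitr-R(Am)-dicht-in-X-Punkt}/Corollary~\ref{cor_D(Am)Schnitr-R(Am)-dicht-in-X-theta-s-inhomogen}, while $A$ maps $D(A^{m+1})\cap R(A^{m+1})$ onto the dense set $D(A^m)\cap R(A^{m+2})$.

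\emph{Domain descriptions.} Regarding $A$ as an operator on the universal extrapolation space $U$, Lemma~\ref{lemma_A^alpha-in-Skala-der-s-Raeume} with $\alpha=1$ gives that $A$ restricts to a bijection $\Xdot^{\theta+1}_{s,A}\to\Xdot^\theta_{s,A}$, with inverse the restriction of $A^{-1}$ on $U$. Thus for $x\in\Xdot^\theta_{s,A}$ one has $x\in D(\dot A_{\theta,s})$ iff $Ax\in\Xdot^\theta_{s,A}$ iff $x=A^{-1}(Ax)\in\Xdot^{\theta+1}_{s,A}$, i.e.\ $D(\dot A_{\theta,s})=\Xdot^\theta_{s,A}\cap\Xdot^{\theta+1}_{s,A}$. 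For the inhomogeneous case I would combine $X^\theta_{s,A}=\Xdot^\theta_{s,A}\cap X$ with Proposition~\ref{satz_hom-vs-inhom-Raeume-fuer-A-invbar}: passing to the invertible operator $1+A$ (same domain, $\|\cdot\|_{\theta,s,1+A}$ an equivalent norm, $X^\theta_{s,A}\iso\Xdot^\theta_{s,1+A}$), the part $A_{\theta,s}=(1+A)_{X^\theta_{s,A}}-1$ has domain $\Xdot^\theta_{s,1+A}\cap\Xdot^{\theta+1}_{s,1+A}$ by the homogeneous computation, and invertibility of $1+A$ forces $\Xdot^{\theta+1}_{s,1+A}\into\Xdot^\theta_{s,1+A}$, so this collapses to $\Xdot^{\theta+1}_{s,1+A}\iso X^{\theta+1}_{s,A}$.

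\emph{Cores.} Choose $m\in\N$ large enough that $D(A^m)\cap R(A^m)\tm\Xdot^\theta_{s,A}\cap\Xdot^{\theta+1}_{s,A}=D(\dot A_{\theta,s})$; this holds by Proposition~\ref{satz_D(Am)Schnitr-R(Am)-dicht-in-X-Punkt} applied with the two parameters $\theta$ and $\theta+1$. Given $x\in D(\dot A_{\theta,s})$, set $x_n:=T_n^m x\in D(A^m)\cap R(A^m)$ with $T_n:=A(\tfrac1n+A)^{-1}n(n+A)^{-1}$. Since $\{T_n^m:n\in\N\}$ is $\mR_s$-bounded and $T_n\to\Id$ strongly, the computation in the proof of Proposition~\ref{satz_D(Am)Schnitr-R(Am)-dicht-in-X-Punkt}, run once for the exponent $\theta$ and once for $\theta+1$, gives $\|x-x_n\|_{\theta,s,A}\to0$ and $\|x-x_n\|_{\theta+1,s,A}\to0$, i.e.\ $x_n\to x$ in the graph norm of $\dot A_{\theta,s}$; hence $D(A^m)\cap R(A^m)$ is a core. (Alternatively one may invoke that a subspace contained in the domain, dense in the ambient space, and invariant under a resolvent is a core, using the invariance from Remark~\ref{bem_s-Raeume-invariant-unter-Resolventen}.) The inhomogeneous statement is handled identically, via Corollary~\ref{cor_D(Am)Schnitr-R(Am)-dicht-in-X-theta-s-inhomogen} and the reduction to $1+A$. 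The expected main obstacle is purely the bookkeeping in the domain identification — turning the a priori $U$-statement ``$Ax\in\Xdot^\theta_{s,A}$'' into membership in $\Xdot^{\theta+1}_{s,A}$, and, inhomogeneously, also controlling $Ax\in X$; but Lemma~\ref{lemma_A^alpha-in-Skala-der-s-Raeume} and Proposition~\ref{satz_hom-vs-inhom-Raeume-fuer-A-invbar} have essentially done this work already, so the remaining steps are routine reruns of earlier arguments.
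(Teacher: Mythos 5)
Your proposal is correct and follows essentially the same route as the paper's own (very terse) proof: sectoriality and the type bound from Remark~\ref{bem_s-Raeume-invariant-unter-Resolventen}, the domain identifications from Lemma~\ref{lemma_A^alpha-in-Skala-der-s-Raeume} with $\alpha=1$ (your detour through $1+A$ and Proposition~\ref{satz_hom-vs-inhom-Raeume-fuer-A-invbar} for the inhomogeneous case is the same content as the lemma's $(1+A)^\alpha$ statement), and the core property from the $T_n^m$-approximation used in the proof of Proposition~\ref{satz_D(Am)Schnitr-R(Am)-dicht-in-X-Punkt}. The additional details you supply (injectivity, density of domain and range, graph-norm convergence in both the $\theta$- and $(\theta+1)$-norms) are correct and merely make explicit what the paper leaves implicit.
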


\begin{proof}
It is well know that the statements of Remark \ref{bem_s-Raeume-invariant-unter-Resolventen}  imply the asserted sectoriality properties, so we only have to verify the statements concerning the domains. But this follows immediately from Lemma \ref{lemma_A^alpha-in-Skala-der-s-Raeume} with $\alpha=1$. The final assertions follows from the approximation result that is also used in the proof of Proposition \ref{satz_D(Am)Schnitr-R(Am)-dicht-in-X-Punkt}.
\end{proof}

Combining Lemma \ref{lemma_parts-in-s-Raeumen-sektoriell+Definitionsbereich} with Proposition \ref{Satz-norm-equivalent-theta-2} we immediately obtain the final main result of this paper.
\begin{theorem} \label{Theorem_part-A-hat-Hoo-in-s-Raeumen}
\begin{itemize}
\item[(1)] The part $\dot{A}_{\theta,s}$ of $A$ in $\Xdot^\theta_{s,A}$ with domain $D(\dot{A}_{\theta,s})=\Xdot^\theta_{s,A} \cap \Xdot^{\theta+1}_{s,A}$ has a bounded \HU-calculus with $\omega_{H^\infty}(\dot{A}_{\theta,s})\le \omega_{\mR_s}(A)$.
\item[(2)] Let $\theta\ge 0$. If $A^{-1}\in L(X)$ or $A$ has a bounded \HU-calculus in $X$, then the part $A_{\theta,s}$ of $A$ in $X^\theta_{s,A}$ with domain $D(A_{\theta,s}) = X^{\theta+1}_{s,A}$ has a bounded \HU-calculus with $\omega_{H^\infty}(A_{\theta,s})\le \omega_{\mR_s}(A)$.
\end{itemize}
\qed\end{theorem}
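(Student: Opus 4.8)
The plan is to deduce the $H^\infty$-boundedness from the uniform estimate of Proposition~\ref{Satz-norm-equivalent-theta-2} together with the convergence-lemma criterion recalled in Subsection~\ref{subsection-Hoo}: a sectorial operator has a bounded $H^\infty(\Sigma_\sigma)$-calculus as soon as $\|\ph(\mal)\|\le C\|\ph\|_{\infty,\sigma}$ holds uniformly for all $\ph\in H_0^\infty(\Sigma_\sigma)$. By Lemma~\ref{lemma_parts-in-s-Raeumen-sektoriell+Definitionsbereich} the part operators $\dot A_{\theta,s}$ (and $A_{\theta,s}$ for $\theta\ge0$) are already known to be injective and sectorial of type $\le\omega_{\mR_s}(A)$, so only the uniform $H_0^\infty$-bound has to be established.

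For (1) I would fix $\sigma>\omega_{\mR_s}(A)$ and first record that, since $\Xdot^\theta_{s,A}$ embeds into the universal extrapolation space $U$ (Proposition~\ref{prop_Xdot-als-TR-von-x-aus-U-mit-endlicher-Norm}) and is invariant under the resolvents $R(\la,A)$ (Remark~\ref{bem_s-Raeume-invariant-unter-Resolventen}), the operator $\dot A_{\theta,s}$ is genuinely the part of the extrapolated $A$ in $\Xdot^\theta_{s,A}$, so that for $\ph\in H_0^\infty(\Sigma_\sigma)$ its Dunford--Riesz calculus is the restriction $\ph(\dot A_{\theta,s})=\ph(A)\big|_{\Xdot^\theta_{s,A}}$ (Lemma~\ref{lemma_Haase-f(A)-in-U} and the extrapolation formalism of Subsection~\ref{subsection_B.f.s.}). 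Then I would apply Proposition~\ref{Satz-norm-equivalent-theta-2}(2) with $f=\ph\in H_0^\infty(\Sigma_\sigma)\subset H^\infty(\Sigma_\sigma)$ to obtain $\|\ph(\dot A_{\theta,s})x\|_{\theta,s,A}\le C\,\|\ph\|_{\infty,\sigma}\,\|x\|_{\theta,s,A}$ for $x\in X^\theta_{s,A}$, pass to all of $\Xdot^\theta_{s,A}$ by density of $X^\theta_{s,A}$, and conclude from the criterion in Subsection~\ref{subsection-Hoo} (using $\omega(\dot A_{\theta,s})\le\omega_{\mR_s}(A)<\sigma$) that $\dot A_{\theta,s}$ has a bounded $H^\infty(\Sigma_\sigma)$-calculus; letting $\sigma\downarrow\omega_{\mR_s}(A)$ gives $\omega_{H^\infty}(\dot A_{\theta,s})\le\omega_{\mR_s}(A)$.

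For (2), with $\theta\ge0$, I would run the same argument on the inhomogeneous norm $\|\mal\|_X+\|\mal\|_{\theta,s,A}$: the $\|\mal\|_{\theta,s,A}$-contribution of $\|\ph(A)x\|$ is again estimated by the inhomogeneous clause of Proposition~\ref{Satz-norm-equivalent-theta-2}(2), while the $\|\mal\|_X$-contribution is supplied by the extra hypothesis --- either directly by the bounded $H^\infty$-calculus of $A$ in $X$, giving $\|\ph(A)x\|_X\le M\|\ph\|_{\infty,\sigma}\|x\|_X$ on a suitable sector, or, when $A^{-1}\in L(X)$, by the isomorphism $X^\theta_{s,A}\iso\Xdot^\theta_{s,A}$ of Proposition~\ref{satz_hom-vs-inhom-Raeume-fuer-A-invbar} (for $\theta>0$), which identifies $A_{\theta,s}$ with $\dot A_{\theta,s}$ and reduces everything to (1). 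Either way one arrives at $\|\ph(A_{\theta,s})\|_{L(X^\theta_{s,A})}\le C\|\ph\|_{\infty,\sigma}$ for $\ph\in H_0^\infty(\Sigma_\sigma)$, and since $A_{\theta,s}$ is sectorial of type $\le\omega_{\mR_s}(A)$ the convergence lemma closes the argument exactly as in (1); density of $D(A^m)\cap R(A^m)$ in $X^\theta_{s,A}$ (Corollary~\ref{cor_D(Am)Schnitr-R(Am)-dicht-in-X-theta-s-inhomogen}) is used for the passage to the full space.

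The one genuinely non-routine point is the identification $\ph(\dot A_{\theta,s})=\ph(A)\big|_{\Xdot^\theta_{s,A}}$ (and its analogue for $A_{\theta,s}$): one must verify that the Cauchy integral defining the calculus of the ambient extrapolated operator restricts to the invariant subspace $\Xdot^\theta_{s,A}$ and agrees there with the calculus of the part operator --- first for $\ph\in H_0^\infty$ and then, via the standard regularization, for $f\in\BB(\Sigma_\sigma)$ --- which rests on Proposition~\ref{prop_Xdot-als-TR-von-x-aus-U-mit-endlicher-Norm}, Remark~\ref{bem_s-Raeume-invariant-unter-Resolventen} and Lemma~\ref{lemma_Haase-f(A)-in-U}. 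A secondary, purely bookkeeping, issue in part (2) is keeping track of the sector on which the bounded $H^\infty$-calculus of $A$ in $X$ is available, so that the resulting $X$-estimate is compatible with the chosen $\sigma>\omega_{\mR_s}(A)$.
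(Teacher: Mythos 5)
Your proposal is correct and follows essentially the same route as the paper: the paper obtains the theorem by combining Lemma \ref{lemma_parts-in-s-Raeumen-sektoriell+Definitionsbereich} (sectoriality of the part, domain, core) with the uniform estimate of Proposition \ref{Satz-norm-equivalent-theta-2}, which via the $H_0^\infty$-criterion of Subsection \ref{subsection-Hoo} yields the bounded calculus on every sector $\Sigma_\sigma$ with $\sigma>\omega_{\mR_s}(A)$, and handles part (2) exactly as you do (invertibility via Proposition \ref{satz_hom-vs-inhom-Raeume-fuer-A-invbar}, or the bounded \HU-calculus of $A$ in $X$ for the $\|\mal\|_X$-contribution). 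Your explicit verification that $\ph(\dot A_{\theta,s})$ is the restriction of $\ph(A)$ is a point the paper leaves implicit, and your write-up supplies it appropriately.
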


As we already noted in the introduction of this section, Theorem \ref{Theorem_part-A-hat-Hoo-in-s-Raeumen} can be seen as a variant of Dore's Theorem that states that an invertible sectorial operator $A$ in a Banach space $X$ has a bounded \HU-calculus in the scale of \emph{real interpolation spaces} $(X,D(A))_{p,\theta}$ for $p\in[1,\infty], \theta\in (0,1)$, cf. \cite{dore-interpolation1} and \cite{dore-interpolation2}, and also the extensive treatment using functional calculus which is given in \cite{haase}, Chapter 6.

\section{Concluding remarks} \label{section_Concluding_remarks}

1. In the last decade, the notion of vector-valued Triebel-Lizorkin spaces $F^{\alpha}_{p,s}(E)$, where $E$ is a Banach space, has become of greater interest e.g. in connections with regularity theory for Cauchy-Problems in an $L^p$-$L^q$ setting with $p\neq q$ (cf., e.g. \cite{denk-hieber-pruess-inhomogen}). We note that the techniques developed in this article also work in vector-valued settings, i.e. one can replace the scalar-valued function space $X$  by a vector-valued function space $X(E)$. In this case one has to replace the (pointwise) absolut values by pointwise norms $|\mal|_E$. For example, the notion of $\mR_s$-boundedness has to be replaced by $\mR_s(E)$-boundedness in the following sense:
\begin{equation}
\Big\| \Big( \sum_{j=1}^n |T_jx_j|_E^s \Big)^{1/s} \Big\|_X \lesssim  \Big\| \Big( \sum_{j=1}^n |x_j|_E^s \Big)^{1/s} \Big\|_X.
\end{equation}

In the same manner, the notions of $\mR_s$-sectoriality, -bounded \HU-calculus and $s$-power-function norms have to be modified. Then all conclusion in this article are still true, except the connection of $\mR_2(E)$-boundedness and $\mR$-boundedness: This will fail in general unless $E$ is a Hilbert space. If $E \tm M(\widetilde{\Omega},\widetilde{\mu})$ is itself a Banach function space with absolut continuous norm, there is also a second possibility to adapt the theory of this article. In this case, the space $X(E)$ can be canonically identified with a scalar-valued Banach function space $XE \tm M(\Omega\times \widetilde{\Omega}, \mu\tensor \widetilde{\mu})$ via the identification of $x:\Omega\to E$ with a measurable function $x:\Omega\times \widetilde{\Omega} \to\K$, and the theory of this article can be applied by replacing the space $X$ by $XE$. In this context it is important to note that then $\mR_s(E)$-boundedness in the space $X(E)$ will in general \emph{not} coincide with $\mR_s$-boundedness in the space $XE$.\\

2. As already mentioned in the introduction, for Schr\"odinger operators $H=-\Delta+V$ with certain potentials $V$, a similar concept of generalized Triebel-Lizorkin spaces is developed in \cite{zheng1} and \cite{zheng2}, where also generalized Triebel-Lizorkin spaces associated to Schr\"odinger operators are defined and studied. The definition given there differs from our definition and is closer to the original definition of Triebel-Lizorkin spaces via a Littlewood-Paley like   decomposition. In particular, the fact that the considered Schr\"odinger operators are self-adjoint is essential, since the auxiliary functions used to define the spaces are in the class $C_c^\infty(\R)$. In contrary, our concept is more general in the way that we can handle also sectorial operators with non-real spectrum. Nevertheless, although we do not study this here, we are convinced that the generalized Triebel-Lizorkin spaces introduced in \cite{zheng1} for Schr\"odinger operators coincide with our notion of $s$-intermediate spaces, at least, if the negative part of the potential is in the Kato class. A proof could be based on suitable modifications of the methods in \cite{kriegler}, Chapter 4.4, where only the case $s=2$ is treated in connection with Littlewood-Paley decompositions.

\bibliography{Kunstmann-Ullmann-Literatur}{}

\bibliographystyle{plain}

\end{document}